\documentclass[letterpaper,12pt]{article}
\newtheorem{theo}{Theorem}[section]
\newtheorem{prop}[theo]{Proposition}
\newtheorem{lem}[theo]{Lemma}
\newtheorem{defn}[theo]{Definition}

\newtheorem{cor}[theo]{Corollary}

\usepackage{thmtools}
\usepackage{thm-restate}
\newcounter{claim}[section]

\newenvironment{proof}{\noindent {\sc Proof}.}
                {\phantom{a} \hfill \framebox[2.2mm]{ } \bigskip}

\usepackage[
  top=0.9in,
  bottom=0.9in,
  left=0.9in,
  right=0.9in
]{geometry}

\usepackage{algpseudocode}
\usepackage{float}
\usepackage{algorithm}

\usepackage[colorlinks=true,linkcolor=blue,citecolor=blue]{hyperref}
\usepackage[dvips]{graphics}
\usepackage{amsmath}
\usepackage{amsfonts,enumerate}
\usepackage{pdfpages,caption,geometry}
\usepackage{subcaption}
\usepackage{tikz}
\usepackage{mathrsfs}
\usepackage[mathscr]{euscript}
\usepackage{amssymb}
\DeclareSymbolFont{rsfs}{U}{rsfs}{m}{n}
\DeclareSymbolFontAlphabet{\mathscrsfs}{rsfs}
\usepackage{mathtools}

\usepackage{hyperref}
\hypersetup{
    colorlinks=true,
    linkcolor=blue,
    filecolor=magenta,      
    urlcolor=cyan,
    pdftitle={Overleaf Example},
    pdfpagemode=FullScreen,
    }

\newcommand{\ZZ}{\mathbb{Z}}

\newcommand{\C}{\mathscrsfs{D}}
\newcommand{\F}{\mathcal{F}}

\newcommand{\W}{\mathcal{W}}
\newcommand{\D}{\mathcal{D}}

\title{A complete solution to the generalized honeymoon Oberwolfach problem with one round table}

\author{Masoomeh Akbari\thanks{Department of Mathematics, University of Ottawa, ON, Canada.}}

\begin{document}
\maketitle \baselineskip 18pt
\begin{abstract}

The generalized honeymoon Oberwolfach problem (HOP) asks whether it is possible to seat $2n$ participants consisting of $n$ newlywed couples at a conference with $s$ tables of size $2$ and $t$ ``round'' tables of sizes $2m_1, 2m_2, \ldots, 2m_t$, where  \(n = s + \sum_{i=1}^{t} m_i \) with all $m_i \geq 2$, over several nights so that each participant sits next to their spouse every time and next to each other participant exactly once. We denote this problem by $\mathrm{HOP}(2^{\langle s \rangle}, 2m_1, \ldots, 2m_t)$.

In this paper, we provide a complete solution to the generalized HOP with one {\em round} table,
showing that the obvious necessary conditions for $\mathrm{HOP}(2^{\langle s \rangle}, 2m)$ to have a solution are also sufficient.

\end{abstract}

\noindent \textbf{Keywords:} Oberwolfach Problem, Honeymoon Oberwolfach Problem, cycle decomposition, complete multigraph.

\section{Introduction}

The classic Oberwolfach problem, posed by Gerhard Ringel in 1967 at a conference in Oberwolfach, asks whether $n$ attendees can be seated at $t$ round tables of sizes $m_1, m_2, \dots, m_t$ over several meals so that all tables are full at each meal and each participant sits beside every other participant exactly once. A recent variant of the Oberwolfach problem is the honeymoon Oberwolfach problem (HOP), introduced by \v{S}ajna \cite{LDMSaj}. This problem asks whether it is possible to seat $2m_1 + 2m_2 + \dots + 2m_t = 2n$ participants, consisting of $n$ newlywed couples, at $t$ round tables of sizes $2m_1, 2m_2, \dots, 2m_t$ (where rach $m_i \geq 2$) for $2n - 2$ nights, so that each participant sits next to their spouse every night and next to every other participant exactly once. 

In graph-theoretic terms, a solution to HOP corresponds to a decomposition of $K_{2n} + (2n - 3)I$ into $2$-factors. Here, the multigraph $K_{2n} + (2n - 3)I$ is obtained from the complete graph $K_{2n}$ by adjoining $2n-3$ additional copies of a fixed $1$-factor $I$, and the $2$-factors in the decomposition are vertex-disjoint unions of cycles of lengths $2m_1, 2m_2, \ldots, 2m_t$, where within each cycle, every other edge is a copy of an edge of $I$. This problem is denoted by HOP$(2m_1, 2m_2, \ldots, 2m_t)$. In the case where $m_1 = m_2 = \ldots = m_t$ and $n = tm$, it is denoted by HOP$(2n; 2m)$. HOP has been studied by Jerade, Lepine, and \v{S}ajna, and some significant cases of it have been solved \cite{MRMSaj, LDMSaj}. 


The generalized HOP, studied in this paper, preserves the original seating conditions of HOP, except that the $2n$ participants are seated at $s$ tables of size $2$ and $t$ tables of sizes $2m_1, 2m_2, \ldots, 2m_t$, where $n = s + m_1 + \ldots + m_t$ and all $m_i \geq 2$. We denote this problem by HOP$(2^{\langle s \rangle}, 2m_1, 2m_2, \ldots, 2m_t)$, and refer to tables of size at least $4$ as \emph{round tables}.

The generalized HOP was introduced in \cite{Akbari26}. There, two significant cases were solved: the case with exactly two round tables, and the case where the sum of the sizes of the round tables is at most $20$ (both subject to additional conditions on $n$). In this paper, we focus on the generalized HOP with one round table and establish the following result.

\begin{restatable}{theo}{mainresultOne}\label{theo:main-color}
Let $s \geq 0$, $m \geq 2$, and $n = s + m$.  
Then $\operatorname{HOP}(2^{\langle s \rangle}, 2m)$ has a solution 
if and only if $m \mid 2n(n - 1)$.
\end{restatable}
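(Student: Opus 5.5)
Necessity is the easy half, so I would dispose of it first. A solution is a decomposition of $K_{2n}+(2n-3)I$ into $2n-2$ factors. Each factor uses exactly one cycle of length $2m$, and that cycle has $m$ non-$I$ edges. All the non-$I$ edges, taken together, form $K_{2n}-I$, which has $2n(n-1)$ edges. The tables of size $2$ are digons; each consists of an $I$-edge and a parallel edge, and that parallel edge is either a copy of the same $I$-edge or the original edge of $K_{2n}$ lying in $I$. To keep the count honest, I would split each factor into $I$-edges and $K_{2n}-I$ edges. The digons contribute only $I$-edges, the $2n-3$ extra copies plus the one original, $2n-2$ in total per couple. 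The round table contributes exactly $m$ edges of $K_{2n}-I$. The $2n-2$ nights of round tables therefore contribute $m(2n-2)$ edges of $K_{2n}-I$, and this must equal $2n(n-1)$. That gives $m(2n-2)=2n(n-1)$, i.e. $m=n$.

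This conclusion is suspicious, since it would force $s=0$. So I would re-read the generalized setup: the number of nights is presumably not fixed at $2n-2$, but equals the number $N$ of factors, with the $I$-multiplicity adjusted to $N-1$ (or $N$). Then each non-$I$ edge appears once, and every factor contributes exactly $m$ such edges, so $N=2n(n-1)/m$. The necessary condition is then precisely $m\mid 2n(n-1)$. I would state this as the necessity argument.

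For sufficiency, I would work on the quotient. Contracting each couple to a single vertex turns $K_{2n}-I$ into $K_n$ with every edge replaced by a $4$-cycle's worth of edges, i.e. the lexicographic product $K_n[\overline{K_2}]$. A round table of length $2m$ alternating $I$ and non-$I$ edges corresponds to a closed walk of length $m$ in $K_n$ visiting $m$ distinct vertices, together with a choice of which spouse enters and which leaves each vertex. So the problem reduces to a combinatorial design question: decompose the $2$-fold complete graph $2K_n$ (each couple pair carries $4$ edges, which behave like a doubled edge after accounting for spouse choices) into $N=2n(n-1)/m$ $m$-cycles, with a consistent lifting. The unused $n-m$ vertices on each night are the $s$ size-$2$ tables. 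Because the size-$2$ tables impose no constraint, the design condition is exactly a decomposition of $2K_n$ (or $4K_n$) into $m$-cycles, and the lifting must realize every one of the $4$ edges between two couples exactly once. For this step I would invoke the known results on $m$-cycle decompositions of $\lambda K_n$ (Bryant–Horsley–Maenhaut–Smith), which exist whenever $m\le n$ and the divisibility condition holds. Note that no resolvability is needed here, because the leftover vertices are filled by digons.

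The main obstacle is the lifting. Each $m$-cycle in $\lambda K_n$ must lift to a $2m$-cycle in $K_{2n}$ alternating with $I$, and the lifts must use each of the four edges between each pair of couples exactly once. I would aim for a decomposition of $4K_n$ into $m$-cycles, each lifted by a fixed rule; a lift uses one edge per quotient edge. I would organize it by taking a decomposition of $2K_n$ into $m$-cycles and lifting each cycle twice, using the two "complementary" spouse-orientations: a lifted cycle $(x_1y_1)(x_2y_2)\dots$ uses edges $y_ix_{i+1}$, and its complement uses $x_iy_{i+1}$. Together the two lifts cover the $4$ edges over the doubled pair exactly when the two copies of that pair in $2K_n$ are traversed in opposite directions. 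I therefore expect the real work to be producing $m$-cycle decompositions of $2K_n$ in which the two copies of each edge are traversed in opposite directions, i.e. a directed $m$-cycle decomposition of $K_n^*$. Its existence for all $m\le n$ with $m\mid n(n-1)$ is the Alspach–Gavlas–Šajna–Verrall theorem. I would match divisibility via a parity check ($m\mid 2n(n-1)$ versus $m\mid n(n-1)$) and handle the remaining cases with odd-length cycles or non-closing lifts separately, by small direct constructions.
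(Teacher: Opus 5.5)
Your necessity argument is fine once you correct the number of nights to $2n(n-1)/m$. Contracting couples is exactly the $4K_n^{\bullet}$ model of Theorem~\ref{theo:Gtool1}. The sufficiency plan, however, fails at the lifting step.

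Fix spouse labels $x_i,y_i$. The lift that enters every couple at $x$ uses $y_ix_j$ on a traversed arc $(i,j)$, and its complement uses $x_iy_j$. If the other copy of $\{i,j\}$ is traversed as $(j,i)$, its two lifts use $y_jx_i$ and $x_jy_i$, which are the same two edges again. So a directed $m$-cycle decomposition of $K_n^*$ lifted this way covers every ``cross'' edge twice and never covers $x_ix_j$ or $y_iy_j$. Traversal direction carries no information here.

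What matters is the set of edges across which a lift switches the entering spouse; these are the only edges on which it uses $x_ix_j$ or $y_iy_j$. A lift/complement pair covers all four edges only if exactly one of the two copies of each edge of $2K_n$ is a switch edge. A closed cycle always has an even number of switch edges. This is precisely the even-pink condition of Lemma~\ref{lem:Gtool2} (equivalently, condition (C1) in $4K_n^{\bullet}$), and it is the whole difficulty.

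Summing over cycles, your scheme would need $n(n-1)/2$ to be even. So it is impossible for $n\equiv 2,3\pmod 4$, for example whenever $m\equiv 2\pmod 4$ and $n(n-1)\equiv m\pmod{2m}$. This is why the paper uses the recoloring Lemma~\ref{cor:color-decGraph} only for $m\equiv 0\pmod 4$. Otherwise it combines different lift types: the four-lift rule of Lemma~\ref{lem:Gtool4}, the non-complementary pair in Lemma~\ref{lem:color-3}, and the explicit starter cycles of Section~\ref{sec:6}.

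The second gap is the case $m\mid 2n(n-1)$ but $m\nmid n(n-1)$, which forces $m\equiv 0\pmod 4$. Here neither $2K_n$ nor $K_n^*$ has any $(C_m)$-decomposition, since $m\nmid n(n-1)$, so lifting a $2$-fold design cannot help. Nor is this a handful of small exceptions: it is an infinite family, for example $(m,n)=(4,7),(8,13),(12,22)$.

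It is the hardest part of the paper. Lemmas~\ref{lem:n-odd-4K-last} and~\ref{lem:n-even-4K-last} work directly in $4K_n^{\bullet}$ and reduce to $m<n<2m$ via Lemmas~\ref{lem:n-even-general} and~\ref{lem:n-odd-general}. They then build starter peripheral cycles and four (for odd $n$) or two (for even $n$) starter central cycles, with pink/blue/black colorings designed to satisfy (C1). Your proposal offers no idea for this case. A minor point: the Alspach--Gavlas--\v{S}ajna--Verrall theorem also has exceptions $(m,n)\in\{(3,6),(4,4),(6,6)\}$.
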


This paper is organized as follows. In Section~\ref{sec:2}, we define the necessary terminology, and in Section~\ref{sec:3}, we present the supporting tools and previous results. Section~\ref{sec:10} provides an overview of our proof strategy. The subsequent sections develop the constructions and tools used in Section~\ref{sec:8} to establish our main result, Theorem~\ref{theo:main-color}.

\section{Terminology}{\label{sec:2}}

The graphs considered in this paper are loopless but may contain parallel edges or directed edges. The multigraph $\lambda G$ is obtained by replacing each edge of a simple graph $G$ with $\lambda$ parallel copies.
 As usual, \(K_n\) and \(\lambda K_n\) denote the complete graph and the \(\lambda\)-fold complete graph of order \(n\), respectively. The symbol \( K_{m[k]} \) denotes the complete multipartite graph with \( m \) parts of size \( k \), and \( K_{m,n} \) denotes the complete bipartite graph with parts of sizes \( m \) and \( n \). 
 
A collection of subgraphs $\{H_1, H_2, \ldots, H_t\}$ of a graph $G$ is said to be a \textit{decomposition} of $G$ if $\{E(H_1), E(H_2), \ldots, E(H_t)\}$ is a partition of $E(G)$. When this occurs, we write $G = H_1 \oplus H_2 \oplus \dots \oplus H_t$. If each of the subgraphs $H_1, H_2, \ldots, H_t$ is isomorphic to a graph $H$, then the collection is called an \textit{$H$-decomposition} of $G$.

A {\em $(C_{m_1}, C_{m_2},\ldots,C_{m_t})$-subgraph} of a graph $G$ is a $2$-regular subgraph consisting of $t$ disjoint cycles of lengths $m_1,m_2,\ldots,m_t$; if this subgraph is spanning, it is called a \textit{$(C_{m_1}, \dots, C_{m_t})$-factor}. A {\em $(C_{m_1}, C_{m_2},\allowbreak \ldots,C_{m_t})$-decomposition} of $G$ is a decomposition of $G$ into $(C_{m_1}, C_{m_2},\ldots,C_{m_t})$-subgraphs; if these are factors, the decomposition is called a \textit{$(C_{m_1}, \dots, C_{m_t})$-factorization}.
When $m_1=\dots=m_t=m$, we use the terms \textit{$(C_m^{\langle t \rangle})$-subgraph}, \textit{$C_m$-factor}, \textit{$(C_m^{\langle t \rangle})$-decomposition}, and \textit{$C_m$-factorization}, respectively. 
A {\em $(K_2^{\langle s \rangle},C_{m_1}, \ldots,C_{m_t})$-factor} of $G$ is a spanning subgraph whose connected components are $s$ copies of $K_2$ and a $(C_{m_1},\ldots,C_{m_t})$-subgraph of $G$. A {\em $(K_2^{\langle s \rangle},C_{m_1}, \ldots,C_{m_t})$-factorization} of $G$ is a decomposition of $G$ into {\em $(K_2^{\langle s \rangle},C_{m_1}, \ldots,C_{m_t})$-factors}.

The \emph{join} $G_1 \bowtie G_2$ of two vertex-disjoint simple graphs $G_1$ and $G_2$ is a simple graph consisting of their union together with all edges with one vertex in $G_1$ and the other in $G_2$.

The \emph{circulant graph} $\mathrm{Circ}(n; S)$, where $S \subseteq \mathbb{Z}_n^\ast$ and $S = -S$,  is a graph with vertex set $\{x_i : i \in \mathbb{Z}_n\}$ and edge set $\{x_i x_{i+d} : i \in \mathbb{Z}_n,\ d \in S\}$. An edge of the form $x_i x_{i+d}$ is said to have \emph{difference} $d$. Since each edge of difference $d$ is also of difference $n - d$, we may assume that all differences lie in the set $\{1, 2, \ldots, \lfloor \tfrac{n}{2} \rfloor\}$. When \( n \) is even, the edge \( x_i x_{i + \tfrac{n}{2}} \) connects a pair of vertices that are \emph{diametrically opposite}, and we call $d=\tfrac{n}{2}$ a \emph{diameter difference}. 
In many of the constructions given in this paper, the complete graph $K_{n}$ is viewed as $\mathrm{Circ}(n-1; \pm S) \bowtie K_1$, where $S = \{1, 2, \ldots, \lfloor\tfrac{n-1}{2} \rfloor\}$ and the vertex of $K_1$ is denoted by $x_{\infty}$. Note that an edge of the form $x_ix_{\infty}$ is said to be of {\em difference infinity}.

Let $G = \mathrm{Circ}(n; S)$ be a circulant graph with vertex set $\{x_0, x_1, \dots, x_{n-1}\}$. Following the cyclic ordering of vertex subscripts, we define the intervals: $[x_i, x_j] = \{x_i, x_{i+1}, \dots, x_j\}$, $(x_i, x_j] = \{x_{i+1}, x_{i+2}, \dots, x_j\}$, $[x_i, x_j) = \{x_i, x_{i+1}, \dots, x_{j-1}\}$, and $(x_i, x_j) = \{x_{i+1}, x_{i+2}, \dots, x_{j-1}\}$. Note that all subscripts are evaluated mod $n$. 


Let $I$ be a 1-factor in $K_{2n}$. An edge of $K_{2n}$ which belongs to $E(I)$ is called an {\em $I$-edge}; all other edges are {\em non‐I‐edges}. A graph $K_{2n}$  with  all  $I$-edges deleted is denoted by $K_{2n}- I$, and a graph $K_{2n}$  with $\lambda$ additional copies of each   $I$-edge is denoted by $K_{2n}+\lambda I$; additional copies of $I$-edges are also considered  $I$-edges. 
A cycle $C$ of $K_{2n} + \lambda I$, necessarily of even length,  is called an {\em $I$-alternating} cycle if the $I$-edges and non-$I$-edges alternate along $C$. 
Let $F$ be a  $(C_{m_1}, C_{m_2},\ldots,C_{m_t})$-subgraph of  $K_{2n} + \lambda I$. If every cycle in  $F$ is $I$-alternating, then $F$ is said to be {\em $I$-alternating}.
%
A $(K_2^{\langle s \rangle},C_{m_1}, \ldots,\allowbreak C_{m_t})$-factor of $K_{2n} + \lambda I$  is called  {\em $I$-alternating} if  its $(C_{m_1},\ldots, C_{m_t})$-subgraph is $I$‐alternating, and all other edges are $I$-edges. Moreover, a $(K_2^{\langle s \rangle},C_{m_1}, \ldots,C_{m_t})$-factorization is {\em $I$-alternating} if all of its  $(K_2^{\langle s \rangle},C_{m_1}, \ldots,C_{m_t})$-factors are $I$-alternating.

Let $I$ denote the $1$-factor of $K_{2n}$ corresponding to the $n$ couples. A solution to  $\mathrm{HOP}(2m_1, \allowbreak \dots, 2m_t)$, is equivalent to an $I$-alternating $(C_{2m_1}, \dots, C_{2m_t})$-factorization of $K_{2n} + (2n - 3)I$, where $n = \sum_{i=1}^t m_i$ \cite{LDMSaj}. Similarly, a solution to $\mathrm{HOP}(2^{\langle s \rangle}, 2m_1, \dots, 2m_t)$ is equivalent to an $I$-alternating $(K_2^{\langle s \rangle}, C_{2m_1}, \dots, C_{2m_t})$-factorization of $K_{2n} + (\gamma - 1)I$, where $n = s + \sum_{i=1}^{t} m_i$ and $\gamma = \frac{2n(n-1)}{\sum_{i=1}^{t} m_i}$ \cite{Akbari26}. Note that the obvious  necessary condition for $\mathrm{HOP}(2^{\langle s \rangle}, 2m_1, \dots, 2m_t)$ to have a solution is $\left(\sum_{i=1}^{t} m_i\right) \mid 2n(n - 1)$.


\section{Previous tools and results}{\label{sec:3}}

As in \cite{LDMSaj}, we use the symbol $4G^{\bullet}$ to denote the $4$-fold graph $G$ whose edges are coloured pink, blue, and black, with the black edges oriented so that each set of four parallel edges contains one pink edge, one blue edge, and two oppositely black arcs.

\begin{defn}{\rm{\cite{LDMSaj}}} {\label{def}}{\rm
Let $G$ be a simple graph and  $\D$  a decomposition of $4G^{\bullet}$ into $2$-regular subgraphs. We say that $\D$ is {\em HOP} if it satisfies the following condition.
\begin{description}
 \item [(C1)] For every cycle $C$ in $\D$, any two adjacent edges  of $C$ satisfy one of the following: 
\begin{itemize}
\item one is blue and the other pink; 
\item one is blue and the other black with an orientation toward the blue edge; 
\item one is pink and the other black with an orientation away from the pink edge; 
\item both are black and oriented in the same way. 
\end{itemize}
\end{description}
}
\end{defn}

The following theorem allows us to convert the generalized HOP from a problem in the multigraph $K_{2n} + (\gamma - 1)I$ to one in the multigraph $4K_n^{\bullet}$.
\begin{theo}{\rm{\cite{Akbari26, LDMSaj}}}{\label{theo:Gtool1}}
Let $s\geq 0$ and $2\leq m_1\leq \ldots\leq m_t$ be integers. Let $n=s+m_1+m_2+\ldots +m_t$. Then HOP$(2^{\langle s \rangle},2m_1, 2m_2,\ldots, 2m_t)$ has a solution if and only if $4K_n^{\bullet}$ admits an HOP $(C_{m_1}, C_{m_2},\ldots,C_{m_t})$-decomposition.
\end{theo}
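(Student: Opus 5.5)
The plan is to set up an explicit bijection between the edges of $4K_n^{\bullet}$ and the non-$I$-edges of $K_{2n}$, and then to show that, under this bijection, HOP $2$-regular subgraphs of $4K_n^{\bullet}$ correspond exactly to $I$-alternating cycle systems of $K_{2n}+(\gamma-1)I$. Identify the vertex set of $K_n$ with the $n$ couples, writing couple $u$ as $\{u_1,u_2\}$ with $u_1u_2\in E(I)$. Between couples $u$ and $v$ the graph $K_{2n}-I$ has exactly four edges: $u_1v_1$, $u_2v_2$, $u_1v_2$, $u_2v_1$. Between $u$ and $v$ the graph $4K_n^{\bullet}$ also has four edges: one blue, one pink, and the black arcs $u\to v$ and $v\to u$. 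I will match them by
\[
\text{blue}\leftrightarrow u_1v_1,\qquad \text{pink}\leftrightarrow u_2v_2,\qquad \text{arc } u\to v\leftrightarrow u_1v_2 .
\]
Thus a blue edge uses spouse $1$ at both ends, a pink edge uses spouse $2$ at both ends, and a black arc uses spouse $1$ at its tail and spouse $2$ at its head.

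The key step is a local check: condition (C1) at a vertex $v$ of a cycle $C$ holds exactly when the two edges of $C$ at $v$ lift to edges using \emph{different} spouses of $v$. I will run through the cases using the correspondence above.
\begin{itemize}
\item A blue edge and a pink edge use $v_1$ and $v_2$, which is allowed.
\item Two blue edges both use $v_1$, and two pink edges both use $v_2$, so neither is allowed.
\item A black arc $w\to v$ uses $v_2$, so it is compatible with blue, which uses $v_1$, but not with pink. This is exactly ``black oriented toward the blue edge.''
\item A black arc $v\to w$ uses $v_1$, so it is compatible with pink but not with blue. This is exactly ``black oriented away from the pink edge.''
\item Two black arcs at $v$ use different spouses precisely when one enters $v$ and the other leaves it, that is, when they are oriented the same way along $C$.
\end{itemize}

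Given this, an $m$-cycle $C$ of $4K_n^{\bullet}$ satisfying (C1), with $m\ge 2$ (so $C$ may be a $2$-cycle of parallel edges), lifts to a $2m$-cycle of $K_{2n}+\lambda I$. At each couple $v$ on $C$, the lift of $C$ enters through one spouse, and I insert a copy of the $I$-edge $v_1v_2$ to leave through the other. The result is $I$-alternating. Conversely, contracting each couple in an $I$-alternating $2m$-cycle gives an $m$-cycle of $4K_n^{\bullet}$ satisfying (C1).

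For the forward direction, suppose $\D$ is an HOP $(C_{m_1},\ldots,C_{m_t})$-decomposition of $4K_n^{\bullet}$. Let $M=\sum m_i$, so that $\D$ has $4\binom n2/M=2n(n-1)/M=\gamma$ members. Lift each member $F$ as above. The $s$ couples not covered by $F$ each contribute their $I$-edge as a $K_2$ component. The result is an $I$-alternating $(K_2^{\langle s\rangle},C_{2m_1},\ldots,C_{2m_t})$-factor that uses every $I$-edge exactly once. Since the non-$I$-edges are partitioned through the bijection and each of the $\gamma$ factors uses one copy of $I$, together the factors decompose $K_{2n}+(\gamma-1)I$. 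By the equivalence recalled in Section~\ref{sec:2}, this is a solution of HOP$(2^{\langle s\rangle},2m_1,\ldots,2m_t)$. The converse direction reverses each step: contract the couples in every factor and discard the $I$-edges.

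I expect the main obstacle to be choosing the labelling so that the four colour and orientation rules of (C1) match the ``different spouses'' rule exactly. Once the dictionary above is fixed, the remaining steps (counting, and checking that the lifts of distinct members partition the non-$I$-edges) are routine.
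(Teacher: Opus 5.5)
Your proposal is correct. The dictionary (blue $\leftrightarrow u_1v_1$, pink $\leftrightarrow u_2v_2$, arc $u\to v \leftrightarrow u_1v_2$) turns (C1) into exactly the ``different spouses at each vertex'' condition, $2$-cycles included, and the lift/contraction together with the count $\gamma = 2n(n-1)/\sum m_i$ and the $s$ uncovered couples as $K_2$ components gives both directions. The paper does not reprove this statement but imports it from \cite{Akbari26, LDMSaj}, and your argument, which contracts the $I$-edges, is essentially the standard one behind those references.
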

We henceforth focus on finding HOP $(C_{m})$-decompositions of $4K_n^{\bullet}$ to prove the existence of solutions to HOP$(2^{\langle s \rangle}, 2m)$. 

The following results, which we will use in our constructions, are taken from previous work.

\begin{lem}{\rm{\cite {LDMSaj}}}{\label{lems-2starterQ}}
 Let $m\geq 5$ be an odd integer. Then $4K^{\bullet}_{2m}$  admits an HOP $C_m$-factorization.
\end{lem}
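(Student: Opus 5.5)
The statement is Lemma~\ref{lems-2starterQ}, cited from \cite{LDMSaj}. I would prove it by a $2$-starter construction: find one HOP $C_m$-factor of $4K^{\bullet}_{2m}$, and take all its images under a cyclic group of order $m$.

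\textbf{Setup.} Take the vertex set $\{x_i, y_i : i \in \ZZ_m\}$ and let $\ZZ_m$ act by $x_i \mapsto x_{i+1}$, $y_i \mapsto y_{i+1}$. The edges of $K_{2m}$ then fall into orbits of size $m$:
\begin{itemize}
\item pure $x$-edges with difference $d \in \{1,\dots,\tfrac{m-1}{2}\}$ (there are no diameter orbits, since $m$ is odd);
\item pure $y$-edges with difference $d \in \{1,\dots,\tfrac{m-1}{2}\}$;
\item mixed edges $x_i y_{i+d}$ with $d \in \ZZ_m$.
\end{itemize}
In $4K^{\bullet}_{2m}$ each edge carries four coloured copies: one pink, one blue, and two oppositely oriented black arcs. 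So I need a base $2$-factor $F$, made of two $m$-cycles, that uses exactly one copy of each colour/orientation class in each difference orbit. For each pure difference $d$, $F$ must use one pink, one blue, one forward black and one backward black edge of difference $d$. For each mixed difference, $F$ must use one edge of each of the four kinds. The count is consistent: $F$ has $2m$ edges, and the number of classes is $4\bigl(2\cdot\tfrac{m-1}{2} + m\bigr)/ m \cdot m/\ldots$; more simply, $4\binom{2m}{2}/m = 4(2m-1)$, which is the number of factors times one. Then the orbit $\{F+i : i \in \ZZ_m\}$ uses every edge copy exactly once. Since $4\binom{2m}{2}/(2m) = 2(2m-1)$, the base factors are in fact forced to be $2m-1$ in number. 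I would therefore need $2m-1$ base factors in total, i.e.\ the $F$'s are the starters $F_1,\dots,F_{2m-1}$, one per difference class, with the orbits partitioning the coloured classes.

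\textbf{Construction.} Each $F_j$ will be the union of an $m$-cycle on the $x$-side and an $m$-cycle on the $y$-side, or two cycles that each alternate between sides, with edge differences chosen like a Walecki/zigzag path. Concretely, I would build zigzag cycles $x_0, x_1, x_{-1}, x_2, x_{-2}, \dots$, which realise each pure difference $1,\dots,\tfrac{m-1}{2}$ twice, together with a closing edge. I would then colour the edges around the cycle so that condition (C1) holds. A cycle with edges coloured in the periodic pattern pink, blue, pink, blue,\dots, or with all edges black and oriented consistently, satisfies (C1). Mixed patterns are allowed too: black arcs are inserted between pink and blue according to the orientation rules, for example pink, black (oriented away from pink), blue.

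The simplest usable family is: directed black cycles, where consistent orientation is automatic, plus pink/blue alternating cycles. Odd $m$ is the obstacle for pink/blue alternation, since an odd cycle cannot alternate pink and blue. Each odd cycle must therefore contain black arcs, forming patterns such as pink, black, \dots, black, blue, with the arcs directed from pink toward blue.

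\textbf{Main obstacle.} The hardest step is the bookkeeping. The difference multisets of the starters must cover each orbit's four colour/orientation classes exactly once, and the colouring of each odd cycle must satisfy (C1). I would first find explicit starters for $m=5$ and $m=7$. I would then generalise them using the zigzag structure, treating $m \equiv 1$ and $m \equiv 3 \pmod 4$ separately, and verify coverage of differences by a direct table. The exclusion of $m=3$ suggests that small cases fail for exactly this parity and colouring reason, which the generic construction needs $m\ge5$ to avoid.
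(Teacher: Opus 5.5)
The paper does not prove this lemma; it imports it from \cite{LDMSaj}. Your proposal therefore has to stand on its own, and as written its framework cannot work, because the orbit count is inconsistent.

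With $\mathbb{Z}_m$ acting on $\{x_i,y_i : i\in\mathbb{Z}_m\}$, $K_{2m}$ has $2m-1$ edge orbits: $\frac{m-1}{2}$ pure $x$-differences, $\frac{m-1}{2}$ pure $y$-differences and $m$ mixed differences. That gives $4(2m-1)$ coloured orbits, each of size $m$.
\begin{itemize}
\item Your first requirement fails: a single $C_m$-factor has only $2m$ edges, so it cannot meet all $4(2m-1)$ coloured orbits once each.
\item More seriously, an HOP $C_m$-factorization of $4K_{2m}^{\bullet}$ has exactly $4\binom{2m}{2}/(2m)=4m-2$ factors. Since $4m-2\equiv -2 \pmod m$, for $m\ge 3$ it can never be a union of full $\mathbb{Z}_m$-orbits of starter factors.
\item Your claim of $2m-1$ base factors is therefore wrong: their orbits would contain $m(2m-1)$ factors, i.e.\ $2m^2(2m-1)$ edge copies instead of $4m(2m-1)$.
\end{itemize}

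To rescue a cyclic construction you must change the setup. Over $\mathbb{Z}_m$ you would need factors with short orbits. A $\mathbb{Z}_m$-invariant $C_m$-factor is forced to be a pair of circulant cycles $x_0x_dx_{2d}\cdots$ and $y_0y_{d'}y_{2d'}\cdots$ with $\gcd(d,m)=\gcd(d',m)=1$. By \textbf{(C1)}, both cycles must be black and consistently oriented, since a monochromatic pink or blue odd cycle is not allowed. For example, $m-2$ such invariant factors plus three full orbits balances the count. But that split needs $m-2$ unit differences on each side, so it only covers prime $m$.

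Alternatively, use a setup where the count is automatically consistent, such as $\mathbb{Z}_{2m-1}$ acting on $\mathbb{Z}_{2m-1}\cup\{\infty\}$. There are then exactly $4m$ coloured orbits:
\begin{itemize}
\item pink and blue orbits for the differences $1,\dots,m-1,\infty$;
\item black orbits for the directed differences $\pm1,\dots,\pm(m-1)$;
\item the two black arc orbits at $\infty$.
\end{itemize}
Two starter $C_m$-factors have $4m$ edges in total, which matches. This only makes the count consistent; it does not yet give the starters.

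Independently of the group, you never exhibit or verify a single starter. The entire content of the lemma is colouring odd cycles so that \textbf{(C1)} holds while every coloured orbit is hit exactly once, and you defer all of it to ``find starters for $m=5,7$ and generalise''. Your option of ``two cycles that each alternate between sides'' is also impossible for odd $m$, because a cycle using only mixed edges is bipartite and hence even.

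Finally, your guess about $m=3$ is unfounded. Lemma~\ref{lem:m=3-00} relies on an HOP $C_3$-factorization of $4K_{3k}^{\bullet}$ for every even $k$, which includes $4K_6^{\bullet}$. So $m=3$ is not excluded here for a parity or colouring reason.
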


\begin{lem}{\rm{\cite{LDMSaj}}}\label{lems-unified}
Let $m \geq 4$ be an even integer. Then $4K^{\bullet}_{m}$ admits an HOP $C_m$-factorization.
\end{lem}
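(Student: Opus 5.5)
The plan is to reduce the problem to an ordinary Hamilton-cycle decomposition of $K_m$, and then give each Hamilton cycle four coloured versions that satisfy (C1). Two colouring patterns always satisfy (C1) along a cycle. A cycle all of whose edges are black arcs oriented in the same direction is allowed by the last clause of (C1). An even cycle whose edges alternate blue and pink is allowed by the first clause. So if $H=(e_1,e_2,\ldots,e_m)$ is a Hamilton cycle of $K_m$ with $m$ even, the four copies of $H$ in $4H^{\bullet}$ split into four HOP Hamilton cycles:
\begin{itemize}
\item the black arcs oriented forwards along $H$;
\item the black arcs oriented backwards along $H$;
\item the cycle with $e_{\mathrm{odd}}$ blue and $e_{\mathrm{even}}$ pink;
\item the cycle with $e_{\mathrm{odd}}$ pink and $e_{\mathrm{even}}$ blue.
\end{itemize}
Each is a $C_m$-factor, so every Hamilton cycle of a decomposition of the underlying simple graph contributes four factors.

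Since $m$ is even, Walecki's construction gives $K_m = H_1\oplus\cdots\oplus H_{(m-2)/2}\oplus F$, where the $H_i$ are Hamilton cycles and $F$ is a perfect matching. The cycles $H_2,\ldots,H_{(m-2)/2}$ are handled by the four-fold splitting above. It remains to find an HOP decomposition of $4(H_1\cup F)^{\bullet}$ into $6$ Hamilton cycles. The count checks: this graph has $4(m+m/2)=6m$ edges, and the total needed is $4\cdot\frac{m-2}{2}+6=2(m-1)$ factors, as required.

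For the gadget $4(H_1\cup F)^{\bullet}$ I would try first the following. Choose the labelling so that $H_1=(x_0,x_1,\ldots,x_{m-1})$ and $F=\{x_ix_{i+m/2}\}$, and look for two Hamilton cycles $A,B$ of $H_1\cup F$ with these properties:
\begin{itemize}
\item each of $A$ and $B$ alternates between $F$-edges and $H_1$-edges, so it contains all of $F$;
\item the $H_1$-edges of $A$ and of $B$ partition $E(H_1)$.
\end{itemize}
Then $A$ and $B$ are each used twice, with the two blue/pink alternating colourings as above. This covers every $F$-edge four times (twice blue, twice pink) and every $H_1$-edge once blue and once pink. The remaining two copies of each $H_1$-edge are its two black arcs, and they give the two oppositely directed all-black copies of $H_1$. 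That yields the six required factors.

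The main obstacle is the existence of $A$ and $B$. The graph $H_1\cup F$ is a Möbius ladder, and when $m/2$ is odd the natural "zigzag" cycles $x_0x_{m/2}x_{m/2+1}x_1x_2\ldots$ appear to have the right form, so I would verify that case directly. When $m/2$ is even (for example $m=4,8$) these cycles may close up prematurely. In that case I would either change the pairing $F$ (Walecki leaves freedom in which matching is left over), or mix black arcs into $A$ and $B$ via the other clauses of (C1): blue followed by a black arc into the blue edge, and pink preceded by a black arc leaving the pink edge. The small case $m=4$ would be checked by hand as a base case.
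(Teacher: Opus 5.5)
Your proposal has a genuine gap. The paper does not prove this lemma; it only quotes it, so I am judging your argument on its own terms.

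\textbf{The gadget does not decompose $4(H_1\cup F)^{\bullet}$.} Your first step is correct: each even Hamilton cycle $H_i$ gives four HOP copies. The factor count should read $4\cdot\frac{m-4}{2}+6=2(m-1)$, not $4\cdot\frac{m-2}{2}+6$. The problem is the gadget. In $4G^{\bullet}$ every edge occurs exactly once pink, once blue, and as two oppositely oriented black arcs. Using $A$ and $B$ twice each, with both alternating colourings, puts every $F$-edge into your factors twice pink and twice blue, and never as a black arc. Overall your six cycles contain $4m$ coloured edges and $2m$ black arcs, whereas $4(H_1\cup F)^{\bullet}$ has $3m$ of each. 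So the six factors you describe do not exist for any $m$.

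In the situation you aim for, there is a repair. Take $M_A,M_B$ to be the two perfect matchings of $H_1$, with $A=F\cup M_A$ and $B=F\cup M_B$ Hamiltonian and $H_1\cup F$ bipartite; the M\"obius ladder with $m/2$ odd is such a case. Use each of $H_1,A,B$ once coloured and once all-black:
\begin{itemize}
\item $H_1$ with $M_A$ pink and $M_B$ blue;
\item $A$ with $F$ pink and $M_A$ blue;
\item $B$ with $F$ blue and $M_B$ pink.
\end{itemize}
Bipartiteness forces each 1-factor to be traversed uniformly from one side to the other in each of these Hamilton cycles. Hence the three all-black copies can be oriented so that every edge receives both of its arcs. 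This orientation consistency fails for non-bipartite gadgets such as $K_4$ or the prism.

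\textbf{Two further pieces are missing even after this repair.}
\begin{itemize}
\item Walecki's construction does not hand you a leftover $H_1\cup F$ that is a M\"obius ladder $M_m$. You would need a Hamilton decomposition of $K_m-M_m\cong\mathrm{Circ}(m;\pm\{2,\ldots,\frac{m}{2}-1\})$. For $m=6$ this graph is $2K_3$, so the approach fails there: in $K_6$, $H_1\cup F$ is the complement of $H_2$ and is forced to be the prism. For $m\ge 10$ you have not proved it.
\item The case $m\equiv 0\pmod 4$ is left open. It actually follows from tools in the paper. Theorem~\ref{theo:(Cm)-dec-2Kn} with $n=m$ gives a Hamilton decomposition of $2K_m$. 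Lemma~\ref{cor:color-decGraph} recolours it so every cycle has an even number of pink edges, and Lemma~\ref{lem:Gtool2} lifts it to $4K_m^{\bullet}$.
\end{itemize}

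For $m\equiv 2\pmod 4$ that global route is impossible, because the total number of pink edges, $\binom{m}{2}$, is odd. This is exactly why a direct construction in $4K_m^{\bullet}$, mixing coloured and black cycles, is needed. Your proposal supplies one only under the unproved M\"obius-ladder assumption, and not at all for $m=6$.
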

\begin{lem}{\rm{\cite {Akbari26}}}{\label{G4C}}
Let $G$ be a simple graph. If $G$ admits a decomposition into $1$-regular subgraphs of order $2t$, then $4G^{\bullet}$ admits an HOP $(C_2^{\langle t \rangle})$-decomposition.
\end{lem}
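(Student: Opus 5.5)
The lemma should follow from a purely local construction. In $4G^{\bullet}$, the four parallel edges on each edge $uv$ of $G$ can be split into two HOP $2$-cycles. Each $1$-regular subgraph of order $2t$ in the given decomposition then yields two HOP $(C_2^{\langle t \rangle})$-subgraphs. No global structure of $G$ is used; we only need to check condition (C1) for cycles of length $2$.

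\textbf{Local step.} Fix an edge $uv$ of $G$. In $4G^{\bullet}$ it carries one pink edge, one blue edge, and two black arcs $(u,v)$ and $(v,u)$. We take two $2$-cycles on $\{u,v\}$.
\begin{itemize}
\item The first, $C_{uv}^{1}$, consists of the pink and the blue edge. Its two edges are adjacent at $u$ and at $v$. At each of these vertices one edge is blue and the other pink, so (C1) holds by its first clause.
\item The second, $C_{uv}^{2}$, consists of the two black arcs. Traverse it as $u \to v \to u$, using arc $(u,v)$ and then arc $(v,u)$. At $v$ the first arc enters and the second leaves, and at $u$ the second arc enters and the first leaves. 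So along the cycle both arcs are oriented the same way, and (C1) holds by its last clause.
\end{itemize}
These two cycles partition the four parallel edges on $uv$.

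\textbf{Global step.} Let $\{M_1,\ldots,M_k\}$ be the given decomposition of $G$ into $1$-regular subgraphs, each with $t$ edges. For each $i$ and $j\in\{1,2\}$, put
\[
F_i^{j}=\bigcup_{uv\in E(M_i)} C_{uv}^{j}.
\]
Since $M_i$ is a matching, the cycles $C_{uv}^{j}$ with $uv\in E(M_i)$ are vertex-disjoint. Hence $F_i^{j}$ is a $(C_2^{\langle t \rangle})$-subgraph of $4G^{\bullet}$, and each of its cycles satisfies (C1). The $M_i$ partition $E(G)$, and the local step partitions the four copies of each edge. Therefore $\{F_i^{j}: 1\le i\le k,\ j=1,2\}$ partitions $E(4G^{\bullet})$ and is the required HOP $(C_2^{\langle t \rangle})$-decomposition.

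\textbf{Main obstacle.} The only point needing care is reading (C1) correctly for a $2$-cycle. There the two edges are adjacent at both endpoints, so the pairing condition must be checked at each endpoint separately. The check above does this for both endpoints $u$ and $v$.
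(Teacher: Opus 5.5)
Your proof is correct, and it is the natural construction behind this lemma; the paper itself only quotes the lemma from \cite{Akbari26} and does not reproduce a proof. On each edge of each matching you pair the pink and blue copies into one $2$-cycle and the two opposite black arcs into another, then take unions over the matching. This split is in fact forced: a $2$-cycle containing a black arc together with a pink (resp.\ blue) edge would need the arc to point away from (resp.\ toward) both of its endpoints. So checking (C1) at both endpoints, as you do, is exactly the right place to take care.
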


In the next lemma, the symbol $2G^{\circ}$ represents the multigraph $2G$ with a 2‐edge‐coloring with colors pink and black such that for any two adjacent vertices in $2G$, the two parallel edges between them have colors  pink and black.

\begin{lem}{\rm{\cite{LDMSaj}}}{\label{lem:Gtool2}}
Assume that $2G^{\circ}$ admits a $(C_{m_1}, C_{m_2},\ldots,C_{m_t})$-decomposition $\F$ with the property that every $m$-cycle of $\F$, for $m\geq 3$, contains an even number of pink edges. Then $4G^{\bullet}$ admits an HOP $(C_{m_1}, C_{m_2},\ldots,C_{m_t})$-decomposition.
\end{lem}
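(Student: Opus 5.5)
The plan is to build the decomposition of $4G^{\bullet}$ cycle by cycle: each cycle $C$ of $\F$ is replaced by two cycles $C^{(1)}$ and $C^{(2)}$ of $4G^{\bullet}$ on the same vertex sequence as $C$. Between them they use all four copies (pink, blue and the two opposite black arcs) of every edge in which $C$ used one of the two copies of $2G^{\circ}$.

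\textbf{Why the copies partition correctly.} If $C$ uses the pink copy of $uv$, then one of $C^{(1)},C^{(2)}$ takes the pink edge of $4G^{\bullet}$ and the other takes the blue edge. If $C$ uses the black copy of $uv$, then $C^{(1)}$ and $C^{(2)}$ take the two oppositely oriented black arcs. Since $\F$ partitions $E(2G^{\circ})$ and each pair of parallel edges of $2G^{\circ}$ is one pink and one black, the resulting family partitions $E(4G^{\bullet})$. The family is a decomposition into $(C_{m_1},\ldots,C_{m_t})$-subgraphs, because each $(C_{m_1},\ldots,C_{m_t})$-subgraph of $\F$ yields two such subgraphs. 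It remains to choose the pink/blue assignment and the arc orientations so that (C1) holds.

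\textbf{Cycles of length $m\ge 3$.} Fix a traversal direction of $C$. The edges of $C$ split into its pink-class edges (those that become pink or blue) and maximal runs of black edges between consecutive pink-class edges $e$ and $f$. Reading (C1) along the traversal gives three cases.
\begin{itemize}
\item Inside a run, consecutive black edges must be oriented the same way, so each run is oriented coherently, either forward or backward.
\item If the run is oriented forward, its first arc points away from $e$, so $e$ must be pink, and its last arc points toward $f$, so $f$ must be blue. A backward run forces $e$ blue and $f$ pink.
\item An empty run means $e$ and $f$ are adjacent, so one must be pink and the other blue.
\end{itemize}
In every case, consecutive pink-class edges of $C^{(1)}$ must alternate pink, blue, pink, blue, and the orientation of each run is then forced by the colour of the edge preceding it.

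Such an alternation around the cycle exists precisely because $C$ has an even number of pink edges, which is exactly the hypothesis. If $C$ has no pink edges, orient all arcs of $C^{(1)}$ forward.

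Then define $C^{(2)}$ by swapping pink and blue and reversing every arc. The rules above are invariant under this simultaneous swap and reversal: a forward run with $e$ pink and $f$ blue becomes a backward run with $e$ blue and $f$ pink. Hence $C^{(2)}$ also satisfies (C1), and $C^{(1)},C^{(2)}$ use complementary copies as required.

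\textbf{Cycles of length $2$.} A $2$-cycle of $\F$ on $\{u,v\}$ must use both parallel edges of $uv$ in $2G^{\circ}$, one pink and one black, so it has an odd number of pink edges. This is why the hypothesis excludes $m=2$. Here we pair the copies differently: $C^{(1)}$ consists of the pink and blue edges between $u$ and $v$, which is allowed by the first clause of (C1). $C^{(2)}$ consists of the two black arcs $u\to v$ and $v\to u$, which form a directed $2$-cycle and so satisfy the last clause at both vertices.

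\textbf{Expected obstacle.} The main step is the bookkeeping in the case analysis for $m\ge3$. One must translate each clause of (C1), stated in terms of orientation toward or away from a coloured edge, into the traversal-based forward/backward rule, and check that the global alternation closes up consistently around the cycle. That closing-up is exactly where the even-pink-count hypothesis is used.
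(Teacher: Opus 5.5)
Your proof is correct and complete. The pieces all check out:
\begin{itemize}
\item Splitting each pink copy of $2G^{\circ}$ into the pink and blue edges, and each black copy into the two opposite arcs, partitions $E(4G^{\bullet})$.
\item Your translation of (C1) is right in both directions: the coloured edges alternate pink and blue, and each black run is coherently oriented, forward after a pink edge and backward after a blue one.
\item The alternation closes up around the cycle exactly when the number of pink edges is even.
\item (C1) is invariant under swapping pink with blue and reversing every arc, so $C^{(2)}$ comes for free.
\item The separate handling of $2$-cycles is correct.
\end{itemize}
The ``expected obstacle'' you flag is already settled by your case analysis: once colours alternate and each run is oriented by the colour of the edge before it, every pair of adjacent edges falls under one of the clauses of (C1).

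This paper does not prove the lemma itself; it imports it from \cite{LDMSaj}, so there is no in-paper argument to compare against. Your construction matches the colouring pattern the paper uses in its direct constructions. For example, the cycle $C$ in Lemma~\ref{lem:color-3} has one pink and one adjacent blue edge, with all remaining black arcs oriented away from the pink edge and toward the blue one.
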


\begin{lem}{\rm{\cite{LDMSaj}}}{\label{Gtool3}}
Let $G$ be a simple graph. Let $H_1, H_2, \ldots, H_s$ be subgraphs of $G$ such that $G = H_1 \oplus H_2 \oplus \dots \oplus H_s$. If, for all $i \in \{1, \ldots, s\}$, the multigraph $4H_i^{\bullet}$ admits an HOP decomposition $\D_i$ into $(C_{m_1}, \ldots, C_{m_t})$-subgraphs, then $\D = \bigcup_{i=1}^{s} \D_i$ is an HOP decomposition of $4G^{\bullet}$ into $(C_{m_1}, \ldots, C_{m_t})$-subgraphs.
\end{lem}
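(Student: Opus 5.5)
The plan is to argue directly from the definitions, since Lemma~\ref{Gtool3} is a gluing statement. Two things must be checked: that $\D$ is a decomposition of $4G^{\bullet}$ into $(C_{m_1},\ldots,C_{m_t})$-subgraphs, and that every cycle of $\D$ satisfies condition (C1) of Definition~\ref{def}.

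The first step is to fix the coloured structure compatibly. Since $G = H_1\oplus\cdots\oplus H_s$, every edge $uv$ of $G$ lies in exactly one $H_i$. The four parallel edges of $4G^{\bullet}$ on $\{u,v\}$ are one pink edge, one blue edge and two oppositely oriented black arcs. We identify these four edges with the four parallel edges of $4H_i^{\bullet}$ on $\{u,v\}$. Because this bundle has the same colours and orientations in either multigraph (up to the obvious isomorphism), $4H_i^{\bullet}$ is a coloured sub-multigraph of $4G^{\bullet}$. Moreover $E(4G^{\bullet})$ is partitioned by $E(4H_1^{\bullet}),\ldots,E(4H_s^{\bullet})$, because the underlying simple edge sets partition $E(G)$.

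The second step is to show $\D$ is a decomposition into the right subgraphs. Each $\D_i$ partitions $E(4H_i^{\bullet})$ into $(C_{m_1},\ldots,C_{m_t})$-subgraphs of $4H_i^{\bullet}$. These are also subgraphs of $4G^{\bullet}$, and a subgraph consisting of $t$ disjoint cycles of lengths $m_1,\ldots,m_t$ keeps that description in the larger graph (we need not ask for spanning, since the claim is only about subgraphs). The union of partitions of the blocks of a partition is a partition of the whole, so $\{E(F):F\in\D\}$ partitions $E(4G^{\bullet})$.

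The third step is condition (C1). Take any cycle $C$ of a member of $\D$. Then $C$ lies in some $\D_i$, and each edge of $C$ has the same colour, and for black edges the same orientation, in $4H_i^{\bullet}$ as in $4G^{\bullet}$. Condition (C1) for two adjacent edges of $C$ depends only on these colours, on the orientations, and on the shared vertex. So it holds in $4G^{\bullet}$ exactly because it holds in $4H_i^{\bullet}$ by hypothesis.

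The only point requiring any care is the first step, namely making sure the coloured structure of each $4H_i^{\bullet}$ is inherited from a single fixed $4G^{\bullet}$. The definition of $4G^{\bullet}$ is unique up to isomorphism: each bundle has exactly one pink edge, one blue edge and two opposite arcs. So the identification can always be made bundle by bundle, and after that the argument is routine.
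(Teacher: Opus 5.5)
Your proof is correct. The paper gives no proof of this lemma; it cites it from earlier work. Your direct verification is exactly the routine argument that justifies it: the bundle-by-bundle identification makes $E(4H_1^{\bullet}),\ldots,E(4H_s^{\bullet})$ a partition of $E(4G^{\bullet})$, so the union of the $\mathcal{D}_i$ partitions $E(4G^{\bullet})$, and (C1) carries over because it depends only on colours and orientations at shared vertices, which are unchanged.
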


\begin{lem}{\rm{\cite{LDMSaj}}}{\label{lem:Gtool4}}
Let $G$ be a simple graph. If $G$ admits a $(C_{m_1}, C_{m_2},\ldots,C_{m_t})$-decomposition, then $4G^{\bullet}$ admits an HOP  $(C_{m_1}, C_{m_2},\ldots,C_{m_t})$-decomposition.
\end{lem}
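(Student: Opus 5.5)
The plan is to reduce to a single cycle and then treat each cycle by hand. Let $\{F_1,\dots,F_k\}$ be the given $(C_{m_1},\dots,C_{m_t})$-decomposition of $G$. By Lemma~\ref{Gtool3} applied with $H_i=F_i$, it suffices to show that $4F^{\bullet}$ admits an HOP $(C_{m_1},\dots,C_{m_t})$-decomposition whenever $F$ is itself a vertex-disjoint union of cycles $C^{(1)},\dots,C^{(t)}$ of lengths $m_1,\dots,m_t$.

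For this I will construct, for each cycle $C=C^{(r)}$ of length $\ell\ge 3$ ($G$ is simple), four cycles $C_1,C_2,C_3,C_4$ in $4C^{\bullet}$. These should partition the edges of $4C^{\bullet}$, each should have underlying cycle $C$, and each should satisfy (C1). The $j$-th subgraph of the decomposition of $4F^{\bullet}$ is then $\bigcup_r C^{(r)}_j$. Since the $C^{(r)}$ are vertex-disjoint, this is a $(C_{m_1},\dots,C_{m_t})$-subgraph, and the four subgraphs partition $E(4F^{\bullet})$. Condition (C1) is local to each cycle, so it is inherited.

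To handle a single cycle I recast (C1) as an in/out labelling of edge-ends. A black arc $u\to v$ is ``out'' at $u$ and ``in'' at $v$. A blue edge is ``out'' at both ends, and a pink edge is ``in'' at both ends. One checks that the four clauses of (C1) say exactly that at every vertex of the cycle, one incident edge is ``in'' and the other is ``out''. Now traverse $C=c_0c_1\cdots c_{\ell-1}c_0$ with edges $e_i=c_ic_{i+1}$. The label flips at every vertex, flips along every black edge, and is preserved along every blue or pink edge. Hence a choice of edge types (black versus coloured) on $C$ is realizable if and only if $\ell+\#\text{black}$ is even. In that case a realization is obtained by fixing the label at one end of $e_0$ and propagating. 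The propagation determines the orientation of each black arc and whether each coloured edge is blue or pink. The global alternative choice reverses all arcs and swaps blue with pink.

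\emph{Even $\ell$.} Here one can instead invoke Lemma~\ref{lem:Gtool2}. In $2C^{\circ}$ take one copy of $C$ all black and one all pink; these contain $0$ and $\ell$ pink edges, both even. Directly, take $C_1,C_2$ to be the all-forward and all-backward black cycles, and $C_3,C_4$ the two blue/pink alternating cycles in the two phases.

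\emph{Odd $\ell$.} Each $C_j$ must contain an odd number of black edges, and the four counts must sum to $2\ell$. I would try the following type pattern:
\begin{itemize}
\item $C_1$ is black only on $e_0$;
\item $C_2$ is black only on $e_{\ell-1}$;
\item $C_3$ is entirely black, say all forward arcs;
\item $C_4$ is black on $e_1,\dots,e_{\ell-2}$ (which must then be the backward arcs) and coloured on $e_0,e_{\ell-1}$.
\end{itemize}
The black counts are $1,1,\ell,\ell-2$, all odd. Each edge then appears black in exactly two of the $C_j$ and coloured in the other two, as required. The main obstacle is the bookkeeping that must follow. Each $C_j$'s orientations and colours are forced up to its global flip, and the four flips must be chosen so that on every edge the two black appearances are oppositely oriented and the two coloured appearances are one blue and one pink. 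I expect to verify this by following the propagation explicitly on $C_1,C_2,C_4$ relative to the fixed $C_3$. If some edge conflicts, I would try alternative type patterns with odd black counts summing to $2\ell$, for example moving the single black edges of $C_1,C_2$, until a consistent choice is found.
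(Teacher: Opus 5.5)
Your proposal is correct and follows essentially the approach behind this cited lemma. The paper does not reprove it, but wherever it applies it, it takes four copies of each cycle and colours them to satisfy (C1), exactly as you do; your reduction to a single $(C_{m_1},\ldots,C_{m_t})$-subgraph, your in/out reformulation of (C1), and your parity criterion ($\ell$ plus the number of black edges must be even) are all sound.

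The odd-$\ell$ bookkeeping you deferred works with your first type pattern, so no alternatives are needed. Propagation forces $C_1$ to be the backward arc on $e_0$ together with $e_1,\ldots,e_{\ell-1}$ coloured alternately pink and blue starting with pink. It forces $C_2$ to be the backward arc on $e_{\ell-1}$ together with $e_0,\ldots,e_{\ell-2}$ coloured alternately pink and blue starting with pink. It forces $C_4$ to have $e_0$ blue and $e_{\ell-1}$ pink. With these, every edge receives exactly one blue copy, one pink copy and two oppositely oriented black arcs.
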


\begin{theo}{\rm{\cite {Chaudhuri}}}\label{thm:ray_wilson}
There exists a  $C_3$-factorization of $K_n$ if and only if $n \equiv 3 \pmod{6}$.
\end{theo}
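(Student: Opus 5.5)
The statement is the classical existence theorem for Kirkman triple systems, and I would prove it in two parts: necessity by counting, and sufficiency by the Ray-Chaudhuri--Wilson strategy of a few direct constructions followed by a recursive closure argument.

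\emph{Necessity.} A $C_3$-factor of $K_n$ exists only if $3 \mid n$. Each vertex has degree $n-1$ and each factor uses two edges at each vertex, so $n-1$ must be even and $n$ odd. Together these give $n \equiv 3 \pmod 6$. The number of factors is then $(n-1)/2$, and the total edge count $\binom{n}{2} = \frac{n-1}{2}\cdot n$ is consistent.

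\emph{Sufficiency, step 1: direct constructions.} Write $n = 6k+3$. First I would handle a finite set of small orders explicitly, namely $n = 3, 9, 15, 21, 27, 33, 39, \ldots$ up to a bound that the recursion needs. For this I would use cyclic or $1$-rotational starters on $\mathbb{Z}_{n-1} \cup \{\infty\}$ or $\mathbb{Z}_{(n-3)/2}\times \mathbb{Z}_3 \cup \{\infty_1,\infty_2,\infty_3\}$, in the same spirit as the circulant viewpoint $\mathrm{Circ}(n-1;\pm S)\bowtie K_1$ used elsewhere in the paper. One then develops a base resolution class whose translates give all parallel classes. Examples are Kirkman's schoolgirl solution for $n=15$, and the affine plane $AG(2,3)$ for $n=9$.

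\emph{Sufficiency, step 2: recursion.} The main tool is the Wilson-type fundamental construction. Take a resolvable group divisible design, or more generally a frame, of type $g^u$ with block sizes in a set $K$. Inflate each point by a factor of $3$ and replace each block of size $k$ by a resolvable $3$-GDD of type $3^k$ (Kirkman frames). Fill each inflated group, together with the adjoined infinite points, by a Kirkman triple system of smaller order. The resolution classes must be assembled so that they remain parallel classes of the whole vertex set. Doing this with resolvable transversal designs $\mathrm{RTD}(k,q)$, which come from affine planes and mutually orthogonal Latin squares for prime powers $q$, gives product-type relations of the form $\mathrm{KTS}(v)\ \&\ \mathrm{KTS}(w) \Rightarrow \mathrm{KTS}(vw)$ and $\mathrm{KTS}(3u + \text{small})$. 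An induction on $k$ then covers every residue class once the base cases are in hand.

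\emph{Main obstacle.} The delicate step is the recursion: keeping resolvability through the filling step, and covering \emph{all} $k$ rather than a density-one subset. The issue is that the ingredient designs (resolvable GDDs with block sizes $3,4,5,\ldots$ and suitable group sizes) exist only for certain parameters. I would first try to show that the set of orders $\{v : \mathrm{KTS}(v)\text{ exists}\}$ is ``PBD-closed'' in the resolvable sense. Then I would verify that the needed resolvable PBDs exist for every $v \equiv 3 \pmod 6$ beyond an explicit bound, treating the leftover finite list of orders by computer-found starters.
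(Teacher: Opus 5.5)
The paper does not prove this statement. It is quoted from Ray-Chaudhuri and Wilson and used only as a black box in Lemma~\ref{lem:m=3-00}. Your necessity argument is correct. Your sufficiency argument, however, is an outline whose decisive step is missing, and several of the ingredients you name do not exist or do not do what you need.

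\emph{The named ingredients fail.} A Kirkman frame of type $g^u$ forces $g$ to be even. A point of a group $G$ has $g(u-1)$ neighbours, all covered by the holey classes of the other groups, and this forces exactly $g/2$ holey classes per group. So there are no Kirkman frames of type $3^k$. A resolvable $3$-GDD of type $3^k$ needs $k$ odd, since each point has degree $3(k-1)$. And if a parallel class of the master design contains blocks of different sizes, their replacements yield different numbers of resolution classes, so resolvability is lost.

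\emph{The precise constructions do not reach all orders.} The constructions you do make precise are ``$\mathrm{KTS}(v)$ and $\mathrm{KTS}(w)$ give $\mathrm{KTS}(vw)$'' and inflating a resolvable transversal design. These only reach orders divisible by $9$, or of the form $3qk$ with $q,k\ge 3$. They never produce $n=3p$ with $p$ prime (e.g.\ $33, 51, 57, \ldots$). Everything therefore rests on the unspecified ``$3u+\text{small}$'' step, and your claim that an induction on $k$ covers every residue class has nothing behind it.

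\emph{Your proposed way out does not work either.} The set $\{v : \mathrm{KTS}(v)\text{ exists}\}$ is not PBD-closed: the Fano plane is a PBD$(7,\{3\},1)$ and $\mathrm{KTS}(3)$ exists, but there is no $\mathrm{KTS}(7)$. In the ``resolvable'' sense, the required input is as hard as the theorem itself, since a resolvable PBD$(v,\{3\},1)$ is exactly a $\mathrm{KTS}(v)$.

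\emph{The missing idea} is to work with $u=(n-1)/2$ and ordinary PBDs: the set $\{u : \mathrm{KTS}(2u+1)\text{ exists}\}$ is PBD-closed.
\begin{itemize}
\item In a $\mathrm{KTS}(2k+1)$ on $(B\times\{1,2\})\cup\{\infty\}$ you may take the $k$ triples through $\infty$ to be $\{\infty,(x,1),(x,2)\}$, $x\in B$. They lie in distinct parallel classes, so each class is labelled by a point of $B$.
\item Given a PBD$(u,K,1)$ on $U$ with $\mathrm{KTS}(2k+1)$ available for every $k\in K$, put such a system on every block and keep each triple $\{\infty,(x,1),(x,2)\}$ only once.
\item For each $x\in U$, take the union of the classes labelled $x$ over all blocks through $x$. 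Because those blocks partition $U\setminus\{x\}$, this is a parallel class, and the $u$ classes obtained form a $\mathrm{KTS}(2u+1)$.
\end{itemize}
Every $u\equiv 1\pmod 3$ lies in $B(\{4,7,10,19\})$, since the PBD-closure of $\{4,7\}$ is all $u\equiv 1\pmod 3$ except $10$ and $19$. Sufficiency therefore reduces to direct constructions of $\mathrm{KTS}(9)$, $\mathrm{KTS}(15)$, $\mathrm{KTS}(21)$ and $\mathrm{KTS}(39)$.
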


\begin{theo}{\rm{\cite {BAHG,cycleIII}}}{\label{theo:Cm-dec}}
Let \( 3 \leq m \leq n \) be integers, and let \( n \) be odd. Then \( K_n \) admits a \( (C_m) \)-decomposition if and only if \( m \mid \frac{n(n-1)}{2} \).
\end{theo}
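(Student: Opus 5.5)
Necessity is a counting argument. Every cycle of a $(C_m)$-decomposition has $m$ edges, so $m$ must divide $|E(K_n)|=\frac{n(n-1)}{2}$. Since $n$ is odd, every vertex of $K_n$ has even degree, so no further parity condition arises. The real content is sufficiency. This is the combined result of Alspach--Gavlas (odd $m$) and \v{S}ajna (even $m$), and I would reconstruct it along their lines, treating odd and even $m$ separately.

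\textbf{Setup.} I would write $K_n$ as $\mathrm{Circ}(n;\pm\{1,\ldots,\frac{n-1}{2}\})$. The working principle is that a set $D$ of differences can be turned into $m$-cycles that together use each edge of every difference in $D$ exactly once. The first step is a library of such circulant building blocks.
\begin{itemize}
\item For odd $m$: a stair-shaped $m$-cycle in $\mathrm{Circ}(n;\pm D)$ uses the differences of $D$, each once and with alternating signs. Its $n$ translates then decompose $\mathrm{Circ}(n;\pm D)$ into $m$-cycles whenever $|D|=m$.
\item For even $m$: I would use the analogous ``zig-zag'' cycles. These can also be taken in $\mathrm{Circ}(n;\pm D)$ with $|D|=m/2$ over translates $i$ of one parity, provided $n$ and the differences are arranged compatibly.
\end{itemize}

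\textbf{Main partition.} Write $\frac{n-1}{2}=qm+r$ with $0\le r<m$, adjusted so that the divisibility $m\mid \frac{n(n-1)}{2}$ governs the leftover part. I would partition $\{1,\ldots,\frac{n-1}{2}\}$ into $q$ blocks of $m$ consecutive differences, plus a residual set $R$ of $r$ differences. Each full block gives $n$ cycles by translation, using the stair construction above.

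\textbf{The residual graph.} The remaining graph $\mathrm{Circ}(n;\pm R)$ has $nr$ edges, and $m\mid nr$ follows from the hypothesis. When $\gcd(n,m)=g>1$, I would decompose it by taking a cycle structure whose orbit under translation has length $n/g$ rather than $n$. Concretely, I would choose the residual differences and a base configuration that closes up after $g$-step rotations.

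When $r$ is small relative to $m$, some cycles must use differences borrowed from the last full block, so that block and $R$ are treated together.

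\textbf{Main obstacle.} I expect the hardest part to be this residual piece in the range $m<n<3m$, where there are too few differences to form a full block. There I would fall back on explicit direct constructions:
\begin{itemize}
\item Walecki Hamiltonian decompositions of $K_m$ when $n=m$;
\item decompositions of $K_n$ as $K_m\oplus (K_{n-m}\bowtie \overline{K_m})$-type pieces;
\item cycle decompositions of complete bipartite graphs for even $m$.
\end{itemize}
This would be followed by a case check on the remaining small values of $n$ modulo $m$.
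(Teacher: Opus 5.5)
The paper does not prove this theorem; it imports it from Alspach--Gavlas and \v{S}ajna \cite{BAHG,cycleIII}. Your necessity argument is correct, and a citation would have matched the paper. As a self-contained proof of sufficiency, however, your outline has a genuine gap, and its concrete building blocks are false as stated.

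\textbf{Odd $m$.} You claim every block of $m$ consecutive differences yields a starter $m$-cycle. This fails already for $m=3$. With $n=13$ your partition is $\{1,2,3\},\{4,5,6\}$, but $\pm4\pm5\pm6\not\equiv 0\pmod{13}$. So $\mathrm{Circ}(13;\pm\{4,5,6\})$ contains no triangle using each difference once; the only valid split is $\{1,3,4\},\{2,5,6\}$. Suppose instead ``stair-shaped'' means the zig-zag of Lemma~\ref{zig-zag} with increasing differences $d_1<\cdots<d_m<\frac{n}{2}$. Then the endpoint is $S=d_1+(d_3-d_2)+\cdots+(d_m-d_{m-1})=(d_1-d_2)+\cdots+(d_{m-2}-d_{m-1})+d_m$, so $0<S<\frac{n}{2}$ and the walk never closes; you get a path, not a cycle. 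Even the case $m=3$ needs Skolem/Heffter-type partitions of the differences. So choosing the difference sets is the substance of the proof, not a routine step.

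\textbf{Even $m$.} Your block with $|D|=\frac{m}{2}$ is impossible by counting. $\mathrm{Circ}(n;\pm D)$ then has $\frac{nm}{2}$ edges, which is not a multiple of $m$ for odd $n$. Also, $\ZZ_n$ has no subgroup of index $2$, so ``translates of one parity'' is meaningless. That device works only in doubled graphs such as $2G$ in Lemma~\ref{lem:color-3}, where each difference is used twice.

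\textbf{The hard steps.} The residual set and the range $m<n<3m$, which you yourself identify as the hard part, are stated only as intentions. A cycle whose orbit has length $\frac{n}{g}$ is the union of $g$ rotated copies of a path of length $\frac{m}{g}$ from $x_0$ to some $x_{kn/g}$ with $\gcd(k,g)=1$. You would have to exhibit such paths with prescribed, pairwise distinct differences, chosen compatibly with the full blocks, for every admissible $(m,n)$. This is precisely the technical core of both cited papers.

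Your fallbacks also do not cover all cases. For even $m$, $K_m$ has odd degrees, so you cannot split off a $K_m$. For odd $m$, bipartite pieces contain no $m$-cycles, so reducing to a bounded range of $n$ needs a non-bipartite gadget (for instance a $(C_m)$-decomposition of some $K_{2m}\bowtie\overline{K_t}$), which you do not supply. As written, the proposal establishes only necessity.
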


\begin{theo}{\rm{\cite {2Kn-dec}}}{\label{theo:(Cm)-dec-2Kn}}
Let $m, n\in \ZZ^+$ such that $2\leq m\leq n$. The multigraph $2K_n$ admits a $(C_m)$-decomposition if and only if \( m \mid n(n-1) \).
\end{theo}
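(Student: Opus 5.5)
The necessity is a counting argument. Every edge of $2K_n$ lies in exactly one $m$-cycle of a $(C_m)$-decomposition, so $m$ divides $|E(2K_n)| = n(n-1)$. The case $m=2$ is also immediate: a $2$-cycle is a pair of parallel edges, so each of the $\binom{n}{2}$ pairs of parallel edges of $2K_n$ is itself one of the cycles. From now on I assume $3\le m\le n$ and $m\mid n(n-1)$, and I aim to prove sufficiency by reducing to directed cycle decompositions.

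\textbf{Reduction to directed cycles.} Let $K_n^{*}$ be the complete symmetric digraph, which has one arc in each direction between every pair of vertices. Forgetting orientations turns $K_n^{*}$ into $2K_n$, and turns any directed $m$-cycle (with $m\ge 3$) into an $m$-cycle of $2K_n$. So any decomposition of $K_n^{*}$ into directed $m$-cycles gives a $(C_m)$-decomposition of $2K_n$. I would invoke the known theorem of Alspach, Gavlas, \v{S}ajna and Verrall on directed cycle decompositions of $K_n^{*}$. It says that such a decomposition exists if and only if $m\mid n(n-1)$, except when $(m,n)\in\{(4,4),(6,6),(3,6)\}$. This settles every case outside those three pairs.

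\textbf{The three exceptional pairs.} Each is settled by an explicit decomposition of $2K_n$.
\begin{itemize}
\item $(m,n)=(4,4)$: $K_4$ contains exactly three $4$-cycles, and together they cover every edge of $K_4$ exactly twice. So these three cycles decompose $2K_4$.
\item $(m,n)=(3,6)$: a $2$-$(6,3,2)$ design exists; for example, the ten triangles obtained by developing a suitable base block set over $\mathbb{Z}_5\cup\{\infty\}$. Its blocks, taken as triangles, decompose $2K_6$.
\item $(m,n)=(6,6)$: I need five Hamilton cycles of $K_6$ covering every edge exactly twice. I would search for these directly, for instance by writing $2K_6$ as the union of two different Hamilton-path-plus-$1$-factor structures, or by taking orbits of a base cycle under $\mathbb{Z}_5$ acting on $\mathbb{Z}_5\cup\{\infty\}$.
\end{itemize}

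\textbf{Alternative route for odd $n$.} When $n$ is odd and $m\mid n(n-1)/2$, Theorem~\ref{theo:Cm-dec} applied to each of the two copies of $K_n$ already gives the decomposition.

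\textbf{Main obstacle.} The real work lies in the directed-cycle theorem being invoked. If I had to prove it from scratch, the hardest part would be the case where $n$ is even and $m$ is close to $n$. In that regime, cyclic ``starter'' constructions over $\mathbb{Z}_{n-1}\cup\{\infty\}$ must be combined with Hamilton-type directed factors, and the small exceptional orders need separate handling. Given the literature, my plan relies on citing that theorem and checking only the three exceptional pairs by hand. Of these, $(6,6)$ is the only one needing a genuine search.
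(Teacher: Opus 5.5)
The paper gives no proof of this statement; it quotes it from \cite{2Kn-dec}. As the paper notes in Section~\ref{sec:5}, that source works with the complete symmetric digraph $K_n^{*}$. Your route is therefore the natural one: forget orientations in a directed $m$-cycle decomposition of $K_n^{*}$, then settle the three exceptional pairs of the Alspach--Gavlas--\v{S}ajna--Verrall theorem by hand. Your necessity argument and your treatment of $m=2$, $(4,4)$ and $(3,6)$ are correct.

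The one piece you leave unfinished is $(6,6)$, where you only say you would search, but it is immediate. On $\{x_\infty\}\cup\{x_i : i\in\ZZ_5\}$, develop the base cycle $x_\infty\, x_0\, x_1\, x_3\, x_2\, x_4\, x_\infty$ modulo $5$.
\begin{itemize}
\item The path $x_0x_1x_3x_2x_4$ has differences $1,2,-1,2$, so every edge of difference $\pm1$ or $\pm2$ is covered exactly twice.
\item Each edge $x_\infty x_i$ is covered twice: once by the translate starting at $x_i$ and once by the translate ending at $x_i$.
\end{itemize}
This gives five Hamilton cycles that decompose $2K_6$.

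For $(3,6)$, an explicit choice is the pair of base blocks $\{\infty,0,1\}$ and $\{0,1,3\}$ developed modulo $5$. The first orbit covers each $\infty$-pair twice and each difference-$\pm1$ pair once; the second covers each difference-$\pm1$ pair once and each difference-$\pm2$ pair twice.
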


\begin{theo}{\rm{\cite {Sotteau}}}{\label{theo-sottu}}
Let $m, r,$  and $s$ be even positive integers. The complete bipartite graph $K_{r,s}$ has a $(C_m)$-decomposition if and only if $min\{r,s\}\geq \frac{m}{2}$ and $m\mid rs$.
\end{theo}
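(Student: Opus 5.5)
Since the statement of Theorem~\ref{theo-sottu} is a characterization, I would prove the two directions separately. Throughout, write $m=2k$. Because $K_{r,s}$ is bipartite, every cycle in it alternates between the two parts. Hence an $m$-cycle has exactly $k$ vertices in each part and uses $m$ edges.

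\emph{Necessity.} Suppose $K_{r,s}$ has a $(C_m)$-decomposition. Then some $m$-cycle exists, so each part must contain at least $k=\tfrac m2$ vertices, giving $\min\{r,s\}\ge \tfrac m2$. The cycles partition the $rs$ edges and each cycle has $m$ edges, so $m\mid rs$. This direction is immediate.

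\emph{Sufficiency: setup and small blocks.} Assume $k\le r\le s$, with $r,s$ even and $2k\mid rs$. Since $K_{r,s}$ is the edge-disjoint union of the graphs $K_{r_i,s_j}$ for any partitions $r=\sum r_i$ and $s=\sum s_j$, it suffices to cut $K_{r,s}$ into ``blocks'' that we can decompose directly. I would first establish three direct constructions, each by a cyclic (difference) method on parts $\{a_i\}$ and $\{b_j\}$ with indices in a cyclic group.
\begin{enumerate}
\item[(i)] $K_{2,2}=C_4$ trivially, and more generally $K_{2k,2k}$ decomposes into $2k$-cycles. One takes a base cycle $a_0b_0a_1b_1\cdots$ with suitably chosen index jumps and develops it cyclically, reading edges $a_ib_j$ by their difference $j-i$.
\item[(ii)] When $k$ is even, $K_{k,k}$ decomposes into Hamilton cycles. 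Pair the differences $\{d,d+1\}$ and use the zig-zag $a_ib_{i+d}a_{i+1}b_{i+1+d}\cdots$.
\item[(iii)] More generally, $K_{r',s'}$ decomposes into $2k$-cycles for every pair $k\le r'\le s'<2k$ (or $\le 2k$) with $r',s'$ even and $2k\mid r's'$. Here the cycles are constructed as unions of ``differences'', in the spirit of Sotteau's original argument.
\end{enumerate}

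\emph{Sufficiency: reduction.} Given general $r\le s$, I would write $r=\alpha k'+r_0$ and $s=\beta k''+s_0$, splitting each side into pieces of size in $[k,2k]$. I would choose the pieces so that every product $r_is_j$ of block sizes is divisible by $2k$, or so that the few exceptional blocks combine into a single block covered by case (iii). The natural attempt is as follows. Let $g=\gcd(r,2k)$. Split the $r$-side into pieces whose sizes are multiples of $g$ (or equal to $2k$), and the $s$-side into pieces whose sizes are multiples of $2k/g$. Then every product is divisible by $2k$ automatically, and the block decompositions are glued together as described above.

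\emph{Main obstacle.} The step I expect to be hardest is this number-theoretic splitting combined with the base case (iii). The pieces must simultaneously lie in the range where direct constructions exist and meet the divisibility requirement. This is delicate when $r$ is close to $k$, since then the $r$-side cannot be split at all. In that regime I would treat $K_{r,s}$ with $k\le r<2k$ directly. I would peel off copies of $K_{r,2k}$, which always has $2k\mid 2kr$, handling each copy via a direct construction of type (iii) extended to $s'=2k$. What remains is a single leftover block $K_{r,s_0}$ with $s_0<2k$, and I would need to show that this leftover block still satisfies the hypotheses and falls under (iii).
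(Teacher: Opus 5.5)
The paper does not prove this statement. It is quoted from Sotteau and used only as a black box in Lemmas~\ref{lem:n-even-general} and~\ref{lem:n-odd-general}, so your proposal has to stand on its own.

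Your necessity argument is correct and complete. The sufficiency part has a genuine gap. Once both sides are cut into even pieces of size in $[k,2k]$ with $2k\mid r_is_j$, the theorem reduces \emph{exactly} to your item (iii). But (iii) is only asserted ``in the spirit of Sotteau's original argument''. That bounded case is the whole substance of Sotteau's result. Already for $m=12$ you would need explicit $12$-cycle decompositions of $K_{6,8}$, $K_{6,10}$, $K_{8,12}$ and $K_{10,12}$, and nothing in the proposal produces them.

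Even the special case (i) cannot be done as you describe when $k$ is odd. Suppose the decomposition of $K_{2k,2k}$ comes from a single base $2k$-cycle developed under the diagonal $\mathbb{Z}_{2k}$-action. Then that cycle must contain each difference $0,1,\dots,2k-1$ exactly once. But the alternating sum of the differences $\delta_1,\dots,\delta_{2k}$ around any cycle is $0$. This forces $2(\delta_2+\delta_4+\dots+\delta_{2k})\equiv k(2k-1)\equiv k \pmod{2k}$, which is impossible for odd $k$. Item (ii) is fine. Note also that the statement tacitly needs $m\ge 4$, i.e.\ $k\ge 2$, since a simple graph has no $2$-cycles.

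The fallback in your ``main obstacle'' paragraph also fails. Peeling copies of $K_{r,2k}$ off the $s$-side can leave a block $K_{r,s_0}$ with $0<s_0<k$, which contains no $m$-cycle at all. For example, $m=12$, $r=6$, $s=14$ satisfies all hypotheses ($rs=84=7\cdot 12$), yet peeling off $K_{6,12}$ leaves $K_{6,2}$.

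The repair is to merge the remainder with one $2k$-chunk and resplit it:
\begin{itemize}
\item Put $g=\gcd(r,2k)$ and $L=\operatorname{lcm}(2,2k/g)$. You need the $s$-pieces to be even, not merely multiples of $2k/g$.
\item Both $s$ and $2k$ are multiples of $L$, so $s_0$ is a positive multiple of $L$. Hence $[k,k+s_0]$ contains a multiple $x$ of $L$.
\item Then $2k+s_0=x+(2k+s_0-x)$ splits into two admissible pieces in $[k,2k]$. In the example, $14=6+8$.
\item The $r$-side is handled the same way with step $g$.
\end{itemize}
With this, the reduction to the bounded case goes through. What is still missing is an actual construction for that bounded case, which is precisely what Sotteau's paper supplies.
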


\section{Proof strategy and overview}{{\label{sec:10}}}

By Theorem~\ref{theo:Gtool1}, $\mathrm{HOP}(2^{\langle s \rangle}, 2m)$ has a solution if and only if the multigraph $4K_n^{\bullet}$ admits an $\mathrm{HOP}(C_m)$-decomposition. Thus, the obvious necessary condition for $\mathrm{HOP}(2^{\langle s \rangle}, 2m)$ to have a solution is $m\mid |E(4K_n)|$, where  $|E(4K_n)|=2n(n-1)$. This will occur in one of the following three cases:

\begin{itemize}
    \item \textbf{Case 1:} $m$ divides $|E(K_n)|$.
    \item \textbf{Case 2:} $m$ divides $|E(2K_n)|$, but not $|E(K_n)|$.
    \item \textbf{Case 3:} $m$ divides $|E(4K_n)|$, but not $|E(2K_n)|$.
\end{itemize}
In the first case, we use the existing results on $(C_m)$-decompositions of $K_n$ and extend them to HOP $(C_m)$-decompositions for $4K_n^{\bullet}$. For the second case, we either use the results on $2K_n$ from \cite{2Kn-dec} or directly construct a $(C_m)$-decomposition of $2K_n$, which we then extend to  an HOP $(C_m)$-decomposition for $4K_n^{\bullet}$. The third case, which is the most challenging, requires us to work directly with $4K_n^{\bullet}$ to construct an HOP $(C_m)$-decomposition.

The rest of the paper is structured as follows. In Section~\ref{sec:4}, we introduce a method for recoloring edges in a cycle decomposition of $2G^\circ$. Section~\ref{sec:5} establishes the definitions and tools required for the remainder of the paper. Building on this, Section~\ref{sec:6} explains how to extend a $(C_m)$-decomposition of $2K_n$ to an $\mathrm{HOP}(C_m)$-decomposition of $4K_n^\bullet$. Following this, Section~\ref{sec:7} provides direct constructions for an $\mathrm{HOP}(C_m)$-decomposition of $4K_n^\bullet$. Finally, these results are combined in Section~\ref{sec:8} to prove the main theorem.



\section{Recoloring the edges in a cycle  decomposition of  $2G^{\circ}$}{{\label{sec:4}}}

In this section, we explain how to recolor a given $(C_m)$-decomposition of $2K_n^{\circ}$ so that every cycle contains an even number of pink edges. This method relies only on the existence of the decomposition rather than its explicit construction, however, it is applicable only for certain values of $m$ and $n$.

We begin this section with the definition of the \emph{decomposition graph}.
\begin{defn}{\label{def:color-0}}{\rm
Let $G$ be a simple graph, and $\D$ a $(C_{m})$-decomposition of $2G^{\circ}$.  The {\em{decomposition graph}} of $\D$, denoted as $\C(\D)$,  is defined as a graph with  vertex set $\D$, and two vertices in $\C(\D)$ are adjacent via $k$ edges if and only if the corresponding cycles in $\D$ share exactly $k$ pairs of parallel edges in $2G^{\circ}$.} 
\end{defn}
Observe that $\C(\D)$ is an $m$-regular graph of order $|\D|$, and hence has $\frac{|\D|\cdot m}{2}$ edges.

\begin{lem}{\rm{\cite {MSaj}}}{\label{cor:color-decGraph}}
Let $G$ be a simple graph, $m \geq 3$, and  $\D=\{F_1, F_2, \ldots, F_t\}$  a $(C_m)$-decomposition of $2G^{\circ}$. If every connected component  in $\C(\D)$ has an even number of edges, then there exists a recoloring of the edges in $\D$ so that every cycle in $\D$ has an even number of pink edges. In particular, this is possible if $m \equiv 0 \pmod{4}$.
\end{lem}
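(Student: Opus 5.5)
We need to prove Lemma \ref{cor:color-decGraph}: if every connected component of $\C(\D)$ has an even number of edges, then the edges can be recoloured so that every cycle has an even number of pink edges; in particular this holds when $m\equiv 0 \pmod 4$.

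The approach is to recast the recolouring as an orientation problem on $\C(\D)$. Each edge of $\C(\D)$ corresponds to a pair of parallel edges $\{e,e'\}$ of $2G^{\circ}$ joining the same two vertices of $G$. The two copies lie in two cycles $F_i, F_j$ of $\D$; if $F_i=F_j$ this gives a loop, which we treat separately. The colouring requirement on $2G^{\circ}$ is only that the two parallel edges receive different colours, one pink and one black. So a recolouring amounts to choosing, for each parallel pair, which of its two edges is pink.

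Orient each edge of $\C(\D)$ from the cycle receiving the black copy toward the cycle receiving the pink copy. Then the number of pink edges of $F_i$ equals the in-degree of $F_i$, with a loop contributing exactly one pink edge to $F_i$ whichever way it is chosen. The task is therefore to orient $\C(\D)$ so that every vertex has even in-degree. This is the classical fact that a connected graph with an even number of edges has an orientation in which all in-degrees are even.

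I would prove that fact directly, working in each component separately.
\begin{enumerate}
\item Start from an arbitrary orientation.
\item The number of vertices with odd in-degree has the same parity as the total number of edges, which is even. So the odd-in-degree vertices can be paired up.
\item For each pair $u,v$, take a path from $u$ to $v$ in the component and reverse every edge on it. Each interior vertex of the path loses one in-arc and gains one, so its parity is unchanged; the endpoints $u$ and $v$ each change parity.
\item Repeat until every vertex has even in-degree.
\end{enumerate}
Loops need care. A loop contributes $1$ to the in-degree whichever way it is oriented, so it cannot be used to fix parities. Loops are not needed on the reversal paths, however, and the parity count above still holds because each loop contributes exactly one to the total in-degree sum. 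Hence the argument goes through unchanged in the presence of loops.

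For the special case, $\C(\D)$ is $m$-regular, with a loop counting degree $2$. A component with $k$ vertices then has $km/2$ edges. If $m\equiv 0\pmod 4$, then $m/2$ is even, so $km/2$ is even for every $k$, and the general statement applies.

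The main obstacle is the bookkeeping: confirming that the pink count of each cycle is exactly its in-degree, that loops are handled correctly, and that swapping the colours within a parallel pair preserves the $2G^{\circ}$ property. Once that translation is verified, the combinatorics is routine.
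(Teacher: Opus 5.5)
Your argument is correct and is essentially the paper's proof in orientation language. The paper likewise notes that a component $H$ of $\C(\D)$ carries exactly $|E(H)|$ pink edges, pairs up the cycles with an odd number of pink edges, swaps the colours of one parallel pair for each edge of a connecting path in $H$, and then uses $|E(H)|=|V(H)|m/2$ for the case $m\equiv 0 \pmod 4$. Your treatment of loops is harmless but unnecessary: an $m$-cycle with $m\ge 3$ cannot contain both copies of a parallel pair, so $\C(\D)$ is loopless.
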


\begin{proof}
Let $H$ be a connected component of $\C(\D)$, and
$D$  the set of cycles in $\D$ corresponding to $H$. Each edge in $H$ corresponds to exactly one pair of parallel edges in $2G^{\circ}$; that is, for any cycle $C \in D$ and any edge $e \in E(C)$, there exists a cycle $C' \in D$ and $e' \in E(C')$ such that $e$ and $e'$ are parallel edges of $2G^{\circ}$. 
Each such pair contains one pink and one black edge; thus, the total number of pink edges in the cycles in $D$ is $|E(H)|$. Since $|E(H)|$ is even, the number of cycles in $D$ with an odd number of pink edges is even. 
Let $F_1$ and $F_2$ be two cycles in $D$ with an odd number of pink edges, and let $P$ be a path in $H$ connecting them. Observe that if two vertices are adjacent in $P$, then the corresponding cycles in $\D$ share at least one pair of parallel edges.
Now, for each pair of adjacent vertices in $P$, swap the colors in one pair of parallel edges in their corresponding cycles. Note that the color of exactly one edge in each of $F_1$ and $F_2$ changes, while in each cycle of $D$ corresponding to an internal vertex of $P$, the colors of two edges change. Therefore, the parity of the number of pink edges in $F_1$ and $F_2$ changes and becomes even, while the parity of the number of pink edges in all other cycles corresponding to an internal vertex of $P$ remains unchanged. Repeating this process for all pairs of cycles of $D$ with an odd number of pink edges results in  every cycle having an even number of pink edges.

Finally, since $|E(H)| = \frac{|V(H)| \cdot m}{2}$, the assumption $m \equiv 0 \pmod{4}$ ensures that $|E(H)|$ is even.
\end{proof}


\section{New definitions and tools}{\label{sec:5}}



\begin{defn}{\rm
Let $G = Circ(n-1; S) \bowtie K_1$ be a graph with vertex set $\{x_i : i \in \ZZ_{n-1}\} \cup \{x_{\infty}\}$. A cycle in $G$ is called a \emph{central cycle} if it passes through the \emph{central vertex} $x_\infty$; otherwise, it is referred to as a \emph{peripheral cycle}.
}
\end{defn}
Let $\rho=(x_{\infty})(x_0 \ x_1\ x_2 \ \ldots \ x_{n-2})$ be a permutation on the vertex set of $G$. If $C$ is a central (peripheral) cycle, then $\rho(C)$ is also a central  (peripheral) cycle. Moreover, if  $C$ generates a decomposition $\{C, \rho(C), \ldots, \rho^{i}(C)\}$, then $C$ is called a \emph{starter cycle}.

\begin{defn}{\label{def:color-1}}{\rm
Let $P=x_0 x_1 \ldots x_{n-1} x_n$ be a path. The {\em reversal} of $P$, denoted as $\stackrel{\leftarrow}{P}$, is the path obtained by reversing the order of the vertices in $P$, that is,  $\stackrel{\leftarrow}{P}=x_{n} x_{n-1} \ldots x_{1} x_0$.
} 
\end{defn}

The following lemma shows the existence of a path in a circulant graph that covers a given sequence of differences.

\begin{lem} \label{zig-zag}
Let $n, r, t, a_1, \dots, a_t$ be positive integers with $r < t$. Let $A = \{ a_1, \dots, a_r \}$ with $a_1 < a_2 < \dots < a_r \leq \lfloor \frac{n-1}{2} \rfloor$, and let $B = \{ a_{r+1}, \dots, a_t \}$ with $a_t < a_{t-1} < \dots < a_{r+1} \leq \lfloor \frac{n-2}{2} \rfloor$. 
Then $\operatorname{Circ}(n-1; \pm(A \cup B \cup \{1\}))$ admits:
\begin{enumerate}
    \item[\textbf{(i)}] a path $P = x_0 x_{v_1} \dots x_{v_t}$ covering the sequence of differences $a_1, \dots, a_t$ in this order, and
    \item[\textbf{(ii)}] for each $i \in \{1, \dots, t\}$, a path $Q = x_0 x_{v_1} \dots x_{v_{i-1}} x_{v'_i} x_{v'_{i+1}} \dots x_{v'_t}$ covering the sequence of differences $a_1, \dots, a_{i-1}, 1, a_{i+1}, \dots, a_t$ in this order.
\end{enumerate}
\end{lem}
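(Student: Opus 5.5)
We build the path $P$ by a zig-zag construction. Work in $\mathbb{Z}_{n-1}$, and choose signs $\epsilon_j\in\{\pm1\}$ so that $v_j=\sum_{k\le j}\epsilon_k a_k$. The differences are then automatically $a_1,\dots,a_t$, so the only thing to secure is that the vertices $0=v_0,v_1,\dots,v_t$ are pairwise distinct modulo $n-1$.

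\textbf{The increasing block $A$.} For $a_1<\dots<a_r$ alternate signs: $\epsilon_j=(-1)^{j+1}$. Then
\[
v_1=a_1,\quad v_2=a_1-a_2,\quad v_3=a_1-a_2+a_3,\ \dots
\]
Since the $a_j$ strictly increase, the vertices with odd index are positive and strictly increase, and those with even index are nonpositive and strictly decrease. So the vertices spread outward alternately on the two sides of $x_0$, and all lie in an interval of length at most $a_r\le\lfloor\frac{n-1}{2}\rfloor$. This is exactly where the bound on $A$ is needed to rule out wrap-around collisions modulo $n-1$.

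\textbf{The decreasing block $B$.} For $a_{r+1}>\dots>a_t$ continue alternating. Now the walk spirals inward: after the outward phase the current vertex is an extreme endpoint of the occupied interval. The next step $a_{r+1}$ jumps to the far side of the interval, beyond or wrapping around, and the later, decreasing steps land strictly inside the gaps nested within the preceding positions. The cleanest route is the standard observation that a zig-zag with strictly decreasing step lengths visits points $u_0,u_1,\dots$ with $\{u_{2k}\}$ monotone in one direction and $\{u_{2k+1}\}$ monotone in the other, all confined to a shrinking interval.

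\textbf{Interface between the two blocks.} The main obstacle is making sure the inward spiral of $B$ does not hit vertices already used by $A$. I would first try to place the $B$-block's points in the complementary arc of $\mathbb{Z}_{n-1}$ not covered by the $A$-interval. The $A$-block occupies an interval of length at most $a_r$. The $B$-walk lives in an interval of length $a_{r+1}\le\lfloor\frac{n-2}{2}\rfloor$ that starts at the endpoint $v_r$. Choosing the sign of $\epsilon_{r+1}$ to point away from the $A$-interval, the two intervals meet only at $v_r$, provided their total length is less than $n-1$. This is guaranteed by
\[
a_r+a_{r+1}\le\left\lfloor\tfrac{n-1}{2}\right\rfloor+\left\lfloor\tfrac{n-2}{2}\right\rfloor=n-2.
\]
This is presumably why the two blocks carry the different bounds $\lfloor\frac{n-1}{2}\rfloor$ and $\lfloor\frac{n-2}{2}\rfloor$. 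Inside its own interval the decreasing zig-zag is injective, since it is the reverse of an increasing zig-zag, so $P$ is a path.

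\textbf{Part (ii).} Replace $a_i$ by $1$. The prefix $x_0,\dots,x_{v_{i-1}}$ is unchanged. At step $i$ I would keep the same direction $\epsilon_i$, which moves by $1$ into a vertex adjacent to $v_{i-1}$. I would then rebuild the suffix as a zig-zag with the remaining differences, in the same arc-separation framework as above.

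The delicate points are two. The vertex $v_{i-1}\pm1$ may already be occupied, which happens exactly when the zig-zag has filled a consecutive run. And the suffix must be re-anchored. I would handle this by a case split on whether $i\le r$ or $i>r$.

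If $i>r$, shortening a decreasing step keeps the spiral inside its interval. One then flips the signs of the subsequent steps so the walk turns back at the innermost point, so the new vertices $v'_j$ are the old ones translated by $\pm(a_i-1)$ within the unused part of the arc.

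If $i\le r$, the increasing property breaks at $i$. Here I would treat $1$ as the start of a fresh outward zig-zag on the opposite side. I expect this verification of injectivity to be the hardest, most case-heavy step.
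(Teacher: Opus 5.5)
Your part (i) is correct, provided the interface sign is taken as $\epsilon_{r+1}=\epsilon_r$, i.e.\ pointing away from the $A$-interval as in your third paragraph. Your second paragraph's ``continue alternating'' genuinely fails: for $n=10$, $r=3$, $(a_1,a_2,a_3,a_4)=(1,2,3,2)$ it gives $0,1,-1,2,0$ and returns to $x_0$. With the away choice, your two-interval argument via $a_r+a_{r+1}\le n-2$ is valid, and modulo $n-1$ it produces exactly the paper's path. The paper verifies it differently: it sets $d_j=a_j$ for $j\le r$ and $d_j=(n-1)-a_j$ for $j>r$. The whole walk then becomes one outward zig-zag with strictly increasing steps $d_1<\dots<d_t\le n-2$, so no interface analysis is needed.

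The genuine gap is (ii), where you give no actual construction.
\begin{itemize}
\item The one concrete rule you state, taking the $1$-step in the old direction $\epsilon_i$, fails for $2\le i\le r$. In the example above with $i=3$ it lands on $v_2+1=0=v_0$.
\item Your fallback (``a fresh outward zig-zag on the opposite side'') is unspecified.
\item For $i>r$ the sketch is inconsistent. Reversing the later signs makes the new suffix a reflection of the old one, not a translate. Keeping the signs instead can send the walk back across $v_{i-1}$ onto used vertices.
\end{itemize}

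The missing idea is a merging trick that makes (ii) a corollary of (i). For $i\ge 2$, put $d^*=d_{i-1}+1$. Then $d_{i-2}<d^*\le d_i<d_{i+1}$ and $d^*\le n-2$, so the zig-zag for $d_1,\dots,d_{i-2},d^*,d_{i+1},\dots,d_t$ is a path $P'$. Split its step $d^*$ into $d_{i-1}$ followed by $1$ in the same direction. The inserted vertex is exactly the old $v_{i-1}$:
\begin{itemize}
\item it differs from $v_0,\dots,v_{i-2}$ because $P$ is a path;
\item it differs from the remaining vertices of $P'$, because these are the endpoint $w$ of the $d^*$-step (at distance $1$ from $v_{i-1}$) and vertices lying outside the segment from $v_{i-2}$ to $w$, since every later step is longer than $d^*$.
\end{itemize}
All vertices still lie in an integer interval of length at most $n-2$, so they stay distinct modulo $n-1$. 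For $i=1$, simply apply (i) to $1,a_2,\dots,a_t$. In your language, the $1$-step goes in direction $\epsilon_{i-1}$ if $i\le r$ and in direction $\epsilon_i$ if $i>r$, and every later sign is reversed. No case-heavy verification is needed.
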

\begin{proof}
\textbf{(i)} First, let \( d_i = a_i \) for \( i = 1, 2, \ldots, r \), and  \( d_i = (n-1) - a_i \) for \( i = r+1, r+2, \ldots, t \). Then
\[
1 \leq d_1 < \ldots < d_r < \lceil \frac{n}{2} \rceil \leq d_{r+1} < d_{r+2}< \cdots < d_t \leq n-2.
\]
Define a walk $P = x_{v_0} x_{v_1} \dots x_{v_t}$ where $v_0 = 0$ and $v_k = \sum_{j=1}^k (-1)^{j+1} d_j$, for $1\leq k\leq t$. 
By this construction, the indices of the vertices $x_{v_0}, x_{v_2}, x_{v_4}, \ldots$ (the even-indexed vertices) are strictly decreasing, while the indices of the vertices $x_{v_1}, x_{v_3}, x_{v_5}, \ldots$ (the odd-indexed vertices) are strictly increasing.

To show $P$ is a path, we prove that even-indexed and odd-indexed vertices never ``cross over''. A ``crossing'' occurs if and only if  $|v_j - v_{j-1}| \geq n-1$, for some $1\leq j\leq t$. However, $|v_j - v_{j-1}| = d_j$, and since $d_j \leq n-2$ by hypothesis, no such crossing occurs. Thus, $P$ is a path. 

\vspace{0.3cm}

\noindent\textbf{(ii)}  Now, assume $d_i$ is the term in the sequence $d_1, d_2, \ldots, d_t$ to be replaced with the difference $1$. If $i=1$, the result follows from {\bf{(i)}}. For $i > 1$, let $d^* = d_{i-1} + 1$. Then $d^* < d_{i+1}$, and for $i>2$ we have $d_{i-2} < d^*$ and if $i=2$, then $d^*\geq 2$. By {\bf{(i)}}, there exists a path $P'$ of length $t-1$ covering the sequence $d_1, \dots, d_{i-2}, d^*, d_{i+1}, \dots, d_t$, in this order. By replacing the edge of difference $d^*$ in $P'$ with two edges of differences $d_{i-1}$ and $1$, respectively, we obtain the desired path $Q$ (see Figure~\ref{fig:diff1}).
\end{proof}
\begin{figure}[H]
 \centering
   \includegraphics[width=0.7\linewidth,height=0.6\textheight,keepaspectratio]{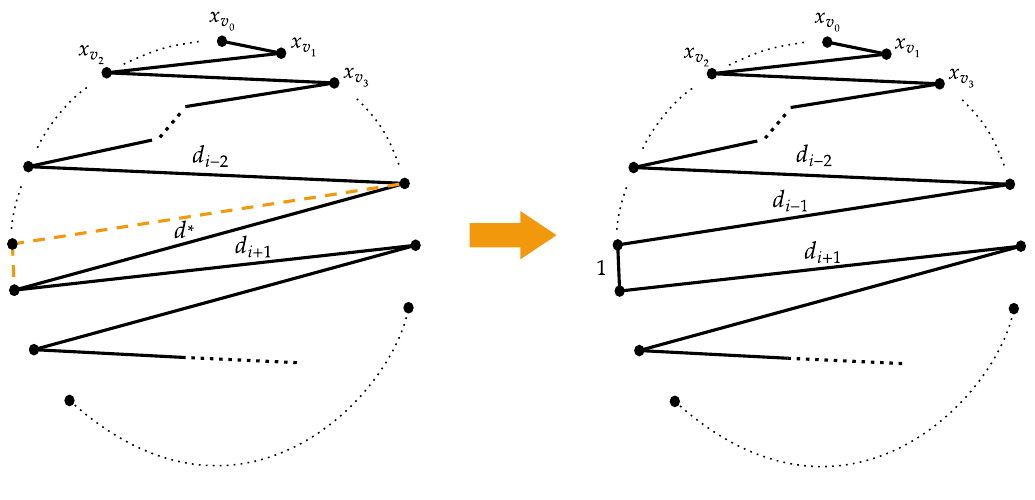}
       \caption{Replacing the edge of difference $d^*$ with two edges of differences $d_{i-1}$ and $1$.}
    \label{fig:diff1}
  \end{figure}
%
%
%
%

Lemma~\ref{lem:n-even-general} serves as a reduction step, showing that to find an  HOP $(C_m)$-decomposition of $4K_n^{\bullet}$ for even $m$ and all even $n \geq m$,  it suffices to find such a decomposition for even $n$ in the interval  $m \leq n < 2m$. The reduction step for the case of odd $n$ is stated in  Lemma~\ref{lem:n-odd-general}. These lemmas extend the approach from~\cite{2Kn-dec}, originally developed for {\em{complete symmetric digraph}} $K_n^*$, to the multigraph $4K_n^{\bullet}$.


\begin{lem} \label{lem:n-even-general}
Let \( m \geq 4\) be an even integer and let \( \alpha \in \mathbb{Z}^+ \). If \( 4K_n^{\bullet} \) admits an HOP \((C_m)\)-decomposition for all even \( n \) such that \( m \leq n < 2m \) and \( m \mid \alpha n(n-1) \), then \( 4K_n^{\bullet} \) also admits an HOP \((C_m)\)-decomposition for all even \( n \geq m \) such that \( m \mid \alpha n(n-1) \).
\end{lem}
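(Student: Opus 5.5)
We argue by strong induction on the even integer $n$. The base cases are the even $n$ with $m \le n < 2m$ and $m \mid \alpha n(n-1)$; these are covered by the hypothesis. Now let $n \ge 2m$ be even with $m \mid \alpha n(n-1)$. The plan is to write $n = n_1 + n_2$ with both $n_1, n_2$ even, both at least $m$, and smaller than $n$. We then decompose
\[
K_n = K_{n_1} \oplus K_{n_2} \oplus K_{n_1,n_2}
\]
and handle each piece separately. Finally we glue the three decompositions together by Lemma~\ref{Gtool3}.

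\textbf{Choice of the split.} Take $n_2 = m$ and $n_1 = n - m$. Since $n$ and $m$ are both even, $n_1$ is even, and $n_1 \ge m$ because $n \ge 2m$.

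\textbf{The bipartite piece.} By Theorem~\ref{theo-sottu}, $K_{n-m,m}$ has a $(C_m)$-decomposition. The hypotheses hold because both parts are even, $\min\{n-m, m\} \ge m/2$, and $m \mid (n-m)m$. Lemma~\ref{lem:Gtool4} then lifts this to an HOP $(C_m)$-decomposition of $4K_{n-m,m}^{\bullet}$.

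\textbf{The piece $K_m$.} Lemma~\ref{lems-unified} gives an HOP $C_m$-factorization of $4K_m^{\bullet}$. Equivalently, $m \le m < 2m$ and $m \mid \alpha m(m-1)$ trivially, so this case is also a base case.

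\textbf{The piece $K_{n-m}$.} We need the divisibility condition to pass down, that is, $m \mid \alpha (n-m)(n-m-1)$. Modulo $m$,
\[
(n-m)(n-m-1) \equiv n(n-1),
\]
since $(n-m)(n-m-1) = n(n-1) - m(2n - m - 1)$. Hence $\alpha (n-m)(n-m-1) \equiv \alpha n(n-1) \equiv 0 \pmod m$, and the condition is preserved. Because $m \le n-m < n$ and $n-m$ is even, the induction hypothesis supplies an HOP $(C_m)$-decomposition of $4K_{n-m}^{\bullet}$.

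\textbf{Gluing.} Combining the three HOP decompositions with Lemma~\ref{Gtool3} yields an HOP $(C_m)$-decomposition of $4K_n^{\bullet}$, which completes the induction.

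The only real check is that the divisibility condition is inherited by $n-m$; the congruence above handles it for every $\alpha$. I expect no serious obstacle beyond confirming that Sotteau's conditions (Theorem~\ref{theo-sottu}) hold. This is exactly why the parity assumptions that $m$ and $n$ are even are needed: they make both parts of $K_{n-m,m}$ even.
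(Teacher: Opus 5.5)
Your proof is correct and is essentially the paper's argument. The paper partitions the vertex set at once into $q$ blocks of size $m$ and one block of size $n_0\in[m,2m)$, obtaining copies of $4K_m^{\bullet}$, $4K_{m,m}^{\bullet}$, $4K_{m,n_0}^{\bullet}$ and one $4K_{n_0}^{\bullet}$; your induction instead peels off one block of size $m$ at a time (applying Sotteau's theorem directly to $K_{n-m,m}$) and uses the same ingredients, and your identity $(n-m)(n-m-1)=n(n-1)-m(2n-m-1)$ justifies the inheritance of the divisibility condition, which the paper only asserts.
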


\begin{proof}
Let $n' \geq m$ be an even integer such that $m \mid \alpha n'(n'-1)$. We write $n' = n + qm$ where $q \geq 0$ and $m \leq n < 2m$. The divisibility condition implies $m \mid \alpha n(n-1)$. By partitioning the vertex set of $4K_{n'}^{\bullet}$ into one part of size $n$ and $q$ parts of size $m$, we observe that $4K_{n'}^{\bullet}$ can be decomposed into $q$ copies of $4K_{m}^{\bullet}$, $\binom{q}{2}$ copies of $4K_{m,m}^{\bullet}$, one copy of $4K_{n}^{\bullet}$, and $q$ copies of $4K_{m,n}^{\bullet}$.

Since \(m \leq n < 2m\), by supposition, \(4K_{n}^{\bullet}\) admits an HOP \((C_m)\)-decomposition. By Lemma~\ref{lems-unified},  \(4K_{m}^{\bullet}\) admits an HOP \((C_m)\)-decomposition. Furthermore, since \(m\) and \(n\) are even, \(\min\{m, n\} \geq \frac{m}{2}\), and \(m\) divides the number of edges in both \(K_{m,m}\) and \(K_{m,n}\), by Theorem~\ref{theo-sottu}, both \(K_{m,m}\) and \(K_{m,n}\) have  \((C_m)\)-decompositions, and by Lemma~\ref{lem:Gtool4}, both \(4K_{m,m}^{\bullet}\) and \(4K_{m,n}^{\bullet}\) admit an HOP \((C_m)\)-decomposition. Finally, by applying Lemma~\ref{Gtool3}, the multigraph \(4K_{n'}^{\bullet}\) admits an HOP \((C_m)\)-decomposition.
\end{proof}


\begin{lem}{\label{lem:color-1}}
Let $m\geq 4$ be an even positive integer. Then there exists an HOP $(C_m)$-decomposition of $4K_{m+1}^{\bullet}$.
\end{lem}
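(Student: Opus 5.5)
The plan is to first reduce the statement to a cycle decomposition of $2K_{m+1}$, and then treat the two residues of $m$ modulo $4$ separately. Since $m+1$ is odd, $m \nmid \frac{m(m+1)}{2} = |E(K_{m+1})|$, so we are in Case~2 of the overview. On the other hand $m \mid m(m+1) = |E(2K_{m+1})|$. Hence Theorem~\ref{theo:(Cm)-dec-2Kn} gives a $(C_m)$-decomposition $\D$ of $2K_{m+1}$ consisting of $m+1$ cycles. We colour it as $2K_{m+1}^{\circ}$.

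\textbf{Case $m \equiv 0 \pmod 4$.} Lemma~\ref{cor:color-decGraph} applies directly: every component of $\C(\D)$ is $m$-regular and therefore has an even number of edges. So $\D$ can be recoloured so that every cycle has an even number of pink edges. Lemma~\ref{lem:Gtool2} then yields an HOP $(C_m)$-decomposition of $4K_{m+1}^{\bullet}$.

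\textbf{Case $m \equiv 2 \pmod 4$.} Here the recolouring route must fail. The total number of pink edges in $2K_{m+1}^{\circ}$ is $\frac{m(m+1)}{2}$, which is odd, so no colouring of any $(C_m)$-decomposition of $2K_{m+1}^{\circ}$ can give every cycle an even number of pink edges. I would therefore work directly in $4K_{m+1}^{\bullet}$, viewing $K_{m+1}$ as $\mathrm{Circ}(m;\pm\{1,\dots,\frac m2\}) \bowtie K_1$ and using the rotation $\rho$ of order $m$.

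We need $2(m+1) = 2m+2$ cycles in total. The edge count suggests the following structure.
\begin{itemize}
\item \textbf{Two invariant cycles.} The difference-$1$ edges form the rim cycle $x_0x_1\cdots x_{m-1}x_0$, which is an $m$-cycle fixed by $\rho$. Take its two copies consisting of the black arcs, each oriented consistently around the rim, one copy in each direction. All edges are black and oriented the same way, so condition (C1) holds for both.
\item \textbf{Remaining edges.} What is left is the pink and blue copies of difference $1$, all four copies of each difference $2,\dots,\frac m2-1$, all four copies of difference infinity, and the four copies of the diameter difference $\frac m2$. Diameter edges form orbits of size only $\frac m2$ under $\rho$.
\item \textbf{Two starter cycles.} I would cover these remaining edges by two starter cycles $C_1, C_2$, each generating a full orbit of $m$ cycles. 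Each starter has $m$ edges, and in each starter the diameter difference occurs twice (as $x_ix_{i+m/2}$ and $x_{i+m/2}x_{i+m}$ with suitable colours), so that its diameter edges cover whole orbits.
\end{itemize}

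To build the starters, I would use Lemma~\ref{zig-zag} to produce zig-zag paths in the circulant covering prescribed sequences of differences. Part (ii) of that lemma lets me insert the difference $1$ where needed. I would then close each path through $x_\infty$, so that both $C_1$ and $C_2$ are central cycles, each using two infinity edges. Each difference class must be used with the correct multiplicities and colours: pink once, blue once, and each black orientation once.

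The main obstacle is the colouring and orientation step. The colours must be assigned along $C_1$ and $C_2$ so that consecutive edges satisfy (C1), while across the two starters each difference receives exactly one pink, one blue, and the two opposite black arcs. I would try first to make the starters mostly black, with arcs oriented along the traversal direction, reversing the traversal via $\stackrel{\leftarrow}{P}$ for the second starter to supply the opposite arcs. The pink and blue edges would be placed as adjacent blue--pink pairs, or as pink/blue edges flanked by black arcs oriented as required by (C1). The leftover parity issue is exactly the one that blocks the $2G^{\circ}$ approach, and I expect it to be absorbed by the two all-black rim cycles. I would also check small cases such as $m=6$ explicitly.
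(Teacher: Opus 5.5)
Your case $m\equiv 0\pmod 4$ is correct. Since $m\mid m(m+1)$, Theorem~\ref{theo:(Cm)-dec-2Kn} gives a $(C_m)$-decomposition of $2K_{m+1}$, and Lemmas~\ref{cor:color-decGraph} and~\ref{lem:Gtool2} finish the argument. The paper instead writes down explicit starters for both residues. Your shortcut is still legitimate, and the paper uses the same combination in Case~2 of the proof of the main theorem. Your observation that the total number of pink edges, $\frac{m(m+1)}{2}$, is odd when $m\equiv 2\pmod 4$ is also right. It is exactly why a direct construction in $4K_{m+1}^{\bullet}$ is needed in that case.

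The gap is that your framework for $m\equiv 2\pmod 4$ cannot give a decomposition under any colouring. That framework is two all-black rims plus two starters, each developed through a full $\langle\rho_{\bullet}\rangle$-orbit of length $m$.

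The pink diameter edges $x_ix_{i+m/2}$ must be covered, and the rims contain none of them. But such an edge $e$ satisfies $\rho_{\bullet}^{m/2}(e)=e$, because the half-turn swaps its ends and preserves colour. So if $e$ lies in a starter $C_j$ whose orbit has length $m$, then $e$ also lies in the different cycle $\rho_{\bullet}^{m/2}(C_j)$ and is covered twice. The same holds for the blue diameter edges.

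Relatedly, ``$x_ix_{i+m/2}$ and $x_{i+m/2}x_{i+m}$'' is a single edge, since $x_{i+m}=x_i$, so it cannot occur twice in an $m$-cycle. Also, two diameter edges in each of two full-orbit starters would cover $4m$ diameter edges, while $4K_{m+1}^{\bullet}$ has only $\frac{m}{2}+\frac{m}{2}+m=2m$. Beyond this, the proposal never exhibits the starters or their colourings, and that construction is the real content of the lemma.

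The missing idea is to develop some cycles over \emph{half}-orbits, to match the orbit length $\frac{m}{2}$ of the pink and blue diameter edges. The paper takes the central cycle
$C=x_{\infty}x_0x_{-1}\,P\,x_{\frac{m-2}{4}}x_{-\frac{m+2}{4}}\,\rho^{\frac{m}{2}}(\stackrel{\leftarrow}{P})\,x_{\frac{m-2}{2}}x_{\infty}$
from the known $(C_m)$-decomposition of $2K_{m+1}$; it contains exactly one diameter edge.

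Four copies $F_1,\dots,F_4$ are coloured so that this edge is blue, pink, and the two opposite black arcs, respectively. In all four copies $P$ is black, forward in $F_1,F_4$ and backward in $F_2,F_3$. The path $\rho^{\frac{m}{2}}(\stackrel{\leftarrow}{P})$ alternates pink/blue, starting with pink in $F_1,F_3$ and with blue in $F_2,F_4$.

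The paper then uses only $\rho_{\bullet}^{i}(F_1)$, $\rho_{\bullet}^{i}(F_2)$, $\rho_{\bullet}^{\frac{m}{2}+i}(F_3)$, $\rho_{\bullet}^{\frac{m}{2}+i}(F_4)$ for $0\le i\le \frac{m-2}{2}$. Each pink or blue diameter orbit is covered by a single half-development. Each orbit of length $m$ is assembled from two complementary half-developments; for example, the blue edge $x_{\frac{m-2}{2}}x_{\infty}$ occurs in both $F_1$ and $F_3$.

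The two rim copies are coloured alternately pink and blue, which is possible since $m$ is even. The black difference-$1$ arcs are then carried by the edge $x_0x_{-1}$ of the central cycles, the opposite of your allocation.
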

\begin{proof}
Let $V(4K_{m+1}^{\bullet})=\{x_i: i\in \ZZ_{m} \}\cup \{x_{\infty}\}$, and let $\rho_{\bullet}$ be the permutation on $E(4K_{m+1}^{\bullet})$ that preserves the edge colors (and orientations), and  is induced by  the permutation  $\rho=(x_{\infty})(x_0 \ x_1\  x_2 \ \ldots \ x_{m-1})$. In \cite[Lemma 3.2]{2Kn-dec}, it is shown that $\W=\{C', C, \rho(C), \rho^{2}(C), \ldots, \allowbreak \rho^{m-1}(C)\}$ is a $(C_m)$-decomposition of $2K_{m+1}$, where
$$C= x_{\infty}\ x_0\ x_{-1}\ x_1\ x_{-2}\ x_{2}\ \ldots \ x_{-\frac{m-2}{2}}\ x_{\frac{m-2}{2}}\ x_{\infty}, \ \text{ and }\ C'=x_0\ x_1\ x_2\ \ldots\ x_{-2}\ x_{-1}\ x_0. $$ 
For \( m \equiv 2 \pmod{4} \), let $P = x_{-1}\ x_1\ x_{-2}\ x_2\ \ldots\ x_{-\frac{m-2}{4}}\ x_{\frac{m-2}{4}};$ thus, 
\[
C = x_{\infty}\ x_0\ x_{-1}\ P\ x_{\frac{m-2}{4}}\ x_{-\frac{m+2}{4}}\ \rho^{\frac{m}{2}}(\stackrel{\leftarrow}{P})\ x_{\frac{m-2}{2}}\ x_{\infty}.
\]
For \( m \equiv 0 \pmod{4} \), let $P = x_{-1}\ x_1\ x_{-2}\ x_2\ \ldots\ x_{\frac{m-4}{4}}\ x_{-\frac{m}{4}};$ thus, 
\[
C = x_{\infty}\ x_0\ x_{-1}\ P\ x_{-\frac{m}{4}}\ x_{\frac{m}{4}}\ \rho^{\frac{m}{2}}(\stackrel{\leftarrow}{P})\ x_{\frac{m-2}{2}}\ x_{\infty}.
\]
We explain the coloring for the case \( m \equiv 2 \pmod{4} \). The case \( m \equiv 0 \pmod{4} \) is analogous.

Let $F_1, F_2, F_3, F_4$ be four copies of $C$ that are colored as follows (see Figure~\ref{fig:color3}). 
\begin{figure}[h!]
 \centering
   \includegraphics[width=0.83\linewidth,height=0.83\textheight,keepaspectratio]{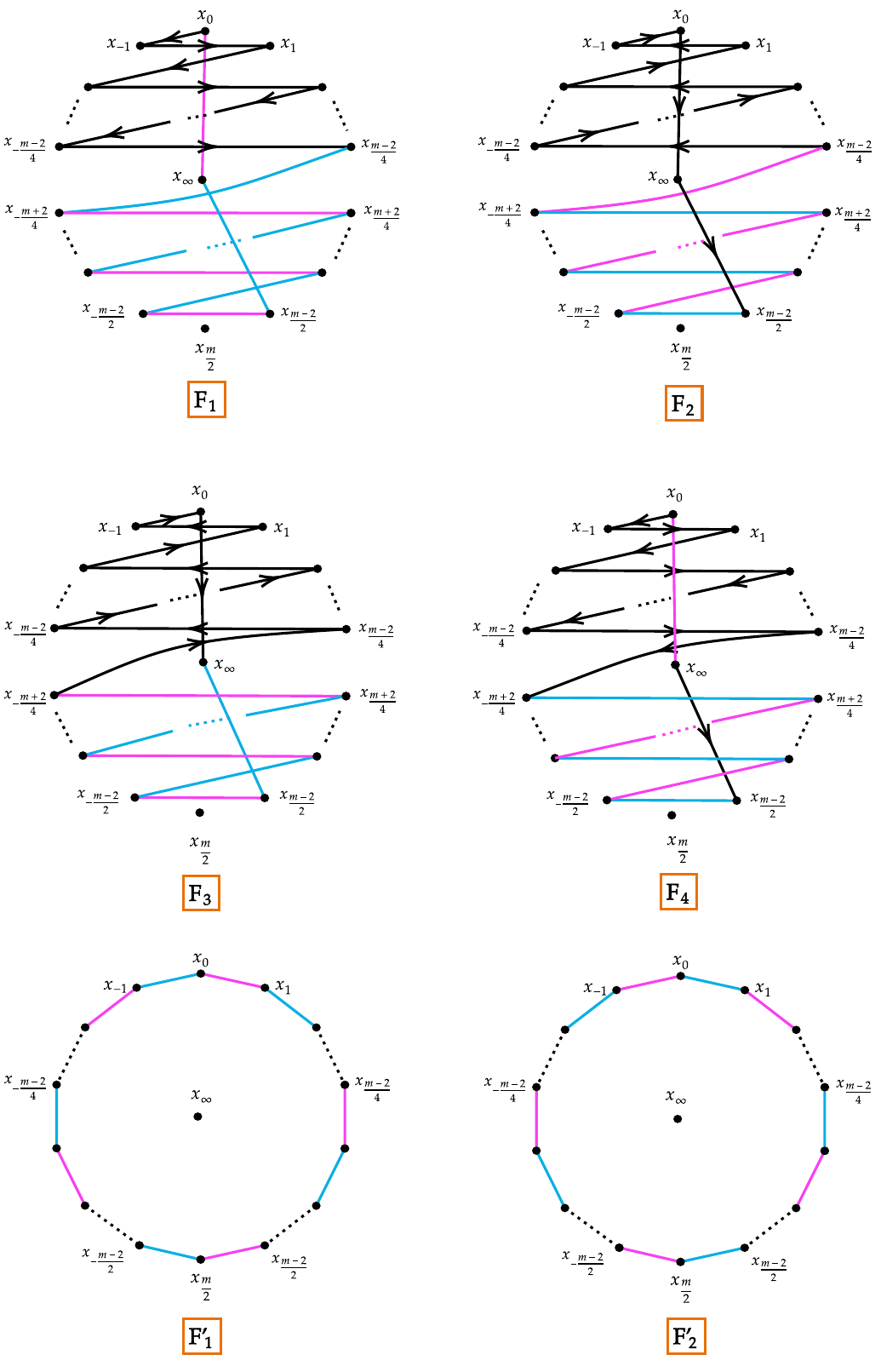}
    \caption{Starter $(C_m)$-subgraphs for an HOP decomposition of $4K_{m+1}^{\bullet}$ with $m\equiv 2\ ({\rm mod}\ 4)$.}
    \label{fig:color3}
  \end{figure}
\begin{itemize}
\item In $F_1$, color the edges of $P$ black and orient them forward, color the edge $x_{\frac{m-2}{4}} x_{-\frac{m+2}{4}}$ blue,
color the edges of $\rho^{\frac{m}{2}}(\stackrel{\leftarrow}{P})$ alternately pink and blue, beginning with pink, and since $\stackrel{\leftarrow}{P}$ has odd length, it ends with pink. Now color the edge  $x_{\frac{m-2}{2}} x_{\infty}$ blue, $x_{\infty} x_0$ pink, and color the arc  $(x_0, x_{-1})$ black. 

\item In $F_2$, color the edges of $P$ black and orient them backward, color the edge $x_{\frac{m-2}{4}} x_{-\frac{m+2}{4}}$ pink, color the edges of $\rho^{\frac{m}{2}}(\stackrel{\leftarrow}{P})$ alternately blue and pink, beginning with blue, and since $\stackrel{\leftarrow}{P}$ has  odd length, it ends with blue. Now color the arcs $(x_{-1}, x_0)$, $(x_0, x_{\infty})$, and $(x_{\infty}, x_{\frac{m-2}{2}})$  black. 

\item In $F_3$, color the edges of $P$ black and orient them backward, color the arc $(x_{-\frac{m+2}{4}}, x_{\frac{m-2}{4}})$ black, color the edges of $\rho^{\frac{m}{2}}(\stackrel{\leftarrow}{P})$ alternately pink and blue, beginning with pink, and since $\stackrel{\leftarrow}{P}$ has  odd length, it ends with pink. Now color the edge  $x_{\frac{m-2}{2}} x_{\infty}$ blue and the arcs $(x_{-1}, x_0)$ and $(x_0, x_{\infty})$ black. 

\item In $F_4$, color the edges of $P$ black and orient them forward, color the arc $(x_{\frac{m-2}{4}}, x_{-\frac{m+2}{4}})$ black, color the edges of $\rho^{\frac{m}{2}}(\stackrel{\leftarrow}{P})$ alternately blue and pink, beginning with blue, and since $\stackrel{\leftarrow}{P}$ has  odd length, it ends with blue. Then color the arcs  $(x_{\infty}, x_{\frac{m-2}{2}})$, $(x_0, x_{-1})$ black and the edge $x_{\infty} x_0$ pink. 
\end{itemize}
Let $F_1', F_2'$ be two copies of $C'$. In $F_1'$,  color the edges alternately pink and blue, begining with pink. In $F_2'$,  color the edges alternately blue and pink, begining with blue. 
We claim that  
$$\W'=\big\{\rho_{\bullet}^i(F_1), \rho_{\bullet}^i(F_2), \rho_{\bullet}^{\frac{m}{2}+i}(F_3), \rho_{\bullet}^{\frac{m}{2}+i}(F_4): i=0,1,\ldots,\textstyle{\frac{m-2}{2}} \big\}\cup\big\{F_1', F_2'\big\}$$
 is a $(C_m)$-decomposition  for $4K_{m+1}^{\bullet}$. 
 Notice that $F_1, F_2, F_3, F_4$ jointly contain exactly one edge from each orbit of $\langle \rho_{\bullet} \rangle$ corresponding to difference $\frac{m}{2}$, and two edges from each orbit of $\langle \rho_{\bullet} \rangle$ corresponding to each difference $d\in \{2,3,\ldots, \frac{m-2}{2}\}$, namely, a pair of edges of the form $(e, \rho_{\bullet}^{\frac{m}{2}}(e))$. The black orbits corresponding to difference $1$ are covered by $F_1, F_2, F_3, F_4$, while the pink and the blue orbits are completely covered by $F_1', F_2'$. 
Since $F_1, F_2, F_3, F_4, F_1', F_2'$ satisfy Condition {\bf (C1)} of Definition {\rm{\ref{def}}}, and $\rho_{\bullet}$ preserves the edge colors (and orientations), it can be verified that $\W'$ is  an HOP $(C_m)$-decomposition of  $4K_{m+1}^{\bullet}$.
\end{proof}

\begin{lem} \label{lem:n-odd-general}
Let \( m\geq 4 \) be an even integer, and let \(\alpha \in \mathbb{Z}^+\). If \(4K_n^{\bullet}\) admits an HOP $(C_m)$-decomposition for all odd \(n\) such that \(m < n < 2m\) and \(m \mid \alpha n(n-1)\), then \(4K_n^{\bullet}\) also admits an HOP $(C_m)$-decomposition for all odd \(n > m\) such that \(m \mid \alpha n(n-1)\).
\end{lem}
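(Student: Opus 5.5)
The plan mirrors the proof of Lemma~\ref{lem:n-even-general}, except that we partition $n'-1$ vertices rather than $n'$ and attach one extra point $x_\infty$ to every block. Let $n' > m$ be odd with $m \mid \alpha n'(n'-1)$. If $n' = m+1$, Lemma~\ref{lem:color-1} already gives an HOP $(C_m)$-decomposition of $4K_{n'}^{\bullet}$, so we may assume $n' > m+1$. Write $n' - 1 = (n-1) + qm$ with $q \geq 0$ and $m \leq n-1 < 2m$; then $n$ is odd and $m < n < 2m+1$. Since $qm \equiv 0 \pmod m$, we have $n'(n'-1) \equiv n(n-1) \pmod m$, so the divisibility hypothesis transfers to give $m \mid \alpha n(n-1)$.

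Now take a distinguished vertex $x_\infty$ and partition the remaining $n'-1$ vertices into one set $V_0$ of size $n-1$ and $q$ sets $V_1,\dots,V_q$ of size $m$. Then $K_{n'}$ decomposes into the following pieces:
\begin{itemize}
\item the complete graph on $V_0 \cup \{x_\infty\}$, which is a copy of $K_n$;
\item for each $j \geq 1$, the complete graph on $V_j \cup \{x_\infty\}$, a copy of $K_{m+1}$;
\item for $1 \leq j < k \leq q$, the complete bipartite graph between $V_j$ and $V_k$, a copy of $K_{m,m}$;
\item for each $j \geq 1$, the complete bipartite graph between $V_0$ and $V_j$, a copy of $K_{n-1,m}$.
\end{itemize}
We then treat each piece in turn.
\begin{itemize}
\item Each $4K_{m+1}^{\bullet}$ has an HOP $(C_m)$-decomposition by Lemma~\ref{lem:color-1}.
\item For $K_{m,m}$ and $K_{n-1,m}$, both part sizes are even, $\min\{m,n-1\} \geq m \geq m/2$, and $m$ divides both $m^2$ and $(n-1)m$. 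So Theorem~\ref{theo-sottu} gives $(C_m)$-decompositions, and Lemma~\ref{lem:Gtool4} lifts them to HOP $(C_m)$-decompositions of the $4G^{\bullet}$ versions.
\item For $4K_n^{\bullet}$: if $m < n < 2m$, the hypothesis applies directly.
\end{itemize}
Finally, Lemma~\ref{Gtool3} glues all the pieces together.

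The main obstacle is the boundary value $n-1 = m$ allowed by the choice of remainder, i.e.\ $n = m+1$, which is not strictly greater than $m$. In that case the ``$K_n$'' block is itself a $K_{m+1}$ and is handled by Lemma~\ref{lem:color-1}. The upper end $n = 2m+1$ cannot occur, because $n-1 < 2m$ and $n-1$ is even, so $n \leq 2m-1$. Hence every block is covered.

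One should also check that the decomposition of $4K_{n'}^{\bullet}$ into the $4H^{\bullet}$ pieces respects the colour and orientation structure. It does, because each parallel class of four edges lies entirely inside a single piece, and that is exactly the setting of Lemma~\ref{Gtool3}.
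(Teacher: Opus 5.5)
Your proof is correct and is essentially the paper's argument. Your $q$ blocks of size $m$ plus one block of size $n-1\in[m,2m)$ around $x_\infty$ are exactly the paper's $q-1$ blocks of size $m$ plus one block of size $m+r$, and you handle them with the same ingredients (Lemma~\ref{lem:color-1}, Theorem~\ref{theo-sottu} with Lemma~\ref{lem:Gtool4}, the hypothesis, and Lemma~\ref{Gtool3}). One small slip: $n=m+1$ \emph{is} strictly greater than $m$, and $m\mid\alpha(m+1)m$ holds trivially, so that block is already covered by the hypothesis; your appeal to Lemma~\ref{lem:color-1} there is valid but unnecessary.
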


\begin{proof}
Take any odd \(n > m\) such that \(m \mid \alpha n(n-1)\). We can write \(n = qm + r + 1\), for integers \(q \geq 1\) and even \(0 \leq r < m - 1\). Then also  \(n=(q - 1)m + (m + r) + 1\). Label one vertex of $4K_n^{\bullet}$ as $x_{\infty}$ and partition the remaining vertices into $q - 1$ sets of size $m$ and one set of size $m + r$. Observe that $4K_n^{\bullet}$ decomposes into $q - 1$ copies of $4K_{m+1}^{\bullet}$, $\binom{q - 1}{2}$ copies of $4K_{m,m}^{\bullet}$, one copy of $4K_{m+r+1}^{\bullet}$, and $q - 1$ copies of $4K_{m,m+r}^{\bullet}$.

Since \(m\) divides \(\alpha n(n - 1)\) and \(n = qm + r + 1\), it follows that \(m\) divides \(\alpha r(r + 1)\). Consequently, \(m\) divides \(\alpha (m + r + 1)(m + r)\). Additionally, considering that \(0 \leq r < m - 1\), we have \(m < m + 1 \leq m + r + 1 < 2m\). Therefore, by the given hypothesis, \(4K_{m+r+1}^{\bullet}\) admits an HOP $(C_m)$-decomposition. 

Also, in Lemma~\ref{lem:color-1}, we proved that \(4K_{m+1}^{\bullet}\) admits an HOP $(C_m)$-decomposition.
 Since \(m\) and \(r\) are even, \(\min\{m, m + r\} \geq \frac{m}{2}\), and \(m\) divides the number of edges in both \(K_{m,m}\) and \(K_{m,m+r}\), by Theorem~\ref{theo-sottu}, both \(K_{m,m}\) and \(K_{m,m+r}\) have a $(C_m)$-decomposition, and thus by Lemma~\ref{lem:Gtool4},  both \(4K_{m,m}^{\bullet}\) and \(4K_{m,m+r}^{\bullet}\) admit an HOP $(C_m)$-decomposition. Therefore, by Lemma~\ref{Gtool3}, the multigraph \(4K_n^{\bullet}\) admits an HOP $(C_m)$-decomposition.
\end{proof}

\section{Extending a $(C_m)$-decomposition of $2K_n$  to an HOP $(C_m)$-decomposition of $4K_n^{\bullet}$}{\label{sec:6}}

In this section, we focus on constructing a \( (C_m) \)-decomposition of \( 2K_n \) and then extending it to an HOP \( (C_m) \)-decomposition of \( 4K_n^{\bullet} \). To find a decomposition for \( 2K_n \), we use different constructions depending on the parity of \( m \) and \( n \). 


\subsection{The case when \( m \) is even and \( n \) is even}

Since the case \( m \equiv 0 \pmod{4} \) can be handled using Lemma~\ref{cor:color-decGraph}, we focus on the case where \( m \equiv 2 \pmod{4} \).


\begin{lem}{\label{lem:color-m-even-n-even}}
Let $m\equiv 2\ ({\rm mod}\ 4)$, and let $n$ be an even positive integer with $6\leq m\leq n$. If $n(n-1)\equiv 0\ ({\rm mod}\ m)$, then $4K_n^{\bullet}$ admits an HOP $(C_m)$-decomposition. 
\end{lem}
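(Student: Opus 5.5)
The plan is to reduce to the range $m \leq n < 2m$, build a $(C_m)$-decomposition of $2K_n$ there, and then recolour it via the decomposition graph. First I apply Lemma~\ref{lem:n-even-general} with $\alpha=1$. It suffices to treat even $n$ with $m\le n<2m$ and $m\mid n(n-1)$. For $n=m$ the claim is Lemma~\ref{lems-unified}, so assume $m<n<2m$.

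For such $n$, Theorem~\ref{theo:(Cm)-dec-2Kn} gives a $(C_m)$-decomposition $\D$ of $2K_n$, which I regard as a decomposition of $2K_n^{\circ}$. By Lemma~\ref{lem:Gtool2}, it is enough to recolour $\D$ so that every $m$-cycle has an even number of pink edges. For this I use Lemma~\ref{cor:color-decGraph}: I need every connected component of $\C(\D)$ to have an even number of edges. Since $\C(\D)$ is $m$-regular with $m\equiv 2 \pmod 4$, a component with $k$ vertices has $km/2$ edges, which is even exactly when $k$ is even. So the goal is a decomposition whose decomposition graph has only components of even order.

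The total number of cycles is $|\D| = n(n-1)/m$. If this number is even and $\C(\D)$ is connected, I am done. Connectivity is not given by the existence theorem, however, so I would rather construct $\D$ directly. I would take $2K_n$ as $2(\mathrm{Circ}(n-1;\pm S)\bowtie K_1)$, with $n-1$ odd, and use starter cycles under $\rho$, choosing them so that each starter shares a parallel pair with its own translates. Then every orbit of cycles is a connected subgraph of $\C(\D)$ of order $n-1$, possibly together with a few short orbits. To build the starters I would use Lemma~\ref{zig-zag}: it yields a path realising a prescribed multiset of differences, into which $x_\infty$ is inserted to close a central cycle. I would also use $C'$-type cycles of difference $1$, as in Lemma~\ref{lem:color-1}, to fix the count. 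Because orbits of a full starter have odd order $n-1$, the parity must be repaired by gluing orbits: two starters sharing a common parallel pair of edges (the same difference used by both) merge their orbits into one component of even order $2(n-1)$.

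Alternatively, I can avoid the graph argument and colour directly, as in Lemma~\ref{lem:color-1}. I would take the two copies of each starter, colour the pink/black pairs so that each copy has an even pink count, and then transfer a single pink/black swap between the two copies along a shared difference. Here $m$ is even, so each cycle of length $m$ can be coloured with an even number of pinks once its edges' partners are controlled.

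The main obstacle is the parity and connectivity bookkeeping. Each component of $\C(\D)$ must have an even number of vertices, or equivalently the odd-pink cycles must be pairable within components. The difficulty is concentrated when $n(n-1)/m$ is odd; then some cycle must pair across differently structured orbits. I would first try designing the starters so that all cycles lie in a single component, meaning one difference is shared by all starter types, and handle an odd total by exhibiting one explicitly recoloured cycle.
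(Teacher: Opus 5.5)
Your reduction to $m<n<2m$ (Lemma~\ref{lem:n-even-general}, with $n=m$ from Lemma~\ref{lems-unified}) matches the paper. The core strategy, however, fails outright when $n\equiv 2\pmod 4$.

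Every variant you propose stays inside $2K_n^{\circ}$ and aims at Lemma~\ref{lem:Gtool2}. This includes recolouring via Lemma~\ref{cor:color-decGraph}, gluing orbits, explicitly recolouring one cycle, and transferring pink/black swaps between two copies; the blue colour is never used. A recolouring only swaps the two colours within a parallel pair, so the total number of pink edges is invariant. It always equals $|E(K_n)|=n(n-1)/2=|\D|\cdot\frac{m}{2}$.

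Since $m\equiv 2\pmod 4$ and $m\mid n(n-1)$, the number $|\D|=n(n-1)/m$ is odd precisely when $n\equiv 2\pmod 4$, which is the case you single out as hard. Then the total pink count is odd, so some cycle must contain an odd number of pink edges. This holds whatever decomposition you start from and however connected $\C(\D)$ is. Equivalently, $\C(\D)$ has an odd number of edges, so some component has an odd number of edges and Lemma~\ref{cor:color-decGraph} can never apply. This is a parity invariant, not a bookkeeping difficulty, and exhibiting one explicitly recoloured cycle cannot fix it.

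The missing idea is to abandon the pink/black framework on part of the graph and use the blue edges of $4K_n^{\bullet}$ directly. The paper writes $n=2^ea$ and $m=2a'b'$ with $a'\mid a$ and $b'\mid n-1$, and sets $\ell=(n-1)/b'$ and $F=(n-m)/(2a')$. It then splits $K_n=G_1\oplus G_2$ into a peripheral circulant $G_1=\mathrm{Circ}(n-1;\pm B)$ and a central part $G_2=\mathrm{Circ}(n-1;\pm(\mathscrsfs{L}-\{\infty\}))\bowtie K_1$.

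When $n\equiv 2\pmod 4$, both $F$ and $t$ are even. So $G_1$ has an even number $(n-1)Fa'$ of edges. It is handled by explicit peripheral starters built from the paths $P_i$, each taken once pink and once black, together with Lemma~\ref{lem:Gtool2}. But $G_2$ has $(n-1)(t+1)$ edges, an odd number. There the paper builds two central starters $C,C'$ from Lemma~\ref{zig-zag} directly in $4G_2^{\bullet}$. They are coloured pink, blue and black so that jointly they contain one pink edge, one blue edge and two opposite black arcs of each orbit, and they satisfy \textbf{(C1)}.

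When $n\equiv 0\pmod 4$ your framework is parity-consistent, but your proposal is still only a plan. Theorem~\ref{theo:(Cm)-dec-2Kn} gives no control over $\C(\D)$, and you never specify starters. The paper constructs them explicitly:
\begin{itemize}
\item $C_0$ is coloured entirely pink;
\item the other peripheral starters are doubled as a pink and a black copy;
\item the central starter $x_\infty T x_\infty$ has as pink edges the second occurrences of $\infty,a_1,\dots,a_t$, i.e.\ $t+1$ of them with $t$ odd.
\end{itemize}
It then applies Lemma~\ref{lem:Gtool2} directly, without any decomposition-graph argument.
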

\begin{proof}
By Lemma~\ref{lem:n-even-general}, it suffices to prove this result for even $n$ in the range $m\leq n<2m$. Note that if  $n=m$, then by Lemma~\ref{lems-unified}, there exists an HOP $(C_m)$-decomposition of $4K_{m}^{\bullet}$. Hence, we assume that $m< n<2m$.

The approach is to construct starter central and starter peripheral cycles and to color them appropriately. These starter cycles are designed to generate all cycles in the decomposition through suitable permutations.
We begin by defining the necessary parameters and determining how many starter central and starter peripheral cycles are needed. 

Let \( V(2K_n^{\circ}) = \{x_i : i \in \mathbb{Z}_{n-1}\} \cup \{x_{\infty}\} \), and let $\rho_{\circ}$ be the permutation on $E(2K_n^{\circ})$ that preserves the color of the edges, and is induced by  the permutation  $\rho=(x_{\infty})(x_0 \ x_1\  x_2 \ \ldots \ x_{n-2})$. Observe that the group $\langle \rho_{\circ} \rangle$ has the following orbits on the edge set of $2K_n^{\circ}$:
\begin{itemize}
\item for each $s\in \{1,2,\ldots, \textstyle{\frac{n-2}{2}}\}$, we have a pink and a black orbit $\{x_ix_{i+s}: i\in \ZZ_{n-1} \}$; and
\item a pink and a black orbit  $\{x_ix_{\infty}: i\in \ZZ_{n-1} \}$.
\end{itemize}
For convenience, let \( S = \{ 1,  2, \ldots,  \frac{n-2}{2}, \infty\} \) be the set of all differences. 
Let \( n = 2^e a \), where \( a \) is odd, and let \( m = 2a' b' \), where \( a' \) and \( b' \) are both odd, with \( a' \mid a \) and \( b' \mid (n-1) \).
To construct the peripheral cycles, we partition the \( n-1 \) vertices in \( \{x_i : i \in \mathbb{Z}_{n-1}\} \) into \( b' \) segments, each containing \( \ell = \frac{n-1}{b'} \) consecutive vertices, and contributing \( 2a' \) edges toward an \( m \)-cycle.
%

If \( 2K_n^{\circ} \) is \((C_m)\)-decomposable, then the number of \( m \)-cycles in the decomposition is
$$ \frac{n(n-1)}{m} = \frac{(n-1)(m + n - m)}{m} = (n-1) + \frac{n-1}{b'} \cdot \frac{n-m}{2a'} = (n-1) + \ell F,$$
where $F=\frac{n - m}{2a'} =  \frac{2^{e-1} a - a' b' }{a'}$ is an integer.
The number of  \( m \)-cycles suggests constructing one starter central cycle and \( F \) starter peripheral cycles. The starter central cycle is rotated through all \( n-1 \) positions, while each of the \( F \) starter peripheral cycles is rotated through  \( \ell \) positions to generate all the cycles in the decomposition. 

Next, we establish some inequalities that will be useful later in the proof. Since $n < 2m$, it follows that $2^e a < 4a' b'$ and $\frac{2^{e-1}a}{a'} < 2b'$, which implies $F = \frac{2^{e-1}a - a'b'}{a'} < b'$. Since $F$ is an integer, we have
\begin{equation}
    F \leq b' - 1. \label{eq:F<b'-1}
\end{equation}
Furthermore, the condition $m < n - 1 < 2m$ implies $2a'b' < b'\ell < 4a'b'$, which simplifies to
\begin{equation}
    2a' < \ell < 4a'. \label{eq:ell4a'}
\end{equation}
If $\ell - a' \leq 2a' - 1$, then $\ell \leq 3a' - 1$. Substituting $n = b'\ell + 1$, we obtain $n \leq 3a'b' - b' + 1$. Since $n = 2^e a$, it follows $ \frac{2^{e-1}a}{a'}  \leq b'+\frac{a'b'-b'+1}{2a'}$ and thus $F= \frac{2^{e-1}a}{a'} -b' \leq \frac{b'}{2}-\frac{b'-1}{2a'}$.  
Given $b' \geq 1$, we have $F \leq \frac{b'}{2}$. Therefore, 
\begin{equation}
\text{if } \ell - a' \leq 2a' - 1, \text{ then } F \leq \frac{b'}{2}. \label{eq:333b'}
\end{equation}
Having established the necessary inequalities, we proceed with the constructions.

\vspace{0.3cm}

\noindent \textbf{Case 1:} Assume $m \mid \frac{n(n-1)}{2}$. Since $n = 2^e a$, 
it follows that $e \geq 2$, implying $n \equiv 0 \pmod{4}$. Moreover, as $a$, $a'$, and $b'$ are odd, $F = \frac{2^{e-1} a - a' b'}{a'}$ is also odd.
%

%

We start by constructing the starter peripheral cycles. For \( i = 0, 1, \ldots, \frac{F-1}{2} \), define 
$$P_i=x_0\ x_{i\ell+1}\ x_{-1}\ x_{i\ell+2}\ x_{-2} \dots x_{i\ell+a'-1}\ x_{-(a'-1)}\ x_{i\ell+a'}\ x_{\frac{b'+1}{2}\ell}.$$
The differences covered by \( P_0 \) are:
\begin{equation}
1, \; 2, \;  3, \; 4, \; \ldots, \;  2a' - 2, \;  2a' - 1, \; \textstyle{\frac{(b' -  1)\ell}{2} + a'}, \label{eq:1-secondcase-P0}
\end{equation}
and the differences covered by \( P_i \), for \( i = 1, \ldots, \frac{F-1}{2} \), are:
\begin{equation}
i\ell + 1, \; i\ell + 2, \; i\ell + 3, \; \ldots, \; i\ell + 2a' - 2, \; i\ell + 2a' - 1, \; \textstyle{\frac{(b' - 2i + 1)\ell}{2} - a'}. \label{eq:1-secondcase}
\end{equation}

Since $2a'<\ell$ by (\ref{eq:ell4a'}), the paths \(\rho^{j}(P_i)\), for \(j = 0, \ell, 2\ell, \ldots, (b' - 1)\ell\),  are pairwise vertex-disjoint except at the endpoints. Also, since \(b'\) is odd, we have \(\gcd\left(b', \frac{b' + 1}{2}\right) = 1\). Thus,
$
C_i=P_i \cup \rho^{\ell}(P_i) \cup \ldots \cup \rho^{(b'-1)\ell}(P_i)
$
is an \(m\)-cycle.

We show that the differences covered by the $m$-cycles in (\ref{eq:1-secondcase-P0})  and (\ref{eq:1-secondcase}) are pairwise distinct.
First, we show that the differences \( i\ell + 1, i\ell + 2, \ldots, i\ell + 2a' - 1 \) are pairwise distinct for \( i = 0, 1, \ldots, \frac{F - 1}{2} \). It suffices to show that the largest difference, $\frac{F-1}{2}\ell + 2a' - 1$,  does not exceed \( \frac{n-2}{2} \). Since $F$ and $b'$ are odd, \eqref{eq:F<b'-1} implies $F \leq b' - 2$. Therefore, \(\frac{F - 1}{2} \ell \leq \frac{b' - 3}{2} \ell\), and  combining this with $2a' < \ell$ from \eqref{eq:ell4a'}, we obtain
\allowdisplaybreaks
\begin{align*}
\frac{F - 1}{2} \ell + 2a' - 1 &\leq \frac{b' - 3}{2}\ell +\ell -1 < \frac{(b' - 1)\ell}{2} + a'   <\frac{b'\ell}{2}= \frac{n - 1}{2} 
\end{align*}
In the above inequality, observe that
\(
\frac{F - 1}{2}\ell + 2a' - 1 < \frac{(b' - 1)\ell}{2} + a'< \frac{n - 1}{2}.
\)
This guarantees that for all \( j = 0, 1, \ldots, \frac{F-1}{2} \), the difference 
\(
\frac{(b' - 1)\ell}{2} + a' \notin \bigl\{j\ell + 1, j\ell + 2, \ldots, j\ell + 2a' - 1\bigr\}.
\)
Next, we show that for all \( i = 1, \ldots, \frac{F-1}{2} \) and \( j = 0, 1, \ldots, \frac{F-1}{2} \),
\[
\frac{(b' - 2i + 1)\ell}{2} - a' \notin \bigl\{j\ell + 1, j\ell + 2, \ldots, j\ell + 2a' - 1\bigr\}.
\]
Observe that 
\[
\textstyle{0<\frac{(b' - 2(\frac{F-1}{2}) + 1)\ell}{2} - a'< \frac{(b' - 2(\frac{F-3}{2}) + 1)\ell}{2} - a'<\ldots<\frac{(b' - 2\cdot 1 + 1)\ell}{2} - a'<\frac{(b' - 1)\ell}{2} + a'.}
\]
Thus, it suffices to show that for all \( j = 0, 1, \ldots, \frac{F-1}{2} \) and all \( \alpha = 1, \ldots, 2a'-1 \),
\[
\frac{(b' - 2\left(\frac{F-1}{2}\right) + 1)\ell}{2} - a' \neq j\ell + \alpha.
\]
We prove this by contradiction.
Suppose to the contrary that $\frac{b' -F}{2}\ell + \ell- a' = j\ell + \alpha$.
Since \( F \) and \( b' \) are odd, the term \( \frac{b' - F}{2} \) is an integer, and since \( 2a' < \ell \) we have  
\(
\frac{b' - F}{2} = j \ \text{and} \ \ell - a' = \alpha .
\)
Given that \( j \leq \frac{F - 1}{2} \), it follows that \( F \geq \frac{b' + 1}{2} \).
On the other hand, knowing that \(1 \leq \alpha \leq 2 a' - 1\) and \(\ell - a' = \alpha\), it follows that \(\ell - a' \leq 2 a' - 1\), and by  (\ref{eq:333b'}), we have \(F\leq \frac{b'}{2}\).
 Therefore, we have \( \frac{b' + 1}{2}\leq F \leq \frac{b'}{2}\), which is a contradiction. 
We conclude that the differences listed in (\ref{eq:1-secondcase-P0}) and (\ref{eq:1-secondcase}) are pairwise distinct.

Let \( B \) be the set of differences covered by the \( m \)-cycles \( C_1, C_2, \ldots, C_{\frac{F-1}{2}} \) as listed in (\ref{eq:1-secondcase}), and let \( G_1 = \text{Circ}(n-1; \pm B) \). 
Then, \(\{C_i, \rho(C_i), \ldots, \rho^{\ell-1}(C_i) \mid i=1, \ldots, \tfrac{F - 1}{2}\}\) is a \((C_m)\)-decomposition of \( G_1 \). Take two copies of each \( C_i \), color one copy pink and the other black. By doing so, we generate \( F - 1 \) starter peripheral cycles, which we denote \( C'_1, \ldots, C'_{F-1} \). It is now easy to see that \(\{C'_i, \rho_{\circ}(C'_i), \ldots, \rho_{\circ}^{\ell-1}(C'_i) \mid i=1, \ldots, F-1\}\) is a \((C_m)\)-decomposition of \(2G_1^{\circ}\). Moreover, since \(m\) is even, each cycle contains an even number of pink edges. Thus, by Lemma~\ref{lem:Gtool2}, there exists an HOP \((C_m)\)-decomposition of \(4G_1^{\bullet}\).


%

Recall \( A = \{1, 2, \ldots, 2a' - 2, 2a' - 1, \tfrac{(b' - 1)\ell}{2} + a'\} \) is the set of differences covered by \( C_0 \). Color the edges of $C_0$  pink. Observe that the family of peripheral cycles generated by \( C_0 \); that is, 
\(
\{C_0, \rho_{\circ}(C_0), \ldots, \rho_{\circ}^{\ell-1}(C_0)\},
\)
covers the pink orbits corresponding to the differences in \( A \).
The differences in \( A \) will also appear in the central cycle \( C \), to be constructed below. There, we need to ensure that the black orbits corresponding to the differences listed in \( A \) are covered.

Now, we focus on constructing the starter central cycle  \(C\). Let \(\mathscrsfs{L} \) be the set of unused differences in \(S\), that is, \(\mathscrsfs{L}  = S \setminus (A \cup B)=\{a_1, a_2, \ldots, a_t\} \cup \{\infty\}\), where \(a_1=2a' < a_2 < \ldots < a_t \leq \frac{n-2}{2}\) are  integers and \(t = \frac{n-2}{2} -(F+1)a'\). Note that $2a' - 1 < a_1< \tfrac{(b' - 1)\ell}{2} + a' \leq \tfrac{n - 2}{2}.$


Consider the following sets of differences: 
\begin{itemize}
\item \( W_1=\{ 1, 2, \ldots, 2 a' - 1, a_1, a_2,  \ldots, \textstyle{\frac{(b' - 1)\ell}{2} +a'}, \ldots, a_{t-1}, a_t\}\)
\item \( W_2=\{ a_1, a_2,  \ldots, a_{t-1}, a_t\} \)
\end{itemize}
Note that \( \tfrac{(b' - 1)\ell}{2} + a' \notin W_2 \), and  in the sequence \( a_1, a_2, \ldots, \tfrac{(b' - 1)\ell}{2} + a', \ldots, a_{t-1}, a_t \), we assume
\(
a_1 < a_2 < \ldots < \tfrac{(b' - 1)\ell}{2} + a' < \ldots < a_{t-1} < a_t,
\)
with the understanding that
\(
a_1 < a_2 < \ldots < a_{t-1} < a_t < \tfrac{(b' - 1)\ell}{2} + a'
\)
is also possible.
Here, \( W_1 \) contains all the differences in \( A \) and  \( \mathscrsfs{L} \) except for \( \infty \), and has size \( |W_1| = t + 2a' \). The set \( W_2 \) contains all the differences in \( \mathscrsfs{L} \) except for \( \infty \), and has size \( |W_2| = t \).  By Lemma~\ref{zig-zag}(i), there exists a path \( T \) of length \( 2t + 2a' \) that starts at \( x_0 \) and covers the differences in the following order:
\[T: 1, 2, \ldots,  2 a' - 1, a_1, a_2,  \ldots, \textstyle{\frac{(b' - 1)\ell}{2} +a'}, \ldots, a_{t-1}, a_t, a_t, a_{t-1}, \ldots, a_2, a_1.\]
Note that the length of the path \( T\) is \( 2t + 2a'=2\left(\frac{n-2}{2} - (F+1) a'\right) + 2a'  =   m - 2. \)

Let \( C = x_{\infty} T x_{\infty} \). We color the edges corresponding to the second occurrences of the differences \( \infty, a_1, a_2, \ldots, a_{t-1}, a_t \) pink, and color all remaining edges black, as shown below.
\begin{figure}[H]
 \centering
   \includegraphics[width=0.95\linewidth,height=0.8\textheight,keepaspectratio]{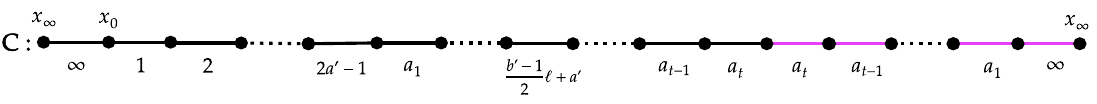}
   \vspace{-10pt}
\end{figure}
Given \( t = \frac{n-2}{2} - (F + 1)a' \), where \( n \equiv 0 \pmod{4} \) and both \( F \) and \( a' \) are odd, we see that \( t \) is odd. Hence, \( t+1 \), the number of pink edges in \( C \), is even.
Observe that \( C \) contains exactly one pink edge and one black edge from each orbit of \( \langle \rho_{\circ} \rangle \) corresponding to the differences in \( \mathscrsfs{L} \). Additionally, it contains exactly one black edge from each orbit of \( \langle \rho_{\circ} \rangle \) corresponding to the differences in \( A \).
Thus,
\(
\{C, \rho_{\circ}(C), \ldots,  \rho_{\circ}^{n-2}(C)\} \cup \{C_0, \rho_{\circ}(C_0), \ldots, \rho_{\circ}^{\ell-1}(C_0)\}
\)
is a \((C_m)\)-decomposition for \( 2G_2^{\circ} \), where
\(
G_2 = \text{Circ}(n-1; \pm(A \cup (\mathscrsfs{L} - \{\infty\}))) \bowtie K_1.
\)
Since each cycle in this decomposition has an even number of pink edges,  by Lemma~\ref{lem:Gtool2}, there exists an HOP \((C_m)\)-decomposition of \(4G_2^{\bullet}\).

We have $4K_n^{\bullet}=4G_1^{\bullet}\oplus 4G_2^{\bullet}$, and since each of $4G_1^{\bullet}$ and $4G_2^{\bullet}$ admits an HOP $(C_m)$-decomposition, by Lemma~\ref{Gtool3}, the multigraph $4K_n^{\bullet}$ admits an HOP $(C_m)$-decomposition.


\vspace{0.3cm}


\noindent \textbf{Case 2:} Assume $m \nmid \frac{n(n-1)}{2}$, that is, $2a'b' \nmid 2^{e-1} a (n-1)$. Since $b' \mid (n-1)$ and $a' \mid a$, it follows that $2 \nmid 2^{e-1}$; thus $e = 1$, $n = 2a$, and $n \equiv 2 \pmod{4}$.

Since \( e = 1 \) and \( a' \), \( b' \), and \( a \) are all odd, it follows that \( F = \frac{2^{e-1} a - a' b'}{a'} \) is even.
We begin by constructing \(\frac{F}{2}\) starter peripheral cycles for \( K_n \). 
For \( i = 0, 1, \ldots, \frac{F-2}{2} \), we define $P_i$ exactly as in Case 1. The differences covered by $P_i$ are given in \ref{eq:1-secondcase-P0} and \ref{eq:1-secondcase}.

As in Case~1, it can be shown that $C_i = P_i \cup \rho^{\ell}(P_i) \cup \dots \cup \rho^{(b'-1)\ell}(P_i)$
is an $m$-cycle, and that the differences covered by $C_0, C_1, \dots, C_{\frac{F-2}{2}}$, as defined in (\ref{eq:1-secondcase-P0}) and (\ref{eq:1-secondcase}), are pairwise distinct. Let \( B \) be the set of differences covered by these \( m \)-cycless, and let \( G_1 = \text{Circ}(n-1; \pm B) \). 
As in Case~1, it can be shown that there exists an HOP \((C_m)\)-decomposition of \(4G_1^{\bullet}\).

Now, we focus on constructing the starter central cycle  \(C\). Let \(\mathscrsfs{L} \) be the set of unused differences in \(S\), that is, \(\mathscrsfs{L}  = S \setminus B=\{a_1, a_2, \ldots, a_t\} \cup \{\infty\}\), where \(a_1 < a_2 < \ldots < a_t \leq \frac{n-2}{2}\) are  integers and \(t = \frac{n-2}{2} -Fa'\). 
Consider two sets of differences  \( W_1=W_2=\{ a_1, a_2, \ldots, a_{t-1}, a_t\}\),  where
$a_1 < a_2 < \ldots  < a_{t-1} < a_t\leq \tfrac{n-2}{2}.$
Here, \(|W_1|=|W_2|  = t\).  By Lemma~\ref{zig-zag}(i), there exists a path \( T \) of length \( 2t = m - 2\) that starts at \( x_0 \) and covers the differences in the following order:
\[T: a_1, a_2,  \ldots, a_{t-1}, a_t, a_t, a_{t-1}, \ldots, a_2, a_1.\]
Let \( C = x_{\infty} T x_{\infty} \). Observe that \( C \) covers each difference in \(\mathscrsfs{L} \) exactly twice. Let \( C' \) be another copy of \( C \).  
In \( C \), color the edges corresponding to the second occurrences of the differences \( a_1, a_2, \ldots,\allowbreak a_t \) alternately pink and blue, starting with \( a_t \) as pink. Since \( n \equiv 2 \pmod{4} \), \( F \) is even, and \( a' \) is odd, it follows that \( t = \frac{n-2}{2} - F a' \) is even, ensuring that the edge corresponding to the difference \( a_1 \) is colored blue.  
In \( C' \), color the same edges alternately pink and blue, but in the opposite way of \( C \), and complete the coloring as shown below.
\begin{figure}[H]
 \centering
   \includegraphics[width=0.85\linewidth,height=0.8\textheight,keepaspectratio]{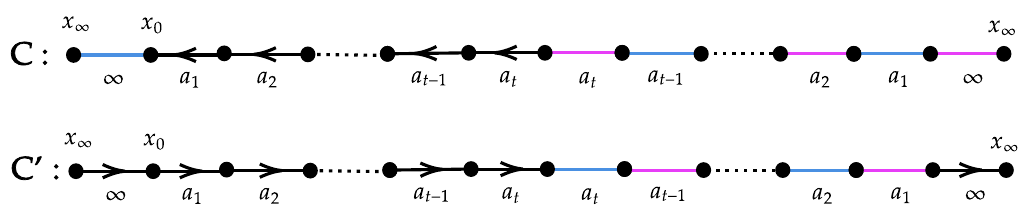}
   \vspace{-10pt}
\end{figure}
Observe that \( C \) and \( C' \) jointly contain exactly one blue edge, one pink edge, and two black arcs corresponding to the differences in \(\mathscrsfs{L} \). Additionally, the two cycles satisfy Condition {\bf (C1)} of Definition {\rm{\ref{def}}}. Therefore,  
\(
\{\rho_{\bullet}^i(C), \rho_{\bullet}^i(C') \mid i = 0, 1, \ldots, n-2\}
\)
is an HOP \((C_m)\)-decomposition of \( 4G_2^{\bullet} \), where  
\(
G_2 = \text{Circ}(n-1; \pm(\mathscrsfs{L} - \{\infty\})) \bowtie K_1.
\)

Since $4K_n^{\bullet}=4G_1^{\bullet}\oplus 4G_2^{\bullet}$ and each of $4G_1^{\bullet}$ and $4G_2^{\bullet}$ admits an HOP $(C_m)$-decomposition, so does $4K_n^{\bullet}$ by Lemma~\ref{Gtool3}.
 \end{proof}

\subsection{The case when \( m \) is even and \( n \) is odd}
In this section, we review the $(C_m)$-decompositions of $2K_n$ from  \cite{2Kn-dec} and extend them to HOP $(C_m)$-decompositions of $4K_n^{\bullet}$.  We focus on even $m$ and odd $n$; specifically, the cases $n(n-1) \equiv m \pmod{2m}$  and $m \equiv 2 \pmod{4}$. The case $n(n-1) \equiv 0 \pmod{2m}$ can be covered by  $(C_m)$-decompositions of $K_n$, and the case $m \equiv 0 \pmod{4}$ can be  addressed by Lemma~\ref{cor:color-decGraph}.


\begin{lem}{\label{lem:color-3}}
Let $m=2k$ with $k$ being an odd  positive integer. Let $n$ be an odd  integer with $6\leq m<n<2m$, and $n(n-1)\equiv m\ ({\rm mod}\ 2m)$. Let $G=Circ(n; \pm S)$, where $S=\{a_1, a_2,\ldots, a_k\}$ and $a_1, a_2,\ldots, a_k$ are positive integers with $a_1<a_2<\ldots<a_k<\frac{n}{2}.$ Then $4G^{\bullet}$ admits an HOP $(C_m)$-decomposition.
\end{lem}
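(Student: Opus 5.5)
We will follow the strategy of Case~2 in the proof of Lemma~\ref{lem:color-m-even-n-even}: build a single starter $m$-cycle in $G$ that uses every difference of $S$ exactly twice, and take two differently coloured copies of it. These two copies together should contain, for each difference $a_j$, exactly one pink edge, one blue edge and two oppositely oriented black arcs. Rotating them under the group generated by $\rho_{\bullet}$, induced by $x_i\mapsto x_{i+1}$ on $\mathbb{Z}_n$, then yields $2n$ cycles of length $m$. This is the right number, since $|E(4G)|=4nk=2n\cdot m$. We then check Condition~(C1) of Definition~\ref{def} on the two starters only, because $\rho_{\bullet}$ preserves colours and orientations.

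\emph{The starter cycle.} We use a zig-zag walk as in Lemma~\ref{zig-zag}, but in $\mathrm{Circ}(n;\pm S)$ rather than $\mathrm{Circ}(n-1;\cdot)$. Its step sequence is $a_1,a_2,\ldots,a_k,a_k,a_{k-1},\ldots,a_1$, with alternating signs: $v_0=0$ and $v_r=v_{r-1}+(-1)^{r+1}d_r$. The second occurrence of $a_j$ is step $2k+1-j$, so it carries sign $(-1)^{j}$, opposite to the sign $(-1)^{j+1}$ of the first occurrence. Hence the total displacement is $0$ and the walk closes up at $x_0$. Because $a_1<\cdots<a_k<\tfrac n2$, the argument of Lemma~\ref{zig-zag}(i) applies. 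The even-indexed vertices move monotonically one way and the odd-indexed ones the other way, and no step reaches length $n$, so they never cross. This holds on both the ascending and the descending half, since the descending half retraces the interval in the reverse order. Thus the closed walk is an $m$-cycle $C$. Moreover, its two edges of difference $a_j$, traversed along $C$, are oppositely directed arcs of difference $\pm a_j$. This is exactly what is needed for black arcs of a common orbit to have opposite orientations.

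\emph{Colouring.} In $C$ we colour the first $k$ edges (the first occurrences) black, oriented forward along $C$. Since $k$ is odd, the $k-1$ edges of differences $a_k,\ldots,a_2$ in the second half are coloured alternately blue, pink, $\ldots$, blue, pink. The final edge, of difference $a_1$, is coloured black and oriented forward. We check (C1) at every vertex of $C$:
\begin{itemize}
\item between two consecutive forward black arcs the orientations agree;
\item at the junction, a forward black arc points into a blue edge, as required;
\item blue and pink edges alternate in the middle segment;
\item after the last pink edge, the forward black arc points away from pink;
\item at $x_0$, the closing arc and the first arc are both forward black arcs.
\end{itemize}
In $C'$, a second copy of the same vertex sequence, we must supply the complementary colours. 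For $a_1$ these are a pink and a blue edge. For $a_j$ with $j\ge 2$ they are the other of pink/blue together with the black arc of the opposite orientation. So we colour the second-half edges of $a_k,\ldots,a_2$ black with the opposite orientation, and the first-half edges of $a_2,\ldots,a_k$ with the colours pink/blue swapped relative to $C$. The two edges of difference $a_1$ receive pink and blue, placed so that consecutive colours along $C'$ satisfy (C1).

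\emph{Main obstacle.} The hard step is making $C'$ satisfy (C1) at the two junctions and at $x_0$. The parity of $k$ (odd) is what forced the extra black $a_1$-edge in $C$, and it may now force a mismatch in $C'$: for example, a black arc adjacent to a pink edge that it points toward. If the naive choice fails, we first try reversing the traversal direction of $C'$, which turns every "toward" into "away". If that still fails, we swap which occurrence of $a_1$ (or of $a_k$) is black between $C$ and $C'$, and repeat the local check. Only a constant number of vertices (the two junctions and $x_0$) need checking, so a case check of these boundary configurations should settle the construction. Then $\{\rho_{\bullet}^i(C),\rho_{\bullet}^i(C') : i\in\mathbb{Z}_n\}$ is the required HOP $(C_m)$-decomposition of $4G^{\bullet}$.
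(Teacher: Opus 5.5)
\textbf{The starter walk is not a cycle.} This is a genuine gap, and it occurs at the first step, before any colouring.

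With the step sequence $a_1,\dots,a_k,a_k,\dots,a_1$ and the signs $(-1)^{r+1}$ you prescribe, steps $k$ and $k+1$ are $+a_k$ and $-a_k$ (since $k$ is odd). Hence $v_{k+1}=v_{k-1}$. By induction, $v_{k+i}=v_{k-i}$ for every $i$. So your closed walk just runs along the zig-zag path $x_{v_0}x_{v_1}\cdots x_{v_k}$ and straight back, using each of its edges twice. For example, $k=3$, $n=7$, $S=\{1,2,3\}$ satisfies all hypotheses, and your walk is $x_0x_1x_{-1}x_2x_{-1}x_1x_0$.

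The appeal to Lemma~\ref{zig-zag}(i) is where this slips through. That argument needs strictly increasing step lengths $d_1<d_2<\cdots$, which it obtains by realizing the decreasing half of the difference list through complements $(n-1)-a_i$. Your step lengths rise to $a_k$ and then fall, so the non-crossing argument does not apply.

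If you instead use complements $n-a_j$ in the second half, you do get a path. But its last vertex is $x_{v_{2k}}$ with $v_{2k}\equiv 2\sum_{j=1}^k(-1)^{j+1}a_j \pmod n$, and $0<2\sum_{j}(-1)^{j+1}a_j<2a_k<n$, so it never returns to $x_0$. In Case~2 of Lemma~\ref{lem:color-m-even-n-even}, and wherever else the paper uses this zig-zag, the path is closed up through the central vertex $x_\infty$. The graph $\mathrm{Circ}(n;\pm S)$ has no such vertex. So the real obstacle is the existence of the starter cycle, not the colouring of $C'$ that you single out.

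\textbf{The missing idea.} Arrange the differences cyclically so that the two occurrences of $a_k$ are not adjacent. The paper runs the zig-zag only on $a_1,\dots,a_{k-1}$. This gives $P=x_0x_{\ell_1}\cdots x_{\ell_{k-1}}$ with $\ell_i=\sum_{j=1}^{i}(-1)^{j-1}a_j$, and the paper sets
\[
C=P\;x_{\ell_{k-1}}x_{\ell_{k-1}-a_k}\;\rho^{n-a_k}(\stackrel{\leftarrow}{P})\;x_{-a_k}x_0 .
\]
This cycle traverses the differences $a_1,\dots,a_{k-1},a_k,a_{k-1},\dots,a_1,a_k$.

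\begin{itemize}
\item $P$ lies in an interval of length $a_{k-1}<a_k$, and $a_{k-1}+a_k<n$. Hence $P$ and its translate by $-a_k$ are vertex-disjoint, and $C$ is a genuine $m$-cycle.
\item Each difference occurs in $C$ exactly twice, in opposite directions.
\end{itemize}

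The two copies are then coloured as follows.

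\begin{itemize}
\item In $C$: the middle $a_k$-edge is blue and the next edge (difference $a_{k-1}$) is pink. All other edges are black, oriented away from the pink edge and toward the blue one.
\item In $C'$: the edges of $P$ alternate pink/blue, ending with blue. The middle $a_k$-edge is pink and the next edge is black. The remaining edges of the reversed path alternate blue/pink, starting and ending with blue because $k-2$ is odd. The closing $a_k$-edge is black, oriented away from the pink edge and toward the blue one.
\end{itemize}

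Rotating $C$ and $C'$ under $\rho_{\bullet}$ then gives the $2n$ cycles of your count.
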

\begin{proof}
Let $V(4G^{\bullet})=\{x_i: i\in \ZZ_{n} \}$. Let $\rho_{\bullet}$ be the permutation on $E(4G^{\bullet})$ that preserves the edge colors (and orientations), and  is induced by  the permutation  $\rho=(x_0 \ x_1\  x_2 \ \ldots \ x_{n-1})$. 
For $i\in\{1,2,\ldots, k-1\}$, let $\ell_i = \sum_{j=1}^{i} (-1)^{j-1} a_j$.  Then $P=x_0\ x_{\ell_1}\ x_{\ell_2}\ x_{\ell_3}\ \ldots\ x_{\ell_{k-1}}$ is a path of length $k-1$ in $G$.  Now let 
$C= P\ x_{\ell_{k-1}}x_{\ell_{k-1}-a_k} \ \rho^{n-a_k}(\stackrel{\leftarrow}{P})\ x_{-a_k} x_{0}.$
Observe that $C$ is an $m$-cycle that traverses the following differences (in order):
$$a_1, a_2, a_3, \ldots, a_{k-1}, a_{k}, a_{k-1}, a_{k-2}, \ldots, a_3, a_2, a_1, a_k .$$
In \cite[Lemma 3.4]{2Kn-dec}, it is shown that $\W=\{C, \rho(C), \rho^{2}(C), \ldots, \rho^{n-1}(C)\}$ is a $(C_m)$-decomposition for $2G$. 
Let $C'$ be a copy of $C$. Color $C$ and $C'$ as follows. (see  Figure~\ref{fig:color41})
\begin{itemize}
\item In $C$, color the edge $x_{\ell_{k-1}}x_{\ell_{k-1}-a_k}$ blue,  and color the first edge in $\rho^{n-a_k}(\stackrel{\leftarrow}{P})$, which is  of difference $a_{k-1}$,  pink. Color the rest of the edges black and orient them away from the pink edge and towards the blue edge. 

\item In $C'$, color the edges in the path $P$ alternately pink and blue, starting with pink. Since $P$ is of length $k-1$, which is even, it ends with blue. Now, color edge $x_{\ell_{k-1}}x_{\ell_{k-1}-a_k}$ pink, and color the first edge in $\rho^{n-a_k}(\stackrel{\leftarrow}{P})$, which is of difference $a_{k-1}$, black and orient it away from the pink edge. Color the rest of the edges in $\rho^{n-a_k}(\stackrel{\leftarrow}{P})$ alternately blue and pink, start with blue, and since $k-2$ is odd, it ends with blue. Finally, color the remaining  edge $x_{-a_k} x_{0}$ black and orient it away from the pink edge and towards the blue edge. 
\end{itemize}
Observe that  $C$ and $C'$ satisfy Condition {\bf (C1)} of Definition {\rm{\ref{def}}}, and they jointly contain exactly one edge from each orbit of $\langle \rho_{\bullet}\rangle$ corresponding to differences $ a_1,  a_2, \ldots,  a_{k}$. Thus, 
$\W'=\big\{\rho_{\bullet}^i(C), \rho_{\bullet}^i(C'): i=0,1,\ldots,n-1\big\}$
 is an HOP $(C_m)$-decomposition  for $4G^{\bullet}$.  
\end{proof}
\begin{figure}[h!]
 \centering
   \includegraphics[width=0.8\linewidth,height=0.8\textheight,keepaspectratio]{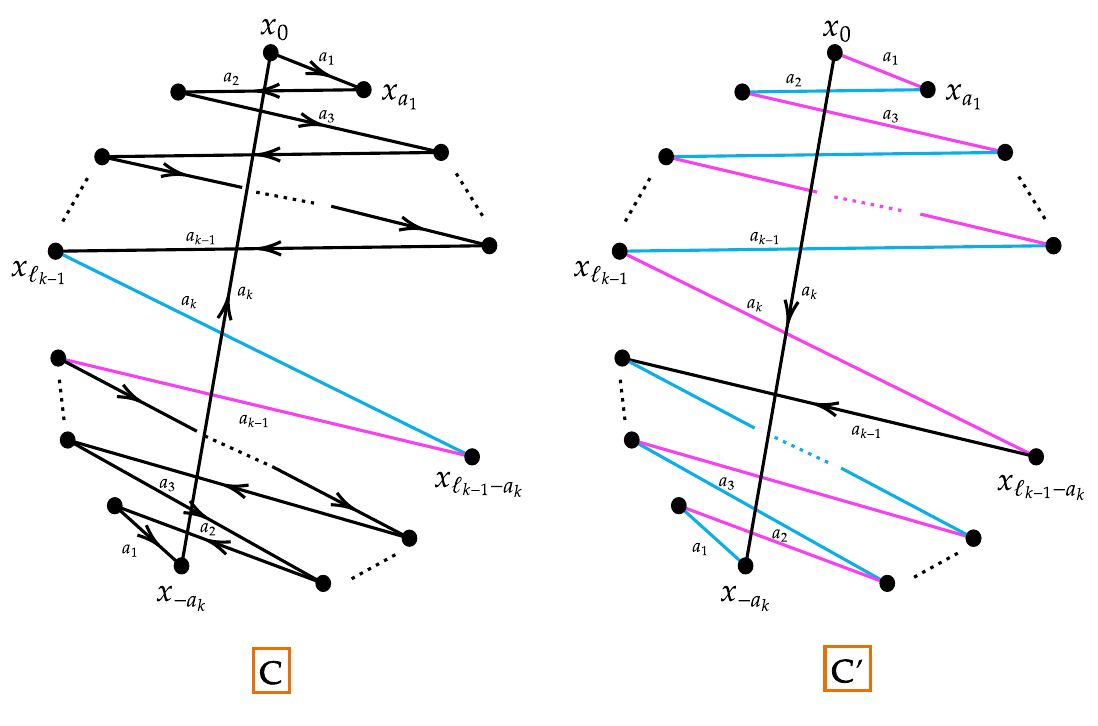}
    \caption{Lemma \ref{lem:color-3} -- Starter $m$-cycles for an HOP $(C_m)$-decomposition of $4G^{\bullet}$ for $m\equiv 2\ ({\rm mod}\ 4)$.}
    \label{fig:color41}
  \end{figure}
\begin{lem}{\label{lem:color-m-even-n-odd}}
Let $m\equiv 2\ ({\rm mod}\ 4)$, and let $n$ be an odd integer such that $6\leq m<n$. If $n(n-1)\equiv m\ ({\rm mod}\ 2m)$, then $4K_n^{\bullet}$ admits an HOP $(C_m)$-decomposition. 
\end{lem}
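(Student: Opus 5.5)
First reduce the range of $n$. Take $\alpha=2$ in Lemma~\ref{lem:n-odd-general}: the hypothesis $n(n-1)\equiv m \pmod{2m}$ gives $m\mid 2n(n-1)$, i.e.\ $m\mid n(n-1)$. So it suffices to handle odd $n$ with $m<n<2m$. The congruence is not literally the divisibility condition in that lemma, so I would check that it is preserved by the reduction. Write $n=qm+r+1$ and $n_0=m+r+1$. Then $n(n-1)\equiv r(r+1)\pmod m$. Modulo $2m$, the difference $n(n-1)-n_0(n_0-1)$ is $(n-n_0)(n+n_0-1)$, with $n-n_0=(q-1)m$. Since $n$ and $n_0$ are odd, $n+n_0-1$ is odd. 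Hence the difference is $\equiv (q-1)m \pmod{2m}$, so $n_0(n_0-1)$ may be $\equiv 0$ rather than $\equiv m$.

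This is why I would apply Lemma~\ref{lem:n-odd-general} with $\alpha=2$ and prove the base case under the weaker condition $m\mid n(n-1)$, splitting into two subcases for odd $n$ with $m<n<2m$.
\begin{itemize}
\item If $n(n-1)\equiv 0\pmod{2m}$, then $m\mid \frac{n(n-1)}{2}$. Theorem~\ref{theo:Cm-dec} ($n$ odd) gives a $(C_m)$-decomposition of $K_n$, and Lemma~\ref{lem:Gtool4} upgrades it to an HOP decomposition of $4K_n^{\bullet}$.
\item If $n(n-1)\equiv m\pmod{2m}$, I would use Lemma~\ref{lem:color-3}.
\end{itemize}

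For the second subcase, view $K_n=\mathrm{Circ}(n;\pm\{1,\dots,\frac{n-1}{2}\})$. The number of differences is $\frac{n-1}{2}$, and $m=2k$ with $k$ odd. The plan is to split the differences into one block of exactly $k$ differences, handled by Lemma~\ref{lem:color-3}, and a remainder set $R$ of size $\frac{n-1}{2}-k$. On $R$ we need $\mathrm{Circ}(n;\pm R)$ to have a $(C_m)$-decomposition as a simple graph, which Lemma~\ref{lem:Gtool4} then upgrades; Lemma~\ref{Gtool3} combines the pieces.

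Check the counts. The number of edges in $\mathrm{Circ}(n;\pm R)$ is $n|R|=\frac{n(n-1)}{2}-nk$. Here $\frac{n(n-1)}{2}\equiv k \pmod m$, and $nk\equiv k\pmod{2k}$ since $n$ is odd. So $n|R|\equiv 0\pmod m$, and divisibility is consistent.

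The main obstacle is constructing that $(C_m)$-decomposition of $\mathrm{Circ}(n;\pm R)$ for a suitable $R$. I would aim for the following. Write $n=a'b'\cdot(\text{odd})$ with $k=a'b'$, $a'\mid n$, $b'\mid(n-1)$, the odd analogue of Lemma~\ref{lem:color-m-even-n-even}. Then build peripheral starter cycles by the zig-zag method of Lemma~\ref{zig-zag}, each covering $m$ distinct differences and rotating through $n/\gcd$ positions. If that is awkward, the fallback is to take $R$ to be pairs of blocks of $k$ consecutive differences and use Lemma~\ref{lem:color-3} twice on them. The pairing is fine: two applications cover $4\cdot 2nk/m=4n$ cycles' worth, consistent. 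Taking $R$ to be such a union of $k$-blocks reduces everything to partitioning $\{1,\dots,\frac{n-1}{2}\}$ into sets of size $k$. That partition requires $k\mid\frac{n-1}{2}$, which holds only when $n\equiv1\pmod{m}$. In general I would instead invoke the construction from \cite{2Kn-dec} for $2K_n$ in this congruence class: decompose $K_n$ into $\mathrm{Circ}$ pieces $G$ as in Lemma~\ref{lem:color-3} plus pieces with $(C_m)$-decompositions in $K_n$ itself. I would then apply Lemma~\ref{lem:color-3} to each $G$-piece and Lemma~\ref{lem:Gtool4} to the rest.
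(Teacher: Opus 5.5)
Your final route is the paper's. You reduce with Lemma~\ref{lem:n-odd-general}, then split $K_n=\mathrm{Circ}(n;\pm B)\oplus\mathrm{Circ}(n;\pm(A\setminus B))$, handling $|A\setminus B|=m/2$ by Lemma~\ref{lem:color-3} and taking $B$ (of size $r/2$) from \cite[Theorem 3.1]{2Kn-dec}, whose $(C_m)$-decomposition of $2G_1$ is a doubled decomposition of $G_1$ so that Lemma~\ref{lem:Gtool4} applies. The paper treats $n=m+1$ separately via Lemma~\ref{lem:color-1}, whereas your $R=\emptyset$ case gets it directly from Lemma~\ref{lem:color-3}. Your observation that the reduced order $n_0=m+r+1$ may instead satisfy $n_0(n_0-1)\equiv 0 \pmod{2m}$ (e.g.\ $m=6$, $n=15$, $n_0=9$), which you dispatch via Theorem~\ref{theo:Cm-dec} and Lemma~\ref{lem:Gtool4}, is correct and makes explicit a point the paper's one-line reduction glosses over.
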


\begin{proof}
By Lemma~\ref{lem:n-odd-general}, it is enough to prove the result for odd \( n \) satisfying \( m < n < 2m \). Write \( n = m + r + 1 \), where \( 0 \leq r < m - 1 \) is even. 
If \( r = 0 \), then \( n = m + 1 \), and by Lemma~\ref{lem:color-1}, the multigraph \( 4K_{m+1}^{\bullet} \) admits an HOP \((C_m)\)-decomposition. Thus, we assume \( 0 < r < m - 1 \).
Let \( V(K_n) = \{x_i : i \in \ZZ_n\} \). Then  \( K_n = \text{Circ}(n;  \pm A) \), where \( A = \{1, 2, \dots, \frac{n-1}{2}\} \).
 Define \( G_1 = \text{Circ}(n;  \pm B) \), where \(|B|=\frac{r}{2}\), \( B \subseteq A \), and $B$ is carefully selected as in the proof of  \cite[Theorem 3.1]{2Kn-dec} to ensure that \( 2G_1 \) admits a \((C_m)\)-decomposition.  Let \( G_2 = \text{Circ}(n;  \pm(A \setminus B)) \). Note that $
K_n= G_1 \oplus G_2.
$
Since \( |A \setminus B| = \frac{m}{2} \), Lemma~\ref{lem:color-3} implies that \( 4G_2^{\bullet} \) admits an HOP \((C_m)\)-decomposition. Furthermore, the \((C_m)\)-decomposition \(\mathcal{D}\) of \( 2G_1 \) from \cite{2Kn-dec} has the property that for any \( m \)-cycle \( C \in \mathcal{D} \), there exists an \( m \)-cycle \( C' \) such that \( C \oplus C' = 2C \). In other words, the authors essentially  constructed a $(C_m)$-decomposition of $G_1$. Thus, by Lemma~\ref{lem:Gtool4}, there exists an HOP $(C_m)$-decomposition of $4G_1^{\bullet}$.
Hence, by Lemma~\ref{Gtool3}, the multigraph $4K_n^{\bullet}$ admits an HOP $(C_m)$-decomposition. 
\end{proof}

\subsection{The case when \( m \) is odd and \( n \) is even}

We first address the case when $m = 3$, then we assume $m\geq5$ for the remainder of this subsection.

\begin{lem}{\label{lem:m=3-00}}
Let $n$ be an even integer such that $3|n(n-1)$. There exists an HOP $(C_3)$-decomposition of $4K_n^{\bullet}$.
\end{lem}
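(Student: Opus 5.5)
The plan is to work directly in $4K_n^{\bullet}$ and to use the two kinds of triangle permitted by Condition \textbf{(C1)} of Definition~\ref{def}. Checking the adjacency rules around a $3$-cycle shows that an HOP triangle is one of two types. A \emph{black triangle} has three black arcs oriented cyclically. A \emph{mixed triangle} has one pink edge, one blue edge, and one black arc oriented away from the pink edge and towards the blue edge. For each pair of vertices, $4K_n^{\bullet}$ has one pink edge, one blue edge and two opposite black arcs. So a natural target is a decomposition in which every pink and every blue edge lies in a mixed triangle, with the $n(n-1)/2$ mixed triangles using one black arc per pair. The remaining $n(n-1)$ black arcs then form a copy of the complete symmetric digraph $K_n^*$, which should be split into directed triangles.

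\textbf{Step 1 (black part).} I would invoke the existence of Mendelsohn triple systems: $K_n^*$ decomposes into directed $3$-cycles iff $n\equiv 0,1 \pmod 3$ and $n\neq 6$. This applies to every even $n$ with $3\mid n(n-1)$ except $n=6$, and $n=6$ will be handled separately by an explicit construction.

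\textbf{Step 2 (mixed part).} I need a decomposition of $2K_n$ into triangles, that is, a $2$-fold triple system, which exists by Theorem~\ref{theo:(Cm)-dec-2Kn} with $m=3$. In it, each triangle must be labelled with one edge pink, one blue, and one black arc oriented from the pink side to the blue side. Across the two triangles containing a pair $\{x,y\}$, the labels on that pair must be exactly one coloured edge and one black arc, and the two black arcs on that pair must go in opposite directions. The black arcs from Step~1 and Step~2 must then together give each pair exactly its two opposite arcs.

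The cleanest way to meet these conditions, which I would try first, is to start from a cyclic (rotational, via $\rho$ on $\mathbb{Z}_{n-1}\cup\{x_\infty\}$) $2$-fold triple system given by starter triangles. Colour each starter so that, within each orbit of $\langle\rho_{\bullet}\rangle$ for each difference, the pink, blue and both black orbits are hit exactly once. The Mendelsohn part is then chosen as the complementary black orbits. This is the same bookkeeping by difference used in Lemma~\ref{lem:color-1} and Lemma~\ref{lem:color-m-even-n-even}, and it reduces to choosing, for each starter, which of its differences carries pink, blue or an arc. Where the number of triangles $n(n-1)/3$ is divisible by $4$ and the decomposition graph is connected, I would alternatively fall back on Lemma~\ref{cor:color-decGraph} plus Lemma~\ref{lem:Gtool2}. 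The argument of that lemma needs components with an even number of edges, which is guaranteed when the cubic decomposition graph has order divisible by $4$. In that case only pink/black colourings with $0$ or $2$ pink edges per triangle are required.

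\textbf{Main obstacle.} The hard step is Step~2. The parity/orientation constraints on the pair orbits must be satisfied simultaneously, since the arc directions in the mixed triangles must complement the Mendelsohn system. I expect to need to split by $n\equiv 0$ or $4\pmod 6$, that is, by whether $x_\infty$ lies in an orbit of full length $n-1$ (which is divisible by $3$ only when $n\equiv 4\pmod 6$). I would also treat the small cases $n=4$ and $n=6$ by explicit listings of triangles.
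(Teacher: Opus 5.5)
\textbf{The classification is correct, but the construction cannot work.} Your description of the two HOP triangle types (a cyclically oriented black triangle, or a pink edge, a blue edge and one black arc from the pink end to the blue end) is right. However, it forces the counts, and Steps~1 and~2 contradict them.

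\emph{The forced counts.} Every pink edge lies in a mixed triangle, and each mixed triangle has exactly one pink edge. So there are exactly $\binom{n}{2}$ mixed triangles, and they use $\binom{n}{2}$ black arcs. That leaves only $\binom{n}{2}$ black arcs, i.e.\ $n(n-1)/6$ directed triangles, not the $n(n-1)$ arcs of $K_n^*$.

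\emph{Step 1 fails.} A Mendelsohn triple system uses every black arc. No arcs would remain for the mixed triangles, so the pink and blue edges could not be covered at all.

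\emph{The ``one black arc per pair'' repair also fails.} If each pair contributes exactly one black arc to the mixed triangles, the leftover arcs form a tournament whose underlying graph is $K_n$. For even $n$, every vertex of $K_n$ has odd degree $n-1$, so it has no triangle decomposition.

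\emph{Step 2 is numerically impossible.} A mixed triangle carries two coloured edges and one black arc. So a $2$-fold triple system of mixed triangles cannot put exactly one coloured edge and one black arc on every pair. Also, $n(n-1)/3$ mixed triangles cover only $n(n-1)/3<\binom{n}{2}$ pink edges.

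\emph{The fallback is incomplete.} Applying Lemma~\ref{cor:color-decGraph} and Lemma~\ref{lem:Gtool2} to all of $2K_n^{\circ}$ requires $\binom{n}{2}$ to be even, since each triangle must have $0$ or $2$ pink edges. This fails for $n\equiv 6,10 \pmod{12}$, e.g.\ $n=6$ and $n=10$. Even for $n\equiv 0,4 \pmod{12}$, you would still have to exhibit a $2$-fold triple system whose decomposition graph has every component of order divisible by $4$.

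So no case of the lemma is actually established, and the explicit small cases are left undone.

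\textbf{The missing idea: confine the odd-degree obstruction to a small piece.}

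For $n=6t+4$, the paper deletes a vertex $x_0$ and takes a Kirkman triple system on the remaining $6t+3$ vertices (Theorem~\ref{thm:ray_wilson}). It sets one parallel class $\mathcal{P}$ aside.
\begin{itemize}
\item The other parallel classes form a triangle-decomposable graph, so Lemma~\ref{lem:Gtool4} applies. For a single triangle, the HOP decomposition of its $4$-fold copy is forced to be one directed black triangle plus three mixed triangles. This is exactly your ``one black arc per pair'' pattern, but used only where a triangle decomposition exists.
\item Joining $\mathcal{P}$ to $x_0$ gives $2t+1$ copies of $K_4$ through $x_0$. Each $2K_4^{\circ}$ splits into four triangles with $0$ or $2$ pink edges; locally there are $6$ pink edges, an even number. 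So Lemma~\ref{lem:Gtool2} applies there.
\item Lemma~\ref{Gtool3} glues the pieces together.
\end{itemize}
For $n\equiv 0 \pmod 6$, the paper simply cites the HOP $C_3$-factorization of $4K_{3k}^{\bullet}$ from \cite{LDMSaj}. Your proposal contains neither of these ingredients.
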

\begin{proof}
By the assumption we have $n=3k$ for an even $k$, or $n=3k+1$ for an odd $k$.
First, assume $n=3k$. In \cite{LDMSaj}, it is shown that there exists an HOP $C_3$-factorization of $4K_{3k}^{\bullet}$. Thus, $4K_{3k}^{\bullet}$ admits an HOP $(C_3)$-decomposition.

For $n=3k+1$ with an odd $k$, we have $n=6t+4$ for an integer $t$.
We use the $(C_3)$-decomposition of $2K_n$ provided in \cite{B-m=3}.
Let $x_0$ be an arbitrary vertex of $K_n$. Obtain $G$ from $K_n$ by deleting $x_0$ and all edges having $x_0$ as an endpoint. In \cite[Theorem~\ref{thm:ray_wilson}]{Chaudhuri},  it is proved that there exists a decomposition of $G$ into $3t+1$ subgraphs (parallel classes), each consisting of $2t+1$ pairwise vertex-disjoint $3$-cycles. Let $\mathcal{P}$ be one of these subgraphs, and label its $3$-cycles as  $C_i = x_i\ y_i\ z_i \ x_i$, for $1\leq i\leq 2t+1$.
 Now, for each $i=1, \ldots, 2t+1$, construct $3$-cycles $C'_i =x_0\ x_i\ y_i\ x_0$, $C_i''=x_0\ y_i\ z_i\ x_0$, and $C_i '''=x_0\ z_i\ x_i\ x_0$ of $K_n$. Observe that $C_i, C'_i, C''_i, C'''_i$ jointly cover each of the edges  $x_0 x_i$, $x_0 y_i$, $x_0 z_i$, $x_i y_i$, $x_i z_i$, and $y_i z_i$ exactly twice. We color the four cycles as follows, ensuring they cover exactly one pink and one black copy of the mentioned edges, and each cycle has an even number of pink edges.
  \vspace{-10pt}
 \begin{figure}[H]
 \centering
   \includegraphics[width=0.8\linewidth,height=0.8\textheight,keepaspectratio]{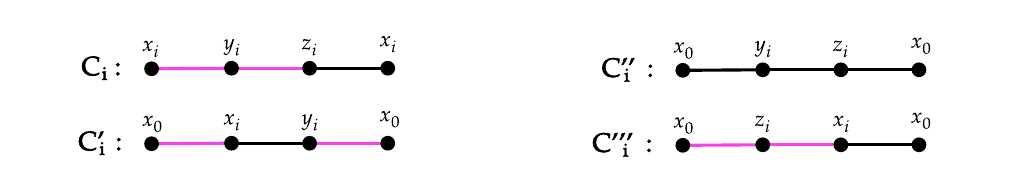}
   \vspace{-10pt}
  \end{figure}
  Let $R$ be the join of the graph $\mathcal{P}$ and $K_1$ with vertex set $\{x_0\}$. We have shown that $2R^{\circ}$ admits a $(C_3)$-decomposition in which every cycle contains an even number of pink edges. Hence by Lemma~\ref{lem:Gtool2}, $4R^{\bullet}$ admits an HOP $(C_3)$-decomposition. 
  Since $G-\mathcal{P}$ admits a $(C_3)$-decomposition, by Lemma~\ref{lem:Gtool4}, $4(G-\mathcal{P})^{\bullet}$ admits an HOP $(C_3)$-decomposition. Finally, since $K_n= R\oplus (G- \mathcal{P})$, the result follows by Lemma~\ref{Gtool3}. 
\end{proof}


We now review the necessary terminology and tools from~\cite{2Kn-dec} to establish a key reduction step that allows us to reduce the problem of finding an HOP \( (C_m) \)-decomposition of \( 4K_n^{\bullet} \), for odd \( m \) and even \( n > m \), to the case where \( n \) lies in the range \( m < n < 3m \).

\begin{defn}{\label{def:s-subg}}{\rm
Assume $m$ is odd, and let $X=\{x_0, x_1,\ldots, x_{m-1}\}$ and $Y=\{y_0, y_1,\ldots, y_{m-1}\}$ be  two sets of size $m$ that partition the vertex set of $K_{2m}$, and let $S,S'\subseteq \{1,2\ldots, \tfrac{m-1}{2}\}$ and $T\subseteq \{0,1,2\ldots, m-1\}$. 
 Then $K_{2m} \langle S, T,  S' \rangle$ denotes a subgraph $G$ of $K_{2m}$ such that $G[X]\cong {\rm{Circ}}(m; S)$,   $G[Y]\cong {\rm{Circ}}(m; S')$, and $x_iy_{i+d}\in E(G)$ if and only if $d\in T$. 
} 
\end{defn}

\begin{lem}{\label{lem:join}}
Assume $m\geq 3$ is odd, and let $G_1= K_{2m} \big\langle \emptyset, \{0\},  \emptyset \big\rangle  \bowtie \overline{K_{\frac{m-1}{2}}}$ and $G_2= K_{2m} \big\langle \{1\}, \emptyset,  \emptyset \big\rangle  \bowtie \overline{K_{\frac{m-1}{2}}}$. Then $4G_1^{\bullet}$ and $4G_2^{\bullet}$ admit an HOP $(C_m)$-decomposition. 
\end{lem}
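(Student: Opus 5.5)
The plan is to build both decompositions cyclically. Let $\rho$ act on $X$ and $Y$ simultaneously by $x_i\mapsto x_{i+1}$, $y_i\mapsto y_{i+1}$ (subscripts mod $m$), and let it fix every vertex of $Z=V(\overline{K_{\frac{m-1}{2}}})=\{z_1,\dots,z_k\}$, where $k=\frac{m-1}{2}$. Each of $G_1$ and $G_2$ has $m+2mk=m^2$ edges, so a $(C_m)$-decomposition of $G_i$ has exactly $m$ cycles. I would look for a single starter $m$-cycle whose $\rho$-orbit is such a decomposition, and then apply Lemma~\ref{lem:Gtool4}.

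\textbf{Structure forced by parity.} The edges between $Z$ and $X\cup Y$ form a bipartite graph, and $m$ is odd. Hence every $m$-cycle must use an odd number of the "non-bipartite" edges: the matching edges $x_iy_i$ in $G_1$, or the edges $x_ix_{i+1}$ in $G_2$. There are only $m$ such edges and $m$ cycles, so each cycle uses exactly one of them. The rest of the cycle is a path of length $m-1=2k$ alternating between $X\cup Y$ and $Z$, and therefore it passes through every $z_j$ exactly once. Under $\rho$, the edges from a fixed $z_j$ to $X$ form a single orbit of size $m$, and likewise for $Y$. So the starter must meet each $z_j$ with exactly one neighbour in $X$ and one in $Y$.

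\textbf{Candidate starters.} For $G_1$ I would take
\[
x_0\,y_0\,z_1\,x_{c_1}\,z_2\,y_{c_2}\,z_3\cdots z_k\,x_0 ,
\]
and for $G_2$
\[
x_0\,x_1\,z_1\,y_{c_1}\,z_2\,x_{c_2}\cdots z_k\,x_0 .
\]
The intermediate vertices alternate between $X$ and $Y$, and the $c_j$ are chosen with distinct indices, avoiding $0$ and $1$, so that the walk is a genuine cycle. A parity check shows that the $X/Y$ alternation closes up correctly at $x_0$ in exactly one of the two cases: $k$ odd for $G_1$, and $k$ even for $G_2$. In those cases the $\rho$-orbit of the starter is a $(C_m)$-decomposition of the simple graph, and Lemma~\ref{lem:Gtool4} gives the HOP decomposition.

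\textbf{Main obstacle: the other parity.} When the parity is wrong ($k$ even for $G_1$, $k$ odd for $G_2$), no $(C_m)$-decomposition of the simple graph exists. I would then work directly in $4G_i^{\bullet}$ with $\rho_\bullet$, using four starter $m$-cycles $F_1,\dots,F_4$, each containing exactly one copy of a matching (respectively difference-$1$) edge. Across the four starters these special edges get one pink, one blue and the two opposite black arcs.

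In $F_1,\dots,F_4$ together, each $z_j$ must have four neighbours in $X$ and four in $Y$. This requirement is still satisfiable if we let $z_j$ have both neighbours in $X$ in one starter and both in $Y$ in another. That freedom is exactly what absorbs the parity defect: for $G_2$, say, the vertex $z_k$ takes $x_{c}$ and $x_0$ in $F_1$, and this is compensated by a vertex $z_{j}$ taking two $Y$-neighbours in $F_2$. The four pairs of differences from each $z_j$ must then be distinct modulo $m$, so that each of the pink, blue and two black orbits from $z_j$ is hit once.

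Finally I would colour the four starters to satisfy (C1), mimicking the colourings in Lemmas~\ref{lem:color-1} and~\ref{lem:color-3}. The pattern would be long black runs oriented consistently away from a pink edge and toward a blue edge, with alternating pink/blue runs elsewhere. The bookkeeping that every orbit colour is used exactly once while (C1) holds at each vertex, especially around the special edge, is where I expect most of the work. If it proves awkward, an alternative is to find a $(C_m)$-decomposition of $2G_i^{\circ}$ with an even number of pink edges in each cycle and invoke Lemma~\ref{lem:Gtool2}.
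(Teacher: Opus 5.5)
There is a genuine gap: you prove only the half of the lemma that the paper does not need.

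\textbf{What is correct.} Your treatment of the favourable parity is right, and there it simplifies the paper. When $k=\frac{m-1}{2}$ is odd for $G_1$ (resp.\ even for $G_2$), a single starter gives a $(C_m)$-decomposition of the simple graph, and Lemma~\ref{lem:Gtool4} finishes; the paper instead always colours four starters. Your obstruction in the other parity is also right. It is visible from degrees alone: $\deg_{G_1}x_i=k+1$ and $\deg_{G_2}y_i=k$.

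\textbf{The gap.} The other parity ($m\equiv 1\pmod 4$ for $G_1$, $m\equiv 3\pmod 4$ for $G_2$) is where the lemma has content. It includes $G_1$ with $m=5$, which is exactly what Lemma~\ref{lem:color-5} needs. There you give no starters, no colouring and no verification.

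Your fallback cannot fill the hole. In any $(C_m)$-decomposition of $2G_i^{\circ}$ the total number of pink edges is $|E(G_i)|=m^2$, which is odd. So the cycles cannot all contain an even number of pink edges, and Lemma~\ref{lem:Gtool2} never applies here.

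Your bookkeeping condition is also wrong. Since $\rho$ fixes $z_j$, all $z_j$--$X$ edges of one colour (or one orientation) form a single orbit. What is needed is that the four $z_j$--$X$ edges in $F_1,\dots,F_4$ are pink, blue, an arc into $z_j$ and an arc out of $z_j$; their indices mod $m$ are irrelevant.

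Finally, for $G_2$ with $m=3$ the statement is false: each $y_i$ has degree $1$ in $G_2$. Any construction for $G_2$, yours included, needs $m\ge 5$.

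\textbf{How the paper fills it.} The paper handles both parities at once. It takes the two starters of the $\omega$-invariant $(C_m)$-decomposition of $2G_i$ from \cite{2Kn-dec}. For $G_1$, with $Z=\{z_0,\dots,z_{k-1}\}$, these are
$C_1=x_0\,y_0\,z_0\,y_1\,z_1\cdots y_{k-1}\,z_{k-1}\,x_0$ and $C_2=y_0\,x_0\,z_{k-2}\,x_1\cdots z_0\,x_{k-1}\,z_{k-1}\,y_0$.
Here every $z_j$ with $j<k-1$ has both neighbours in $Y$ in $C_1$ and both in $X$ in $C_2$, which is your compensation idea. The paper then colours two copies of each directly in $4G_1^{\bullet}$.

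One colouring that works for every $k\ge 1$:
\begin{itemize}
\item In $C_1$: let $x_0y_0$ be blue; the stretch $y_0z_0y_1\cdots z_{k-2}y_{k-1}$ alternately pink and blue, starting with pink; $y_{k-1}z_{k-1}$ pink; and $(z_{k-1},x_0)$ a black arc.
\item In $C_1'$: make every edge black, oriented as the directed cycle $x_0\to z_{k-1}\to y_{k-1}\to\cdots\to y_0\to x_0$.
\item In $C_2$: let $(x_0,y_0)$ be a black arc; the stretch $x_0z_{k-2}x_1\cdots z_0x_{k-1}$ alternately pink and blue, starting with pink; $x_{k-1}z_{k-1}$ pink; and $z_{k-1}y_0$ blue.
\item In $C_2'$: let $y_0x_0$ be pink; the same stretch black, oriented from $x_0$ towards $x_{k-1}$; $x_{k-1}z_{k-1}$ blue; and $(y_0,z_{k-1})$ a black arc.
\end{itemize}
One checks \textbf{(C1)} at every vertex, and that the four cycles meet each $\langle\omega_{\bullet}\rangle$-orbit exactly once. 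An analogous colouring of the paper's two $G_2$-starters works for $k\ge 2$. Without some such explicit construction, the hard half of the lemma is unproved.
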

\begin{proof}
Let $m=2k+1$ for $k\in\ZZ$, and let $V(\overline{K_k})=\{z_0, z_1,\ldots, z_{k-1}\}$. Let  $$\omega=(x_0\ x_1\ \ldots\ x_{m-1})(y_0\ y_1\ \ldots\ y_{m-1})$$ be a permutation that fixes all vertices in $\overline{K_k}$. Let $\omega_{\bullet}$ be the permutation induced by $\omega$  that preserves the color (and orientation)  of the edges.
Consider the following  $m$-cycles in $G_1$:
$$C_1= x_0\ y_0\ z_0\ y_1\ z_1\ y_2\ z_2\ \ldots\ z_{k-2}\ y_{k-1}\ z_{k-1}\ x_0$$
$$C_2= y_0\ x_0\ z_{k-2}\ x_1\ z_{k-3}\ x_2\ \ldots\ z_1\ x_{k-2}\ z_0\ x_{k-1}\ z_{k-1}\ y_0$$
In \cite[Proposition 5.4]{2Kn-dec}, it is shown that $\{\omega^i(C_1), \omega^i(C_2): i=0,1,\ldots, m-1\}$ is a $(C_m)$-decomposition of $2G_1$. 
Take two copies of each of $C_1$ and $C_2$ and color them as follows:
 \begin{figure}[H]
 \centering
   \includegraphics[width=1\linewidth,height=0.8\textheight,keepaspectratio]{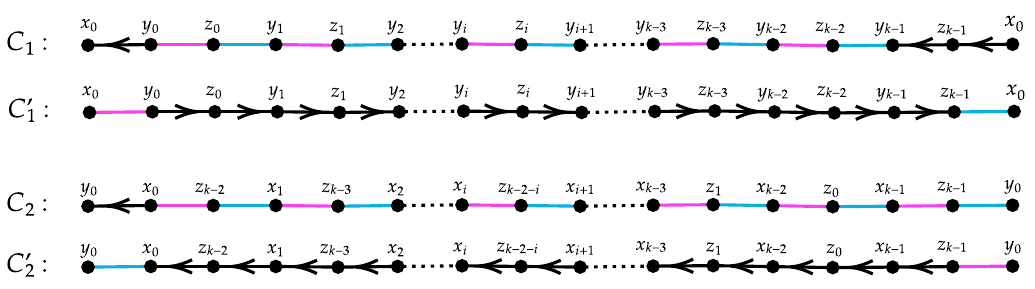}
  \end{figure}
It can be verified $C_1, C'_1, C_2, C'_2$ jointly contain  exactly one edge from each orbit of $\langle \omega_{\bullet}\rangle$. Additionally, they satisfy Condition {\bf (C1)} of Definition {\rm{\ref{def}}}.
 Hence, $$\{\omega_{\bullet}^i(C_1), \omega_{\bullet}^i(C'_1),\omega_{\bullet}^i(C_2), \omega_{\bullet}^i(C'_2): i=0,1,\ldots, m-1\}$$ is an HOP $(C_m)$-decomposition of $4G_1^{\bullet}$.

Now, consider the following  $m$-cycles in $G_2$:
$$C_1 = x_0\ x_1\ z_0\ y_0\ z_1\ y_1\ z_2\ \ldots\ z_{k-2}\ y_{k-2}\ z_{k-1}\ x_0$$
$$C_2 = x_1\ x_0\ z_{1}\ x_2\ z_{2}\ x_3\ \ldots\ z_{k-3}\ x_{k-2}\ z_{k-2}\ x_{k-1}\ z_{k-1}\ y_0\ z_0\ x_1$$
In \cite[Proposition 5.4]{2Kn-dec}, it is  proved that $\{\omega^i(C_1), \omega^i(C_2): i=0,1,\ldots, m-1\}$ is a $(C_m)$-decomposition of $2G_2$. 
Take two copies of each cycle and color them as follows:
 \begin{figure}[H]
 \centering
   \includegraphics[width=1\linewidth,height=0.8\textheight,keepaspectratio]{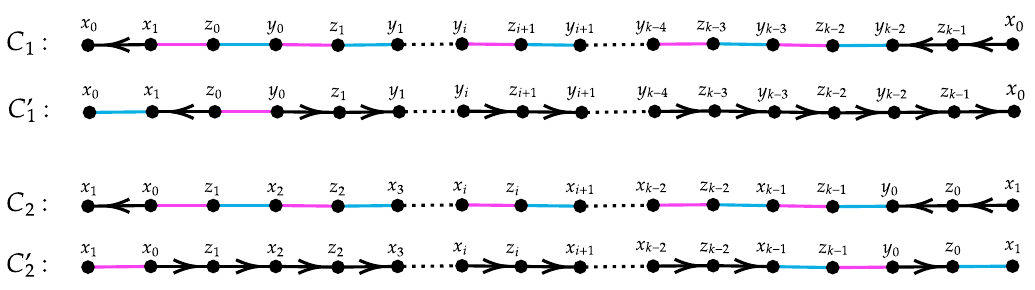}
  \end{figure}
Observe  that $C_1, C'_1, C_2,$ and $C'_2$ satisfy Condition {\bf (C1)} of Definition {\rm{\ref{def}}}, and jointly contain  exactly one edge from each orbit of $\langle \omega_{\bullet}\rangle$.
 Therefore, $$\{\omega_{\bullet}^i(C_1), \omega_{\bullet}^i(C'_1),\omega_{\bullet}^i(C_2), \omega_{\bullet}^i(C'_2): i=0,1,\ldots, m-1\}$$ is an HOP $(C_m)$-decomposition of $4G_2^{\bullet}$. 
\end{proof}

The following proposition is based on \cite[Proposition 5.5]{2Kn-dec}. 
\begin{prop} {\label{propcor:join2}}
Let $G=K_{2m}   \bowtie \overline{K_t}$. For integers $m$ and $t$ satisfying $m = 2k + 1$ and $t = qk + r$ where $0 \le r\leq k-1$ and $1 \le q \le m + 2r - 1$, the multigraph $2G$ is $(C_m)$-decomposable. In particular, $2G$ is $(C_m)$-decomposable whenever $\frac{m-1}{2} \le t \le \frac{(m-1)^2}{2}$.
\end{prop}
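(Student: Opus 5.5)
The plan is to follow the strategy of \cite[Proposition 5.5]{2Kn-dec}: cut $G=K_{2m}\bowtie\overline{K_t}$ into edge-disjoint pieces, each a translate of one of the building blocks from Lemma~\ref{lem:join}, plus a small residual part that is decomposed directly. Write $V(K_{2m})=X\cup Y$ with $X=\{x_0,\dots,x_{m-1}\}$ and $Y=\{y_0,\dots,y_{m-1}\}$, as in Definition~\ref{def:s-subg}. Under $\omega=(x_0\ \dots\ x_{m-1})(y_0\ \dots\ y_{m-1})$ the edges of $K_{2m}$ fall into $2m-1$ classes, each with exactly $m$ edges:
\begin{itemize}
\item $k$ pure-difference classes $K_{2m}\langle\{d\},\emptyset,\emptyset\rangle$ inside $X$;
\item $k$ pure-difference classes $K_{2m}\langle\emptyset,\emptyset,\{d\}\rangle$ inside $Y$;
\item $m$ cross classes $K_{2m}\langle\emptyset,\{d\},\emptyset\rangle$.
\end{itemize}
Since $m$ is odd, each pure class is an $m$-cycle and hence isomorphic to the case $S=\{1\}$. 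Each cross class is a perfect matching between $X$ and $Y$, isomorphic (after relabelling $Y$) to the case $T=\{0\}$. Consequently, every class joined with a fresh independent set of size $k$ is isomorphic to $G_1$ or $G_2$ of Lemma~\ref{lem:join}, and $2G_1$, $2G_2$ are $(C_m)$-decomposable by \cite[Proposition 5.4]{2Kn-dec}.

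\textbf{Main part.} Split the $t=qk+r$ vertices of $\overline{K_t}$ into $q$ blocks $Z_1,\dots,Z_q$ of size $k$ and a set $R$ of $r$ extra vertices. Choose $q$ of the $2m-1$ classes and join the $j$-th chosen class with $Z_j$. This requires $q\le 2m-1$, which the hypothesis guarantees. Each such piece, doubled, decomposes into $m$-cycles by the previous paragraph.

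\textbf{Residual part.} What remains is:
\begin{itemize}
\item the unused $2m-1-q\ge m-2r$ classes;
\item the star edges from $R$ to $X\cup Y$.
\end{itemize}
The unused pure-difference classes cause no trouble, since $2C_m$ is two $m$-cycles. The unused cross matchings are the obstacle: a doubled perfect matching contains no $m$-cycle. I would therefore choose the $q$ classes in the main part so that as many cross classes as possible are used there. For the cross classes still left over, I would attach them to the vertices of $R$, letting each $z\in R$ absorb two classes. The idea is that $2(\{z\}\bowtie M_d\cup M_{d'})$, possibly together with one pure class, decomposes into $m$-cycles alternating through $z$, in the spirit of the cycle $C_2$ in Lemma~\ref{lem:join}. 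This accounts for up to $2r$ classes. Balancing counts, the inequality $q\le m+2r-1$ should be exactly what ensures that the leftover matchings (at most $m$ of them) can be covered this way. If some $r$ vertices end up with no attached classes, I would fall back to enlarging one block $Z_j$ to size $k+r$ and citing the general form of \cite[Proposition 5.4]{2Kn-dec} for that block.

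I expect the main obstacle to be this residual step: showing that the leftover cross classes and the $r$ extra vertices can always be absorbed by $m$-cycles. That needs an explicit starter cycle whose $\omega$-orbit has length $m$.

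\textbf{The ``in particular'' clause.} This is arithmetic. Given $\frac{m-1}{2}\le t\le\frac{(m-1)^2}{2}$, set $q=\lfloor t/k\rfloor$ and $r=t-qk$, so that $0\le r\le k-1$. The lower bound $t\ge k$ gives $q\ge1$. The upper bound $t\le 2k^2$ gives $q\le 2k=m-1\le m+2r-1$. Hence the main statement applies.
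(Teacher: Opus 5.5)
There is a genuine gap in the residual step, and the mechanism you propose for it cannot work. The block part is essentially fine, apart from the point in the second paragraph.

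Take your gadget $2\bigl(\{z\}\bowtie(M_d\cup M_{d'})\bigr)$. It has $8m$ edges, so a $(C_m)$-decomposition of it would consist of exactly $8$ cycles. But $z$ has degree $4m$ there, so it must lie on $2m>8$ of them.

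The same count applies to the whole residual. It consists of $c=2m-1-q$ classes together with the $r$ vertices of $R$, so it has $2c+4r$ cycles. The vertices of $R$ need $2mr$ cycle-visits in total, and a cycle meets at most $r$ of them. For $r\ge 1$ this forces $c\ge m-2r$, which is exactly $q\le m+2r-1$. A lone vertex of $R$ would already need $m-2$ classes, not two.

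So the hypothesis does not guarantee that few cross matchings are left over for $R$ to absorb. Your scheme would need $m-q\le 2r$, which is a lower bound on $q$ that the hypothesis does not provide. What the hypothesis actually guarantees is that enough classes remain for the $r$ extra vertices to be threaded \emph{jointly}, with each cycle passing through many of them.

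Your plan already fails at $t=k$, i.e.\ $q=1$, $r=0$. After the single block, $m-1$ cross matchings remain and $R=\emptyset$. These matchings must therefore be decomposed together with pure edges inside $2K_{2m}\langle S,T,S'\rangle$.

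Your fallback fails by the same count. A block of size $k+r$ carrying a single class forces $r\le 0$.

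The missing ingredient is an explicit $(C_m)$-decomposition of $2\bigl(K_{2m}\langle S,T,S'\rangle\bowtie\overline{K_r}\bigr)$ with $|S|+|T|+|S'|=2m-1-q\ge m-2r$, using cycles that mix cross and pure edges and run through several vertices of $R$. This is the substance of \cite[Proposition~5.5]{2Kn-dec}, which the paper cites rather than reproves.

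A smaller error: $\mathrm{Circ}(m;\{d\})$ is an $m$-cycle only when $\gcd(d,m)=1$; for $m=9$, $d=3$ it is three triangles. Such a class joined with $Z_j$ is therefore not isomorphic to $G_2$, and a doubled leftover copy of it contains no $m$-cycle at all.

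For the blocks this is easily repaired. Replace $x_1$ by $x_d$ in the starter cycles of Lemma~\ref{lem:join}, and choose the remaining $X$-vertices of $C_2$ distinct from $x_0$ and $x_d$. In the residual, however, such classes cannot simply be peeled off as $2C_m$.

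Your derivation of the ``in particular'' clause is correct.
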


The following corollary is a direct consequence of Proposition~\ref{propcor:join2}. 


\begin{cor} {\label{prop:join2}}
Let $m=2k+1$, and let $t=qk+r$ for $0\leq r\leq k-1$ and $1\leq q\leq m+2r-1$. Let $G=K_{2m}   \bowtie \overline{K_t}$. Then $4G^{\bullet}$ admits an HOP $(C_m)$-decomposition. In particular, $4G^{\bullet}$ admits an HOP $(C_m)$-decomposition whenever $\frac{m-1}{2}\leq t\leq \frac{(m-1)^2}{2}$.
\end{cor}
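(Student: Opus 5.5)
The corollary should follow from Proposition~\ref{propcor:join2} by converting the $(C_m)$-decomposition of $2G$ into an HOP $(C_m)$-decomposition of $4G^{\bullet}$. The direct tool is Lemma~\ref{lem:Gtool2}: it suffices to produce a $(C_m)$-decomposition of $2G^{\circ}$ in which every $m$-cycle contains an even number of pink edges. Since $m$ is odd, Lemma~\ref{cor:color-decGraph} does not apply automatically ($m\not\equiv 0 \pmod 4$), so I would not rely on arbitrary recoloring. Instead, I would trace how the decomposition in Proposition~\ref{propcor:join2} (that is, \cite[Proposition 5.5]{2Kn-dec}) is assembled and colour each building block separately.

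The construction in \cite[Proposition 5.5]{2Kn-dec} writes $K_{2m}\bowtie\overline{K_t}$, with $t=qk+r$, as an edge-disjoint union of the following pieces:
\begin{itemize}
\item subgraphs of the form $K_{2m}\langle S,T,S'\rangle\bowtie\overline{K_{k}}$ with small sets $S,T,S'$, in particular the two graphs $G_1,G_2$ of Lemma~\ref{lem:join};
\item pieces on which the decomposition of $2G$ is in fact twice a $(C_m)$-decomposition of the simple graph;
\item when $r>0$, extra pieces absorbing the remaining $r$ vertices of $\overline{K_t}$.
\end{itemize}
So the first step is to list these pieces explicitly, so that $G=H_1\oplus\cdots\oplus H_s$. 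By Lemma~\ref{Gtool3}, it is then enough to find an HOP $(C_m)$-decomposition of each $4H_i^{\bullet}$.

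For each piece I would use one of three routes. Pieces isomorphic to $G_1$ or $G_2$ are handled directly by Lemma~\ref{lem:join}. Pieces where the $2H_i$-decomposition consists of doubled cycles, i.e.\ comes from a $(C_m)$-decomposition of $H_i$ itself (such as $K_{m,m}$-type bipartite parts or circulant parts decomposed by starter cycles used twice), are handled by Lemma~\ref{lem:Gtool4}. The remaining pieces, where the starter cycles of $2H_i$ under $\omega$ are genuinely distinct, are handled by mimicking the proof of Lemma~\ref{lem:join}: take two copies of each starter, colour one copy black with consistent orientation and the other alternately pink/blue, splitting at a pink–black–blue transition as in Lemma~\ref{lem:color-3}, so that (C1) holds. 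Then check that the coloured starters together contain one edge of each $\langle\omega_\bullet\rangle$-orbit. The ``in particular'' statement is then a numerical consequence: every $t$ with $\frac{m-1}{2}\le t\le\frac{(m-1)^2}{2}$ can be written as $qk+r$ with the stated bounds, exactly as in Proposition~\ref{propcor:join2}.

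The main obstacle is this third step: finding colourings of odd-length cycles that satisfy (C1) while covering every coloured orbit exactly once. Odd length rules out a purely alternating pink/blue colouring, so each starter needs a black segment. The hard part is arranging these black segments in a paired copy so that the black arcs appear in both orientations. If a uniform colouring proves awkward, I would fall back on reducing every remaining piece to copies of $G_1$, $G_2$ and Lemma~\ref{lem:Gtool4}-type pieces, which may require adjusting the partition of $\overline{K_t}$.
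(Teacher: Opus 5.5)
Your overall strategy is the one the paper intends. The paper gives no separate argument: it calls the corollary a direct consequence of Proposition~\ref{propcor:join2}. That is justified only because the decomposition in \cite[Proposition 5.5]{2Kn-dec} is assembled from two kinds of blocks:
\begin{itemize}
\item copies of the two graphs of \cite[Proposition 5.4]{2Kn-dec}, which are exactly the graphs $G_1,G_2$ HOP-coloured by hand in Lemma~\ref{lem:join};
\item pieces carrying ordinary $(C_m)$-decompositions of the simple graph, which Lemma~\ref{lem:Gtool4} converts.
\end{itemize}
Lemma~\ref{Gtool3} then glues them together. Your first two routes plus Lemma~\ref{Gtool3} are therefore the whole proof, and your treatment of the ``in particular'' clause is correct.

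As written, however, your proposal does not close the argument. You never pin down the blocks of \cite[Proposition 5.5]{2Kn-dec}. The real content of the corollary is checking, block by block, that each one is either isomorphic to $G_1$ or $G_2$, or is a piece whose $2H$-decomposition is a doubled simple one. You leave that check open and hedge with a third kind of piece instead. One of your examples is also wrong: for odd $m$ a bipartite piece such as $K_{m,m}$ contains no $m$-cycles at all, so it cannot be one of the blocks.

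Your third route would not work as described. For a block $H$ whose $2H$-decomposition is generated by starters under $\omega$, each $\langle\omega_\bullet\rangle$-orbit of $H$ is met exactly twice among the starters. Over the four coloured copies, each orbit must then receive exactly one pink edge, one blue edge and two oppositely oriented black arcs.

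Suppose one copy of each starter is coloured entirely black, as you propose. Then the other copies must supply exactly one pink and one blue edge per orbit, with no black edges. But (C1) forbids pink--pink and blue--blue adjacencies, so a cycle with no black edge must alternate pink and blue and hence has even length. Each of your odd ``pink/blue'' copies therefore contains a black edge, and some orbit ends up with three black occurrences.

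Fixing this requires moving pink and blue edges back into the ``black'' copies. That is precisely the case-specific balancing done explicitly in Lemma~\ref{lem:join} and Lemma~\ref{lem:color-3}, and your proposal gives no general recipe for it. So if a genuinely new block did occur, you would need explicit starter colourings for it. Your fallback of re-partitioning $\overline{K_t}$ would also require redoing the existence argument of Proposition~\ref{propcor:join2}, which you have not done.
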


Having established the necessary terminology and tools, we are now ready to state the reduction step. 

\begin{lem}{\label{lem:color-5}}
Let $m\geq 5$ be an odd integer. If $4K_n^{\bullet}$ admits an HOP $(C_m)$-decomposition for all even $n$ such that $m< n<3m$ and $m|n(n-1)$, then $4K_n^{\bullet}$ admits an HOP $(C_m)$-decomposition for all even $n> m$ such that $m|n(n-1)$.
\end{lem}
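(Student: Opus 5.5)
Take any even $n\geq 3m$ with $m\mid n(n-1)$. The plan is induction on $n$, peeling off $2m$ vertices at a time. The choice of $2m$ matters: removing $2m$ vertices preserves both the parity of $n$ and the divisibility condition, since
\[
(n-2m)(n-2m-1)\equiv n(n-1) \pmod{m}.
\]
So write $n=n_0+2mq$ with $q\geq 1$ and $m<n_0<3m$. The base value $n_0$ is even, and $m\mid n_0(n_0-1)$. Partition $V(K_n)$ into one set $N$ of size $n_0$ and $q$ sets $X_1,\dots,X_q$ of size $2m$. This gives the edge decomposition
\[
K_n=K_{n_0}\oplus\bigoplus_{j=1}^{q}\bigl(K_{2m}[X_j]\bowtie \overline{K_{t_j}}\bigr)\oplus(\text{remaining bipartite edges}).
\]
Here the $j$-th join uses as $\overline{K_{t_j}}$ only part of the vertices outside $X_j$. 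I would organize it so that each $X_j$ is joined to $N$ and to $X_i$ for $i<j$, i.e. $t_j=n_0+2m(j-1)$. This is probably too large for Corollary~\ref{prop:join2}, so I will split the cross edges differently.

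\textbf{Cross edges.} Edges between $X_i$ and $X_{i'}$ form a $K_{2m,2m}$. Since $m$ is odd, $2m\cdot 2m$ is not necessarily a multiple of $2m$ cycles of odd length, and a bipartite graph has no odd cycles at all. Therefore cross edges cannot be handled as bipartite pieces; every cross edge must lie inside some join structure $K_{2m}\bowtie\overline{K_t}$ with $t$ in the range of Corollary~\ref{prop:join2}. This forces the following refinement:
\begin{itemize}
\item Split the vertices outside $X_j$ that must be joined to $X_j$ into groups of size $t$ with $\frac{m-1}{2}\leq t\leq \frac{(m-1)^2}{2}$.
\item Use one group together with the inner edges of $K_{2m}[X_j]$.
\item For the other groups, use the pure-join pieces.
\end{itemize}
For the pure-join pieces I would use Lemma~\ref{lem:join}. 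The graphs $G_1$ and $G_2$ there cover the matching $\{x_iy_i\}$ or the difference-$1$ cycle on $X$ together with a join to $\frac{m-1}{2}$ outside vertices. This gives flexibility to distribute the edges of $K_{2m}[X_j]$ among several joins, each with its own $\frac{m-1}{2}$ outside vertices.

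Concretely, $K_{2m}$ decomposes into pieces $K_{2m}\langle S,T,S'\rangle$. Using the symmetry between $X$ and $Y$ and the shifts of $T$, each such piece is isomorphic to one of the two types in Lemma~\ref{lem:join}, or can be lumped with a larger outside set via Corollary~\ref{prop:join2}. This lets $X_j$ absorb an outside set of any size from about $\frac{m-1}{2}$ up to roughly $(2m-1)\cdot\frac{m-1}{2}$ or more. The edges inside $N$ are handled by the hypothesis: $4K_{n_0}^{\bullet}$ has an HOP $(C_m)$-decomposition.

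\textbf{Simplest alternative.} A cleaner route is to do one step at a time: show $4K_{n+2m}^{\bullet}$ from $4K_n^{\bullet}$ using $K_{n+2m}=K_n\oplus(K_{2m}\bowtie\overline{K_n})$. It then suffices that $4(K_{2m}\bowtie\overline{K_t})^{\bullet}$ is HOP $(C_m)$-decomposable for every $t\geq\frac{m-1}{2}$ with $t$ even, since $n$ is even. For $t\leq\frac{(m-1)^2}{2}$ this is Corollary~\ref{prop:join2}. For larger $t$, write $\overline{K_t}=\overline{K_{t_1}}\cup\overline{K_{t_2}}$ and decompose
\[
K_{2m}\bowtie\overline{K_t}=(K_{2m}\bowtie\overline{K_{t_1}})\oplus K_{2m,t_2}.
\]
The leftover bipartite part $K_{2m,t_2}$ is the obstacle, because odd $m$ forbids a bipartite $C_m$-decomposition. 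Instead, split the edge set of $K_{2m}$ into $s$ subgraphs, each being a $K_{2m}\langle S,T,S'\rangle$ piece of the forms in Lemma~\ref{lem:join} (the difference-$\{0\}$ matching type, or a difference-$\{1\}$ cycle on one side, up to isomorphism). Pair each piece with its own $\frac{m-1}{2}$ outside vertices, and use Corollary~\ref{prop:join2} to absorb one larger chunk of size up to $\frac{(m-1)^2}{2}$. Since $K_{2m}$ splits into $2m-1$ such pieces, outside sets of size up to about $(2m-2)\frac{m-1}{2}+\frac{(m-1)^2}{2}$ are covered, which is at least $3m$ for $m\geq5$. Combined with the inductive hypothesis on $n$, this suffices: by induction the range $m<n<3m$ feeds all larger even $n$, and Lemma~\ref{Gtool3} glues the pieces.

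\textbf{Main obstacle.} The hardest step is the join-extension for large $t$. Arbitrary $t$ with $\frac{m-1}{2}\mid$ not holding needs one flexible chunk from Corollary~\ref{prop:join2}. I would have to check that the pieces of $K_{2m}$ can all be brought to the isomorphism types of Lemma~\ref{lem:join} (the differences $d\geq2$ on $X$ and $Y$ require showing $\mathrm{Circ}(m;\{d\})\cong \mathrm{Circ}(m;\{1\})$, true as $m$ odd prime to $d$ only when $\gcd(d,m)=1$). Non-coprime $d$ would be merged into the Corollary~\ref{prop:join2} chunk, whose $K_{2m}$ side is complete. That part is the one I would verify most carefully, possibly restructuring so the Corollary~\ref{prop:join2} piece carries all of $K_{2m}$ and only $G_1$/$G_2$-type matching pieces $\langle\emptyset,\{d\},\emptyset\rangle$ (always isomorphic to $\{0\}$) are used for the extra outside vertices. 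This would require borrowing matchings $x_iy_{i+d}$ from the complete piece, leaving $K_{2m}\langle\{1,\dots\},T',\{1,\dots\}\rangle$, whose decomposability would then need the underlying argument of \cite[Proposition 5.5]{2Kn-dec}.
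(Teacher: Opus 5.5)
There is a genuine gap, and it is in the structure of the argument, not in the details. In both of your versions, every edge leaving a $2m$-block has to lie in a piece of the form $H\bowtie\overline{K_t}$ with $H\subseteq K_{2m}[X_j]$. Such pieces can absorb only boundedly many outside vertices:
\begin{itemize}
\item The outside set is independent and $m$ is odd, so every $m$-cycle of $K_{2m}\bowtie\overline{K_t}$ uses an odd number, hence at least one, of the edges of $K_{2m}$.
\item An HOP $(C_m)$-decomposition of $4(K_{2m}\bowtie\overline{K_t})^{\bullet}$ has $4(2m-1)+8t$ cycles, but only $4m(2m-1)$ such edges exist.
\item So the decomposition can exist only if $t\le\frac{(m-1)(2m-1)}{2}$.
\end{itemize}
Consequently your one-step induction $K_{n+2m}=K_n\oplus(K_{2m}\bowtie\overline{K_n})$ fails once $n$ passes this bound. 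For $m=5$ this already happens at the step $20\to30$, which needs $t=20>18$. The same count taken over all $q$ blocks shows your first scheme is impossible once $q$ is about $m$ or larger, however the cross edges are distributed among joins. Your range $(2m-2)\frac{m-1}{2}+\frac{(m-1)^2}{2}$ even exceeds this necessary bound. That reflects the double use of $K_{2m}$ you suspected: Corollary~\ref{prop:join2} needs the whole $K_{2m}$ as its inner graph, so it cannot be combined with $G_1$/$G_2$-type pieces cut from the same $K_{2m}$.

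The missing idea is to stop attaching the inter-block edges to block interiors. You are right that a single $K_{2m,2m}$ contains no odd cycles, but the remedy is to take all inter-block edges together rather than forcing them into joins.
\begin{itemize}
\item For $q\ge 3$ they form the complete equipartite graph $K_{q[2m]}$. This graph is not bipartite and has a $(C_m)$-decomposition by the known result the paper cites as \cite{H-sys}; lift it with Lemma~\ref{lem:Gtool4}.
\item Each block then only needs to be joined to the base set $N$, with $|N|=n_0<3m$.
\item For $q=2$, where $K_{2[2m]}$ is bipartite, join $X_1$ to $N$ and $X_2$ to $N\cup X_1$, so $t<5m$.
\end{itemize}
These values of $t$ lie in the range of Corollary~\ref{prop:join2} once $m\ge 13$. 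For $5\le m\le 11$ you still need extra work. There one uses that $n_0\in\{m+1,2m\}$. For $m=5$, $t=10$ and $t=16$ fall outside the corollary's range, so $K_{10}$ must be split explicitly into Lemma~\ref{lem:join}-type pieces plus copies of $K_5$ or $C_5$. This is the structure of the paper's proof.
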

\begin{proof}
Take any even $n'> m$ such that $m|n'(n'-1)$. We can write $n'=n+2qm$, for integers $q\geq 0$ and $m< n< 3m$. Moreover, since $m|n'(n'-1)$, it follows that $m|n(n-1)$. 
Notice that we can partition the vertex set of $4K_{n'}^{\bullet}$ into $q$ sets of $2m$ vertices and one set of $n$ vertices. Depending on the value of $q$, there are three cases:
\begin{itemize}
\item $q=0$. Then  $4K_{n'}^{\bullet}=4K_n^\bullet$.

\item $q=1$. Then  $4K_{n'}^{\bullet}=4K^{\bullet}_{2m} \bowtie \overline{4K^{\bullet}_n}\oplus4K^{\bullet}_{n}$. 

\item $q\geq 3$. Then  $4K_{n'}^{\bullet}$ is an edge-disjoint union of $q$ copies of $4K^{\bullet}_{2m} \bowtie \overline{4K^{\bullet}_n}$, one copy of $4K^{\bullet}_{q[2m]}$, and one copy of $4K^{\bullet}_{n}$. 

\item $q=2$. Then $4K_{n'}^{\bullet}$ is an edge-disjoint union of one copy of $4K^{\bullet}_{2m} \bowtie \overline{4K^{\bullet}_n}$,  one copy of $4K^{\bullet}_{n}$, and one copy of $4K^{\bullet}_{2m} \bowtie \overline{4K^{\bullet}_{2m+n}}$. 
\end{itemize}
By \cite{H-sys}, the complete multipartite graph with $q\geq3$ parts of cardinality $2m$, that is, $K_{q[2m]}$, has  a $(C_m)$-decomposition; by Lemma~\ref{lem:Gtool4}, there exists an HOP $(C_m)$-decomposition of $4K^{\bullet}_{q[2m]}$. Moreover, by supposition $4K^{\bullet}_{n}$ has an HOP $(C_m)$-decomposition.  It remains to show that $4K^{\bullet}_{2m} \bowtie \overline{4K^{\bullet}_n}$ and $4K^{\bullet}_{2m} \bowtie \overline{4K^{\bullet}_{2m+n}}$ admit   HOP $(C_m)$-decompositions. 
\begin{itemize}
\item For \( m \geq 13 \), we have \( n < 2m + n < 5m \leq \frac{(m-1)^2}{2} \); hence, by Corollary~\ref{prop:join2}, each of \( 4K^{\bullet}_{2m} \bowtie \overline{4K^{\bullet}_n} \) and \( 4K^{\bullet}_{2m} \bowtie \overline{4K^{\bullet}_{2m+n}} \) admits an HOP \((C_m)\)-decomposition. 

\item
For \( m \leq 11 \), since \( m \mid n(n-1) \) and \( m \) is odd, it follows that \( m \mid n \) or \( m \mid (n-1) \). Since \( n \) is even and \( m < n < 3m \), it follows that \( n \) must be either \( m + 1 \) or \( 2m \). 
Consequently, we need an HOP \( (C_m) \)-decomposition for \( 4K^{\bullet}_{2m} \bowtie \overline{4K^{\bullet}_{m+1}} \), \( 4K^{\bullet}_{2m} \bowtie \overline{4K^{\bullet}_{2m}} \), \( 4K^{\bullet}_{2m} \bowtie \overline{4K^{\bullet}_{3m+1}} \), and \( 4K^{\bullet}_{2m} \bowtie \overline{4K^{\bullet}_{4m}} \).  
First, observe that the case \( 4K^{\bullet}_{2m} \bowtie \overline{4K^{\bullet}_{4m}} \) arises from  \( n = 2m \) and \( q = 2 \). Thus, we actually seek an HOP \((C_m)\)-decomposition of \( 4K^{\bullet}_{6m} \). The multigraph \( 4K^{\bullet}_{6m} \) is an edge-disjoint union of three copies of \( 4K^{\bullet}_{2m} \) and one copy of \( 4K^{\bullet}_{3[2m]} \), each of which admits an HOP \((C_m)\)-decomposition. For the remaining cases, if \( m \geq 9 \), then \( m+1<2m<3m+1 \leq \frac{(m-1)^2}{2} \). Hence, each of these multigraphs admits an HOP \( (C_m) \)-decomposition by Corollary~\ref{prop:join2}. Thus, we only need to verify the cases where \( m \leq 7 \).

\item  For $m=7$, we need to find an HOP $(C_m)$-decomposition for $4K^{\bullet}_{14} \bowtie \overline{4K^{\bullet}_{8}}$, $4K^{\bullet}_{14} \bowtie \overline{4K^{\bullet}_{14}}$, and $4K^{\bullet}_{14} \bowtie \overline{4K^{\bullet}_{22}}$.  It can be verified that these exist by Corollary~\ref{prop:join2}.

\item  For $m=5$, we need to find an HOP $(C_5)$-decomposition for $4K^{\bullet}_{10} \bowtie \overline{4K^{\bullet}_{6}}$, $4K^{\bullet}_{10} \bowtie \overline{4K^{\bullet}_{10}}$, and $4K^{\bullet}_{10} \bowtie \overline{4K^{\bullet}_{16}}$. 

For $4K^{\bullet}_{10} \bowtie \overline{4K^{\bullet}_{6}}$, we can use Corollary~\ref{prop:join2}.

For $4K^{\bullet}_{10} \bowtie \overline{4K^{\bullet}_{10}}$, partition the vertices in $\overline{4K^{\bullet}_{10}}$ into five sets of two vertices. Then,  $4K^{\bullet}_{10} \bowtie \overline{4K^{\bullet}_{10}}$ can be viewed as an edge-disjoint union of five isomorphic copies of $4G^{\bullet}$ where $G=K_{10} \big\langle \emptyset, \{0\},  \emptyset \big\rangle  \bowtie \overline{K_2}$ and two copies of $4K_5^{\bullet}$. Clearly, $4K_5^{\bullet}$ admits an HOP $(C_5)$-decomposition, and by Lemma~\ref{lem:join}, there exists an HOP $(C_5)$-decomposition of $4G^{\bullet}$. Hence, $4K^{\bullet}_{10} \bowtie \overline{4K^{\bullet}_{10}}$ admits an HOP $(C_5)$-decomposition. 

For $4K^{\bullet}_{10} \bowtie \overline{4K^{\bullet}_{16}}$, partition the vertices in $\overline{4K^{\bullet}_{16}}$ into eight sets of two vertices. Then,  $4K^{\bullet}_{10} \bowtie \overline{4K^{\bullet}_{16}}$ can be viewed as an edge-disjoint union of five isomorphic copies of $4G_1^{\bullet}$ where $G_1=K_{10} \big\langle \emptyset, \{0\},  \emptyset \big\rangle  \bowtie \overline{K_2}$,  three isomorphic copies of $4G_2^{\bullet}$ where $G_2=K_{10} \big\langle \{1\}, \emptyset,  \emptyset \big\rangle  \bowtie \overline{K_2}$, and  one $4C_5^{\bullet}$.  By Lemma~\ref{lem:join}, there exist  HOP $(C_5)$-decompositions of $4G_1^{\bullet}$ and $4G_2^{\bullet}$. Thus, $4K^{\bullet}_{10} \bowtie \overline{4K^{\bullet}_{16}}$ admits an HOP $(C_5)$-decomposition.
\end{itemize}
Therefore, in all cases, $4K_{n'}^{\bullet}$ admits an HOP $(C_m)$-decomposition. 
\end{proof}

We now address the special cases \( n = m + 1 \) and \( n = 2m \).

\begin{lem}{\label{lem:color-6}}
For any odd integer $m \geq 5$, each of the multigraphs $4K_{2m}^{\bullet}$ and $4K_{m+1}^{\bullet}$ admits an HOP $(C_m)$-decomposition.
\end{lem}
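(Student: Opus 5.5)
For $4K_{2m}^{\bullet}$ the work is already done. By Lemma~\ref{lems-2starterQ}, for odd $m\geq 5$ the multigraph $4K_{2m}^{\bullet}$ admits an HOP $C_m$-factorization. The factors in such a factorization are $(C_m^{\langle 2\rangle})$-factors, so splitting each factor into its two $m$-cycles gives an HOP $(C_m)$-decomposition. Condition (C1) is a condition on individual cycles, so it survives this splitting.

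For $4K_{m+1}^{\bullet}$ we have $n=m+1$ even, and $m\mid n(n-1)=m(m+1)$. Since $m$ is odd, in fact $m\mid \frac{(m+1)m}{2}$, but Theorem~\ref{theo:Cm-dec} applies only to odd $n$, so I would not rely on it. Instead I would build a cyclic construction on $V=\{x_i:i\in\ZZ_m\}\cup\{x_\infty\}$, writing $K_{m+1}=\mathrm{Circ}(m;\pm\{1,\dots,\tfrac{m-1}{2}\})\bowtie K_1$ and using $\rho=(x_\infty)(x_0\ x_1\ \ldots\ x_{m-1})$. The difference classes are $1,\dots,\frac{m-1}{2}$ together with $\infty$, which is $\frac{m+1}{2}$ classes, each an orbit of size $m$ of a single colour. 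So $4K_{m+1}^{\bullet}$ has $4\cdot\frac{m+1}{2}$ edge orbits under $\rho_\bullet$, each of size $m$. I would look for $\frac{m+1}{2}\cdot 4/m\cdot m/m$, that is, $2(m+1)/m$ worth of starters, which is not an integer. Hence the orbits must be mixed: some starters are full $\rho$-orbits, and some are short-orbit cycles such as the peripheral cycle $x_0x_1\cdots x_{m-1}x_0$ of difference $1$, which is fixed by $\rho$.

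Concretely, I would copy the structure of Lemma~\ref{lem:color-1} with $m$ odd. As short-orbit cycles, take the difference-$1$ peripheral cycle $C'$ in its pink copy and in its blue copy. Because $m$ is odd, a cycle that alternates pink and blue is impossible, so instead I would use the two black copies: the all-black cycle with all arcs oriented forward, and the all-black cycle with all arcs oriented backward. Both satisfy (C1), and together they use up both black orbits of difference $1$.

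The remaining edges are the pink and blue orbits of difference $1$ plus all four colour classes of the differences $2,\dots,\frac{m-1}{2}$ and $\infty$. That is $2m+4m\cdot\frac{m-1}{2}=2m^2$ edges, which is exactly $2m$ cycles, so I need two starter central cycles under the full $\rho_\bullet$-orbit. Each starter passes through $x_\infty$ and has $m-2$ peripheral edges. I would take the zig-zag $x_\infty\,x_0\,x_{-1}\,x_1\,x_{-2}\cdots x_\infty$ of Lemma~\ref{zig-zag}, which covers differences $1,2,\dots,\frac{m-1}{2}$ and then repeats. Choosing the two starters so that together they contain one pink edge, one blue edge and two oppositely oriented black arcs of each difference in $\{2,\dots,\frac{m-1}{2},\infty\}$, plus one pink and one blue edge of difference $1$, requires a careful colouring in the style of $F_1,\dots,F_4$ in Lemma~\ref{lem:color-1}.

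\textbf{The main obstacle} is making the difference counts match. A central $m$-cycle has $2$ edges of difference $\infty$ and $m-2$ peripheral edges, and the two starters together have $2(m-2)$ peripheral edges. These must cover $4\cdot\frac{m-3}{2}+2=2m-4$ peripheral edge classes, which holds. So the plan is to find a central path of length $m-2$ traversing each difference in $\{2,\dots,\frac{m-1}{2}\}$ twice and difference $1$ once, realise it via Lemma~\ref{zig-zag}(ii), and then colour two copies so that (C1) holds. The (C1) colouring along odd-length stretches is where I expect difficulty, and I would resolve it with black runs oriented toward a blue edge and away from a pink edge, as in Lemma~\ref{lem:color-3}. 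If this fails for small $m$, such as $m=5$ or $m=7$, I would give explicit ad hoc starters for those cases.
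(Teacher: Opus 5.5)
Your treatment of $4K_{2m}^{\bullet}$ is the paper's. Your plan for $4K_{m+1}^{\bullet}$ has the same architecture as the paper's proof: two oppositely oriented all-black short-orbit cycles, plus two coloured copies of one central starter under $\rho_\bullet$. The paper puts the short-orbit cycles on difference $\frac{m-1}{2}$ and uses a starter that covers difference $1$ twice and $\frac{m-1}{2}$ once. You swap the roles of $1$ and $\frac{m-1}{2}$, which is fine. Your starter is just the truncated zig-zag $C=x_\infty x_0x_1x_{-1}x_2x_{-2}\cdots x_{-(k-1)}x_kx_\infty$ with $k=\frac{m-1}{2}$, so Lemma~\ref{zig-zag}(ii) is not needed.

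The gap is that you stop exactly where the content of the proof lies. You never produce the colouring of the two copies of $C$, and you allow that it might fail for small $m$. This step is not automatic. Condition (C1) says that, going around a cycle, pink and blue edges alternate; a black run from a pink edge to the next blue edge is oriented forward, and one from a blue edge to the next pink edge is oriented backward. So each starter, having odd length, has equally many pink and blue edges and an odd number of black edges. This must be reconciled with getting exactly one pink, one blue and two opposite black copies of each difference (only pink and blue for difference $1$). Moreover, in $C$ both occurrences of each difference $a\ge 2$ are traversed in the same direction. Hence the natural move of blackening, in one run, the two consecutive edges of difference $k$ at the turning point covers the same black orbit twice.

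The gap closes uniformly, with no ad hoc cases. Let $e_0=x_\infty x_0$, let $e_1,\dots,e_{2k-1}$ be the path edges, and let $e_{2k}$ be the edge back to $x_\infty$. Then $e_1$ has difference $1$, and $e_a$ and $e_{2k+1-a}$ have difference $a$ for $2\le a\le k$. Call a black edge forward if it is oriented along the traversal.

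If $k$ is odd:
- in the first copy, colour $e_0,\dots,e_k$ pink, blue, $\dots$, blue, and $e_{k+1},\dots,e_{2k}$ black backward;
- in the second copy, colour $e_0,\dots,e_k$ blue, pink, $\dots$, pink, and $e_{k+1},\dots,e_{2k}$ black forward.

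If $k$ is even:
- in the first copy, colour $e_0,\dots,e_k$ pink, blue, $\dots$, pink, then $e_{k+1},\dots,e_{2k-1}$ black forward and $e_{2k}$ blue;
- in the second copy, colour $e_1,\dots,e_k$ pink, blue, $\dots$, blue, and $e_{k+1},\dots,e_{2k},e_0$ black backward.

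Both copies satisfy (C1). Together they contain one pink and one blue edge of difference $1$. For each difference in $\{2,\dots,k,\infty\}$ they contain one pink edge, one blue edge and two oppositely oriented black arcs. For $2\le a\le k$ the black copies sit at $e_{2k+1-a}$, once forward and once backward; for $\infty$ one arc enters $x_\infty$ and one leaves it. Adding your two all-black Hamilton cycles of difference $1$ gives the HOP $(C_m)$-decomposition for every odd $m\ge 5$.
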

\begin{proof}
By Lemma~\ref{lems-2starterQ}, there exists an HOP $C_m$-factorization of $4K_{2m}^{\bullet}$, and hence an HOP $(C_m)$-decomposition.

Next, let $V(4K_{m+1}^{\bullet}) = \{x_i : i \in \mathbb{Z}_m\} \cup \{x_{\infty}\}$. Let $\rho_{\bullet}$ be the permutation on $E(4K_{m+1}^{\bullet})$ that preserves edge colors and orientations, induced by the permutation $\rho = (x_{\infty})(x_0 \ x_1 \ x_2 \ \ldots \ x_{m-1})$.
In \cite[Lemma 5.7]{2Kn-dec}, it is shown that $\W=\{C_1, \rho(C_1), \ldots, \rho^{m-1}(C_1), C_2\}$  is a $(C_m)$-decomposition of $2K_{m+1}$, where 
$$C_1= x_0\ x_{1}\ x_{-1}\ x_{2}\ \ldots \ x_{\frac{m-1}{4}}\ x_{-\frac{m-1}{4}}\ x_{\frac{m+7}{4}}\ x_{-\frac{m+3}{4}}\ \ldots \ x_{\frac{m+3}{2}} \ x_{\frac{m+1}{2}}\ x_{\infty}\ x_0$$ 
if $m\equiv 1\ ({\rm mod}\ 4)$, and 
$$C_1= x_0\ x_{1}\ x_{-1}\ x_{2}\ \ldots \ x_{\frac{m-3}{4}}\ x_{-\frac{m-3}{4}}\ x_{\frac{m+5}{4}}\ x_{-\frac{m+1}{4}}\ x_{\frac{m+9}{4}}\ \ldots \ x_{\frac{m+3}{2}} \ x_{\frac{m+1}{2}}\ x_{\infty}\ x_0$$ 
if $m\equiv 3\ ({\rm mod}\ 4)$, and $C_2=x_0\ x_{\frac{m-1}{2}}\ x_{m-1}\ x_{\frac{m-3}{2}}\ \ldots\ x_{\frac{m+1}{2}}\ x_0.$
Observe that $C_2$ only covers difference $\frac{m-1}{2}$. Let $C'_2$ be another copy of $C_2$. Color $C_2$ and $C'_2$ black and orient them in opposite directions. By doing this, we cover all black arcs corresponding to the difference $\frac{m-1}{2}$. 
%
Notice that $C_1$ covers each difference in $\{ 1,2\ldots, \frac{m-3}{2}\}$ exactly twice and difference $\frac{m-1}{2}$ exactly once, namely,  in the order
$1, 2, 3,\ldots, \textstyle{\frac{m-3}{2}}, \textstyle{\frac{m-1}{2}}, \textstyle{\frac{m-3}{2}}, \ldots, 3, 2, 1,\infty, \infty.$
Let $C_1'$ be another copy of $C_1$ and color them as follows.
\begin{itemize}
\item If $m\equiv 1\ ({\rm mod}\ 4)$: 
 \begin{figure}[H]
 \centering
   \includegraphics[width=0.9\linewidth,height=0.9\textheight,keepaspectratio]{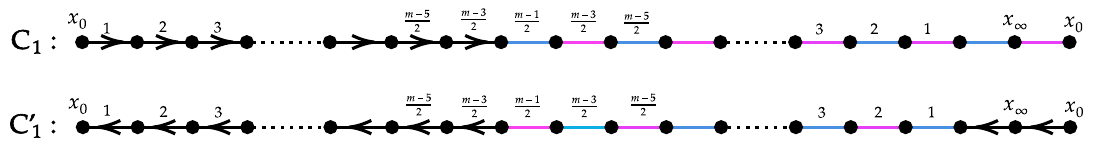}
   \vspace{-15pt}
  \end{figure} 
\item If $m\equiv 3\ ({\rm mod}\ 4)$: 
 \begin{figure}[H]
 \centering
   \includegraphics[width=0.9\linewidth,height=0.9\textheight,keepaspectratio]{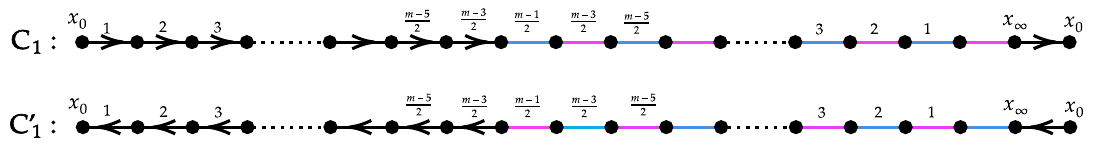}
   \vspace{-15pt}
  \end{figure} 
\end{itemize}
%
It can be verified that 
$\W'=\big\{\rho_{\bullet}^i(C_1), \rho_{\bullet}^i(C'_1): i=0,1,\ldots,m-1\big\}\cup\{C_2, C'_2\}$
 is an HOP $(C_m)$-decomposition  for $4K_{m+1}^{\bullet}$. 
\end{proof}
%
%
\begin{lem}{\label{lem:color-m-odd-n-even}}
Let $m$ be odd, and let $n$ be an even integer with $3\leq m\leq n$. If $m|n(n-1)$, then $4K_n^{\bullet}$ admits an HOP $(C_m)$-decomposition. 
\end{lem}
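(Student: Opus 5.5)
We split on whether $m=3$ or $m\geq 5$, and then reduce to a short list of small values of $n$.

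\textbf{The case $m=3$.} Here the statement is exactly Lemma~\ref{lem:m=3-00}.

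\textbf{The case $n=m$.} Since $n$ is even, $n=m$ would force $m$ to be even, which is impossible. So for $m\geq 5$ odd we may assume $n>m$.

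\textbf{Reduction for $m\geq 5$.} By Lemma~\ref{lem:color-5}, it suffices to handle even $n$ with $m<n<3m$ and $m\mid n(n-1)$. Unlike Lemma~\ref{lem:color-5}'s own small-$m$ argument, we cannot use that $m$ divides one of $n$ or $n-1$, because $m$ need not be prime. For example, $m=15$ and $n=21$ give $15 \mid 420$. So the values of $n$ in this range split into two groups.

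The first group is $n\in\{m+1,\,2m\}$. These are covered directly by Lemma~\ref{lem:color-6}.

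The second group consists of the remaining even $n$ in the range. In this group $m$ is composite, with $m=uv$, $u\mid n$ and $v\mid n-1$, and $\gcd(u,v)=1$. This is the main obstacle. I would handle it according to whether $m\mid n(n-1)/2$.

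\textbf{Subcase $m\mid n(n-1)/2$.} This always holds here. Since $m$ is odd and $m\mid n(n-1)$, we get $m\mid \frac{n(n-1)}{2}$. However, $n$ is even, so Theorem~\ref{theo:Cm-dec} (which needs odd order) does not apply. Instead I would use the known result on $(C_m)$-decompositions of $K_n$ for all $n$: $K_n$ for even $n$ is not decomposable into cycles since its vertex degrees are odd. So we need $2K_n$ and Theorem~\ref{theo:(Cm)-dec-2Kn}, which gives a $(C_m)$-decomposition of $2K_n$ because $m\mid n(n-1)$. This must then be lifted to $2K_n^{\circ}$ with an even number of pink edges in every cycle. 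Lemma~\ref{cor:color-decGraph} needs every component of the decomposition graph to have an even number of edges. A component $H$ has $|V(H)|m/2$ edges, so when $m$ is odd we need $|V(H)|\equiv 0 \pmod 4$; this is not automatic.

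\textbf{Plan for the lift.} Mimic Lemma~\ref{lem:color-m-even-n-even}.
\begin{enumerate}
\item Label $V(K_n)=\{x_i:i\in\ZZ_{n-1}\}\cup\{x_\infty\}$ and write $n-1=b'\ell$ with $b'=v$.
\item Build peripheral starter cycles as unions of $v$ rotated zig-zag paths, each contributing $u$ edges.
\item Build one central starter cycle $C=x_\infty T x_\infty$, where $T$ is a zig-zag path from Lemma~\ref{zig-zag}. It covers each leftover difference twice, the second occurrence colored pink and the first black.
\item Adjust parity using Lemma~\ref{zig-zag}(ii), inserting a difference-$1$ edge where needed. Alternatively, pair a central cycle with a copy and use the blue/pink/black coloring as in Case~2 of Lemma~\ref{lem:color-m-even-n-even}, which yields an HOP decomposition of $4G_2^{\bullet}$ directly.
\item Color each peripheral cycle once entirely pink and once entirely black. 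Because $m$ is odd, an all-pink copy has an odd number of pink edges, so it cannot be used as is. Instead, handle the peripheral families by Lemma~\ref{lem:Gtool4}, applied to a genuine $(C_m)$-decomposition of the underlying simple circulant.
\end{enumerate}

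\textbf{Assembly.} Finally, combine the pieces by Lemma~\ref{Gtool3}. I expect the difference bookkeeping in step 2 to be the hardest part: showing that the peripheral differences are pairwise distinct and fit below $\frac{n-2}{2}$.
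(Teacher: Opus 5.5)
\textbf{Verdict: there is a genuine gap.} Your outer skeleton is the paper's: Lemma~\ref{lem:m=3-00} for $m=3$, the reduction of Lemma~\ref{lem:color-5}, Lemma~\ref{lem:color-6} for $n\in\{m+1,2m\}$, then rotational starters with $n-1=b'\ell$. The core construction for the remaining $n$, however, fails because of a parity obstruction specific to odd $m$.

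\textbf{The $2K_n^{\circ}$ route cannot work in general.} Using Lemma~\ref{lem:Gtool2} is not merely ``not automatic''. $2K_n^{\circ}$ has $n(n-1)/2$ pink edges, which is odd when $n\equiv 2\pmod 4$, so no decomposition can give every cycle an even number of pink edges. This case occurs outside Lemma~\ref{lem:color-6}, e.g.\ $m=35$, $n=50$.

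\textbf{The rotational plan is inconsistent as written.} A central cycle $x_{\infty}Tx_{\infty}$ that covers each of its differences exactly twice has even length, so it cannot be an $m$-cycle. The pink/blue/black pairing from Case~2 of Lemma~\ref{lem:color-m-even-n-even} fails for the same reason. Sending all peripheral families through Lemma~\ref{lem:Gtool4} also fails: it produces a multiple of $4$ peripheral cycles. But any $(C_m)$-decomposition of $4K_n^{\bullet}$ has exactly $2(n-1)$ cycles through $x_{\infty}$, hence $2(n-1)(n-m)/m$ cycles avoiding it. Since $(n-1)(n-m)/m$ is an odd integer, this number is $\equiv 2\pmod 4$. (Also, your example $m=15$, $n=21$ has $n$ odd; $n=36$ or $n=40$ is what you want.)

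\textbf{The missing idea is to let one peripheral family share its differences with the central cycles.} Write $m=a'b'$ with $a'\mid n$ and $b'\mid n-1$, and set $\ell=(n-1)/b'$ and $F=(n-m)/a'$; both are odd. The paper proceeds as follows.
\begin{itemize}
\item $(F-1)/2$ peripheral starters, taken from \cite{2Kn-dec}, decompose a simple circulant $\mathrm{Circ}(n-1;\pm B)$ and go through Lemma~\ref{lem:Gtool4}.
\item One further starter $C_0$ covers a set $A$ of $a'$ differences.
\item The central path $T$ covers each difference of $A$ once and every other leftover finite difference twice, so it has length $2t+a'=m-2$.
\item In $4K_n^{\bullet}$, two copies of $C_0$, coloured black with opposite orientations, cover the black arcs of $A$.
\item Two central starters $C,C'$, coloured pink/blue/black so that \textbf{(C1)} holds, cover the pink and blue copies of $A$ and all four copies of every other leftover difference, including $\infty$.
\end{itemize}
This gives $2+4\cdot\frac{F-1}{2}=2F$ peripheral starters, each rotated through $\ell$ positions. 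That matches the required count $2\ell F\equiv 2\pmod 4$.

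Finally, the peripheral constructions of \cite{2Kn-dec} have exceptions at $(m,n)\in\{(15,36),(15,40)\}$, which need separate starters.
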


\begin{proof}
The case $m = 3$ is handled in Lemma~\ref{lem:m=3-00}. For $m \geq 5$, by Lemma~\ref{lem:color-5}, it is enough to prove the result for even $n$ in the range $m < n < 3m$.

The required HOP $(C_m)$-decomposition will be generated by starter central and starter peripheral cycles. For the peripheral cycles, we use the construction provided for \(2K_n\) in \cite{2Kn-dec} and extend it to starter peripheral cycles for \(4K_n^{\bullet}\). However, for the central cycles, we introduce a new construction that differs from the one in \cite{2Kn-dec}.

By Lemma~\ref{lem:color-6}, we know that $4K_{m+1}^{\bullet}$ and $4K_{2m}^{\bullet}$ admit  HOP $(C_m)$-decompositions. Therefore, we may assume $m + 2 < n < 2m$ or $2m < n < 3m$.

Let \( V(2K_n) = \{x_i : i \in \mathbb{Z}_{n-1}\} \cup \{x_{\infty}\} \), and let  $\rho=(x_{\infty})(x_0 \ x_1 \ \ldots \ x_{n-2})$. Observe that the group $\langle \rho \rangle$ has the following orbits on the edge set of $K_n$:
\begin{itemize}
\item for each $s\in \{1,2,\ldots, \textstyle{\frac{n-2}{2}}\}$, we have  an orbit $\{x_ix_{i+s}: i\in \ZZ_{n-1} \}$; and
\item $\{x_ix_{\infty}: i\in \ZZ_{n-1} \}$.
\end{itemize}
For convenience, let \( S = \{ 1,  2, \ldots,  \frac{n-2}{2}, \infty\} \) be the set of all differences. 
Let $n= 2^e a$, where $a$ is odd, and let $m=a'b'$, where $a'$ and $b'$ both are odd with $a'|a$ and $b'|(n-1)$. Note that $b'\geq 3$ since $m\nmid n$. 

To construct the peripheral cycles, we partition the \( n-1 \) vertices in \( \{x_i : i \in \mathbb{Z}_{n-1}\} \) into \( b' \) segments, each containing \( \ell = \frac{n-1}{b'} \) consecutive vertices. Note that each segment will contribute \( a' \) edges toward a peripheral  \( m \)-cycle.
%

If \( 2K_n \) is \((C_m)\)-decomposable, then the number of \( m \)-cycles in the decomposition is
$$\frac{n(n-1)}{m} = \frac{(n-1)(m + n - m)}{m} = (n-1) + \frac{n-1}{b'} \cdot \frac{n - m}{a'} = (n-1) + \ell \cdot \frac{n - m}{a'}=(n-1) + \ell F.$$
where
\(
F = \frac{n - m}{a'} = \frac{2^{e} a - a' b' }{a'}.
\)
Since \( a' \mid a \), and \( a' \), \( b' \), \( a \) are all odd, and $e\geq 1$,  it follows that \( F \) is an odd integer.
The number of \( m \)-cycles suggests constructing one starter central cycle and \( F \) starter peripheral cycles. 

As in \cite{2Kn-dec}, we will address the subcases \( (m, n) \in \{(15, 36), (15, 40)\} \) separately at the end of the proof. In all others cases, the peripheral cycles are constructed as follows. 

The authors of \cite{2Kn-dec} first construct a starter peripheral cycle \( C_0 \), which covers a set of differences \( A = \{s_{1}, s_{2},  \ldots,  s_{a'}\} \subset S \), where \(s_{1}< s_{2}<  \ldots<  s_{a'}\). If \( F = 1 \), the remaining differences can be covered by the central cycles. However, if \( F \geq 3 \), additional families of peripheral cycles are needed. 
For \( F \geq 3 \), let \( c = \frac{F - 1}{2} \). The authors of \cite{2Kn-dec} construct \( c \) starter peripheral cycles, called \( C_1, C_2, \ldots, C_c \) such that 
\(
\{C_i, \rho(C_i), \ldots, \rho^{\ell-1}(C_i) \mid i = 1, 2, \ldots, c\} 
\) 
is a \((C_m)\)-decomposition of \( G_1 = \text{Circ}(n - 1; \pm B) \), for a suitable set $B\subset S\setminus A$ with $|B|=ca'$. By Lemma~\ref{lem:Gtool4} , we know \(4G_1^{\bullet}\) admits an HOP \((C_m)\)-decomposition.

Let \( C'_0 \) be another copy of \( C_0 \). Color both \( C_0 \) and \( C'_0 \) black and orient them in opposite directions. The families of cycles generated by these two cycles cover the black orbits corresponding to the differences in \( A \). The differences in \( A \) will also appear in the starter central cycle \( C \), to be constructed next.

Let  \(\mathscrsfs{L} = S \setminus (A \cup B)= \{a_1, a_2, \ldots, a_t\} \cup \{\infty\}\), where \( a_1 < a_2 < \ldots < a_t \leq \frac{n-2}{2} \) are integers and \( t = \frac{n-2}{2} - \frac{F+1}{2}a' \).  
Consider the following sets of differences: 
\begin{itemize}
\item \( W_1=\{ s_1, s_2, \ldots, s_{a'}, a_1, a_2, \ldots, a_{t-1}, a_t\}\)
\item \( W_2=\{ a_1, a_2,  \ldots, a_{t-1}, a_t\} \)
\end{itemize}
Here,  \( |W_1|= a' + t \), and since the differences in \( W_1 \) are pairwise distinct, we may relabel them as
\(
W_1 = \{ b_1, b_2, \ldots, b_{a'+t} \}  \text{ with }  b_1 < b_2 < \cdots < b_{a'+t}.
\)
By Lemma~\ref{zig-zag}(i), there exists a path $T$ starting at $x_0$ that covers the differences in $W_1$ and $W_2$ in the order $b_1, b_2, \dots, b_{a'+t}, a_t, a_{t-1}, \dots, a_1$. Substituting the value of $t$ and $F$, we see that $T$ has length $2t + a' = m - 2$.

Let \( C = x_{\infty} T x_{\infty} \). Observe that \( C \) covers each difference in \(\mathscrsfs{L} \) exactly twice and each difference in \( A \) exactly once. Let \( C' \) be another copy of \( C \).  
In \( C \), color the first \( t + a' \) edges with differences \( b_1, b_2, \ldots, b_{a'+t}\) alternately pink and blue, starting with \( b_1 \) in blue, and in $C'$, in the opposite way. The remaining coloring depends on whether \( t \) is even or odd:
\begin{itemize}
    \item Assume \( t \) is even. Since \( a' \) is odd, \( t + a' \) is also odd, so the edge with difference \( b_{a'+t} \) is colored blue.  Then, complete the coloring as shown below.
    \begin{figure}[H]
        \centering
        \includegraphics[width=0.9\linewidth,height=0.9\textheight,keepaspectratio]{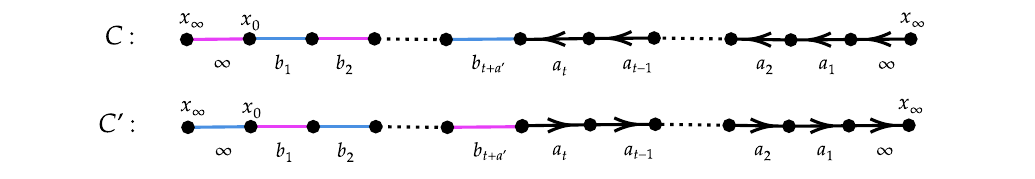}
        \vspace{-15pt}
    \end{figure}
    \item Assume \( t \) is odd. Then, \( t + a' \) is even, so the edge with difference \( b_{a'+t} \) is colored pink.  Then, complete the coloring as shown below. 
    \begin{figure}[H]
        \centering
        \includegraphics[width=0.9\linewidth,height=0.9\textheight,keepaspectratio]{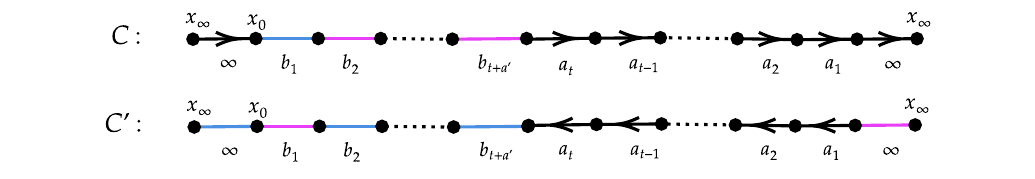}
        \vspace{-15pt}
    \end{figure}

\end{itemize}  

Observe that the opposite black arcs corresponding to each difference in \( A \) are covered by \( C_0 \) and \( C'_0 \), while the pink and the blue edges corresponding to each difference in \( A \) are covered by the starter central cycles \( C \) and \( C' \). 
Moreover, the starter central cycles jointly contain exactly one blue edge, one pink edge, and two opposite black arcs corresponding to each difference in \(\mathscrsfs{L} \).  
Therefore,  
\(
\{\rho_{\bullet}^i(C), \rho_{\bullet}^i(C') \mid i = 0, 1, \ldots, n-2\} \cup  \{\rho_{\bullet}^i(C_0), \rho_{\bullet}^{i}(C_0') \mid i = 0, 1, \ldots, \ell-1\}
\)
is an HOP  \((C_m)\)-decomposition of \( 4G_2^{\bullet} \), where  
\(
G_2 = \text{Circ}(n-1; \pm(A \cup (\mathscrsfs{L} - \{\infty\})) \bowtie K_1.
\)
Since $4K_n^{\bullet}=4G_1^{\bullet}\oplus 4G_2^{\bullet}$,  by Lemma~\ref{Gtool3}, the multigraph $4K_n^{\bullet}$ admits an HOP $(C_m)$-decomposition.


To complete the proof of Lemma \ref{lem:color-m-odd-n-even},  we prove that HOP $(C_{15})$-decompositions of $4K^{\bullet}_{36}$ and $4K^{\bullet}_{40}$ exist.
As before, we construct starter peripheral and starter central cycles. We begin with the starter peripheral cycles for $2K_{36}$ and $2K_{40}$, as provided in \cite{2Kn-dec}, and extend them to obtain the starter peripheral cycles for $4K_{36}^{\bullet}$ and $4K_{40}^{\bullet}$. Finally, we describe the construction of the starter central cycles.

For $n=36$, we have $a=9$, $a'=3$, $b'=5$, and the set of differences is $S = \{1,2,\ldots,17,\infty\}$.
We partition the $n-1 = 35$ vertices into $b' = 5$ segments, where each segment contains $\ell = \frac{n-1}{b'} = 7$ consecutive vertices. Each segment contributes $a' = 3$ edges toward a $15$-cycle. 
The number of families of peripheral cycles is $F = \frac{n-m}{a'} = 7$. Following \cite{2Kn-dec}, we define the paths $P_0 = x_0\ x_5\ x_{-1}\ x_{14}$, $P_1 = x_0\ x_1\ x_{-1}\ x_7$, $P_2 = x_0\ x_3\ x_{-1}\ x_{-14}$, and $P_3 = x_0\ x_9\ x_{-2}\ x_{-14}$.
For each \( i = 0, 1, 2, 3 \), we have
\(
C_i = P_i \cup \rho^\ell(P_i) \cup \rho^{2\ell}(P_i) \cup \rho^{3\ell}(P_i) \cup \rho^{4\ell}(P_i),
\)
which is a cycle of length 15.
The families generated by the peripheral cycles $C_1, C_2, C_3$ jointly cover differences in $B = \{1,2,3,4,8,9,11,12,13\}$, and form a $(C_{15})$-decomposition of $G_1 = \text{Circ}(35; \pm B)$. By Lemma~\ref{lem:Gtool4}, there exists 
 an HOP $(C_{15})$-decomposition of $4G_1^{\bullet}$.
The starter peripheral cycle $C_0$ covers  the differences in the set $A = \{5, 6, 15\}$.
 Let $\mathscrsfs{L}= S \setminus (A \cup B) = \{7,10,14,16,17,\infty\}$ be the set of unused differences.
Consider the sets  \( W_1=\{ 5,6,7,10,14,15,16,17\}\)
and \( W_2=\{ 7, 10,14,16,17\} \). 
Using Lemma~\ref{zig-zag}(i), we construct a path $T$ starting from $x_0$, covering differences in the order 
\(  5,6,7,10,14,15,16,17,17,16,14,10,7. \)
Let \( C = x_{\infty} T x_{\infty} \). Notice that \( C \) is a cycle of length 15. Take another copy of \( C \), call it \( C' \). Color them as follows:
\begin{figure}[H]
    \centering
    \includegraphics[width=0.8\linewidth,height=0.8\textheight,keepaspectratio]{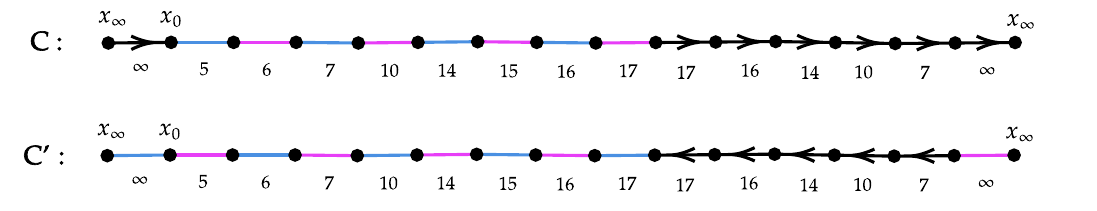}
    \vspace{-10pt}
\end{figure}
Now, let $C'_0$ be another copy of $C_0$, color both $C_0$ and $C'_0$ black, and orient them in the opposite directions. We see that
$$ \{\rho_{\bullet}^i(C), \rho_{\bullet}^{i}(C') \mid i=0,1,\dots,34\} \cup \{\rho_{\bullet}^i(C_0), \rho_{\bullet}^{i}(C_0') \mid i=0,1,\dots,6\} $$
is an HOP $(C_{15})$-decomposition of $4G_2^{\bullet}$, where $G_2=\text{Circ}(35; \pm(A \cup (\mathscrsfs{L} - \{\infty\})) \bowtie K_1$. 
Since $4K_{36}^{\bullet} =4G_1^{\bullet}\oplus 4G_2^{\bullet}$, the multigraph  $4K_{36}^{\bullet}$ admits an HOP $(C_{15})$-decomposition.

 
For \( n = 40 \), we have \( a = 5 \), \( a' = 5 \), \( b' = 3 \), and \( \ell = 13 \). The set of differences is \( S = \{1, 2, \dots, 19, \infty\} \).
Consider the paths \( P_0 = x_0\ x_9\ x_{-1}\ x_{10}\ x_{-2}\ x_{13} \), \( P_1 = x_0\ x_1\ x_{-3}\ x_2\ x_{-5}\ x_{13} \), and \( P_2 = x_0\ x_2\ x_{-1}\ x_5\ x_{-3}\ x_{13} \) from \cite{2Kn-dec}. 
For \( i = 0, 1,2 \), we see that \( C_i = P_i \cup \rho^\ell(P_i) \cup \rho^{2\ell}(P_i) \) is a cycle of length 15. 
The families generated by the peripheral cycles \( C_1 \) and \( C_2 \) jointly cover the differences in \( B = \{1, 2, 3, 4, 5, 6, 7, 8, 16, 18\} \).
As in the case $n=36$, we obtain an HOP \((C_{15})\)-decomposition of \( 4G_1^{\bullet} \), where \( G_1 = \text{Circ}(39; \pm B) \).

Note that the family generated by starter peripheral cycle $C_0$ covers the differences in the set \( A = \{9, 10, 11, 12, 15\} \).
The starter central cycle uses the remaining differences, namely \( \{13, 14, 17, 19, \infty\} \). Let \( C = x_{\infty} T x_{\infty} \), where \( T \) is a path of length 13 constructed using Lemma~\ref{zig-zag}(i).  
The path \( T \) starts at \( x_0 \) and covers the differences in the following order:
\[
9, 10, 11, 12, 13, 14, 15, 17, 19, 19, 17, 14, 13.
\]
Let \( C' \) be another copy of the starter central cycle $C$. We color them as follows:
\begin{figure}[H]
    \centering
    \includegraphics[width=0.8\linewidth,height=0.8\textheight,keepaspectratio]{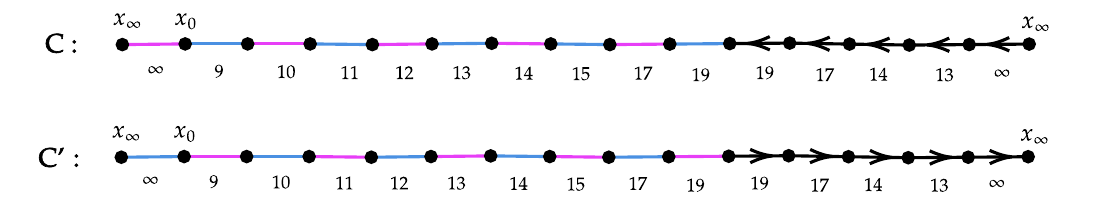}
     \vspace{-10pt}
\end{figure}
The construction is then completed as in the case $n=36$. 
\end{proof}


\section{HOP $(C_m)$-decomposition of $4K_n^{\bullet}$}{\label{sec:7}}

We now address the most challenging case; namely, when $m \mid 2n(n-1)$ but $m \nmid n(n-1)$. This condition implies that $m \equiv 0 \pmod{4}$ and $2n(n-1) \equiv m \pmod{2m}$. The proof is structured into several cases and subcases, where the main idea for each is to construct starter peripheral and central cycles that are then permuted to form the decomposition. 



\subsection{The case when $n$ is odd }


\begin{lem}{\label{lem:n-odd-4K-last}}
Let $m\equiv 0\ ({\rm mod}\ 4)$, and let $n$ be an odd integer such that $4\leq m< n$. If $2n(n-1)\equiv m\ ({\rm mod}\ 2m)$, then $4K_n^{\bullet}$ admits an HOP $(C_m)$-decomposition. 
\end{lem}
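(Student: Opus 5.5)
By Lemma~\ref{lem:n-odd-general} applied with $\alpha=2$, it suffices to treat odd $n$ with $m<n<2m$ and $m\mid 2n(n-1)$, $m\nmid n(n-1)$. If $n=m+1$, Lemma~\ref{lem:color-1} already gives the decomposition, so I assume $m+1<n<2m$.

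\textbf{Arithmetic setup.} Since $n$ is odd and $m\equiv 0\pmod 4$, the hypothesis forces the $2$-part of $m$ to be exactly twice the $2$-part of $n-1$. I would write $m=2a'b'$ with $a'$ odd and $a'\mid n$, and with $b'$ even and $b'\mid (n-1)$. The total number of $m$-cycles is $2n(n-1)/m$. Working on $\mathbb{Z}_{n-1}\cup\{x_\infty\}$ with $\rho=(x_\infty)(x_0\ x_1\ \ldots\ x_{n-2})$ as in Lemma~\ref{lem:color-m-even-n-even}, I would write this count as
\[
\frac{2n(n-1)}{m}=2(n-1)+\ell F,\qquad \ell=\frac{n-1}{b'},\quad F=\frac{2(n-m)}{2a'}\cdot\frac{1}{1},
\]
adjusting the definitions of $a',b'$ so that $F$ is an integer. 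The intended structure is:
\begin{itemize}
\item two starter central cycles $C,C'$, each rotated through all $n-1$ positions, which together supply one pink edge, one blue edge and two opposite black arcs for each difference they use;
\item $F$ starter peripheral cycles, each rotated only $\ell$ times.
\end{itemize}
The odd difference $(n-1)/2$ does not occur here, because $n-1$ is even and all difference orbits on $\mathbb{Z}_{n-1}$ are full.

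\textbf{Peripheral cycles.} I would use the segment construction $P_i=x_0\,x_{i\ell+1}\,x_{-1}\,x_{i\ell+2}\ldots$ from Lemma~\ref{lem:color-m-even-n-even}. Each $P_i$ has length $2a'$, and $C_i=\bigcup_j\rho^{j\ell}(P_i)$ is an $m$-cycle, provided $\gcd$ with the closing jump works. The closing jump needs adjusting because $b'$ is now even: I would end $P_i$ at $x_{\frac{b'}{2}\ell\pm 1}$ or use a shifted endpoint so that the $b'$ translates close into a single cycle. The inequalities $2a'<\ell<4a'$ and $F\le b'-1$ should go through as before, and they show that the differences used by distinct $C_i$ are pairwise distinct.

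Pairs of peripheral starters are coloured in one of two ways:
\begin{itemize}
\item fully pink and fully black, which is admissible by Lemma~\ref{lem:Gtool2} since $m$ is even;
\item as two black copies oriented oppositely, together with blue and pink versions that alternate.
\end{itemize}
Together these cover the corresponding difference orbits of $4G_1^\bullet$, and Lemma~\ref{Gtool3} combines the pieces.

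\textbf{Central cycles.} The remaining differences $a_1<\cdots<a_t$, together with $\infty$ and the single leftover peripheral difference set $A$ (if $F$ is odd), go into $C=x_\infty T x_\infty$. Here $T$ is a zig-zag path given by Lemma~\ref{zig-zag}(i), covering $W_1$ ascending and then $W_2$ descending, and its length is checked to be $m-2$. The two copies $C,C'$ are coloured as in Lemma~\ref{lem:color-m-odd-n-even}: the first run is coloured alternately pink and blue, in opposite phases in $C$ and $C'$, and the second run and the two $\infty$-edges are coloured black with orientations chosen so that (C1) holds.

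\textbf{Main obstacle.} The hardest step is reconciling parities. The lengths of the alternating runs depend on $t+|A|$, and (C1) at the junctions near $x_\infty$ forces particular end colours. If the parity comes out wrong, I would first try Lemma~\ref{zig-zag}(ii): replacing one difference by $1$ shifts a single edge between runs and fixes the phase, with difference $1$ reserved in advance from a peripheral family. I would also expect to verify by hand a few small sporadic pairs $(m,n)$ where the inequalities fail, for example $m=12$, $n=21$; these would be handled by explicit starters, as was done for $(15,36)$ and $(15,40)$.
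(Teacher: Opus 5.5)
Your plan has a genuine gap. It sits in the sentence asserting that the difference $\frac{n-1}{2}$ ``does not occur''. Here $n$ is odd, so the modulus $n-1$ is \emph{even} and $\frac{n-1}{2}$ is a diameter difference. Under $\langle\rho\rangle$ its pink and blue orbits $\{x_ix_{i+\frac{n-1}{2}}\}$ have only $\frac{n-1}{2}$ elements, while its black orbit of arcs has $n-1$. All orbits are full only when $n-1$ is odd, as in Lemma~\ref{lem:color-m-even-n-even}.

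Your count $2(n-1)+\ell F$ leaves room only for two central starters rotated through all $n-1$ positions, plus the segment-type peripheral cycles, and neither can carry this difference:
\begin{itemize}
\item A pink or blue diameter edge in a starter rotated $n-1$ times is covered twice, because $\rho^{i}$ and $\rho^{i+\frac{n-1}{2}}$ send it to the same edge. So at most one central starter can carry a black diameter arc, and the pink and blue diameter orbits stay uncovered.
\item $\bigcup_j\rho^{j\ell}(P_i)$ cannot contain a diameter edge $e$. Since $\frac{n-1}{2}=\frac{b'}{2}\ell$ with $b'$ even, the translate $\rho^{\frac{b'}{2}\ell}(P_i)$ would contain $e$ again.
\end{itemize}
The paper's remedy is structural. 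It uses \emph{four} central starters $C,C_c,C',C'_c$, rotating $C,C'$ by $\rho^i$ and $C_c,C'_c$ by $\rho^{\frac{n-1}{2}+i}$ for $0\le i\le\frac{n-3}{2}$; this is why it writes the count as $4\cdot\frac{n-1}{2}+Fn'$. The diameter difference $a_t=\frac{n-1}{2}$ lies in the prefix $P$ shared by all four. It is black in $C$ and $C'$, pink in one of $C_c,C'_c$ and blue in the other. On every full orbit the four half-rotations together still supply one pink edge, one blue edge and two opposite black arcs.

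A second missing idea is how to absorb the leftover set $A$ of $C_0$ when $F$ is odd (your $\frac{n-m}{a'}$, the paper's $\frac{2(n-m)}{m'}$). The colouring you borrow from Lemma~\ref{lem:color-m-odd-n-even} puts each difference of $A$ once in $C$ and once in $C'$. That covers two of its four colour classes, so exactly two peripheral starters would be built on $C_0$, forcing $F\equiv 2\pmod 4$. The paper instead splits on $F \bmod 4$:
\begin{itemize}
\item For $F\equiv 3\pmod 4$, it uses three copies of $C_0$: one alternating pink/blue, and two black ones oriented oppositely. The central cycles then cover each difference of $A$ exactly once. The differences in $A\setminus\{1,s\}$ are divided between the tails $R_1$ of $T_1$ and $R_2$ of $T_2$, and Lemma~\ref{zig-zag}(ii) lets $T_1$ carry $s$ where $T_2$ carries $1$.
\item For $F\equiv 1\pmod 4$, it uses a single alternating copy of $C_0$. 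It also places all of $A$, coloured black, in the common prefix $P$, so that $A$ is covered three times centrally.
\end{itemize}

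There are also smaller slips:
\begin{itemize}
\item The correct bound is $F\le 2b'-1$, not $b'-1$; for example, $(m,n)=(8,13)$ has $b'=4$ and $F=5$.
\item For the translates to chain into a single cycle, $P_i$ must end at some $x_{k\ell}$ with $\gcd(k,b')=1$. The paper uses $x_{-\ell}$ (its $x_{-n'}$); $x_{\frac{b'}{2}\ell\pm 1}$ does not work.
\item The four $\infty$-edges of your two central starters cannot all be black, since only two black $\infty$-orbits exist.
\item $(m,n)=(12,21)$ is outside the lemma, because $12\mid 21\cdot 20$. The sporadic case actually treated separately in the paper is $(8,13)$.
\end{itemize}
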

\begin{proof}
By Lemma \ref{lem:n-odd-general}, it suffices to prove this result for odd \( n \) in the range \( m < n < 2m \). 
We first outline the parameters. 
 Let \( V(4K_n^{\bullet}) = \{x_i : i \in \mathbb{Z}_{n-1}\} \cup \{x_{\infty}\} \), and let $\rho_{\bullet}$ be the permutation on $E(4K_n^{\bullet})$ that preserves the color (and orientation) of the edges, and is induced by  the permutation  $\rho=(x_{\infty})(x_0 \ x_1\  x_2 \ \ldots \ x_{n-2})$. Observe that the group $\langle \rho_{\bullet} \rangle$ has the following orbits on the edge set of $4K_n^{\bullet}$:
\begin{itemize}
\item for each $s\in \{1,2,\ldots, \textstyle{\frac{n-3}{2}}\}$, we have a pink and a blue orbit $\{x_ix_{i+s}: i\in \ZZ_{n-1} \}$;
\item a pink and a blue orbit $\{x_ix_{i+\frac{n-1}{2}}: i=0,1,\ldots, \textstyle{\frac{n-3}{2}} \}$;
\item for each $s\in \{1,2,\ldots, n-2\}$, we have a black orbit $\{(x_i, x_{i+s}): i\in \ZZ_{n-1} \}$;
\item a pink and a blue orbit  $\{x_ix_{\infty}: i\in \ZZ_{n-1} \}$; and
\item black orbits $\{(x_{\infty}, x_{i}): i\in \ZZ_{n-1} \}$ and $\{(x_i, x_{\infty}): i\in \ZZ_{n-1} \}$.
\end{itemize}
For convenience, let \( S = \{ 1,  2, \ldots,  \frac{n-3}{2},  \frac{n-1}{2}, \infty\} \) be the set of differences. Notice that $\frac{n-1}{2}$ is the diameter difference. 
Let \( n-1 = 2^a p \) and \( m = 2^b q \), where \( p \) and \( q \) are both odd. Observe that \( b \geq 2 \) since \( m \equiv 0 \pmod{4} \) and \(a\geq1\) since \(n-1\) is even. 

Since \(m=2^bq\mid 2n(n-1)=2^{a+1}np\) and \(n\), \(p\), and \(q\) are odd, we have \(b\le a+1\). Since \(m\nmid n(n-1)=2^anp\), we must have \(b>a\). Hence \(b=a+1\), and thus \(m=2^{a+1}q\).

Let \( d = \gcd(m, n - 1) \). Since \( m = 2^{a+1} q\) and \( n - 1 = 2^a p \), it follows that \( 2^a \mid d \).  Since $a\geq 1$,  \( d \) is even. Now, let \( m = m' d \) and \( n - 1 = n' d \). We see that \( \gcd(n', m') = 1 \), \( n' \) is odd, and \( m' \equiv 2 \pmod 4 \).

Let \( r = n - m \). Since \( n \) is odd and \( m \) is even, we have that \( r \) is odd. If \( r = 1 \), then \( n = m + 1 \), which implies that \( m \mid n(n - 1) \), leading to a contradiction. Thus, 
\begin{equation}
r > 1.
\label{eq:greater_than_one}
\end{equation}
Now, partition the \( n-1 \) vertices in \( \{x_i : i \in \mathbb{Z}_{n-1}\} \) into \( d \) segments, each containing \( n' \) consecutive vertices. Each segment will contribute \( m' \) edges toward a peripheral  \( m \)-cycle. 

If \( 4K_n^{\bullet} \) is \((C_m)\)-decomposable, then the number of \( m \)-cycles in the decomposition is
\allowdisplaybreaks
\begin{align*}
\frac{2n(n-1)}{m} &= \frac{2(m+r)(n-1)}{m} = 2(n-1) + \frac{2r(n-1)}{m} \\
&= 2(n-1) + \frac{2rn'd}{m'd} = 4\left(\frac{n-1}{2}\right) + \frac{2r}{m'}n'= 4\left(\frac{n-1}{2}\right) + Fn',
\end{align*}
where $F'=\frac{2r}{m'}$. Since  \( \gcd(n', m') = 1 \), $r$ is odd, and \( m' \equiv 2 \pmod 4 \), it is easy to see that $F$ is an odd integer. 
The number of \( m \)-cycles suggests constructing four starter central cycles and \( F \) starter peripheral cycles.  The four starter central cycles will be rotated through \( \frac{n-1}{2} \) positions, while each of the \( F \) starter peripheral cycles will be rotated through \( n' \) positions to generate all cycles in the decomposition.

Before proceeding further, we establish some inequalities to be used later. 

\noindent {\bf{Bounds on $n'$ and $F$:}}

From (\ref{eq:greater_than_one}), we know $r > 1$, which implies $m < n - 1$. Since $n-1 < 2m$, we have $m < n-1 < 2m$, and thus $m'd < n'd < 2m'd$. Hence:
\begin{equation}
m' < n' < 2m'. \label{eq:greater_than_one-m'}
\end{equation} 
Since $n < 2m$ implies $r = n - m < m$, it follows that $\frac{2r}{m'} < \frac{2m}{m'}$, yielding:
\begin{equation}
F \leq 2d - 1. \label{eq:bound-F}
\end{equation}
Moreover, since $d$ is even, 
\begin{equation}
\text{if } F \equiv 1\pmod{4}, \text{ then } F \leq 2d - 3. \label{eq:newineq}
\end{equation}

\noindent {\bf{Analysis for $F = 2d-1$:}}

Since $F = \frac{2(n-m)}{m'}$ and $m=m'd$, we have $n = \frac{m'(F+2d)}{2}$, which gives $n' = \frac{n-1}{d} = \frac{m'(F+2d)-2}{2d}$. Given that $m' \equiv 2 \pmod 4$ and $d$ is even:
\begin{equation} \label{eq:n-prime}
\text{if } F = 2d-1, \text{ then } n' = 2m' - \frac{m' + 2}{2d}.
\end{equation}
For $m'>2$, the inequality $\frac{m'+2}{m'-2} < d$ holds except when $(m', d) = (6, 2)$. Thus, $\frac{m'+2}{d} < m'-2$, implying $2m' - \frac{m' + 2}{2d} > 2m' - \frac{m' - 2}{2}$, which yields:
\begin{equation}\label{eq:1nprimebiger}
n' > \frac{3m' + 2}{2}, \quad \text{provided } m'>2, \, (m', d) \neq (6, 2), \, \text{and } F = 2d-1.
\end{equation}

\noindent {\bf{Analysis for $F = 2d-3$:}}

\begin{equation} \label{eq:n-primenew}
\text{If } F = 2d-3, \text{ then } n' = 2m' - \frac{3m' + 2}{2d}.
\end{equation} 
For $m'>2$ and $d \geq 4$, the inequality $\frac{3m'+2}{m'-2} < d$ holds except when $(m', d) \in \{(6, 4), (10, 4)\}$. Consequently, $\frac{3m'+2}{d} < m'-2$, thus $2m' - \frac{3m' + 2}{2d} > 2m' - \frac{m' - 2}{2}$, yielding:
\begin{equation} \label{eq:n'>}
n' > \frac{3m' + 2}{2}, \quad \text{provided } m'>2, \, d \geq 4, \, (m', d) \notin \{(6, 4), (10, 4)\}, \, \text{and } F = 2d-3.
\end{equation}

With the necessary inequalities established, we now begin the construction.
We have seen that \( F \) is odd, so we have two cases to consider:   \( F \equiv 1 \pmod{4} \) and   \( F \equiv 3 \pmod{4} \).

\vspace{0.3cm}

\noindent
\textbf{\boldmath Case 1: \( F \equiv 3 \pmod{4} \).} \label{case1-ref}
The approach is to first construct \(\frac{F-3}{4} + 1\) starter peripheral cycles for $K_n$. 
Then, take four copies of each of the \(\frac{F-3}{4}\)  cycles and color them appropriately to obtain $F-3$ starter peripheral  cycles in $4K^{\bullet}_n$. We will then be left with one starter peripheral  cycle in $K_n$; we take three copies of it to generate the remaining three starter peripheral cycles in $4K^{\bullet}_n$. 

We begin by constructing the starter peripheral cycles. Define  paths \( P_i \) as follows.
\begin{itemize}
\item  Let
$P_0=x_0\ x_{1}\ x_{-1}\ x_{2}\ x_{-2} \dots x_{\frac{m'-2}{2}}\ x_{-\frac{m'-2}{2}}\ x_{\frac{m'}{2}}\ x_{-n'}.$
The differences covered by \( P_0 \) are:
\begin{equation}
1,2,3,4,\ldots, m'-2, m'-1, s, \label{eq:1odd1}
\end{equation}
where $s=n'+\frac{m'}{2}$ if $d\geq 4$ and $s=n'-\frac{m'}{2}$ if $d=2$.
\item For \( i = 1, 2, \ldots, \lceil\frac{F-3}{8}\rceil \), define 
\begin{align*}
P_{2i-1} &= x_0\ x_{(2i-1)n' + 1}\ x_{-1}\ x_{(2i-1)n' + 2}\ x_{-2}\ \dots\ \ x_{-\frac{m' - 6}{4}} \ x_{(2i-1)n' + \frac{m' - 2}{4}} \ x_{-\frac{m' - 2}{4}} \\
&\quad x_{(2i-1)n' + \frac{m' + 6}{4}}\ x_{-\frac{m' + 2}{4}} \ \dots\ x_{-\frac{m' - 2}{2}}\ x_{(2i-1)n' + \frac{m' + 2}{2}} \ x_{-n'}.
\end{align*}
The differences covered by \( P_{2i-1}  \) are:
\begin{align}
& (2i-1)n' + 1, (2i-1)n' + 2, (2i-1)n' + 3, \ldots, (2i-1)n' + \tfrac{m' - 4}{2}, (2i-1)n' + \tfrac{m' - 2}{2}, \notag \\
& (2i-1)n' + \tfrac{m' + 2}{2}, (2i-1)n' + \tfrac{m' + 4}{2}, \ldots, (2i-1)n' + m', 2in' + \tfrac{m' + 2}{2}. \label{eq:1odd2}
\end{align}
\item For \( i = 1, 2, \ldots, \lfloor\frac{F-3}{8}\rfloor\), define 
\begin{align*}
P_{2i} &= x_0\ x_{2in' + 1}\ x_{-1}\ x_{2in' + 2}\ x_{-2}\ \dots\ x_{2in' + \frac{m' - 2}{4}}\ x_{-\frac{m' - 2}{4}} \\
&\quad x_{2in' + \frac{m' + 2}{4}}\ x_{-\frac{m' + 6}{4}}\ x_{2in' + \frac{m' + 6}{4}}\ \dots\ x_{-\frac{m'}{2}}\ x_{2in' + \frac{m'}{2}}\ x_{-n'}.
\end{align*}
The differences covered by \( P_{2i}  \) are:
\begin{align}
&\textstyle{2in' + 1, \ 2in' + 2, \ 2in' + 3, \ldots, 2in' + \frac{m'-2}{2}, 2in' + \frac{m'}{2},} \notag \\
&\textstyle{2in' + \frac{m' + 4}{2}, 2in' + \frac{m' + 6}{2} \ldots, 2in' + m', \ (2i + 1)n' + \frac{m'}{2}}. \label{eq:1odd3}
\end{align}
\end{itemize}

Since \( m' < n' \) by (\ref{eq:greater_than_one-m'}), the paths \( \rho^{j}(P_i) \), for \( j = 0, n', 2n', \ldots, (d - 1)n' \), are pairwise vertex-disjoint except at their endpoints. Thus, each path $P_i$ generates an $m$-cycle:
\[
C_i=P_i \cup \rho^{n'}(P_i) \cup \ldots \cup \rho^{(d-1)n'}(P_i).
\]
Note that the differences \((2i + 1)n' + \frac{m'}{2}\) occur at the ends of the paths $P_{2i}$, and these differences are  avoided in the paths $P_{2i-1}$. Similarly, the differences \(2in' + \frac{m' + 2}{2}\) appear at the ends of the paths $P_{2i-1}$ and are  avoided in the paths $P_{2i}$. We now show that no difference occurs in more than one $P_i$.

{\bf{First, assume \(\mathbf{d =2}\).}} By~(\ref{eq:bound-F}), we have \( F \leq 2d - 1 \), which implies that \( F = 3 \). Thus, there is only one path, \( P_0 \), and it covers differences $1, 2, 3, \ldots, m' - 2, m' - 1, n' - \frac{m'}{2}.$
By~(\ref{eq:1nprimebiger}) unless $m'=2$ or $(m', d) = (6, 2)$, we have \( n' > \frac{3m' + 2}{2} \), which implies \( n' - \frac{m'}{2} > m' + 1 \). Therefore,
\begin{equation} \label{ineq:chainx}
m' - 1 < m' < n' - \frac{m'}{2}.
\end{equation}
Note that for \( (m', d) = (6, 2) \), we have \( n' = 10 \) by~\eqref{eq:n-prime}. However, since \( n' \) must be odd, this case does not arise.
For the case \( m' = 2 \), using inequality \eqref{eq:greater_than_one-m'},  we obtain \( n' = 3 \). In this case, \( m = 4 \), \( n = 7 \), and \( F = 3 \). There is only one path, \( P_0 = x_0\, x_1\, x_3 \), and the covered differences are 1 and 2, which are distinct.

{\bf{Now, assume \(\mathbf{d \geq 4}\).}} The paths \( P_i \) jointly cover the following differences:
\begin{itemize}
    \item For \( i = 0: \quad\quad  1, 2, \ldots, m' - 2, m' - 1 \)

    \item For \( i = 1, 2, \ldots, \frac{F - 3}{4}:  \quad\quad    in' + 1,\, in' + 2,\, \ldots,\, in' + m'\)

    \item One additional difference, depending on the parity of \( \frac{F - 3}{4} \), which corresponds to
    \[
    \left( \frac{F - 3}{4} + 1 \right)n' + \frac{m'}{2}
    \quad \text{or} \quad
    \left( \frac{F - 3}{4} + 1 \right)n' + \frac{m' + 2}{2}.
    \]
\end{itemize}

\noindent
To show that the above differences are pairwise distinct, it suffices to prove the following inequalities. 

\begin{enumerate}[\bf(i)]
\item \( \frac{F - 3}{4}n' + m' < \frac{n-1}{2} \). 

By (\ref{eq:bound-F}), we have \( F \leq 2d - 1 \). Then, by (\ref{eq:greater_than_one-m'}), 
\(
\frac{F - 3}{4}n' + m' < \frac{d - 2}{2}n' + n'  = \frac{n - 1}{2}.
\)

\item \(\frac{F + 1}{4}n' + \frac{m' + 2}{2} < \frac{n - 1}{2}\) when \( F < 2d - 1 \).

Since  \( F \leq 2d - 5 \), by (\ref{eq:greater_than_one-m'}), we have
\(
\frac{F + 1}{4}n' + \frac{m' + 2}{2} < \frac{d - 2}{2}n' + n'  = \frac{n - 1}{2}.
\)

\item \(\frac{F - 3}{4}n' + m' < \frac{F + 1}{4}n' - \frac{m' + 2}{2} \) when \( F =2d - 1 \). 

We have $\frac{F + 1}{4}n' = \frac{n - 1}{2}$, and the largest difference covered by $P_{\frac{F-3}{4}}$ is either $\frac{F + 1}{4}n' - \frac{m'}{2}$ or $\frac{F + 1}{4}n' - \frac{m' + 2}{2}$, depending on the parity of $\frac{F - 3}{4}$. By~\eqref{eq:1nprimebiger}, $n' > \frac{3m' + 2}{2}$ as $d \geq 4$, which is equivalent to 
\(
\frac{F - 3}{4}n' + m' < \frac{F + 1}{4}n' - \frac{m' + 2}{2}.
\)

\end{enumerate}
%
%
Thus, the differences covered by the \( m \)-cycles \( C_0, C_1, \ldots, C_{\frac{F-3}{4}} \), listed in (\ref{eq:1odd1}), (\ref{eq:1odd2}), and (\ref{eq:1odd3}), are pairwise distinct.
Let \( B \) be the set of differences listed in (\ref{eq:1odd2}) and (\ref{eq:1odd3}) that are covered by the \( m \)-cycles \( C_1, C_2, \ldots, C_{\frac{F-3}{4}} \). 
Let \(G_1=\text{Circ}(n-1;\pm B)\). Then $\{C_i,\rho(C_i),\ldots,\rho^{n'-1}(C_i)\mid i=1,\ldots,{\textstyle\frac{F-3}{4}}\}$ is a \((C_m)\)-decomposition of \(G_1\). By Lemma~\ref{lem:Gtool4}, there exists an HOP \((C_m)\)-decomposition of \(4G_1^{\bullet}\) generated by \(F-3\) starter peripheral cycles obtained from four copies of the \(\frac{F-3}{4}\) cycles of \(G_1\).


Recall that $A=\{1, 2, 3, \ldots, m'-2, m'-1, s\}$ is the set of differences covered by \( C_0 \). Take three copies of \( C_0 \) and label them as \( C'_0 \), \(  C'_1\) and \( C'_2 \); these are the remaining three starter peripheral cycles for \( 4K_n^{\bullet}\). Color the edges of the cycles \(C'_1\) and \(C'_2\) black, and orient them in the opposite directions.  Color the edges of \(P_0\) in the cycle  \(C'_0\) alternately pink and blue (as shown below), starting with pink. Since \(P_0\) has length \(m'\), and \( m' \equiv 2 \pmod 4 \), it ends with blue.
\begin{figure}[H]
\centering
   \includegraphics[width=0.9\linewidth,height=0.9\textheight,keepaspectratio]{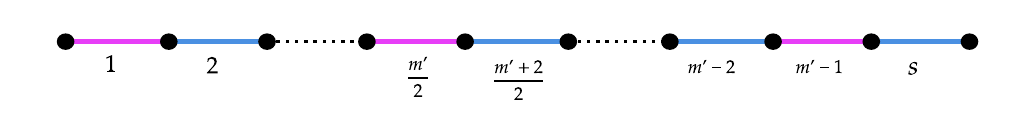}
   \vspace{-15pt}
\end{figure}
Observe that the families generated by the starter cycles \( C'_1 \) and \( C'_2 \) jointly cover the black orbits corresponding to the differences in the set \( A \). The family generated by the starter cycle \( C'_0 \) covers:
\begin{enumerate}[(i)]
    \item The pink orbits corresponding to the differences
    \begin{equation}
    \textstyle{1, 3, 5, \ldots, \frac{m'}{2}, \frac{m'+4}{2}, \ldots, m' - 3, m' - 1}.
    \label{eq:2pink_differences_odd}
    \end{equation}
    \item The blue orbits corresponding to the differences
    \begin{equation}
    \textstyle{2, 4, 6, \ldots, \frac{m'-2}{2}, \frac{m'+2}{2}, \ldots, m' - 2, s}.
    \label{eq:2blue_differences_odd}
    \end{equation}
\end{enumerate}
%
We now need to cover each difference exactly one more time using the central cycles \( C \) and \( C' \), to be constructed below. The differences in the set \( A \) are carefully divided into two parts, with each half covered by one of the central cycles and assigned the proper coloring, such that the blue orbits corresponding to the differences listed in~(\ref{eq:2pink_differences_odd}) and the pink orbits corresponding to the differences listed in~(\ref{eq:2blue_differences_odd}) are covered.

Let \(\mathscrsfs{L} \) be the set of unused differences in \(S\), that is, \(\mathscrsfs{L}  = S \setminus (A \cup B)=\{a_1, a_2, \ldots, a_t\} \cup \{\infty\}\), where \(a_1=m' < a_2 < \ldots < a_t = \frac{n-1}{2}\)  and \( \textstyle{t = \frac{n-1}{2} - \frac{(F+1) m'}{4}}\). 
Observe that  $a_1 < s < a_t$.

\noindent
Consider the following sets of differences:  
\begin{itemize}
\item \( W_1=\{ a_1, a_2, a_3, \ldots, a_{t-2}, a_{t-1}, a_t\}\)
\item \( W_2=\{ 2, 5, 6, \ldots, m'-9, m'-8, m'-5, m'-4, m'-1, a_1, a_2, \ldots, s, \ldots, a_{t-1}\} \)
\end{itemize}
Note that $s\notin W_1$,  \(|W_1| = t\) and \(|W_2| = t + \frac{m'-2}{2}\).  By Lemma \ref{zig-zag}(i), there exists a path  \( T_1 \) of length \( 2t + \frac{m'-2}{2} \) that starts at \( x_0 \) and covers the differences in $W_1$ and $W_2$ in the following order:
\[
T_1: a_1, a_2\ldots, a_{t-1}, a_t, a_{t-1}, \ldots, s, \ldots, a_1, m'-1, m'-4, m'-5, \ldots, 6, 5, 2.
\]
Now, consider the following set of differences: 
\begin{itemize}
\item \( W_2'=\{ 3,4, \ldots, m'-10, m'-7, m'-6, m'-3, m'-2, a_1,\ldots, s, \ldots, a_{t-1}\} \)
\end{itemize}
Here, \(|W_2'| = t + \frac{m'-2}{2}\).  By Lemma \ref{zig-zag}(i), there exists a path of length \( 2t + \frac{m'-2}{2}\) that starts at \( x_0 \) and covers the differences  in $W_1$ and $W_2'$ in the following order:
\[
a_1, \ldots, a_{t-1}, a_t, a_{t-1}, \ldots, s, \ldots, a_1, m'-2, m'-3, m'-6, m'-7,\ldots, 4, 3.
\]
By Lemma \ref{zig-zag}(ii), the difference \( s \) in the above path can be replaced with difference \( 1 \); that is, there exists a path  \( T_2 \) that covers the differences in the following order:
\[
T_2: a_1, \ldots, a_{t-1}, a_t, a_{t-1}, \ldots, 1, \ldots, a_1, m'-2, m'-3, m'-6, m'-7,\ldots, 4, 3.
\]
Moreover, by Lemma~\ref{zig-zag}(ii), the paths \( T_1 \) and \( T_2 \) can be constructed to be identical up to the occurrence of the difference \( s \) in \( T_1 \) and the difference \( 1 \) in \( T_2 \).
Note that since $t = \frac{n-1}{2} - \frac{(F+1)m'}{4}$ and $F = \frac{2(n-m)}{m'}$, the length of each of $T_1$ and $T_2$ is $2t + \frac{m'-2}{2} = m - 2$.

To simplify the explanation of the coloring process, we express $T_1$ and $T_2$ as the concatenations $T_1 = PQ_1R_1$ and $T_2 = PQ_2R_2$. Here, \( P \), \( Q_1 \), \( Q_2 \), \( R_1 \), and \( R_2 \) represent subpaths that cover the following sequences of differences: \label{case1x}
\begin{itemize}
\item \( P: a_1, a_2, a_3, \ldots, a_{t-2}, a_{t-1}, a_t\)
\item \( Q_1: a_{t-1}, a_{t-2}, \ldots, s, \ldots, a_3, a_2, a_1 \)
\item \( Q_2: a_{t-1}, a_{t-2}, \ldots, 1, \ldots, a_3, a_2, a_1 \)
\item \( R_1: m'-1, m'-4, m'-5, m'-8, m'-9, \ldots, 6, 5, 2 \)
\item \( R_2: m'-2, m'-3, m'-6, m'-7, m'-10, m'-11, \ldots, 4, 3 \)
\end{itemize}
The difference \( a_t = \frac{n-1}{2} \) occurs only in \( P \), and importantly, \( P \) is identical in both \( T_1 \) and \( T_2 \).
Also, notice that  \( P \), \( Q_1 \), and \( Q_2 \) each have length \( t  \). 
 The paths \( R_1 \) and \( R_2 \) each have length \( \textstyle{\frac{m'-2}{2}} \). 
We now describe the coloring.

Let \( C=x_{\infty} PQ_1R_1 x_{\infty} \) and \( C'=x_{\infty} PQ_2R_2 x_{\infty}\). 
First, color the edges in the path \( R_1 \) alternately pink and blue, starting with blue. Since its length is \( \textstyle{\frac{m'-2}{2}} \), which is even, it ends with pink. Similarly, color the edges in the path \( R_2 \) alternately pink and blue, starting with pink and ending with blue.

Next, in the path \( Q_1 \), color the edge with difference \( s \) pink, and in the path \( Q_2 \), color the edge with difference \( 1 \) blue, as shown below.
 With this coloring, we ensure that the blue orbits corresponding to the differences listed in (\ref{eq:2pink_differences_odd}) and the pink orbits corresponding to the differences listed in (\ref{eq:2blue_differences_odd}) are covered. 
\vspace{-13pt}
\begin{figure}[H]
\centering
   \includegraphics[width=0.9\linewidth,height=0.9\textheight,keepaspectratio]{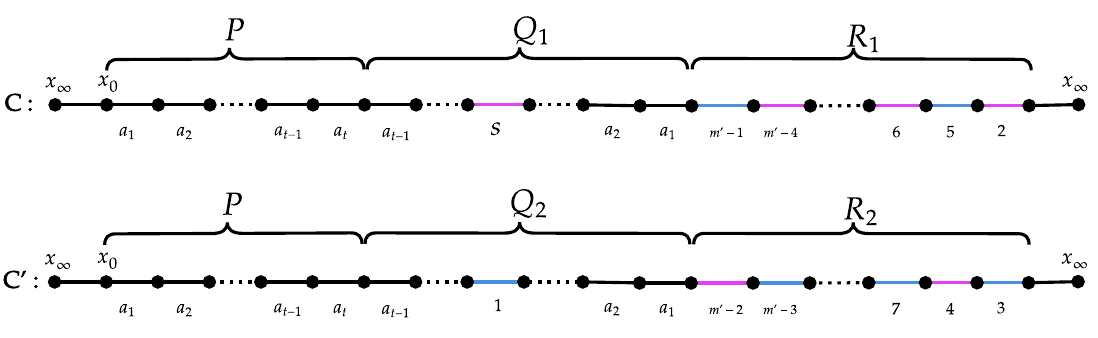}
   \vspace{-13pt}
\end{figure} 

Let \( \omega \) be the set of differences in \( Q_1 \) that fall within the interval \( \left[m', s\right) \), and recall that \( a_1 = m' \),  
 \( s = n' - \frac{m'}{2} \) if \( d = 2 \), and \( s = n' + \frac{m'}{2} \) otherwise. 
 In either case, it can be verified that \(|\omega|\) is even.
We start by coloring the edge with difference \(a_1\) pink in \(Q_1\) and blue in \(Q_2\), then continue coloring the remaining edges with differences in $\omega$ alternately pink and blue, as below.
\begin{figure}[H]
\centering
   \includegraphics[width=0.9\linewidth,height=0.9\textheight,keepaspectratio]{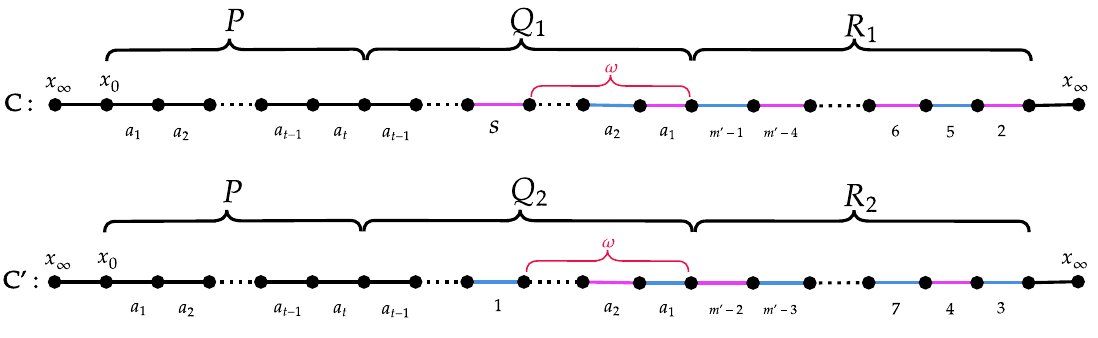}
    \vspace{-13pt}
\end{figure} 

Let \( C_c \) be a copy of \( C \), and let \( C'_c \) be a copy of \( C' \) with the same coloring as described above. Note that the number of uncolored edges in each of \( Q_1 \) and \( Q_2 \) is \( t - (|\omega| + 1) \). Depending on the parity of \( t \), we proceed with the rest of the coloring as follows.

\begin{enumerate}[\bf(i)] \label{OD:i}
\item $t$ is odd. Then  \( t - (|\omega| + 1) \) is even. 

In \( C \) and \( C_c \), start by coloring the uncolored edges in \( Q_1 \) alternately pink and blue, beginning with the edge of difference \( a_{t-1} \) in pink. Additionally, in \( C_c \), color the edge with difference \( a_t \)  blue. Then, proceed to color the remaining edges as shown below.

\begin{figure}[H]
\centering
  \includegraphics[width=0.9\linewidth,height=0.9\textheight,keepaspectratio]{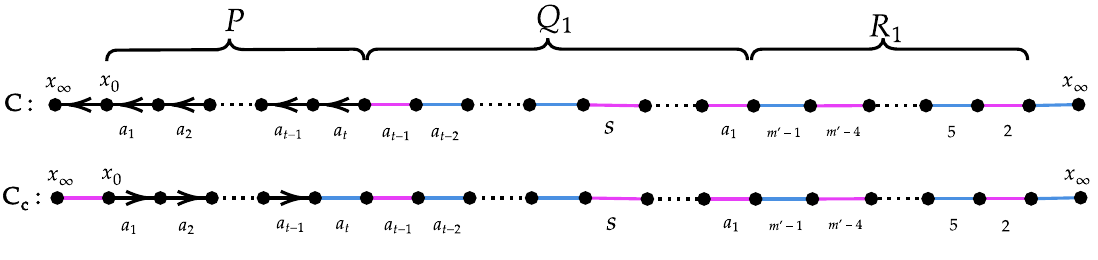}
   \vspace{-15pt}
\end{figure}

In \( C' \) and \( C'_c \), start by coloring the uncolored edges in \( Q_2 \) alternately pink and blue,  beginning with the edge of difference \( a_{t-1} \) in blue. Additionally, in \( C'_c \), color the edge with difference \( a_t \) pink. Then, color the remaining edges as shown below.

\begin{figure}[H]
\centering
 \includegraphics[width=0.9\linewidth,height=0.9\textheight,keepaspectratio]{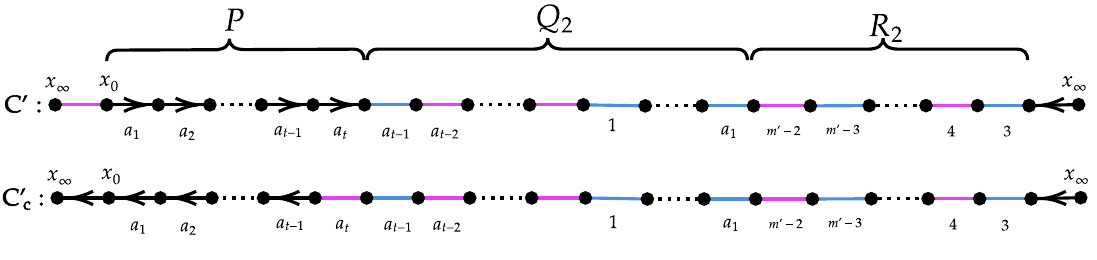}
  \vspace{-15pt}
\end{figure}

\item $t$ is even. Then  \( t - (|\omega| + 1) \) is odd. \label{EV:ii}

In \( C \) and \( C_c \), start by coloring the uncolored edges in \( Q_1 \) alternately pink and blue, beginning with the edge of difference \( a_{t-1} \) in blue. Additionally, in \( C_c \), color the edge with difference \( a_t \)  pink. Then, proceed to color the remaining edges as shown below.

\begin{figure}[H]
\centering
  \includegraphics[width=0.9\linewidth,height=0.9\textheight,keepaspectratio]{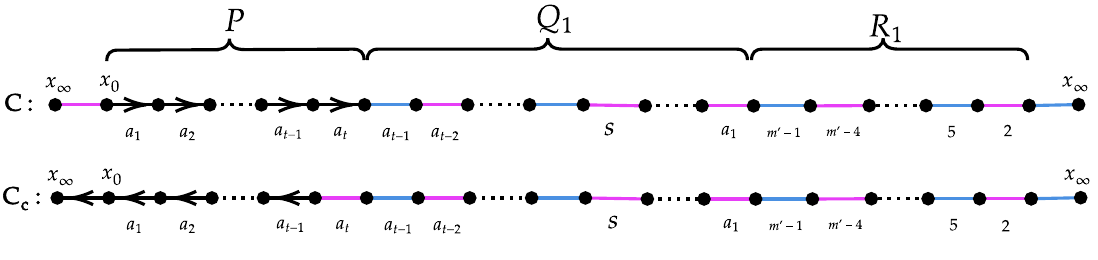}
   \vspace{-15pt}
\end{figure}

In \( C' \) and \( C'_c \), start by coloring the uncolored edges in \( Q_2 \) alternately pink and blue,  beginning with the edge of difference \( a_{t-1} \) in pink. Additionally, in \( C'_c \), color the edge with difference \( a_t \) blue. Then, color the remaining edges as shown below.

\begin{figure}[H]
\centering
 \includegraphics[width=0.9\linewidth,height=0.9\textheight,keepaspectratio]{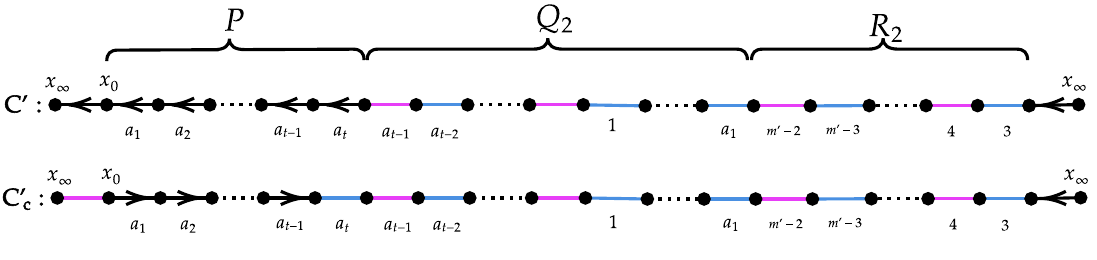}
  \vspace{-15pt}
\end{figure} 

\end{enumerate}

It can be verified  that
\begin{align*}
&\big\{\rho_{\bullet}^i(C), \rho_{\bullet}^{\frac{n-1}{2}+i}(C_c), \rho_{\bullet}^i(C'), \rho_{\bullet}^{\frac{n-1}{2}+i}(C'_c) \mid i=0,1,\ldots,\textstyle{\frac{n-3}{2}}\big\}  \cup \{C'_i, \rho_{\bullet}(C'_i), \ldots, \rho_{\bullet}^{n'-1}(C'_i) \mid i=0,1,2\}
\end{align*}
is an HOP $(C_m)$-decomposition  for $4G_2^{\bullet}$, where  \( G_2 = \text{Circ}(n-1; \pm(A\cup(\mathscrsfs{L}-\{\infty\}))) \bowtie K_1.\)

We have $4K_n^{\bullet}=4G_1^{\bullet}\oplus 4G_2^{\bullet}$, and since each of $4G_1^{\bullet}$ and $4G_2^{\bullet}$ admits an HOP $(C_m)$-decomposition, by Lemma \ref{Gtool3}, the multigraph $4K_n^{\bullet}$ admits an HOP $(C_m)$-decomposition. 
 
\vspace{0.2cm}

\noindent
\textbf{\boldmath Case 2: \( F \equiv 1 \pmod{4} \).}
Here, we divide the proof into three subcases:  \( d = 2 \);  \( d \geq 4 \) with \( (m, d) \neq (8, 4) \); and  \( (m, d) = (8, 4) \).

\textbf{Subcase 2.1:} \( d = 2 \). From (\ref{eq:newineq}), we have \( F \leq 2d - 3 \). Thus, \( F = 1 \), and we need to construct one family of peripheral cycles. Note that since \( d = 2 \), we have \( n' = \frac{n - 1}{2} \).
Define 
\begin{align*}
P_0&= x_0\ x_{1}\ x_{-1}\ x_{2}\ x_{-2} \dots x_{\frac{n'+1}{2}-\frac{m'+6}{4}}\ x_{-(\frac{n'+1}{2}-\frac{m'+6}{4})} \ x_{\frac{n'+1}{2}-\frac{m'+2}{4}}\\
 & x_{-(\frac{n'-1}{2}-\frac{m'-2}{4})} \ x_{\frac{n'+1}{2}-\frac{m'-2}{4}}\ \ldots\ x_{-\frac{m'-2}{2}}\ x_{\frac{m'-2}{2}}\ x_{-\frac{m'}{2}}\ x_{\frac{m'}{2}}\ x_{n'}.
\end{align*}
The differences covered by \( P_{0}  \) are:
\[
1,2,3,4,\ldots, n'-\frac{m'}{2}-2, n'-\frac{m'}{2}-1, n'-\frac{m'}{2}+1, n'-\frac{m'}{2}+2, \ldots, m'-2, m'-1, m', n'-\frac{m'}{2}.
\]
Since $n' - \frac{m'}{2}$ appears at the end of the path and $n' - \frac{m'}{2} < m'$ (as shown below), its occurrence was  avoided earlier in the path.

Since \( F = 1 \), we have \( 2(n - m) = m' \), which implies
\begin{equation}
n = m + \frac{m'}{2}.
\label{eqn:n+}
\end{equation}
Moreover, since \( d = 2 \), \( m = 2m' \),  we have \( n = \frac{5m'}{2} \), and we can write \( n' = \frac{n - 1}{2} = \frac{5m' - 2}{4} \).
 Therefore, we have
\begin{equation}
n' - \frac{m'}{2} = \frac{3m'}{4} - \frac{1}{2}  < m'.
\label{eq:n'<m'}
\end{equation}

Since \( m' < n' \) by (\ref{eq:greater_than_one-m'}), the paths \(P_0\) and \(\rho^{n'}(P_0)\) are pairwise vertex-disjoint except at the endpoints. Thus, \(C_0=P_0 \cup \rho^{n'}(P_0)\) is an \(m\)-cycle.
Moreover, having \( m' < n' \) implies \( \frac{m'}{2} < n'-\frac{m'}{2} \), and using the inequality in  (\ref{eq:n'<m'}) we have \( \frac{m'}{2} < n'-\frac{m'}{2} <m'.\)

Let \( A = \{1, 2, 3, \ldots,\frac{m'}{2}, \ldots, n'-\frac{m'}{2}, \ldots, m'-1, m'\} \) be the set of differences covered by \( C_0 \).  Color the edges of \( P_0 \) in the cycle \( C_0 \) alternately pink and blue (as shown below), starting with pink. Since the length of \( P_0 \) is \( m' \) and \( m' \equiv 2 \pmod{4} \), it ends with blue. Note that since \( n' \) is odd, \( n' - \frac{m'}{2} - 1 \) is also odd, and the edge with this difference gets pink color.

\begin{figure}[H]
\centering
   \includegraphics[width=0.9\linewidth,height=0.9\textheight,keepaspectratio]{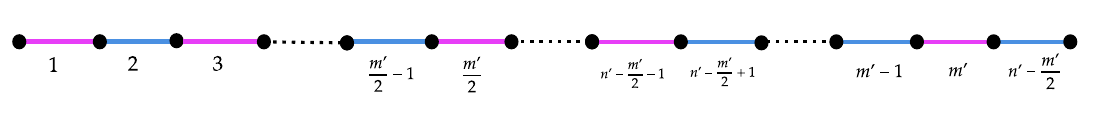}
   \vspace{-15pt}
\end{figure}

Observe that the family generated by the starter cycle \( C_0 \) covers:
\begin{enumerate}[(i)]
    \item The pink orbits corresponding to the differences
    \begin{equation}
    \textstyle{1, 3, 5, \ldots, \frac{m'}{2}, \ldots, n' - \frac{m'}{2}-1, n' - \frac{m'}{2}+2, \ldots, m' - 2, m'}.
    \label{eq:pink_differences_F=1}
    \end{equation}
    \item The blue orbits corresponding to the differences
    \begin{equation}
    \textstyle{2, 4, 6, \ldots, \frac{m'}{2}-1, \ldots,n' - \frac{m'}{2}-2, n' - \frac{m'}{2}+1, \ldots, m' - 1, n' - \frac{m'}{2}}.
    \label{eq:blue_differences_F=1}
    \end{equation}
\end{enumerate}
The differences in the set \( A \) will also appear in the central cycles \( C \) and \( C' \), to be constructed next.

Let \( \mathscrsfs{L} \) be the set of unused differences in \( S \); that is, \( \mathscrsfs{L} =  \{m'+1, m'+2, \ldots, n'\} \cup \{\infty\} \).
Consider the following sets of differences:  
\begin{itemize}
\item \( W_1=\{ 1,2,3,\ldots, n'-2, n'-1, n'\}\)
\item \( W_2=\{ 2, 3, 4, \ldots, \frac{m'}{2}-2, \frac{m'}{2}-1, \frac{m'}{2}, \textstyle{n'-\frac{m'}{2}}, m'+1, m'+2, \ldots, n'-1\} \)
\end{itemize}
The set \( W_1 \) contains all  differences from the set \( A \) and all differences from \( \mathscrsfs{L} \), except the difference \( \infty \), and has size \( |W_1| = n' \). The set \( W_2 \) contains half of the differences from \( A \) and all differences from \( \mathscrsfs{L} \), excluding differences \( \infty \) and \( n' \), and has size \( |W_2| = n' - \frac{m' + 2}{2} \).
 By Lemma \ref{zig-zag}(i), there exists a path  \( T_1 \) of length \( 2n' - \frac{m'+2}{2} \) that starts at \( x_0 \) and covers the differences in the following order:
\begin{align*}
T_1:  \ 1,2,3,\ldots, m'+1 ,\ldots, n'-1, n',  n' - 1, \ldots,  m' + 1, \textstyle{n' - \frac{m'}{2}, \frac{m'}{2}, \frac{m'}{2} - 1},  \ldots, 4, 3, 2.
\end{align*}
Now, consider the following set of differences:
\begin{itemize}
\item \( W_2'=\{ 1,  \frac{m'}{2}+1,  \frac{m'}{2}+2, \ldots, n' - \frac{m'}{2}-1, n' - \frac{m'}{2}+1,  \ldots, m'-2, m'-1, m', m'+1, \allowbreak m'+2, \ldots,n'-2,  n'-1\} \)
\end{itemize}
Note that $n' - \frac{m'}{2}\notin W_2'$, and \(|W_2'| = n' - \frac{m'+2}{2}\). 
By Lemma \ref{zig-zag}(i), there exists a path  \( T_2 \) of length \( 2n' - \frac{m'+2}{2} \) that starts at \( x_0 \) and covers the differences in $W_1$ and $W_2'$ in the following order:
\begin{align*}
T_2: \ & 1, 2, 3, \ldots, n' - 1, n', n' - 1, \ldots, m' + 2, m' + 1, m', m' - 1, \ldots, \\
& n' - \frac{m'}{2} + 1, n' - \frac{m'}{2} - 1, \ldots, \frac{m'}{2} + 2, \frac{m'}{2} + 1, 1.
\end{align*}
Moreover, using the proof of  Lemma~\ref{zig-zag}, the paths \( T_1 \) and \( T_2 \) can be constructed to be identical up to the occurrence of the difference \( m' + 1 \) in both \( T_1 \) and \( T_2 \). Recall $n' = \frac{n-1}{2}$ and applying (\ref{eqn:n+}), the lengths of $T_1$ and $T_2$ are $2n' - \frac{m'+2}{2} =   (m + \frac{m'}{2}) - 2 - \frac{m'}{2} = m - 2.$

As before, we express \( T_1 = PQR_1 \) and \( T_2 = PQR_2 \), where
\begin{itemize}
\item \( P: 1, 2, 3 , \ldots, n'-2, n'-1, n'\)
\item \( Q: n' - 1, n' - 2, \ldots, m' + 2, m' + 1 \)
\item \( R_1: n' - \frac{m'}{2}, \frac{m'}{2}, \frac{m'}{2} - 1, \frac{m'}{2} - 2, \ldots, 4, 3, 2 \)
\item \( R_2: m',m'-1, m'-2, \ldots, \frac{m'}{2}+2, \frac{m'}{2}+1, 1 \)
\end{itemize}
The diameter difference \( n' = \frac{n - 1}{2} \) occurs only in \( P \). Also, notice that \( P \) has length \( n' \), and \( Q \) has length \( n' - m' - 1 \). The paths \( R_1 \) and \( R_2 \) each have length \( \frac{m'}{2} \), and $n'-\frac{m'}{2}\notin R_2$. 

Color the edges in the path \( R_1 \) alternately pink and blue, starting with pink. Since its length is \( \frac{m'}{2} \), which is odd, it ends with pink. Similarly, color the edges in the path \( R_2 \) alternately pink and blue, starting with blue and ending with blue. Note that the difference \( n' - \frac{m'}{2} \) is covered by \( R_1 \) but not \( R_2 \).
%
With this coloring, we ensure that the blue orbits corresponding to the differences listed in (\ref{eq:pink_differences_F=1}) and the pink orbits corresponding to the differences listed in (\ref{eq:blue_differences_F=1}) are covered. 

Let \( C = x_{\infty} PQR_1 x_{\infty} \) and \( C' = x_{\infty} PQR_2 x_{\infty} \). 
We know that the length of \( Q \) is \( n' - m' - 1 \), which is even. Color the edges of \( Q \) in \( C \) alternately pink and blue, starting with pink and ending with blue, and oppositely in \( C' \), as shown below.
\begin{figure}[H]
\centering
   \includegraphics[width=0.9\linewidth,height=0.9\textheight,keepaspectratio]{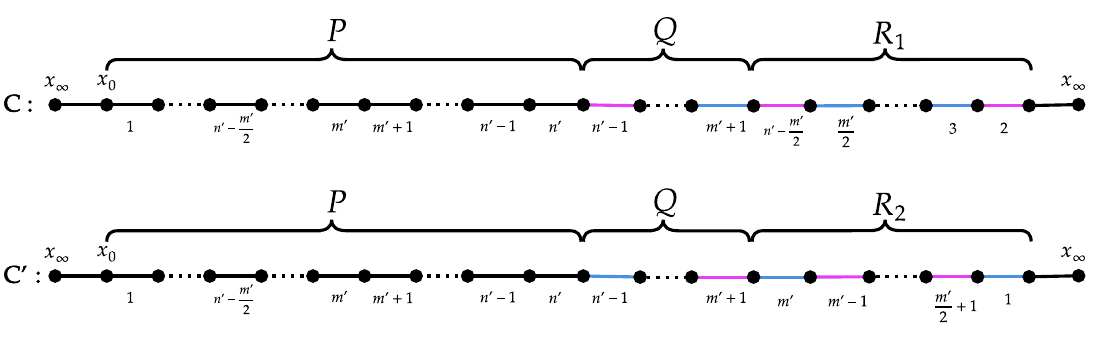}
     \vspace{-15pt}
\end{figure}

Let \( C_c \) be a copy of \( C \), and let \( C'_c \) be a copy of \( C' \) with the above coloring. In \( C_c \), color the edge with difference \( n' \) blue, and in \( C'_c \), color the edge with difference \( n' \) pink. Then, color the remaining edges as shown below. 
\begin{figure}[H]
\centering
   \includegraphics[width=0.9\linewidth,height=0.9\textheight,keepaspectratio]{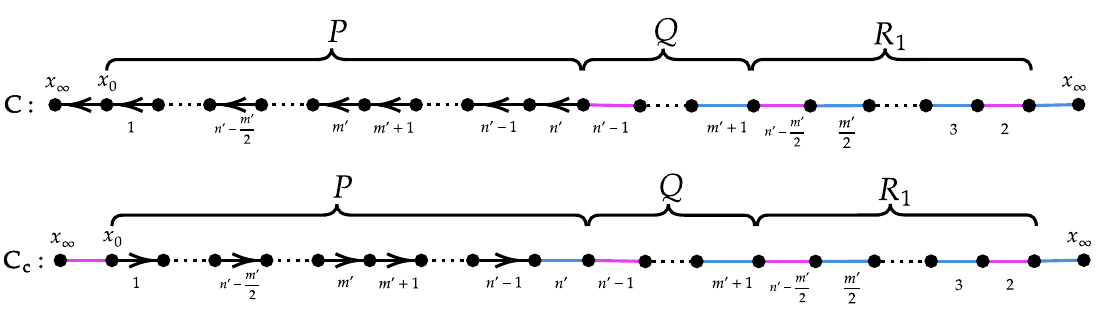}
     \vspace{-19pt}
\end{figure} 

\begin{figure}[H]
\centering
   \includegraphics[width=0.9\linewidth,height=0.9\textheight,keepaspectratio]{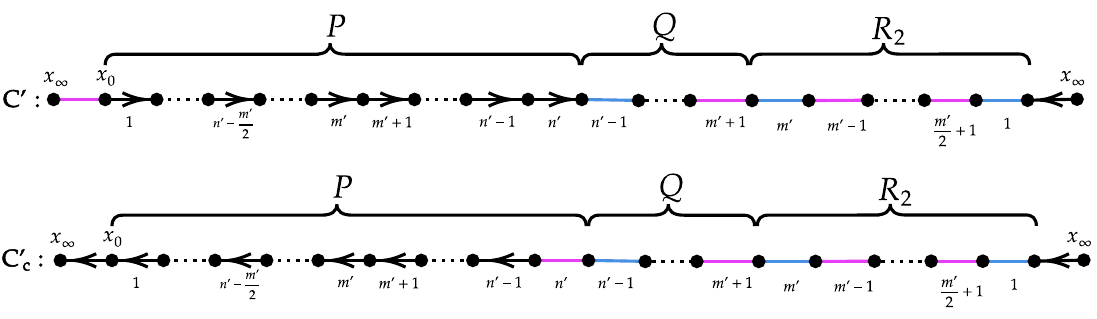}
     \vspace{-15pt}
\end{figure} 
Again, it can be verified that 
\begin{align*}
&\big\{\rho_{\bullet}^i(C), \rho_{\bullet}^{\frac{n-1}{2}+i}(C_c), \rho_{\bullet}^i(C'), \rho_{\bullet}^{\frac{n-1}{2}+i}(C'_c) \mid i=0,1,\ldots,\textstyle{\frac{n-3}{2}}\big\}   \cup \{C_0, \rho_{\bullet}(C_0), \ldots, \rho_{\bullet}^{n'-1}(C_0) \}
\end{align*}
is an HOP $(C_m)$-decomposition  for $4K_n^{\bullet}$. 


\textbf{Subcase 2.2:} $d\geq 4 \text{ and } (m, d)\neq(8, 4)$. 

Define paths $P_0$; $P_{2i-1} $ for $i = 1, 2, \ldots, \lceil\frac{F-1}{8}\rceil$; and $P_{2i-1}$ for $i = 1, 2, \ldots, \lfloor\frac{F-1}{8}\rfloor$ exactly as in Case~1 (page \pageref{case1-ref}).  
We see that $C_i=P_i \cup \rho^{n'}(P_i) \cup \ldots \cup \rho^{(d-1)n'}(P_i)$ is an \(m\)-cycle.
The paths \( P_i \)  jointly cover the following differences:
\begin{itemize}
    \item For \( i = 0 : \quad
    1, 2, \ldots, m' - 2, m' - 1
    \)

    \item For \( i = 1, 2, \ldots, \frac{F - 1}{4}: \quad
    in' + 1,\, in' + 2,\, \ldots,\, in' + m'
    \)

    \item One additional difference, depending on the parity of \( \frac{F - 1}{4} \), which corresponds to
    \[
    \left( \frac{F - 1}{4} + 1 \right)n' + \frac{m'}{2}
    \quad \text{or} \quad
    \left( \frac{F - 1}{4} + 1 \right)n' + \frac{m' + 2}{2}.
    \]
\end{itemize}
To show that the above differences are distinct, as in Case~1,  it suffices to prove the following inequalities. 
\begin{enumerate}[\bf(i)]
\item \( \frac{F - 1}{4}n' + m' < \frac{n-1}{2} \). 
This follows from (\ref{eq:newineq}) and (\ref{eq:greater_than_one-m'}).

\item \( \frac{F + 3}{4}n' + \frac{m' + 2}{2} < \frac{n - 1}{2} \) when $F < 2d - 3$. 
Since $F \equiv 1 \pmod{4}$, we have $F \leq 2d - 7$. It follows that $\frac{F + 3}{4} \leq \frac{d - 2}{2}$; combining this with $m' < n'$ from \eqref{eq:greater_than_one-m'} yields the inequality.

\item $\frac{F - 1}{4}n' + m' < \frac{F + 3}{4}n' - \frac{m' + 2}{2}$ when \( F = 2d - 3 \). 
Observe that
\(
\frac{F + 3}{4}n' + \frac{m' + 2}{2} = \frac{n - 1}{2} + \frac{m' + 2}{2}.
\)
By \eqref{eq:n'>},  for \( F = 2d - 3 \), all \( m' > 2 \), \( m' \equiv 2 \pmod{4} \), and even \( d \geq 4 \), except for \( (m', d) \in \{ (6, 4), (10, 4) \} \), we showed that \( n' > \frac{3m' + 2}{2} \). Hence, $\frac{F - 1}{4}n' + m'=\frac{F +3}{4}n'-n' + m' <\frac{F + 3}{4}n' - \frac{m' + 2}{2}$. 

It can be verified that cases with $m'=2$ or \( (m', d) \in \{ (6, 4), (10, 4) \} \) lead to a contradiction.
\end{enumerate}
%
%
Thus, we conclude that the differences covered by \( C_0, C_1, \ldots, C_{\frac{F-1}{4}} \),  listed in (\ref{eq:1odd1}), (\ref{eq:1odd2}), and (\ref{eq:1odd3}), are pairwise distinct.
%
%
Let \( B \) be the set of differences covered by the \( m \)-cycles \( C_1, C_2 \ldots, C_{\frac{F-1}{4}} \) listed in (\ref{eq:1odd2}), and (\ref{eq:1odd3}). 
Let \( G_1 = \text{Circ}(n-1; \pm B) \).  
Then,  \(\{C_i, \rho(C_i), \ldots, \rho^{n'-1}(C_i) \mid i=1, \ldots, \frac{F - 1}{4}\}\) is a \((C_m)\)-decomposition of \( G_1 \), and hence by Lemma~\ref{lem:Gtool4}, there exists an HOP \((C_m)\)-decomposition of \( 4G_1^{\bullet} \).

Recall that \( A = \{1, 2, 3, \ldots, m'-2, m'-1, n' + \frac{m'}{2}\} \) is the set of differences covered by \( C_0 \). Color the edges of \( P_0 \) in the cycle \( C_0 \) alternately pink and blue (as shown below), starting with pink and necessarily ending with blue. 
\begin{figure}[H]
\centering
   \includegraphics[width=0.9\linewidth,height=0.9\textheight,keepaspectratio]{Final-C1-odd.pdf}
   \vspace{-15pt}
\end{figure}

Observe that the family generated by the starter cycle \( C_0 \) covers:
\begin{enumerate}[(i)]
    \item The pink orbits corresponding to the differences
    \begin{equation}
    \textstyle{1, 3, 5, \ldots, \frac{m'}{2}, \frac{m'+4}{2}, \ldots, m' - 3, m' - 1}.
    \label{eq:5pink_differences_odd}
    \end{equation}
    \item The blue orbits corresponding to the differences
    \begin{equation}
    \textstyle{2, 4, 6, \ldots, \frac{m'-2}{2}, \frac{m'+2}{2}, \ldots, m' - 2, n' + \frac{m'}{2}}.
    \label{eq:5blue_differences_odd}
    \end{equation}
\end{enumerate}
Each difference in the set \( A \) will be covered  three more times using the starter central cycles \( C \) and \( C' \), to be constructed below. All differences in \( A \) will appear in both \( C \) and \( C' \), colored black and properly oriented, accounting for two additional occurrences and ensuring coverage of the black orbits. To cover the blue orbits corresponding to the differences listed in \eqref{eq:5pink_differences_odd}, and the pink orbits corresponding to the differences listed in \eqref{eq:5blue_differences_odd}, the set \( A \) is divided into two parts, each part covered by one of the starter central cycles.

Let \( \mathscrsfs{L} = S \setminus (A \cup B) = \{a_1, a_2, \ldots, a_t\} \cup \{\infty\} \), where \( m'=a_1 < a_2 < \ldots < a_t = \frac{n-1}{2} \) and \( t = \frac{n-1}{2} - \frac{(F+3)m'}{4} \).  
Since $d\geq 4$, we have $ m'-1<m' = a_1 < n' + \frac{m'}{2} < a_t. $

%
Recall that \( m' \equiv 2 \pmod{4} \). Now, consider the following sets of differences:  
\begin{itemize}
\item \( W_1=\{ 1,2,\ldots, m'-2, m'-1, a_1, a_2,\ldots, \textstyle{n'+\frac{m'}{2}}, \ldots, a_{t-2}, a_{t-1}, a_t\}\)
\item \( W_2=\{ 2, 5, 6, \ldots,  m'-8, m'-5, m'-4, m'-1, a_1, a_2, \ldots, \textstyle{n'+\frac{m'}{2}}, \ldots, a_{t-2}, a_{t-1}\} \)
\end{itemize}
%
The set \( W_1 \) contains all the differences from the set \( A \) and all the differences from \( \mathscrsfs{L} \), except the difference \( \infty \), and has size \( |W_1| = t+m'\). The set \( W_2 \) contains half of the differences from \( A \) and all differences from \( \mathscrsfs{L} \), excluding differences \( \infty \) and \( a_t \), and has size \( |W_2| = t -1+ \frac{m'}{2} \).  By Lemma \ref{zig-zag}(i), there exists a path  \( T_1 \) of length \( 2t + \frac{3m'-2}{2} \) that starts at \( x_0 \) and covers the differences in the following order:
\begin{align*}
T_1: & \ 1, 2, \ldots, m'-2, m'-1, a_1, a_2, \ldots, n' + \frac{m'}{2}, \ldots, a_{t-2}, a_{t-1}, a_t, \\
     &  a_{t-1}, a_{t-2}, \ldots, n' + \frac{m'}{2}, \ldots, a_2,  a_1, m'-1, m'-4, m'-5, m'-8, \ldots, 6, 5, 2.
\end{align*}
Now, consider the following set of differences: 
\begin{itemize}
\item \( W_2'=\{ 3,4, \ldots,  m'-7, m'-6, m'-3, m'-2, a_1, a_2, \ldots, \textstyle{n'+\frac{m'}{2}}, \ldots, a_{t-2}, a_{t-1}\} \)
\end{itemize}
Here,  \(|W_2'| = t + \frac{m'-2}{2}\).  By Lemma \ref{zig-zag}(i), there exists a path of length \( 2t + \frac{3m'-2}{2}\) that starts at \( x_0 \) and covers the differences in $W_1$ and $W_2'$ in the following order:
\begin{align*}
& 1, 2, \ldots, m'-2, m'-1, a_1, a_2, \ldots, n' + \frac{m'}{2}, \ldots, a_{t-2}, a_{t-1}, a_t, \\
& a_{t-1}, a_{t-2}, \ldots, n' + \frac{m'}{2}, \ldots, a_2, a_1, m'-2, m'-3, m'-6, m'-7, \ldots, 4, 3.
\end{align*}
By Lemma \ref{zig-zag}(ii), the second occurrence of difference \( n' + \frac{m'}{2} \) in the above path can be replaced with  difference \( 1 \); that is, there exists a path \( T_2 \) that covers the sequence of differences:
\begin{align*}
T_2: &\ 1, 2, \ldots, m'-2, m'-1, a_1, a_2, \ldots, n' + \frac{m'}{2}, \ldots, a_{t-2}, a_{t-1}, a_t, \\
& a_{t-1}, a_{t-2}, \ldots, 1, \ldots, a_2, a_1, m'-2, m'-3, m'-6, m'-7, \ldots, 4, 3.
\end{align*}
Notice that the lengths of $T_1$ and $T_2$ are $2t + \frac{3m'-2}{2} = m - 2$.

As in Case 1,  we express \( T_1 = PQ_1R_1 \) and \( T_2 = PQ_2R_2 \), where
\begin{itemize}
\item \( P: 1,2,\ldots, m'-2, m'-1, a_1, a_2,\ldots, \textstyle{n'+\frac{m'}{2}}, \ldots, a_{t-2}, a_{t-1}, a_t\)
\item \( Q_1: a_{t-1}, a_{t-2}, \ldots, \textstyle{n'+\frac{m'}{2}}, \ldots, a_3, a_2, a_1 \)
\item \( Q_2: a_{t-1}, a_{t-2}, \ldots, 1, \ldots, a_3, a_2, a_1 \)
\item \( R_1: m'-1, m'-4, m'-5, m'-8, m'-9, \ldots, 6, 5, 2 \)
\item \( R_2: m'-2, m'-3, m'-6, m'-7, m'-10, m'-11, \ldots, 4, 3 \)
\end{itemize}
The difference \( a_t = \frac{n-1}{2} \) occurs only in \( P \), and this path has length \( t + m' \). Importantly, \( P \) is identical in both \( T_1 \) and \( T_2 \).
The paths \( Q_1 \) and \( Q_2 \) each have length \( t \), and the paths \( R_1 \) and \( R_2 \) each have length \( \frac{m'-2}{2} \). 
Observe that, except for the subpath \( P \), all other subpaths are identical to those in Case~1 (see page~\pageref{case1x}) in terms of the sequence of differences they cover. The only distinction is that \( P \) is longer and includes differences from the set \( A \). The coloring process remains exactly the same as in Case~1. See figures on pages \pageref{OD:i} and \pageref{EV:ii}.

Analogous to Case 1, it can be verified that
\begin{align*}
&\big\{\rho_{\bullet}^i(C), \rho_{\bullet}^{\frac{n-1}{2}+i}(C_c), \rho_{\bullet}^i(C'), \rho_{\bullet}^{\frac{n-1}{2}+i}(C'_c) \mid i=0,1,\ldots,\textstyle{\frac{n-3}{2}}\big\} \cup \{C_0, \rho_{\bullet}(C_0), \ldots, \rho_{\bullet}^{n'-1}(C_0)\}
\end{align*}
is an HOP  $(C_m)$-decomposition  for $4G_2^{\bullet}$, where  \( G_2 = \text{Circ}(n-1; \pm(A\cup(\mathscrsfs{L}-\{\infty\}))) \bowtie K_1.\)

We have $4K_n^{\bullet}=4G_1^{\bullet}\oplus 4G_2^{\bullet}$, and since each of $4G_1^{\bullet}$ and $4G_2^{\bullet}$ admits an HOP $(C_m)$-decomposition, by Lemma \ref{Gtool3}, the multigraph $4K_n^{\bullet}$ admits an HOP $(C_m)$-decomposition.


\textbf{Subcase 2.3:} $(m, d)=(8, 4)$. 
In this case, \( n = 13 \), \( F = 5 \), \( n' = 3 \), \( m' = 2 \), and the set of differences is \( S = \{ 1,  2,  3,  4,  5, 6,  \infty\} \). 

Let \( P_0 = x_0\ x_1\ x_3 \), and \( P_1 = x_0\ x_4\ x_{-3} \). The path \( P_0 \) covers differences \( 1 \) and \( 2 \), and the path \( P_1 \) covers differences \( 4 \) and \( 5 \). 
Observe that, for \( i = 0, 1 \),
\(
C_i = P_i \cup \rho^{n'}(P_i) \cup \rho^{2n'}(P_i) \cup \rho^{3n'}(P_i),
\)
is a cycle of length 8.
Take four copies of \( C_1 \), and use proof of Lemma \ref{lem:Gtool4} to color them so that they satisfy Condition \textbf{(C1)} of Definition \ref{def}. By doing so, we generate four starter peripheral cycles of \( 4K_{13}^{\bullet} \) with the appropriate HOP coloring, which we label as \( C'_1, C'_2, C'_3, C'_4 \).
Note that $\{1, 2\}$ is the set of differences covered by $C_0$. In $P_0$, we color the edge with difference $1$ pink and the edge with difference $2$ blue. The black copies of the edges with differences $1$ and $2$, the blue copy of difference $1$, the pink copy of difference $2$, and the set of uncovered differences $\{3, 6, \infty\}$ are all covered by the four central cycles $C, C_c, C'$, and $C'_c$, as shown in Figure~\ref{fig:EXm8}.
\begin{figure}[h!]
\centering
   \includegraphics[width=0.7\linewidth,height=0.7\textheight,keepaspectratio]{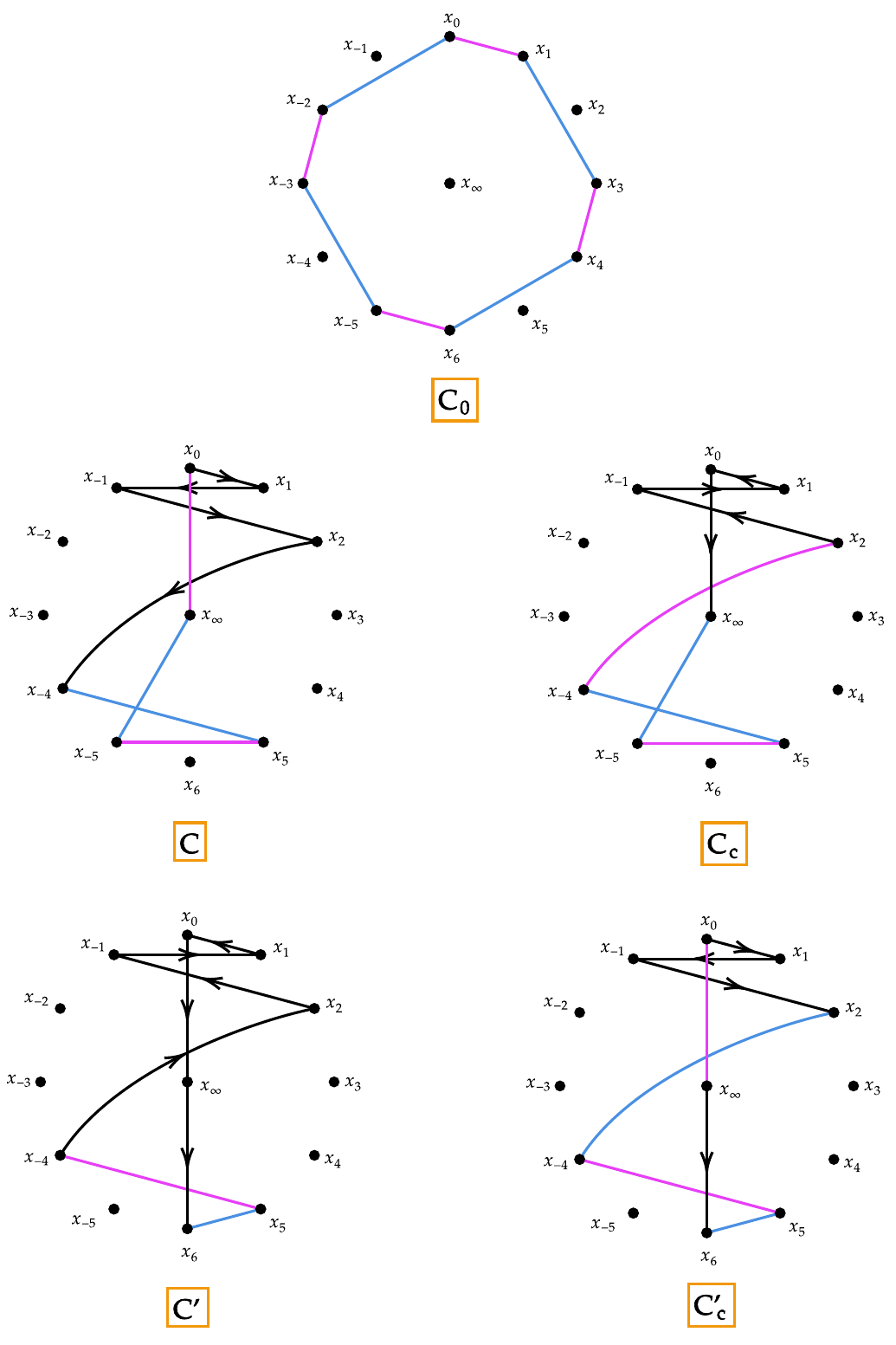}
   \caption{Lemma~\ref{lem:n-odd-4K-last}, Subcase 2.3 -- The starter peripheral cycle $C_0$ and the four starter central cycles for \( m = 8 \) and \( n = 13 \).}
   \label{fig:EXm8}
   \vspace{-10pt}
\end{figure}
We see that
\begin{align*}
& \{\rho_{\bullet}^i(C), \rho_{\bullet}^{6+i}(C_c), \rho_{\bullet}^i(C'), \rho_{\bullet}^{6+i}(C'_c) \mid i = 0, 1, \ldots, 5\} \cup \{C_0, \rho_{\bullet}(C_0), \rho_{\bullet}^2(C_0)\} \\
&\qquad \qquad \qquad \cup \{C'_i, \rho_{\bullet}(C'_i), \rho_{\bullet}^2(C'_i) \mid i = 1, 2, 3, 4\}
\end{align*}
 is an HOP  $(C_8)$-decomposition  of $4K_{13}^{\bullet}$.
\end{proof}

\subsection{The case when $n$ is even }


\begin{lem}{\label{lem:n-even-4K-last}}
Let $m\equiv 0\ ({\rm mod}\ 4)$, and let $n$ be an even positive integer such that $4\leq m\leq n$. If $2n(n-1)\equiv m\ ({\rm mod}\ 2m)$, then $4K_n^{\bullet}$ admits an HOP $(C_m)$-decomposition. 
\end{lem}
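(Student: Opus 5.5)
First I reduce the range of $n$. By Lemma~\ref{lem:n-even-general} with $\alpha=2$, it suffices to treat even $n$ with $m\le n<2m$. The case $n=m$ is already covered: Lemma~\ref{lems-unified} gives an HOP $C_m$-factorization of $4K_m^{\bullet}$. So I assume $m<n<2m$.

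Next I pin down the arithmetic. Write $n=2^a p$ and $m=2^{b}q$ with $p,q$ odd. From $m\mid 2n(n-1)$ and $m\nmid n(n-1)$ I get $b=a+1$. Since $n-1$ is odd, the full $2$-part of $n$ enters $m$ with one extra factor of $2$.

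As in Lemma~\ref{lem:n-odd-4K-last}, I take vertex set $\{x_i:i\in\ZZ_{n-1}\}\cup\{x_\infty\}$ and use the rotation $\rho$ of order $n-1$. Because $n-1$ is odd, there is no diameter difference, and every difference $s\in\{1,\dots,\frac{n-2}{2}\}$ gives full pink, blue and two black orbits of length $n-1$. I set $d=\gcd(m,n-1)$, which is odd, with $m=m'd$ and $n-1=n'd$. Then $m'\equiv 0\pmod 4$ and $n'$ is odd. The cycle count becomes
\[
\frac{2n(n-1)}{m}=2(n-1)+Fn',\qquad F=\frac{2(n-m)}{m'} ,
\]
and $F$ is odd, since $2$-adically $n-m$ has exactly the $2$-part of $m'/2$.

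This count suggests \emph{two} starter central cycles $C,C'$, each rotated through all $n-1$ positions, together with $F$ starter peripheral cycles, each built from $d$ segments of $n'$ consecutive vertices. Each segment contributes $m'$ edges, and each peripheral cycle is rotated through $n'$ positions. Here $m'<n'<2m'$ holds whenever $n-1>m$, which is automatic since $n$ is even.

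For the peripheral part I split $F$ as $F=4c+\varepsilon$ with $\varepsilon\in\{1,3\}$, following Cases~1 and~2 of Lemma~\ref{lem:n-odd-4K-last}.
\begin{itemize}
\item The $c$ zig-zag paths $P_i$, modelled on (\ref{eq:1odd2})--(\ref{eq:1odd3}) but adapted so that the final edge closes a $d$-fold rotation with $d$ odd, each give one $m$-cycle of $K_n$. Taking four copies and applying Lemma~\ref{lem:Gtool4} handles $4G_1^{\bullet}$ for these differences.
\item One more cycle $C_0$ covering $A=\{1,\dots,m'-1,s\}$ supplies the remaining $\varepsilon$ starters. When $\varepsilon=3$ I use two oppositely oriented black copies plus one pink/blue alternating copy. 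When $\varepsilon=1$ I use a single pink/blue alternating copy; because $m'\equiv0\pmod4$, this copy has an even number of pink edges and ends with blue.
\end{itemize}
The inequalities keeping all differences distinct and below $\frac{n-1}{2}$ follow from $F\le 2d-1$ and $m'<n'<2m'$, argued as in (i)--(iii) of that proof.

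For the central part, the leftover differences $\mathscrsfs{L}$ together with the residual colours of $A$ are covered by $C=x_\infty T_1x_\infty$ and $C'=x_\infty T_2x_\infty$. The paths $T_1,T_2$ come from Lemma~\ref{zig-zag}: part (i) builds them, and part (ii) swaps $s$ for $1$ where needed. Each path has the shape $P\,Q_j\,R_j$, where $P$ runs up through $\mathscrsfs{L}$ (and through $A$ in the case $\varepsilon=1$), $Q_j$ runs back down, and $R_1,R_2$ split the residual $A$-differences by parity. Since every difference orbit has full length, $C$ and $C'$ must jointly contain one pink edge, one blue edge and two opposite black arcs for each difference in $\mathscrsfs{L}$, plus exactly the missing copies for $A$.

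The main obstacle is the colouring of $C$ and $C'$ so that condition~(C1) holds around $x_\infty$ and at every junction, with the correct total colour counts. Unlike the odd-$n$ case, there is no diameter difference and no half-rotation copies $C_c,C'_c$ to absorb a parity defect. I would first try alternating pink/blue on $Q_j$ and $R_j$ with black forward arcs on $P$, choosing starting colours according to the parity of $t=|\mathscrsfs{L}|-1$ and of $|\omega|$ as in Case~1. Small exceptional pairs such as $(m,n)=(4,6)$ or $(12,\dots)$, where the generic inequalities fail, I would handle by explicit starters as in Subcase~2.3.
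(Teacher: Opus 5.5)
Your setup is exactly the paper's: the reduction to $m<n<2m$, $d=\gcd(m,n-1)$ odd with $m'\equiv 0\pmod 4$, the count $2(n-1)+Fn'$ with $F$ odd, two central starters, and the split by $F \bmod 4$. The paper's $b'$, $\ell$, $2^da'$ are your $d$, $n'$, $m'$.

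The first gap is that the peripheral part cannot be imported from Lemma~\ref{lem:n-odd-4K-last}. There $d$ was even, so the extreme value $F=2d-1$ fell in the $F\equiv 3$ branch, and (\ref{eq:newineq}) gave $F\le 2d-3$ in the $F\equiv 1$ branch. Here $d$ is odd, so $F=2d-1\equiv 1\pmod 4$ does occur, and your inequality (i) then fails. With $c=\frac{F-1}{4}=\frac{d-1}{2}$, the top block $cn'+1,\dots,cn'+m'-1$ exceeds $\frac{n-2}{2}=cn'+\frac{n'-1}{2}$ unless $n'=2m'-1$, because $n'<2m'$.

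For $d=1$ the failure is already at $C_0$. Then $F=1$ and $n=\frac{3m}{2}$, which is an infinite family $(4,6),(8,12),(12,18),\dots$ rather than a few small exceptions. Your set $A=\{1,\dots,m'-1,s\}$ cannot exist, since $m'-1=m-1>\frac{3m-4}{4}=\frac{n-2}{2}$; a single peripheral starter rotated through all $n-1$ positions must repeat differences.

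The paper needs two new constructions here:
\begin{itemize}
\item For $d=1$ (Subcase~2.1), it takes a $C_0$ covering each difference of $\{1,2,4,5,\dots,\frac m2,\frac{3m-4}{4}\}$ twice. This $C_0$ is coloured entirely black, so its orbit absorbs both black orbits, and the central cycles supply all pink and blue copies.
\item For $d\ge 3$ with $F=2d-1$ (Subcase~2.2.2), it redesigns the last path $P_f$. The new $P_f$ skips the difference $\frac{(d-1)n'}{2}+\frac{m'}{2}$ already used by $C_0$ and fills the overflow with differences from $[m',n']$. This needs separate versions for $n'\equiv 1,3\pmod 4$ and a hand-made path for $(m,n)=(12,22)$.
\end{itemize}

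The second gap is the colouring of $C$ and $C'$, which you identify as the main obstacle but leave at ``I would first try''. This is exactly where the absence of $C_c,C'_c$ has to be compensated. For $d\ge 3$ the paper builds the two central cycles via Lemma~\ref{zig-zag} as follows:
\begin{itemize}
\item Both share the same initial segment $P$ through $\mathscrsfs{L}$ (and through $A$ when $F\equiv 1$), black and oppositely oriented in $C$ and $C'$.
\item $P$ is followed by $Q_1$ (through $a_t,\dots,a_1$ and $s$) in $C$, and by $Q_2$ (with $1$ in place of $s$) in $C'$, coloured pink/blue in opposite phases.
\item There are two interchangeable tails of odd length $\frac{m'}{2}-1$: $R_1$ on $\frac{m'}{2}+1,\dots,m'-1$ starting blue, and $R_2$ on $2,\dots,\frac{m'}{2}$ starting pink.
\end{itemize}
Depending on whether $Q_1$ starts pink and on the parity of $t$, $C$ receives the tail whose first colour is opposite to the last colour of $Q_1$, and $C'$ receives the other. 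This choice is what lets (C1) hold at the $Q$--$R$ junction and then at $x_\infty$.

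Without this mechanism (or an equivalent) and the two special peripheral constructions, the proposal is an outline rather than a proof.
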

\begin{proof}
By Lemma~\ref{lem:n-even-general}, it suffices to prove this result for even \( n \) in the range \( m \leq n < 2m \). Moreover, since \( 2n(n - 1) \equiv m \pmod{2m} \), we have \( m \neq n \), so we restrict our attention to the range \( m < n < 2m \).
As before, the approach is to construct starter central and starter peripheral cycles.
We begin by outlining the parameters and then explain how many starter central and starter peripheral cycles are needed.

Let \( V(4K_n^{\bullet}) = \{x_i : i \in \mathbb{Z}_{n-1}\} \cup \{x_{\infty}\} \), and let $\rho_{\bullet}$ be the permutation on $E(4K_n^{\bullet})$ that preserves the color (and orientation) of the edges, and is induced by  the permutation  $\rho=(x_{\infty})(x_0 \ x_1\  x_2 \ \ldots \ x_{n-2})$. Observe that the group $\langle \rho_{\bullet} \rangle$ has the following orbits on the edge set of $4K_n^{\bullet}$:
\begin{itemize}
\item for each $s\in \{1,2,\ldots, \textstyle{\frac{n-2}{2}}\}$, we have a pink and a blue orbit $\{x_ix_{i+s}: i\in \ZZ_{n-1} \}$;
\item for each $s\in \{1,2,\ldots, n-2\}$, we have a black orbit $\{(x_i, x_{i+s}): i\in \ZZ_{n-1} \}$;
\item a pink and a blue orbit  $\{x_ix_{\infty}: i\in \ZZ_{n-1} \}$; and
\item black orbits $\{(x_{\infty}, x_{i}): i\in \ZZ_{n-1} \}$ and $\{(x_i, x_{\infty}): i\in \ZZ_{n-1} \}$.
\end{itemize}
For convenience, let \( S = \{ 1,  2, \ldots,  \frac{n-2}{2}, \infty\} \) be the set of all differences. 
Let \( n = 2^e a \), where \( a \) is odd, and let \( m = 2^d a' b' \), where \( a' \) and \( b' \) are both odd, with \( a' \mid a \) and \( b' \mid (n-1) \). Since \( m \equiv 0 \pmod{4} \), we have that \( d \geq 2 \). Since \( m \mid 2n(n-1) \) and \( m \nmid n(n-1) \), we have \( d = e + 1 \), and we can write \( n = 2^{d-1} a \).

To construct peripheral cycles, we partition the \( n-1 \) vertices in \( \{x_i : i \in \mathbb{Z}_{n-1}\} \) into \( b' \) segments, each containing \( \ell = \frac{n-1}{b'} \) consecutive vertices and contributing \( 2^d a' \) edges toward 
a peripheral \( m \)-cycle.

If \( 4K_n^{\bullet} \) is \((C_m)\)-decomposable, then the number of \( m \)-cycles in the decomposition is
\allowdisplaybreaks
\begin{align*}
\frac{2n(n-1)}{m} 
&= \frac{2(n-1)(m+n-m)}{m} = 2(n-1) + \frac{2(n-1)(n-m)}{2^da'b'}\\[1ex] 
&=  2(n-1) + \ell \cdot \frac{2(n-m)}{2^da'}=2(n-1)+\ell F,
\end{align*}
where $F= \frac{2(n - m)}{2^d a'} =  \frac{a - 2 a'b'}{a'}$ is an odd integer. This suggests constructing two starter central cycles and \( F \) starter peripheral cycles.

We have two cases to consider:  \( F \equiv 1 \pmod{4} \) and  \( F \equiv 3 \pmod{4} \). 
We first prove some inequalities that will be useful in both cases.

Since \( n < 2m \), we have  \( 2^{d-1}a < 2^{d+1} a' b' \), hence \(\frac{a}{a'} < 4b'\). Therefore, \(\frac{a - 2a' b'}{a'} < 2b'\), which means 
\begin{equation}
    F \leq 2b' - 1.
    \label{eq:F<2b'-1}
\end{equation}
Since \( F \leq 2b' - 1 \), it follows that \( b' \geq 3 \) whenever \( F \equiv 3 \pmod{4} \), and \( b' \geq 1 \) whenever \( F \equiv 1 \pmod{4} \). In the latter case, when \( b' = 1 \), we handle it separately.

Additionally, we have \( m < n - 1 < 2m \); that is, \( 2^d a'b' < b'\ell < 2^{d+1} a'b' \). Thus
\begin{equation}
    2^d a'  < \ell < 2^{d+1} a'.
    \label{eq:ell}
\end{equation}

Suppose $\ell - 2^{d-1}a' \leq 2^d a' - 1$. 
Multiplying this inequality by $b'$ and substituting $b'\ell = n-1$ gives $n - 1 \leq 3 \cdot 2^{d-1}a'b' - b'$. Thus $2n \leq 3 \cdot 2^d a'b' - 2b' + 2$, and  $2n \leq 3 \cdot 2^d a'b'$. Substituting $n = 2^{d-1}a$ and simplifying, we obtain $\frac{a}{a'} \leq 3b'$. It follows that $F=\frac{a - 2a'b'}{a'} \leq b'$,  thus
\begin{equation}
\text{if } \ell - 2^{d-1}a' \leq 2^d a' - 1, \text{ then } F \leq b'.
\label{eq:333}
\end{equation}
Now that we have established the necessary inequalities, we proceed with the constructions.
\vspace{0.2cm}

\noindent
\textbf{\boldmath Case 1: \( F \equiv 3 \pmod{4} \).}
The approach is similar to Case~1 of the proof of Lemma~\ref{lem:n-odd-4K-last}. 
We start by constructing the starter peripheral cycles.

\label{page:cas1L}

For \( i = 0, 1, \ldots, \frac{F-3}{4} \), define 
$$P_i=x_0\ x_{i\ell+1}\ x_{-1}\ x_{i\ell+2}\ x_{-2} \dots x_{i\ell+2^{d-1}a'-1}\ x_{-(2^{d-1}a'-1)}\ x_{i\ell+2^{d-1}a'}\ x_{\frac{b'+1}{2}\ell}.$$
The differences covered by \( P_0 \) correspond to:
\begin{equation}
1, \; 2, \;  3, \; \ldots, \;  2^d a' - 2, \;  2^d a' - 1, \; \textstyle{\frac{(b' - 1)\ell}{2} +2^{d-1}a'}, \label{eq:1-P0}
\end{equation}
and the differences covered by \( P_i \), for \( i = 1, \ldots, \frac{F - 3}{4} \), are:
\begin{equation}
i\ell + 1, \; i\ell + 2, \; i\ell + 3, \; \ldots, \; i\ell + 2^d a' - 2, \; i\ell + 2^d a' - 1, \; \textstyle{\frac{(b' - 2i + 1)\ell}{2} - 2^{d-1}a'}. \label{eq:1}
\end{equation}
Since \(\ell> 2^d a' \) by (\ref{eq:ell}), the paths \(\rho^{j}(P_i)\), for \(j = 0, \ell, 2\ell, \ldots, (b' - 1)\ell\), are pairwise vertex-disjoint except at the endpoints. Also, since \(b'\) is odd, we have \(\gcd\left(b', \frac{b' + 1}{2}\right) = 1\). Thus,
\(
C_i=P_i \cup \rho^{\ell}(P_i) \cup \ldots \cup \rho^{(b'-1)\ell}(P_i)
\)
is an \(m\)-cycle.

Next, we show that the differences covered by the \( m \)-cycles \( C_0, C_1, \ldots, C_{\frac{F-3}{4}} \), listed in (\ref{eq:1-P0}) and (\ref{eq:1}), are pairwise distinct.
To show \( i\ell + 1, i\ell + 2, \ldots, i\ell + 2^d a' - 1 \) are pairwise distinct for \( i = 0, 1, \ldots, \frac{F - 3}{4} \), it suffices to show that the largest element on this list does not exceed \(\frac{n-2}{2}\). Note that $F \leq 2b' - 1$ from \eqref{eq:F<2b'-1} implies $\frac{F - 3}{4} \ell \leq \frac{b' - 2}{2} \ell$. Using $2^d a' < \ell$ from \eqref{eq:ell}, we obtain:
\begin{align*}
\frac{F - 3}{4} \ell + 2^d a' - 1 
&\leq \frac{b' - 2}{2} \ell + 2^d a' - 1 < \frac{b' - 1}{2} \ell + 2^{d-1} a' < \frac{n - 1}{2}.
\end{align*}
The above inequality also ensures that for all $j = 0, 1, \ldots, \frac{F - 3}{4}$, 
\[
\frac{b' - 1}{2} \ell + 2^{d-1} a' \notin \{j\ell + 1, j\ell + 2, \ldots, j\ell + 2^d a' - 1\}.
\]
Next, we show that for all \( i= 1, \ldots, \frac{F-3}{4} \) and \( j = 0, 1, \ldots, \frac{F-3}{4} \), the difference 
\(
\frac{(b' - 2i + 1)\ell}{2} - 2^{d-1}a'
\)
does not appear among the differences 
\(
j\ell + 1, j\ell + 2, \ldots, j\ell + 2^d a' - 1.
\)
Since 
\[
\textstyle{
0<\frac{(b' - 2\cdot \frac{F - 3}{4} + 1)\ell}{2} - 2^{d-1}a'
< \frac{(b' - 2\cdot \frac{F - 7}{4} + 1)\ell}{2} - 2^{d-1}a'
< \ldots 
< \frac{(b' - 2\cdot 1 + 1)\ell}{2} - 2^{d-1}a'
< \frac{(b' -1)\ell}{2} +2^{d-1}a'.
}\]
It suffices to show that for all \( j = 0, 1, \ldots, \frac{F-3}{4} \) and \(1 \leq \alpha \leq 2^d a' - 1\)
\[
\frac{(b' - 2\cdot\frac{F-3}{4} + 1)\ell}{2} - 2^{d-1}a' \neq j\ell + \alpha. 
\]
We prove this by contradiction. Suppose to the contrary that $\frac{2b' -F+1}{4}\ell + \ell- 2^{d-1}a' = j\ell + \alpha.$
Since \( F \equiv 3 \pmod{4} \) and \( b' \) is odd, the factor \( \frac{2b' - F + 1}{4} \) is an integer, and  since \(\ell> 2^d a' \) by (\ref{eq:ell}), we have
\(
\frac{2b' -F+1}{4}=j  \text { and } \ell - 2^{d-1}a' = \alpha .
\)
Given that \(j \leq \frac{F - 3}{4}\), it follows that \(F \geq b' + 2\).
On the other hand, since \(1 \leq \alpha \leq 2^d a' - 1\) and \(\ell - 2^{d-1}a' = \alpha\), it follows that \(\ell - 2^{d-1}a' \leq 2^d a' - 1\), and by (\ref{eq:333}), we have \(F \leq b'\).
 Therefore, we have \(b' + 2 \leq F \leq b'\), which is a contradiction. 
 
We conclude that the differences listed in~(\ref{eq:1-P0}) and in~(\ref{eq:1}) are pairwise distinct.
 Let \( B \) be the set of differences covered by the \( m \)-cycles \( C_1, C_2 \ldots, C_{\frac{F-3}{4}} \).
Then,  \(\{C_i, \rho(C_i), \ldots, \rho^{\ell-1}(C_i) \mid i=1, \ldots, \frac{F - 3}{4}\}\) is a \((C_m)\)-decomposition of \( G_1 = \text{Circ}(n-1; \pm B) \), and hence by Lemma~\ref{lem:Gtool4}, there exists an HOP \((C_m)\)-decomposition of \( 4G_1^{\bullet} \).

Notice that $A=\{1, 2, \ldots,  2^d a' - 2, 2^d a' - 1, \textstyle{\frac{(b' - 1)\ell}{2} + 2^{d-1}a'}\}$ is the set of differences covered by \( C_0 \). Take three copies of \( C_0 \) and denote them \(  C'_0, C'_1\) and \( C'_2 \). These are the remaining three peripheral cycles. Color the edges of the cycles \(C'_1\) and \(C'_2\) black, and orient them in the opposite directions.  Color the edges of \(P_0\) in the cycle  \(C'_0\) alternately pink and blue (as shown below), starting with pink. Since \(P_0\) has length \(2^d a'\), which is even, it ends with blue.
\begin{figure}[H]
 \centering
   \includegraphics[width=0.9\linewidth,height=0.9\textheight,keepaspectratio]{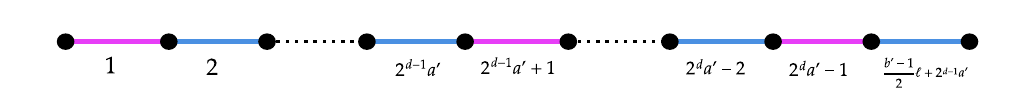}
   \vspace{-15pt}
  \end{figure}
The families of cycles generated by \( C'_0, C'_1 \) and \( C'_2 \)  jointly cover the black orbits corresponding to the differences in the set \( A \), and cover:
\begin{enumerate}[(i)]
    \item the pink orbits corresponding to the differences
    \begin{equation}
    1, 3, 5, \ldots, 2^{d-1}a' + 1, 2^{d-1}a' + 3, \ldots, 2^d a' - 3, 2^d a' - 1, \text{ and}
    \label{eq:pink_differences}
    \end{equation}
    
    \item the blue orbits corresponding to the differences
    \begin{equation}
    2, 4, 6, \ldots, 2^{d-1}a', 2^{d-1}a' + 2, \ldots, 2^d a' - 2, \frac{(b' - 1)\ell}{2} + 2^{d-1}a'.
    \label{eq:blue_differences}
    \end{equation}
\end{enumerate}
Now, we focus on constructing the starter central cycles \(C\) and \(C'\). Let \(\mathscrsfs{L}  = S \setminus (A \cup B)=\{a_1, a_2, \ldots, a_t\} \cup \{\infty\}\), where \(a_1 < a_2 < \ldots < a_t \leq \frac{n-2}{2}\)  and \(t = \frac{n-2}{2} - \frac{(F+1) 2^d a'}{4}\). 

Since \( F \leq 2b' - 1 \) by \eqref{eq:F<2b'-1}, and \( F \equiv 3 \pmod{4} \), we have \( b' \geq 3 \). Moreover, since difference \( \ell \) is not covered by any peripheral cycles, we observe that
\(
a_1\leq \ell < \frac{(b' - 1)\ell}{2} + 2^{d-1}a' \leq \frac{n-2}{2}.
\)
Consider the following sets of differences: 
\begin{itemize}
\item \( W_1=\{ a_1, a_2, a_3, \ldots, a_{t-2}, a_{t-1}, a_t\}\)
\item \( W_2=\{   2^{d-1}a'+1, 2^{d-1}a'+2,  \ldots, 2^d a'-1, a_1, a_2, \ldots, \textstyle{\frac{(b' - 1)\ell}{2} + 2^{d-1}a'}, \ldots, a_{t-1}, a_t\} \)
\end{itemize}
Note that \(\textstyle{\frac{(b' - 1)\ell}{2} + 2^{d-1}a'}\notin W_1\), and in the sequence \( a_1, a_2, \ldots, \frac{(b' - 1)\ell}{2} + 2^{d-1}a', \ldots,  a_t \), we assume
\[
a_1 < a_2 < \ldots < \frac{(b' - 1)\ell}{2} + 2^{d-1}a' < \ldots < a_{t-1} < a_t,
\]
with the understanding that
\(
a_1 < a_2 < \ldots < a_{t-1} < a_t < \frac{(b' - 1)\ell}{2} + 2^{d-1}a'
\)
is also possible.

Here,  \( |W_1| = t \), and \( W_2 \) contains half of the differences from the set \( A \), along with all the differences from \( \mathscrsfs{L} \) except \( \infty \), thus \( |W_2| = t + 2^{d-1}a' \).
   By Lemma \ref{zig-zag}(i), there exists a path  \( T_1 \) of length \( 2t + 2^{d-1} a' \) that starts at \( x_0 \) and covers the differences in $W_1$ and $W_2$ in the following order:
\[
T_1: a_1, \ldots, a_{t-1}, a_t, a_t, a_{t-1}, \ldots, \textstyle{\frac{(b' - 1)\ell}{2} + 2^{d-1}a'}, \ldots, a_1, 2^d a' - 1, \ldots, 2^{d-1}a' + 1.
\]
By Lemma \ref{zig-zag}(ii), the difference \( \frac{(b' -1)\ell}{2} + 2^{d-1}a' \) can be replaced with  difference \( 1 \); that is, there exists a path  \( T_2 \) that covers the differences in the following order:
\[
T_2: a_1, a_2, \ldots, a_{t-1}, a_t, a_t, a_{t-1}, \ldots, 1, \ldots, a_2, a_1, 2^d a' - 1, \ldots, 2^{d-1}a' + 1.
\]
Now, consider the following set of differences: 
\begin{itemize}
\item \( W_2'=\{   2,3,4, \ldots, 2^{d-1}a'-1, 2^{d-1}a', a_1, a_2, \ldots, \textstyle{\frac{(b' - 1)\ell}{2} + 2^{d-1}a'}, \ldots, a_{t-1}, a_t\} \)
\end{itemize}
Here,  \(|W_2'| = t + 2^{d-1}a'\).  By Lemma \ref{zig-zag}(i), there exists a path  \( T_3 \) of length \( 2t + 2^{d-1} a' \) that starts at \( x_0 \) and covers the differences in $W_1$ and $W_2'$ in the following order:
\[
T_3: a_1, \ldots, a_{t-1}, a_t, a_t, a_{t-1}, \ldots, \textstyle{\frac{(b' - 1)\ell}{2} + 2^{d-1}a'}, \ldots, a_1, 2^{d-1}a', 2^{d-1}a'-1, \ldots, 3, 2.
\]
By Lemma \ref{zig-zag}(ii), the difference \( \frac{(b' -1)\ell}{2} + 2^{d-1}a' \) can be replaced with  difference \( 1 \); that is, there exists a path  \( T_4 \) that covers the differences in the following order:
\[
T_4: a_1, \ldots, a_{t-1}, a_t, a_t, a_{t-1}, \ldots, 1, \ldots, a_1, 2^{d-1}a', 2^{d-1}a'-1, \ldots, 3, 2.
\]
The lengths of $T_1, T_2, T_3,$ and $T_4$ are all equal to 
\begin{align*}
2t + 2^{d-1} a' &= (n - 2 - (F+1) 2^{d-1} a') + 2^{d-1} a'  = m - 2.
\end{align*}
We express \( T_1 = PQ_1R_1 \), \( T_2 = PQ_2R'_1 \), \( T_3 = PQ_1R_2 \), and \( T_4 = PQ_2R'_2 \), where 
\begin{itemize}
\item \( P: a_1, a_2, a_3, \ldots, a_{t-2}, a_{t-1}, a_t \)
\item \( Q_1: a_t, a_{t-1}, a_{t-2}, \ldots, \textstyle{\frac{(b' - 1)\ell}{2} +2^{d-1}a'}, \ldots, a_3, a_2, a_1 \)
\item \( Q_2: a_t, a_{t-1}, a_{t-2}, \ldots, 1, \ldots, a_3, a_2, a_1 \)
\item \( R_1, R_1': 2^d a'-1, 2^d a'-2,  \ldots,  2^{d-1}a'+2, 2^{d-1}a'+1 \)
\item \( R_2, R_2': 2^{d-1}a', 2^{d-1}a'-1, 2^{d-1}a'-2, \ldots, 4, 3, 2 \)
\end{itemize}
Note that \( P \) has length \( t \), while \( Q_1 \) and \( Q_2 \) each have length \( t + 1 \). The paths \( R_1 \) and \( R_1' \) cover the same sequence  of differences, as do \( R_2 \) and \( R_2' \), with each path having length \( 2^{d-1}a' - 1 \).

\label{page:cas1xx}

Now, color the edges in the paths \( R_1 \) and \( R_1' \) alternately pink and blue, starting with blue. Since their length is \( 2^{d-1}a' - 1 \), which is odd, they end with blue. Similarly, color the edges in the paths \( R_2 \) and \( R_2' \) alternately pink and blue, starting with pink, and necessarily ending with pink.

%
%

Next, color the edges in the path \( Q_1 \) alternately pink and blue, ensuring that the edge with difference \(\frac{(b' - 1)\ell}{2} + 2^{d-1}a'\) is pink. Color the edges in the path \( Q_2 \) alternately pink and blue, but in the opposite way of \( Q_1 \);  this ensures that the edge with difference \(1\) is blue.
 With this coloring approach, we ensure that the blue copies of the differences listed in (\ref{eq:pink_differences}) and the pink copies of the differences listed in (\ref{eq:blue_differences}) are covered. We  color the remaining paths as described below and then construct the central cycles based on the coloring.

First, assume that \( Q_1 \) starts with pink, meaning that the edge with difference \( a_t \) is assigned the pink color. Then \( Q_2 \) must start with blue. Depending on the parity of \( t \), there are two cases:
\begin{enumerate}[\bf(i)]
    \item \label{POD:i} If \( t \) is even, since \( Q_1 \) and \( Q_2 \) have length \( t + 1 \), we know \( Q_1 \) ends with pink and \( Q_2 \)  ends with blue. In this case, let \( C = x_{\infty} \, P \, Q_1 \, R_1 \, x_{\infty} \) and \( C' = x_{\infty} \, P \, Q_2 \, R_2' \, x_{\infty} \), and complete the coloring as follows. 
\vspace{-5pt}
\begin{figure}[H]
 \centering
   \includegraphics[width=0.9\linewidth,height=0.9\textheight,keepaspectratio]{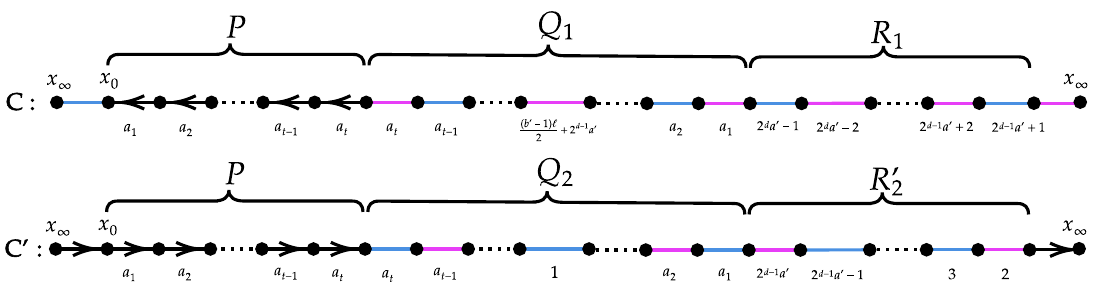}
  \vspace{-15pt}
  \end{figure} 
    \item \label{POD:ii} If \( t \) is odd,  we know \( Q_1 \)  ends with blue and \( Q_2 \)  ends with pink. In this case, let \( C = x_{\infty} \, P \, Q_1 \, R_2 \, x_{\infty} \) and \( C' = x_{\infty} \, P \, Q_2 \, R'_1 \, x_{\infty} \), and complete the coloring as follows.  
\end{enumerate}
\vspace{-15pt}
\begin{figure}[H]
 \centering
   \includegraphics[width=0.9\linewidth,height=0.9\textheight,keepaspectratio]{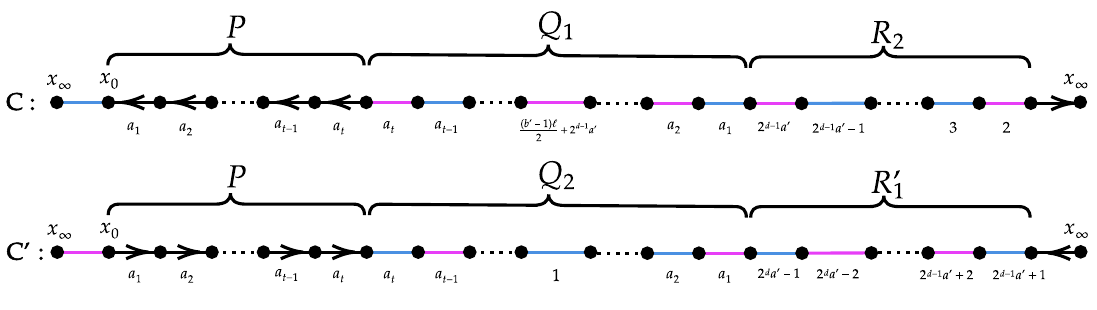}
    \vspace{-15pt}
  \end{figure} 
  
Next, assume that \( Q_1 \) starts with blue. Then \( Q_2 \) must start with pink. Depending on the parity of \( t \), there are two cases:
\begin{enumerate}[\bf(i)]
    \item \label{BOD:i} If \( t \) is even, then \( Q_1 \) ends with blue and \( Q_2 \)  ends with pink. In this case, let \( C = x_{\infty} \, P \, Q_1 \, R_2 \, x_{\infty} \) and \( C' = x_{\infty} \, P \, Q_2 \, R_1' \, x_{\infty} \), and complete the coloring as follows.  
\begin{figure}[H]
 \centering
   \includegraphics[width=0.9\linewidth,height=0.9\textheight,keepaspectratio]{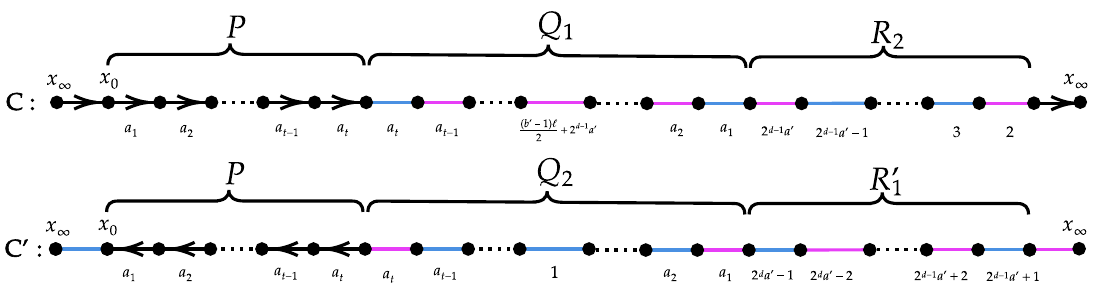}
    \vspace{-15pt}
  \end{figure}   
    \item \label{BOD:ii} If \( t \) is odd, then \( Q_1 \)  ends with pink and \( Q_2 \)  ends with blue. In this case, let \( C = x_{\infty} \, P \, Q_1 \, R_1 \, x_{\infty} \) and \( C' = x_{\infty} \, P \, Q_2 \, R'_2 \, x_{\infty} \), and complete the coloring as follows.  
\end{enumerate}
\begin{figure}[H]
 \centering
   \includegraphics[width=0.9\linewidth,height=0.9\textheight,keepaspectratio]{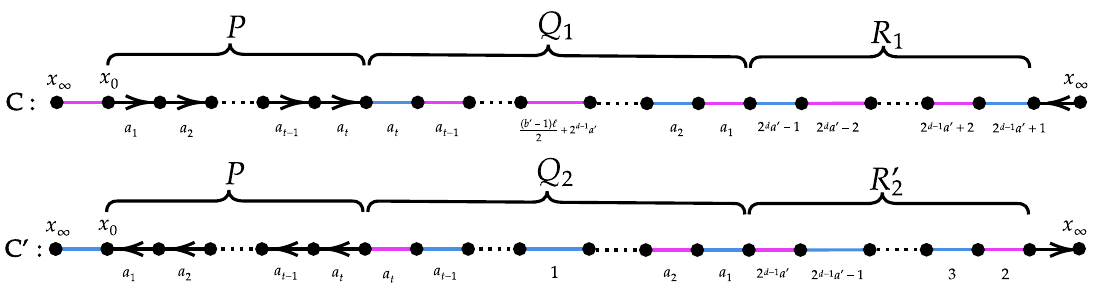}
    \vspace{-15pt}
  \end{figure} 
    
In each case, we can easily verify that both \( C \) and \( C' \) are cycles of length \( m \), and that they satisfy Condition \textbf{(C1)} of Definition \ref{def}. Moreover, they jointly contain exactly one blue edge, one pink edge, and two black arcs from each orbit of \( \langle \rho_{\bullet} \rangle \) corresponding to the differences in \( \mathscrsfs{L} \). 

The black orbits corresponding to the differences in $A$ are covered by $C'_1$ and $C'_2$, while the pink and the blue orbits of the differences in $A$ are  covered by $C, C'$, and  $C'_0$. 
%
%
Thus, 
$$\big\{\rho_{\bullet}^i(C), \rho_{\bullet}^i(C')\mid i=0,1,\ldots,n-2\big\}\cup\{C'_i, \rho_{\bullet}(C'_i), \ldots, \rho_{\bullet}^{\ell-1}(C'_i) \mid i=0,1,2\}$$
 is an HOP $(C_m)$-decomposition  for $4G_2^{\bullet}$, where \( G_2 = \text{Circ}(n-1; \pm(A\cup(\mathscrsfs{L}-\{\infty\}))) \bowtie K_1.\) 

Hence,  $4K_n^{\bullet}=4G_1^{\bullet}\oplus 4G_2^{\bullet}$ admits an HOP $(C_m)$-decomposition. 

\vspace{0.25cm}

\noindent
\textbf{\boldmath Case 2: \( F \equiv 1 \pmod{4} \).}
%
%
Here, we have two subcases:   \( b' = 1 \) and   \( b' \geq 3 \).

\textbf{Subcase 2.1:} $b'=1$. It follows from (\ref{eq:F<2b'-1}) that \( F = 1 \), and thus we only need to construct one family of peripheral cycles.
Since \( F = 1 \), and \( b' = 1 \), it follows that \( a = 3a' \). Thus, \( n = \frac{3}{2} m \), and the set of differences is \( S = \{ 1,  2, \ldots,  \frac{3m - 4}{4},  \infty\} \). 

We define the starter peripheral cycle \( C_0 \) as follows (see Figure~\ref{fig:F=1}):
\begin{align*}
C_0 = \, &x_{-1}\ x_{1}\ x_{2}\ x_{-2}\ x_{3}\ x_{-3}\ x_{4}\ x_{-4} \ldots x_{-(\frac{m}{4}-2)}\ x_{\frac{m}{4}-1}\ x_{-(\frac{m}{4}-1)}\ x_{\frac{m}{4}}\ x_{-\frac{m}{4}}\ x_{\frac{m}{2}}\  \\
& x_{-(\frac{m}{2}-1)}\ x_{\frac{m}{2}+1}\ x_{-\frac{m}{2}} \dots\dots x_{\frac{3m}{4}-3}\ x_{-(\frac{3m}{4}-4)}\ x_{\frac{3m}{4}-2}\ x_{-(\frac{3m}{4}-3)}\ x_{-(\frac{3m}{4}-2)}\ x_{\frac{3m}{4}-1}\ x_{-1}.
\end{align*}
The cycle \( C_0 \) covers the differences in the following order:
$$\textstyle{
 2,1,4,5,6,\ldots, \frac{m}{2}-3, \frac{m}{2}-2, \frac{m}{2}-1, \frac{m}{2}, \frac{3m-4}{4}, \frac{m}{2}, \frac{m}{2}-1, \frac{m}{2}-2, \ldots, 6,5,4,1, 2,\frac{3m-4}{4}.
}$$
Observe that \( C_0 \) covers each difference in \( A = \{ 1, 2, 4, 5, \ldots, \frac{m}{2} - 1, \frac{m}{2}, \frac{3m - 4}{4} \} \) exactly twice. Color the edges of \( C_0 \) black and orient them forwards as shown in Figure \ref{fig:F=1}. 
\begin{figure}[h]
    \centering
    \includegraphics[width=0.7\linewidth,height=0.7\textheight,keepaspectratio]{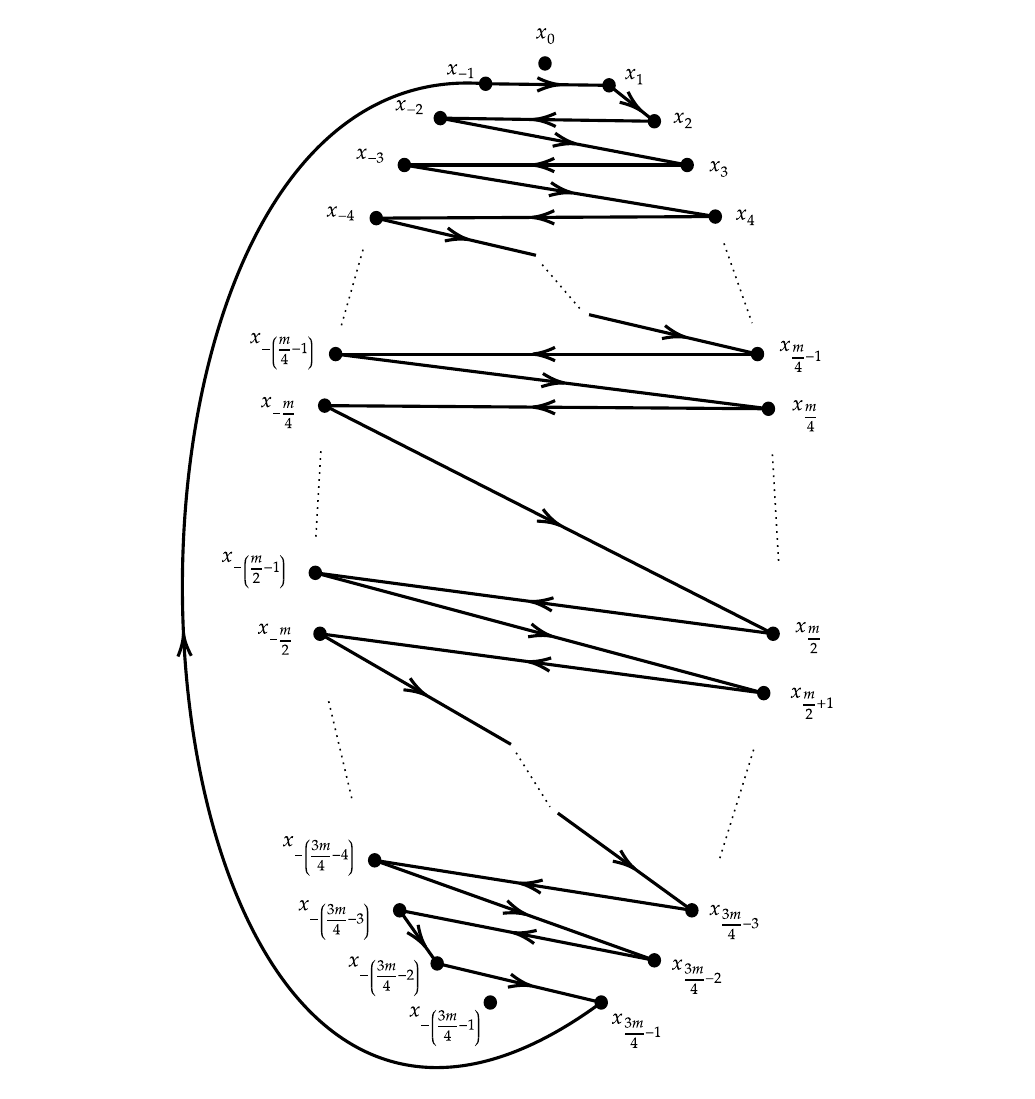}
    \caption{Orientation and coloring of the edges of \( C_0 \).}
    \label{fig:F=1}
\end{figure}
Let \(\mathscrsfs{L} \) be the set of unused differences in \(S\), that is, \(\mathscrsfs{L}  = S \setminus A=\{a_1, a_2, \ldots, a_t\} \cup \{\infty\}\), where \(a_1=3 < a_2 < \ldots < a_t < \frac{n-2}{2}=\frac{3m-4}{4}\)   and \(t = \frac{m-4}{4}\). Also, observe that \( \frac{m}{2} < a_2 \).
Consider the following sets of differences: 
\begin{itemize}
\item \( W_1=\{ 1, 2, a_1, 4, 5\ldots, \frac{m}{2}-1, \frac{m}{2}, a_2, a_3, \ldots, a_{t-1}, a_{t}, \frac{3m-4}{4}\}\)
\item \( W_2=\{  a_1, a_2, a_3, \ldots, a_{t-2}, a_{t-1}, a_t \} \)
\end{itemize}
Observe that 
\[
1<2<a_1< 4< 5<\ldots < \frac{m}{2} < a_2 < a_3 < \ldots < a_{t-1} < a_t <\frac{3m-4}{4}.
\]
Here, \(|W_1| = \frac{m}{2}+t\) and \(|W_2| = t \).  By Lemma \ref{zig-zag}(i), there exists a path  \( T \) of length \( 2t + \frac{m}{2}=m-2\) that starts at \( x_0 \) and covers the differences in the following order:
\[
T: \textstyle{1, 2, a_1, 4, 5,  \ldots, \frac{m}{2}-1, \frac{m}{2}, a_2, a_3, \ldots, a_{t-1}, a_t,  \frac{3m-4}{4}, a_t, a_{t-1},  \ldots, a_2, a_1}.
\]
Let \( P \), \( Q \),  and \( R \) be the subpaths of $T$ that cover the following  sequences of differences.
\begin{itemize}
\item \( P: 1, 2, a_1, 4,5, \ldots, \frac{m}{2}-1, \frac{m}{2} \)
\item \( Q:  a_2, a_3, \ldots, a_{t-2}, a_{t-1}, a_t, \frac{3m-4}{4}\)
\item \( R: a_t, a_{t-1}, a_{t-2}, \ldots, a_3, a_2, a_1 \)
\end{itemize}
Note that $T=PQR$ and  \( P \) has length \( \frac{m}{2} \), while \( Q \) and \( R \) each have length \( t \). Let \( C = x_{\infty} PQR x_{\infty} \); observe that \( C \) is a cycle of length \( m \).
Let \( C' \) be another copy of \( C \). In \( C \), color the edges of the path \( P \) alternatingly pink and blue, starting with pink. Since the length of the path $P$ is even, it ends with blue. In \( C' \), color the edges of the path \( P \) alternatingly pink and blue, starting with blue;  it ends with pink. Color the rest of the edges as follows:

\begin{itemize}
    \item If \( t \) is even, \( Q \) ends with blue in \( C \) and with pink in \( C' \).
    \begin{figure}[H]
 \centering
   \includegraphics[width=0.9\linewidth,height=0.9\textheight,keepaspectratio]{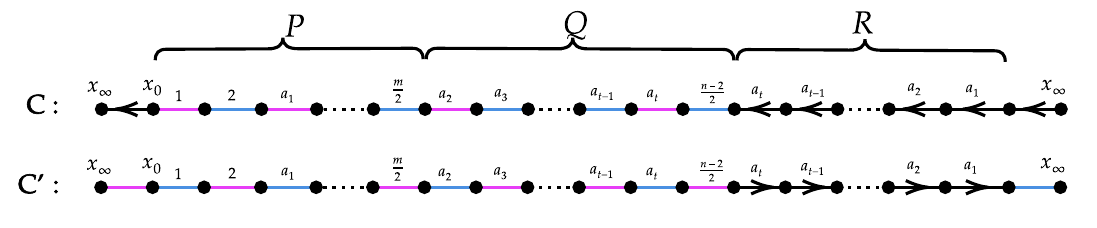}
   \vspace{-15pt}
  \end{figure} 
    \item If \( t \) is odd, \( Q \) ends with pink in \( C \) and with blue in \( C' \).
\begin{figure}[H]
 \centering
   \includegraphics[width=0.9\linewidth,height=0.9\textheight,keepaspectratio]{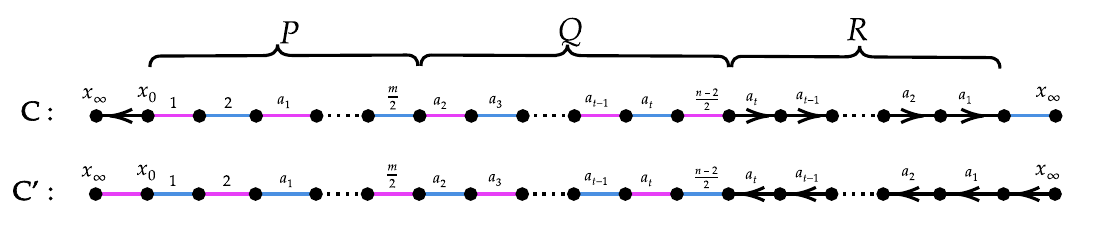}
   \vspace{-15pt}
  \end{figure} 
\end{itemize}
The cycle \( C_0 \) covers the black copies of the differences in the set \( A \), while the pink and blue copies of these differences are covered by the paths \( P \) and \( Q \) in the central cycles. Additionally, the central cycles \( C \) and \( C' \) together cover exactly one blue edge, one pink edge, and two opposite black arcs corresponding to the differences in the set \(\mathscrsfs{L} \).
Furthermore, the cycles \( C_0 \), \( C \), and \( C' \) satisfy Condition {\bf (C1)} in Definition {\rm \ref{def}}.
Therefore, 
\(
\{\rho^i_{\bullet}(C_0), \rho^i_{\bullet}(C), \rho^i_{\bullet}(C') \mid i=0,1,\ldots,n-2\}
\)
is an HOP $(C_m)$-decomposition of $4K_{n}^{\bullet}$. 


\textbf{Subcase 2.2:} $b'\geq 3$. As before, the approach is to first construct $\frac{F-1}{4} + 1$ starter peripheral cycles for $K_n$. 
We construct paths \( P_i \) for \( i = 0, 1, 2, \ldots, \frac{F - 1}{4} \). From (\ref{eq:F<2b'-1}), it follows that
\(
\frac{F - 1}{4} \leq \frac{b' - 1}{2}.
\)
Note that the construction of $P_i$  depends on whether \( \frac{F - 1}{4} < \frac{b' - 1}{2} \) or \( \frac{F - 1}{4} = \frac{b' - 1}{2} \). 

\textbf{ Subcase 2.2.1:} {\boldmath {\( \frac{F - 1}{4} < \frac{b' - 1}{2}. \)}}
%
%
%
For \( i = 0, 1, \ldots, \frac{F-1}{4}\), define \( P_i \) as in Case~1 (page \pageref{page:cas1L}). The differences covered by $P_i$ are given in \ref{eq:1-P0} and \ref{eq:1}.

As in Case~1,
\(
C_i=P_i \cup \rho^{\ell}(P_i) \cup \ldots \cup \rho^{(b'-1)\ell}(P_i)
\)
is an \(m\)-cycle.
Since \(\frac{F-1}{4} < \frac{b'-1}{2}\) and  \( 2^d a'  < \ell \) by \ref{eq:ell}, we have 
\begin{align*}
\frac{F - 1}{4} \ell + 2^d a' - 1 
\leq \frac{b' - 3}{2} \ell + 2^d a' - 1 
< \frac{b' - 1}{2} \ell + 2^{d-1} a' < \frac{n - 1}{2}.
\end{align*}
The above inequality guarantees 
the differences \( i\ell + 1, i\ell + 2, \ldots, i\ell + 2^d a' - 1 \) are pairwise distinct, and for all \( j = 0, 1, \ldots, \frac{F - 1}{4} \), we have 
\(
\frac{b' - 1}{2} \ell + 2^{d-1} a' \notin \{j\ell + 1, j\ell + 2, \ldots, j\ell + 2^d a' - 1\}.
\)

Next, we show that for all \( i= 1, \ldots, \frac{F-1}{4} \) and \( j = 0, 1, \ldots, \frac{F-1}{4} \)
\[
\frac{(b' - 2i + 1)\ell}{2} - 2^{d-1}a'
\notin 
\{j\ell + 1, j\ell + 2, \ldots, j\ell + 2^d a' - 1\}.
\]
Since 
\[
\textstyle{
0<\frac{(b' - 2\cdot \frac{F - 1}{4} + 1)\ell}{2} - 2^{d-1}a'
< \frac{(b' - 2\cdot \frac{F - 5}{4} + 1)\ell}{2} - 2^{d-1}a'
< \ldots 
< \frac{(b' - 2\cdot 1 + 1)\ell}{2} - 2^{d-1}a'
< \frac{(b' -1)\ell}{2} +2^{d-1}a',
}\]
it suffices to show that for all \( j = 0, 1, \ldots, \frac{F-1}{4} \) and \(1 \leq \alpha \leq 2^d a' - 1\)
\[
\frac{(b' - 2( \frac{F-1}{4}) + 1)\ell}{2} - 2^{d-1}a' \neq j\ell + \alpha.
\]
We prove this by contradiction. Suppose to the contrary that $\frac{2b' -F-1}{4}\ell + \ell- 2^{d-1}a' = j\ell + \alpha$. Since $F \equiv 1 \pmod{4}$ and $b'$ is odd, $\frac{2b' - F - 1}{4}$ is an integer; combining this with $2^d a' < \ell$ from \eqref{eq:ell}, we have $j = \frac{2b' - F - 1}{4}$ and $\alpha = \ell - 2^{d-1}a'$.
Given \(j \leq \frac{F - 1}{4}\), it follows that \(F \geq b'\). 
On the other hand, since  \(1 \leq \alpha \leq 2^d a' - 1\) and \(\ell - 2^{d-1}a' = \alpha\), it follows that \(\ell - 2^{d-1}a' \leq 2^d a' - 1\), and by (\ref{eq:333}), we have  \(F \leq b'\).
 Therefore, we have \( b' \leq F \leq b' \), which implies \( F =\frac{a - 2a'b'}{a'} = b'\), and  \( a = 3a'b' \). Consequently, \( n = 2^{d-1} \cdot 3a'b' \). Since \( b' \mid (n-1) \), it must hold that \( b' = 1 \), a contradiction.  
Thus,  the differences listed in (\ref{eq:1-P0}) and (\ref{eq:1}) are pairwise distinct.



\textbf{Subcase 2.2.2:} {\boldmath {\( \frac{F - 1}{4} = \frac{b' - 1}{2}. \)}}
Let \(f=\frac{F-1}{4} = \frac{b'-1}{2}\).
Note that  path $P_f$ covers the following differences:
\begin{align}
    &f\ell + 1,\; f\ell + 2,\; \ldots,\; f\ell + 2^{d-1}a' - 1,\; \textcolor{red}{f\ell + 2^{d-1}a'},\; f\ell + 2^{d-1}a' + 1, \nonumber \\
    &\ldots,\; f\ell + \tfrac{\ell - 3}{2},\; \textcolor{red}{f\ell + \tfrac{\ell - 1}{2}},\; f\ell + \tfrac{\ell - 1}{2},\; f\ell + \tfrac{\ell - 3}{2},\; \ldots,\; \ell - 2^{d-1}a'. \label{eq:sequence21}
\end{align}
Notice that \( f\ell + 2^{d-1}a' = \frac{b' - 1}{2} \ell + 2^{d-1}a' \). However, this difference has already occurred in \( P_0 \). Continuing further, we encounter $f\ell + \frac{\ell-1}{2} = \frac{n-2}{2}$, after which more repeated differences occur.
To address this issue, we modify the construction of $P_f$ to avoid repeated differences. Specifically:
\begin{itemize}
\item  We  avoid the difference $f\ell + 2^{d-1}a' = \frac{b'-1}{2}\ell + 2^{d-1}a'$.
\item  We replace differences occurring after $f\ell + \frac{\ell-1}{2} = \frac{n-2}{2}$ with differences from the interval $[2^d a', \ell]$.
\end{itemize}
Given that the length of \(P_f\) must be \(2^d a'\), we replace \(2^d a' - \frac{\ell - 3}{2}\) of the differences listed in (\ref{eq:sequence21}).

Next, we explain why we chose differences from the interval $[2^d a', \ell]$.
\begin{itemize}
\item The second-to-last difference used in $P_0$ is $2^d a' - 1$, while the first difference used in $P_1$ is $\ell + 1$.
\item  The differences $\frac{(b' - 2i + 1)\ell}{2} - 2^{d-1}a'$ decrease as $i$ increases.
The last path before $P_f$ is $P_{f-1}$. For $i = f-1 = \frac{b'-3}{2}$, the corresponding difference is $2\ell - 2^{d-1}a'$. 
By (\ref{eq:ell}), we have \(2^{d-1} a' <2^d a' < \ell\), and hence $\ell < 2\ell - 2^{d-1}a'.$
\end{itemize}
Therefore, replacing the (potentially) repeated differences with differences in the range $[2^d a', \ell]$ ensures that the differences covered by $P_f$ are distinct from those covered by $P_i$ for $i = 0, 1, \ldots, f-1$.

Before discussing the construction of $P_f$, we first prove some inequalities. 
Recall that $d \geq 2$, with $a' \geq 1$ and $b' \geq 3$ both odd. Furthermore, $\frac{F-1}{4} = \frac{b' - 1}{2}$, which implies $F = 2b' - 1$. Therefore, we have $a = 4a'b' - a'$ and $n = 2^{d-1}(4a'b' - a')$. Thus,
\begin{equation}
\ell = \frac{n - 1}{b'} = 2^{d+1}a' - \frac{2^{d-1}a' + 1}{b'}. \label{eq:ell_formula}
\end{equation}
By \eqref{eq:ell_formula}, the parameters $(a',d,b') \in \{(1,2,5),(1,3,3)\}$ lead to contradictions. Apart from these parameters, 
we see
\begin{equation}
2\left(\frac{2^{d-1}a' + 1}{2^{d-1}a' - 1}\right) < b', \quad \text{for } (a', d, b') \notin \{(1, 2, 3)\}. \label{eq:ine--y}
\end{equation}
Applying the inequality from \eqref{eq:ine--y} to \eqref{eq:ell_formula}, we obtain:
\begin{align*}
\ell &> 2^{d+1}a' - \frac{2^{d-1}a' - 1}{2} = 2^{d+1}a' - 2^{d-2}a' + \frac{1}{2} = 7 \cdot 2^{d-2}a' + \frac{1}{2}.
\end{align*}
Therefore, we have the following:
\begin{equation}
\ell > 7 \cdot 2^{d-2}a' + \frac{1}{2} \quad \text{for } (a', d, b') \notin \{(1, 2, 3)\}.
\label{eq:ell_lower_bound}
\end{equation}
Using \eqref{eq:ell_lower_bound}, we establish the following inequalities:
\begin{itemize}

\item We have $\frac{\ell-1}{4} > \frac{7}{4} 2^{d-2} a' - \frac{1}{8} > 2^{d-2} a' + 1$. Rearranging terms yields:
\begin{equation} 
-(2^{d-2}a' + 1) \geq -\frac{\ell - 1}{4}. \label{eq:Simple1}
\end{equation}

\item We have $\frac{3\ell - 1}{2} > 21 \cdot 2^{d-3}a' + \frac{1}{4} > 2^{d+1}a'$. Thus,
\begin{equation}
\frac{3\ell - 1}{2} > 2^{d+1}a'. \label{eq:Simple2}
\end{equation}
\end{itemize}
 Finally, from \eqref{eq:ell} we know $\frac{\ell}{2} < 2^d a'$, and hence
\begin{equation}
\frac{\ell - 3}{4} < 2^d a' - \frac{\ell + 1}{4}. \label{eq:Simple3}
\end{equation}
%
%
%
Note that since $b'\ell = n - 1 < 2m = 2^{d+1} a'b'$, it follows that $\ell < 2^{d+1} a'$. This implies $\frac{\ell - 3}{2} < 2^d a'$, confirming that the number of replaced differences, $2^d a' - \frac{\ell - 3}{2}$, is positive.

Next, we explain how we construct the path \( P_f \). There are two cases to consider:   \(\ell \equiv 3 \pmod{4}\)  and   \(\ell \equiv 1 \pmod{4}\). The construction will be verified using the inequalities proven above. Note that the inequality (\ref{eq:ell_lower_bound}) and its consequent inequalities are proved for  \( (a', d, b') \notin \{(1, 2, 3)\} \). First note that if \( (a', d, b') = (1, 2, 3) \), we have \( \ell = 7 \) by \eqref{eq:ell_formula}, \( m = 12 \), \( n = 22 \), \(f=1\), and \( F = 5 \). In this case, \( \ell \equiv 3 \pmod{4} \), and we verify the construction of the path \( P_f \) without using inequality (\ref{eq:ell_lower_bound}).


\begin{itemize}

\item \textbf{Subcase 2.2.2.1: }\(\ell \equiv 3 \pmod{4}\). If \( (a', d, b') = (1, 2, 3) \), we have \( P_f = x_0\ x_8\ x_{-2}\ x_2\ x_7 \), and the differences covered are \( 8, 10, 4, 5 \). It is clear that \( P_f \) is a path and the differences covered are pairwise distinct. 
In all other cases, define a walk  $P_f$ as follows (see Figure~\ref{Ofig:2b'-1}):
\begin{align*}
P_f = &x_0\ x_{f\ell+1}\ x_{-1}\ x_{f\ell+2}\ x_{-2} \dots \\
& x_{-(2^{d-2}a'-1)}\ x_{f\ell+2^{d-2}a'}\ x_{-(2^{d-2}a'+1)}\ x_{f\ell+2^{d-2}a'+1}\ x_{-(2^{d-2}a'+2)}\ x_{f\ell+2^{d-2}a'+2} \dots \\
& \ x_{f\ell+ \frac{\ell-3}{4} } \ x_{- \frac{\ell+1}{4} }\ x_{2^da'- \frac{\ell+1}{4} }\ x_{- \frac{\ell+5}{4} }\ x_{2^da'- \frac{\ell-3}{4} }  \dots \\
 &\ x_{-(2^{d-1}a'-1)}\ x_{3\cdot2^{d-1}a'-\frac{\ell+3}{2}}\ x_{-(2^{d-1}a')}\ x_{3\cdot2^{d-1}a'-\frac{\ell+1}{2}} \ x_{\ell}
\end{align*}
\end{itemize}
To show that \( P_f \) is a path, we verify the following:
\begin{itemize}
\item By (\ref{eq:Simple1}), we have $
-(2^{d-2}a' +1)  \geq - \frac{\ell+1}{4}.$

\item Since  $\frac{\ell}{4} < 2^{d-1}a'$ by \eqref{eq:ell}, and $\ell \equiv 3 \pmod{4}$, we have $\frac{\ell + 1}{4} \leq 2^{d-1}a'$. This confirms $3 \cdot 2^{d-1}a' - \frac{\ell+1}{2} \geq 2^d a' - \frac{\ell+1}{4}$.

\item By (\ref{eq:ell_lower_bound}), we see $21 \cdot 2^{d-3}a' + \frac{3}{4}<\frac{3\ell}{2}$,  thus
\(
3 \cdot 2^{d-1}a'  < 21 \cdot 2^{d-3}a' + \frac{3}{4}< \frac{3\ell+1}{2}.
\)
Therefore, we have $3\cdot2^{d-1}a'-\frac{\ell+1}{2} < \ell$.
\end{itemize}
Thus, we conclude that $P_f$ is a path.  The differences covered by \( P_f \) are:
\begin{align*}
&f\ell+1, f\ell+2, f\ell+3, \ldots, f\ell+2^{d-1}a'-1, f\ell+2^{d-1}a'+1, f\ell+2^{d-1}a'+2, \ldots \\
& \ldots, f\ell+{\textstyle{\frac{\ell-1}{2}}},\  2^da', 2^da'+1,  \ldots, 2^{d+1}a'-\frac{\ell+3}{2}, 2^{d+1}a'-\frac{\ell+1}{2}, \frac{3\ell+1}{2}-3 \cdot 2^{d-1}a'.
\end{align*}
Next, we show that the following differences from the above list are pairwise distinct and lie in the interval $[2^d a', \ell]$.
\[
2^da', 2^da'+1,  \ldots, 2^{d+1}a'-\frac{\ell+3}{2}, 2^{d+1}a'-\frac{\ell+1}{2}, \frac{3\ell+1}{2}-3 \cdot 2^{d-1}a'
\] 
To show they are distinct, it suffices to prove that $\frac{3\ell+1}{2}-3 \cdot 2^{d-1}a' > 2^{d+1}a'-\frac{\ell+1}{2}$, which is equivalent to $2\ell+1 > 7 \cdot 2^{d-1}a'$. This follows by  \eqref{eq:ell_lower_bound}.

Furthermore, since $\ell< 2^{d+1}a'$ by \eqref{eq:ell}, we have $\frac{3\ell + 1}{2} - 3 \cdot 2^{d-1} a' < \ell$, so the differences are all less than $\ell$. 

Next, we show that the internal vertices of the path \( P_f \) are pairwise distinct modulo \( \ell \)  (see Figure~ \ref{Ofig:2b'-1}). We do this by proving the following two inequalities.
\begin{figure}[h!]
 \centering
   \includegraphics[width=0.8\linewidth,height=0.8\textheight,keepaspectratio]{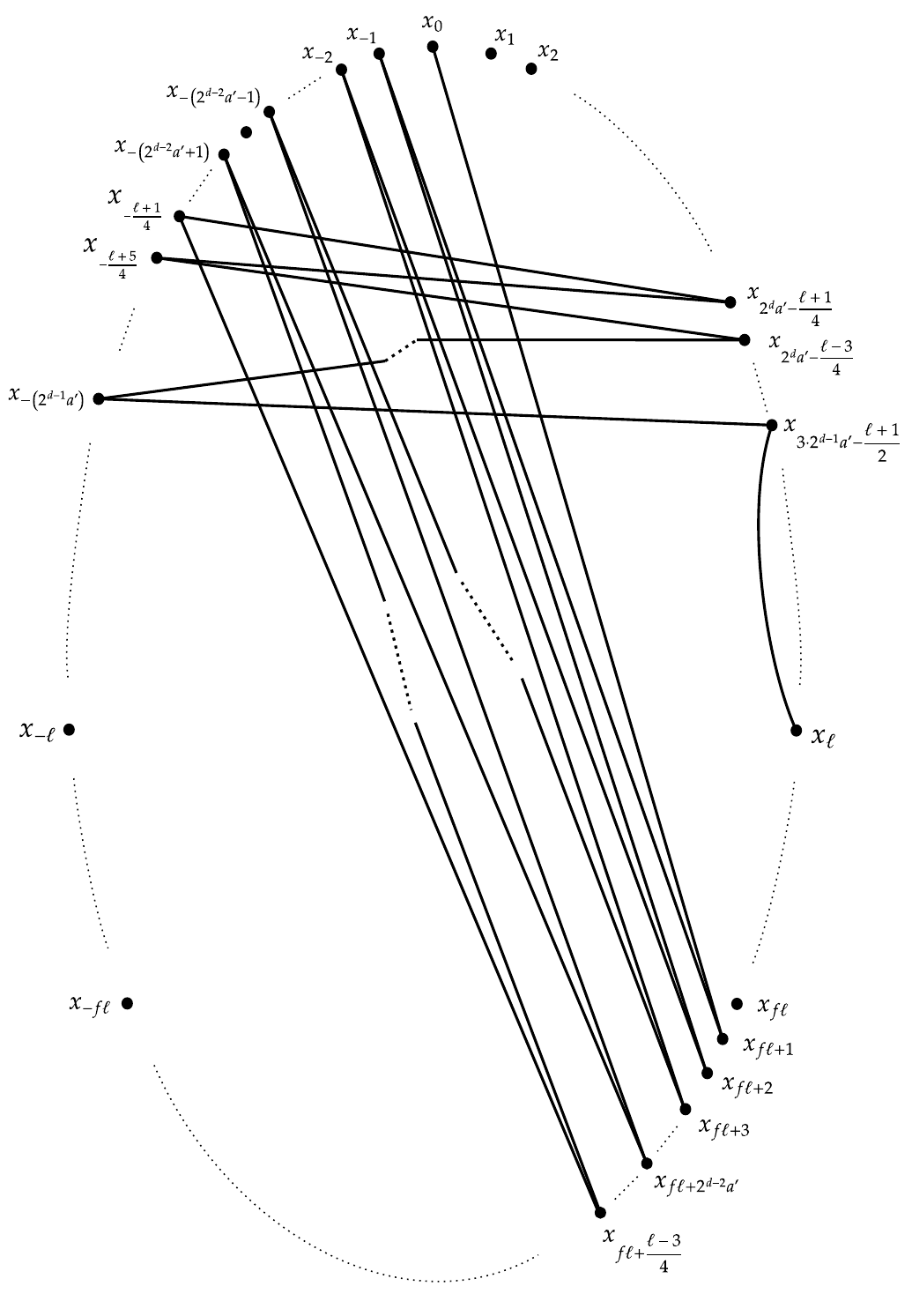}
   \caption{Illustration of the path \( P_f \) for \(\ell \equiv 3 \pmod{4}\).}
   \label{Ofig:2b'-1}
\end{figure}
%
\begin{enumerate}[\bf{(i)}]
\item 
\( \frac{\ell - 3}{4} < 2^d a' -  \frac{\ell +1}{4}  \).
This is proved in (\ref{eq:Simple3}).
\end{enumerate}

\begin{enumerate}[\bf{(ii)}]
\item 
\( \ell-2^{d-1} a' > 3\cdot2^{d-1}a'-\frac{\ell+1}{2},   {\text{ which is equivalent to }}  \frac{3\ell + 1}{2} > 2^{d+1}a' .\) The latter follows from (\ref{eq:Simple2}).
\end{enumerate}
Therefore, the internal vertices of the path \( P_f \) are pairwise distinct modulo \( \ell \).


\begin{itemize}
\item  \textbf{Subcase 2.2.2.2: } \(\ell \equiv 1 \pmod{4}\).
Define a walk $P_f$ as follows:
\begin{align*}
P_f = &x_0\ x_{f\ell+1}\ x_{-1}\ x_{f\ell+2}\ x_{-2} \dots \\
& x_{-(2^{d-2}a'-1)}\ x_{f\ell+2^{d-2}a'}\ x_{-(2^{d-2}a'+1)}\ x_{f\ell+2^{d-2}a'+1}\ x_{-(2^{d-2}a'+2)}\ x_{f\ell+2^{d-2}a'+2} \dots \\
& \ x_{f\ell+ \frac{\ell-5}{4} } \ x_{- \frac{\ell-1}{4} }\ x_{2^da'- \frac{\ell-1}{4} }\ x_{- \frac{\ell+3}{4} }\ x_{2^da'- \frac{\ell-5}{4} }  \dots \\
 &\ x_{-(2^{d-1}a'-1)}\ x_{3\cdot2^{d-1}a'-\frac{\ell+1}{2}}\ x_{-(2^{d-1}a')}\ x_{3\cdot2^{d-1}a'-\frac{\ell-1}{2}} \ x_{\ell}
\end{align*}
\end{itemize}
Analogous to Subcase 2.2.2.1, it can be shown that \(P_f\) is a path that covers pairwise distinct differences in the correct interval and that  the internal vertices of  \( P_f \) are pairwise distinct modulo \( \ell \).

In both Subcases 2.2.2.1 and  2.2.2.2, for \( j = 0, \ell, 2\ell, \ldots, (b' - 1)\ell \), the paths \( \rho^j(P_f) \) are pairwise vertex-disjoint, except at the endpoints.  Hence, 
\(
C_f = P_f \cup \rho^{\ell}(P_f) \cup \cdots \cup \rho^{(b' - 1)\ell}(P_f)
\)
 is an \( m \)-cycle.

Furthermore, the path \( P_f \) is constructed so that the differences it covers are distinct from those covered by the paths \( P_i \) for \( i = 0, 1, \ldots, f-1 \). Hence, the differences covered by the paths \( P_i \) for \( i = 0, 1, \ldots, \frac{F-1}{4} \) are pairwise distinct.

Let \( B \) be the set of differences covered by the \( m \)-cycles \( C_1, C_2 \ldots, C_{\frac{F-1}{4}} \). 
Let \( G_1 = \text{Circ}(n-1; \pm B) \). Then,  \(\{C_i, \rho(C_i), \ldots, \rho^{\ell-1}(C_i) \mid i=1, \ldots, \frac{F - 1}{4}\}\) is a \((C_m)\)-decomposition of \( G_1 \), and hence by Lemma~\ref{lem:Gtool4}, there exists  an HOP \((C_m)\)-decomposition of \( 4G_1^{\bullet} \).

Notice that $A=\{1, 2, \ldots,  2^d a' - 2, 2^d a' - 1, \textstyle{\frac{(b' - 1)\ell}{2} + 2^{d-1}a'}\}$ is the set of differences covered by \( C_0 \).  As in Case 1, color the edges of \(P_0\) alternately pink and blue (as shown below), starting with pink. Since \(P_0\) has length \(2^d a'\), which is even, it ends with blue. 
\begin{figure}[H]
 \centering
   \includegraphics[width=0.9\linewidth,height=0.9\textheight,keepaspectratio]{Final-C1-e.pdf}
   \vspace{-15pt}
  \end{figure}
  
The family of starter cycles generated by \( C_0 \); namely, \(\{C_0, \rho_{\bullet}(C_0), \ldots, \rho_{\bullet}^{\ell-1}(C_0) \}\), covers:
\begin{enumerate}[(i)]
    \item the pink orbits corresponding to the differences
    \begin{equation}
    1, 3, 5, \ldots, 2^{d-1}a' + 1, 2^{d-1}a' + 3, \ldots, 2^d a' - 3, 2^d a' - 1, \text{ and}
    \label{eq:pink_differences2}
    \end{equation}
    
    \item the blue orbits corresponding to the differences
    \begin{equation}
    2, 4, 6, \ldots, 2^{d-1}a', 2^{d-1}a' + 2, \ldots, 2^d a' - 2, \frac{(b' - 1)\ell}{2} + 2^{d-1}a'.
    \label{eq:blue_differences2}
    \end{equation}
 \end{enumerate}   
The differences in the set \( A \) will also appear in the central cycles \( C \) and \( C' \), to be constructed below. 

Let \(\mathscrsfs{L} \) be the set of unused differences in \(S\), that is, \(\mathscrsfs{L}  = S \setminus (A \cup B)= \{a_1, a_2, \ldots, a_t\} \cup \{\infty\}\), where \(a_1 < a_2 < \ldots < a_t \leq \frac{n-2}{2}\)  and \(t = \frac{n-2}{2} - \frac{(F+3) 2^d a'}{4}\). 

Notice that since \( b' \geq 3 \), we have \( \frac{(b' - 1)\ell}{2} + 2^{d-1}a' > \ell \geq 2^da' \). Observe that \[ 2^d a' - 1 < a_1 \leq \frac{(b' - 1)\ell}{2} + 2^{d-1}a' \leq \frac{n-2}{2}. \]

Consider the following sets of differences: 
\begin{itemize}
\item \( W_1=\{ 1, 2, \ldots,  2^d a' - 1, a_1, a_2, a_3, \ldots, \textstyle{\frac{(b' - 1)\ell}{2} + 2^{d-1}a'}, \ldots, a_{t-2}, a_{t-1}, a_t\}\)
\item \( W_2=\{   2^{d-1}a'+1, 2^{d-1}a'+2,  \ldots, 2^d a'-1, a_1, a_2, \ldots, \textstyle{\frac{(b' - 1)\ell}{2} + 2^{d-1}a'}, \ldots, a_{t-1}, a_t\} \)
\end{itemize}
In the sequence \( a_1, a_2, \ldots, \frac{(b' - 1)\ell}{2} + 2^{d-1}a', \ldots, a_{t-1}, a_t \), we assume
\[
a_1 < a_2 < \ldots < \frac{(b' - 1)\ell}{2} + 2^{d-1}a' < \ldots < a_{t-1} < a_t,
\]
with the understanding that
\(
a_1 < a_2 < \ldots < a_{t-1} < a_t < \frac{(b' - 1)\ell}{2} + 2^{d-1}a'
\)
is also possible.
Here,  \( W_1 \) contains all the differences from \( A \) and all the differences from \( \mathscrsfs{L} \), except the difference \( \infty \), and  \( |W_1| = t +2^d a' \). The set \( W_2 \) contains half of the differences from \( A \) and all differences from \( \mathscrsfs{L} \), except the difference \( \infty \), and  \( |W_2| = t + 2^{d-1}a'\).
 By Lemma \ref{zig-zag}(i), there exists a path  \( T_1 \) of length \( 2t + 3\cdot2^{d-1} a' \) that starts at \( x_0 \) and covers the differences in $W_1$ and $W_2$ in the following order:
\[
\begin{aligned}
T_1: & \, 1, 2, \ldots, 2^d a' - 1, a_1, a_2, \ldots, \textstyle{\frac{(b' - 1)\ell}{2} + 2^{d-1}a'}, \ldots, a_{t-2},  a_{t-1}, a_t, \\
& a_t, a_{t-1},  \ldots,  \textstyle{\frac{(b' - 1)\ell}{2} + 2^{d-1}a'}, \ldots, a_2, a_1, 2^d a' - 1, \ldots, 2^{d-1}a'+2,  2^{d-1}a' + 1.
\end{aligned}
\]
Similar to Case 1, we use Lemma \ref{zig-zag}(ii) to construct a path \( T_2 \) by replacing the second occurrence of the difference \(\textstyle \frac{(b' - 1)\ell}{2} + 2^{d-1}a'\) with  difference \(1\) in the path \( T_1 \).
\[
\begin{aligned}
T_2: & \, 1, 2, \ldots, 2^d a' - 1, a_1, a_2, \ldots, \textstyle{\frac{(b' - 1)\ell}{2} + 2^{d-1}a'}, \ldots, a_{t-2},  a_{t-1}, a_t,  \\
& a_t, a_{t-1}, \ldots,1, \ldots, a_2, a_1, 2^d a' - 1, \ldots, 2^{d-1}a'+2,  2^{d-1}a' + 1.
\end{aligned}
\]
Now, consider the following set of differences: 
\begin{itemize}
\item \( W_2'=\{   2,3,4, \ldots, 2^{d-1}a'-1, 2^{d-1}a', a_1, a_2, \ldots, \textstyle{\frac{(b' - 1)\ell}{2} + 2^{d-1}a'}, \ldots, a_{t-1}, a_t\} \)
\end{itemize}
Here,  \(|W_2'| = t + 2^{d-1}a'\).  By Lemma \ref{zig-zag}(i), there exists a path  \( T_3 \) of length \( 2t + 3\cdot2^{d-1} a' \) that starts at \( x_0 \) and covers the differences in $W_1$ and $W_2'$ in the following order:
\[
\begin{aligned}
T_3: & \, 1, 2, \ldots, 2^d a' - 1, a_1, a_2, \ldots, \textstyle{\frac{(b' - 1)\ell}{2} + 2^{d-1}a'}, \ldots, a_{t-2}, a_{t-1}, a_t,  \\
& a_t, a_{t-1}, \ldots,\textstyle{\frac{(b' - 1)\ell}{2} + 2^{d-1}a'}, \ldots, a_2, a_1, 2^{d-1}a', 2^{d-1}a'-1, \ldots, 3, 2.
\end{aligned}
\]
By Lemma \ref{zig-zag}(ii), the second occurrence of the difference \( \frac{(b' -1)\ell}{2} + 2^{d-1}a' \) can be replaced with  difference \( 1 \); that is, there exists a path  \( T_4 \) that covers the differences in the following order:
\[
\begin{aligned}
T_4: & \, 1, 2, \ldots, 2^d a' - 1, a_1, a_2, \ldots, \textstyle{\frac{(b' - 1)\ell}{2} + 2^{d-1}a'}, \ldots, a_{t-2},  a_{t-1}, a_t, \\
& a_t, a_{t-1}, \ldots, 1, \ldots, a_2, a_1, 2^{d-1}a', 2^{d-1}a'-1, \ldots, 3, 2.
\end{aligned}
\]
The paths $T_1, T_2, T_3, T_4$ each have length 
\begin{align*}
2t + 3 \cdot 2^{d-1} a' = \left(n - 2 - (F+3) 2^{d-1} a'\right) + 3 \cdot 2^{d-1} a' = n - 2 - F \cdot 2^{d-1} a' =  m - 2.
\end{align*}

As in {\bf{Case~1}},  we express \( T_1 = PQ_1R_1 \), \( T_2 = PQ_2R'_1 \), \( T_3 = PQ_1R_2 \), and \( T_4 = PQ_2R'_2 \), where 
\begin{itemize}
\item \( P: 1, 2, \ldots,  2^d a' - 1, a_1, a_2, a_3, \ldots, \textstyle{\frac{(b' -1)\ell}{2} +2^{d-1}a'}, \ldots, a_{t-2}, a_{t-1}, a_t\)
\item \( Q_1: a_t, a_{t-1}, a_{t-2}, \ldots, \textstyle{\frac{(b' - 1)\ell}{2} + 2^{d-1}a'}, \ldots, a_3, a_2, a_1 \)
\item \( Q_2: a_t, a_{t-1}, a_{t-2}, \ldots, 1, \ldots, a_3, a_2, a_1 \)
\item \( R_1, R_1': 2^d a'-1, 2^d a'-2, 2^d a'-3, \ldots, 2^{d-1}a'+3, 2^{d-1}a'+2, 2^{d-1}a'+1 \)
\item \( R_2, R_2': 2^{d-1}a', 2^{d-1}a'-1, 2^{d-1}a'-2, \ldots, 4, 3, 2 \)
\end{itemize}
Note that \( P \) has length \( t +2^d a'\), while \( Q_1 \) and \( Q_2 \) each have length \( t + 1 \). The paths \( R_1 \) and \( R_1' \) cover the same sequences of differences, as do \( R_2 \) and \( R_2' \), with each path having length \( 2^{d-1}a' - 1 \).

The construction of \(C\) and \(C'\), along with their coloring, is the same as in Case~1 (see page~\pageref{page:cas1xx}). 
The only difference from Case~1 is that the path \(P\) is longer and covers the black copies of the differences in the set \(A\). 
As in Case~1, we can verify that
$$\big\{\rho_{\bullet}^i(C), \rho_{\bullet}^i(C')\mid i=0,1,\ldots,n-2\big\}\cup\{C_0, \rho_{\bullet}(C_0), \ldots, \rho_{\bullet}^{\ell-1}(C_0) \}$$
 is an HOP $(C_m)$-decomposition  for $4G_2^{\bullet}$, where \( G_2 = \text{Circ}(n-1; \pm(A\cup(\mathscrsfs{L}-\{\infty\}))) \bowtie K_1.\) 

We have $4K_n^{\bullet}=4G_1^{\bullet}\oplus 4G_2^{\bullet}$, and since each of $4G_1^{\bullet}$ and $4G_2^{\bullet}$ admits an HOP $(C_m)$-decomposition,  so does $4K_n^{\bullet}$ by Lemma \ref{Gtool3}. 
\end{proof}
%
%
\section{Proof of the main result}{\label{sec:8}}

For the reader's convenience, we restate our main result here and summerize its proof.

\mainresultOne*
\begin{proof}
By Theorem~\ref{theo:Gtool1}, this is equivalent to proving that $4K_n^{\bullet}$ admits an $\mathrm{HOP}(C_m)$-decomposition if and only if $2n(n-1)\equiv 0\ ({\rm mod}\ m)$. 
It is clear that if $4K_n^{\bullet}$ has an $\mathrm{HOP}(C_m)$-decomposition, then  
$2n(n-1) \equiv 0 \pmod{m}$. Conversely, assume $2n(n-1) \equiv 0 \pmod{m}$.  We  show $4K_n^{\bullet}$ admits an $\mathrm{HOP}(C_m)$-decomposition. 

First, let $m = 2$. Then it follows from Lemma~\ref{G4C} that there exists an  
$\mathrm{HOP}(C_m)$-decomposition of $4K_n^{\bullet}$.  
From now on, let $m \geq 3$. Then, there are three cases to consider.

\begin{description}

\item[Case 1:]  $m \mid \frac{n(n-1)}{2}$. 
First, assume \( n \) is odd. By Theorem~\ref{theo:Cm-dec}, there exists a \((C_m)\)-decomposition of \( K_n \), and hence by Lemma~\ref{lem:Gtool4}, there exists an HOP \((C_m)\)-decomposition of \( 4K_n^{\bullet} \).
Second, assume \( n \) is even. The following subcases arise:
\begin{itemize}
    \item \( m \) is even.
 By Theorem~\ref{theo:(Cm)-dec-2Kn}, \( 2K_n \) admits a \((C_m)\)-decomposition. Consequently, \( 2K_n^\circ \)  admits a \((C_m)\)-decomposition. If \( m \equiv 0 \pmod{4} \), then by Corollary~\ref{cor:color-decGraph}, we can recolor each \( m \)-cycle in this decomposition so that it contains an even number of pink edges, and hence by  Lemma~\ref{lem:Gtool2}, there exists an HOP \((C_m)\)-decomposition of \( 4K_n^{\bullet} \).
If \( m \equiv 2 \pmod{4} \), then since \( m \mid \frac{n(n-1)}{2} \), it follows from Lemma~\ref{lem:color-m-even-n-even}  that \( 4K_n^{\bullet} \) admits an HOP \((C_m)\)-decomposition.
   
    \item \( m \) is odd. By Lemma~\ref{lem:color-m-odd-n-even}, there exists an HOP \((C_m)\)-decomposition of \( 4K_n^{\bullet} \).
\end{itemize}

\item[Case 2:]  $m \nmid \frac{n(n-1)}{2}$ but $m \mid n(n-1)$. 
This implies that \(m\) is even and \(n(n-1) \equiv m \pmod{2m}\).  
If \(m \equiv 0 \pmod{4}\), then, using Corollary~\ref{cor:color-decGraph} and Lemma~\ref{lem:Gtool2}, there exists an HOP \((C_m)\)-decomposition of \( 4K_n^{\bullet} \).
If \(m \equiv 2 \pmod{4}\), then for odd $n$, the results follows from Lemma~\ref{lem:color-m-even-n-odd}, and for even $n$ from Lemma~\ref{lem:color-m-even-n-even}.

\item[Case 3:]  $m \nmid n(n-1)$ but $m \mid 2n(n-1)$. 
This implies that $m \equiv 0 \pmod{4}$ and  $2n(n-1)\equiv m\ ({\rm mod}\ 2m)$. If $n$ is odd, the results follows by Lemma~\ref{lem:n-odd-4K-last}, and if $n$ is even by Lemma~\ref{lem:n-even-4K-last}. 

\end{description}
\end{proof}

\section{Acknowledgments}

The author would like to thank her PhD supervisor, Dr. Mateja \v{S}ajna, for her invaluable guidance and support during this research.


\section{Data availability statement}
The author has nothing to report.


\end{document}